\documentclass{article}
\usepackage[
backend=biber,
style=alphabetic,
natbib=true,
url=false,
doi=true,
eprint=true,
giveninits=true,
isbn=false,
maxbibnames=12,
]{biblatex}
\usepackage[a4paper, margin=3cm]{geometry}
\usepackage{hyperref}
\usepackage{authblk}
%
\usepackage{amsthm,amsmath,amsfonts,amssymb}
\usepackage{mathtools}
\usepackage[capitalize,nameinlink,noabbrev]{cleveref} 
\usepackage{enumitem} 
\usepackage{graphicx} 
\usepackage{dsfont} 
\usepackage{tikz} 
\usetikzlibrary{decorations.pathreplacing}
\usetikzlibrary {patterns,patterns.meta}
\usepackage{pgfplots}
\usepgfplotslibrary{colorbrewer}

\definecolor{myOrange}{HTML}{F16E04}
\definecolor{myBlue}{HTML}{A152D6}
\definecolor{metricSpacepColor}{HTML}{000000}


\newcommand{\ol}[2]{\overline{#1#2}}

\newcommand{\tran}{\tau}
\newcommand{\dtran}{\tau\pr}
\newcommand{\ddtran}{\tau\prr}
\newcommand{\dddtran}{\tau\prrr}
\newcommand{\ddltran}{\tau^{\prime\ominus}}
\newcommand{\ddrtran}{\tau^{\prime\oplus}}

\newcommand{\setcc}{\mc S}
\newcommand{\setcciz}{\mc S_0^+}

\newcommand{\ld}{^\ominus}

\newcommand{\rd}{^\oplus}
\newcommand{\woinf}[1]{#1^\ominus}
\newcommand{\wosup}[1]{#1^\oplus}

\newcommand{\semiDerivs}{\partial^\pm}

\newcommand{\GeodUS}{\Gamma_1} 
\newcommand{\geodft}[2]{\gamma_{#1\to#2}} 
%
%

%
%
%
%
%
\renewcommand{\subset}{\subseteq}
%
%
\newcommand{\lcb}{\left\lbrace} 
\newcommand{\rcb}{\right\rbrace} 
\newcommand{\cb}[1]{\lcb #1 \rcb} 
\newcommand{\lab}{\left[} 
\newcommand{\rab}{\right]} 
\newcommand{\ab}[1]{\lab #1 \rab} 
\newcommand{\abOf}[1]{\!\ab{#1}} 
\newcommand{\lb}{\left(} 
\newcommand{\rb}{\right)} 
\newcommand{\br}[1]{\lb #1 \rb} 
\newcommand{\brOf}[1]{\!\br{#1}} 
\newcommand{\abs}[1]{\left| #1 \right|} 
%
\newcommand*{\E}{\mathbb{E}} 
\let\Pr\relax
\newcommand*{\Pr}{\mathbb{P}} 
\newcommand{\sizedMid}[2]{#1 \ \kern-\nulldelimiterspace\mathopen{}\left| \vphantom{#1}\ #2\right.\mathclose{}\kern-\nulldelimiterspace}
\newcommand{\EOf}[1]{\E\abOf{#1}}
\newcommand{\Eof}[1]{\E[#1]}

\newcommand{\PrOf}[1]{\Pr\mathopen{}\lb #1 \rb\mathclose{}}
\newcommand{\Prof}[1]{\Pr(#1)}

\newcommand{\setByEle}[2]{\cb{\sizedMid{#1}{#2}}}
\newcommand{\setByEleInText}[2]{\{#1 \mid #2\}}
\DeclareMathOperator{\diam}{\mathsf{diam}}

\newcommand{\ball}[2]{\mathrm{B}_{#2}(#1)}

%

\DeclarePairedDelimiterX\Set[1]{\lbrace}{\rbrace}%
{  #1 }
%
%
\newcommand{\Ex}{\E\expectarg}
\DeclarePairedDelimiterX{\expectarg}[1]{[}{]}{%
	\ifnum\currentgrouptype=16 \else\begingroup\fi
	\activatebar#1
	\ifnum\currentgrouptype=16 \else\endgroup\fi
}
\newcommand{\innermid}{\nonscript\;\delimsize\vert\nonscript\;}
\newcommand{\activatebar}{%
	\begingroup\lccode`\~=`\|
	\lowercase{\endgroup\let~}\innermid 
	\mathcode`|=\string"8000
}
%

%

%
\newcommand*{\mc}[1]{\mathcal{#1}}
\newcommand*{\mb}[1]{\mathbb{#1}}

\newcommand*{\ms}[1]{\mathsf{#1}}

\newcommand*{\mf}[1]{\mathfrak{#1}}
%
\newcommand{\N}{\mathbb{N}}

\newcommand{\R}{\mathbb{R}}
\newcommand{\Rp}{[0, \infty)}
\newcommand{\Rpp}{(0, \infty)}


%

%

%
\newcommand{\transpose}{\!^\top\!}
\newcommand{\tr}{\transpose}
\newcommand{\pr}{^\prime}
\newcommand{\prr}{^{\prime\prime}}
\newcommand{\prrr}{^{\prime\prime\prime}}
%
 %
%
\def\integral from #1to #2of #3by #4;{\int_{#1}^{#2} \! #3 \mathrm{d}#4} %
\def\integralMeasure in #1of #2by #3of #4;{\int_{#1} \! #2{#4} #3{\mathrm{d}#4}} %
%
\def\mapping #1from #2to #3;{#1 \colon #2 \rightarrow #3}
\def\mappingDef #1from #2to #3maps #4to #5;{#1 \colon #2 \rightarrow #3,\ #4 \mapsto #5}
%
\def\seq #1by #2;{\br{#1}_{#2\in\N}}
\def\seqInText #1by #2;{(#1)_{#2\in\N}}
\newcommand{\innerProduct}[2]{\left\langle#1\,,\, #2\right\rangle}
\newcommand{\ip}[2]{\innerProduct{#1}{#2}}
\newcommand{\lebesgue}{\mathcal{L}}
\newcommand{\lebesguePow}[1]{\lebesgue^{#1}}

%
%

%
%
\newcommand{\dl}{\mathrm{d}}
\def\converges for #1to #2;{\xrightarrow{#1} #2}
\def\convergesAlmostSurely for #1to #2;{\xrightarrow{#1}_{\mathsf{fs}} #2}
\def\convergesInProbability for #1to #2;{\xrightarrow{#1}_{\mathsf{p}} #2}
\def\convergesInL #1for #2to #3;{\xrightarrow{#2}_{\lebesguePow{#1}} #3}

%
\newcommand{\ind}{\mathds{1}}
\newcommand{\indOf}[1]{\ind_{\!#1}}%
\newcommand{\indOfOf}[2]{\ind_{\!#1}\!\brOf{#2}}%
%
%
%
%
%
%
%
%
%
%
%
%
\newcommand{\norm}{\left\Vert \cdot \right\Vert}
\newcommand{\normof}[1]{\Vert #1 \Vert}
\newcommand{\normOf}[1]{\left\Vert #1 \right\Vert}
%

%
%
\newcommand{\equationFullstop}{\, .}
\newcommand{\eqfs}{\equationFullstop}
\newcommand{\equationComma}{\, ,}
\newcommand{\eqcm}{\equationComma}

\DeclareMathOperator*{\argmin}{arg\,min}
\DeclareMathOperator*{\argmax}{arg\,max}
%
%
%
%

%
\newbox{\myorcidthanksbox}
\sbox{\myorcidthanksbox}{\large\includegraphics[height=1.7ex]{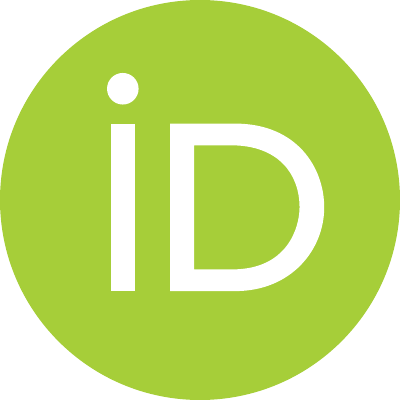}}
\newcommand{\orcidthanks}[1]{%
	\href{https://orcid.org/#1}{\usebox{\myorcidthanksbox}\,#1}}
\usepackage{thmtools}
%
%
%

\def\postBoxSkip{1.0ex}
\def\postBoxSkipCmd{\vskip\postBoxSkip}
\def\preBoxSkip{1.0ex}
\def\preBoxSkipCmd{\vskip\preBoxSkip}
\declaretheoremstyle[
	bodyfont=\normalfont,
	postfoothook={\postBoxSkipCmd},
	preheadhook={\preBoxSkipCmd},
	mdframed={
		backgroundcolor = black!2,
		startcode={},
}]{ruledBoxStyle}
\declaretheoremstyle[
	bodyfont=\normalfont,
	postfoothook={\postBoxSkipCmd},
	preheadhook={\preBoxSkipCmd},
	mdframed={
		backgroundcolor=white,
}]{ruledBoxStyleWhite}
\declaretheoremstyle[
	bodyfont=\normalfont,
	postfoothook={\postBoxSkipCmd},
	preheadhook={\preBoxSkipCmd},
	mdframed={
		backgroundcolor=black!2,
		linecolor = black!2,
		tikzsetting = {
			draw = black,
			line width = 2pt,%
			dashed,%
			dash pattern = on 10pt off 3pt
		},
}]{dashedBoxStyle}
\declaretheoremstyle[
	bodyfont=\normalfont,
	postfoothook={\postBoxSkipCmd},
	preheadhook={\preBoxSkipCmd},
	mdframed={
		linecolor = white,
		startcode={},
		tikzsetting = {
			draw = black,
			line width = 1pt,%
			loosely dotted,
		},
	}
]{dashedStyle}
\declaretheoremstyle[
	bodyfont=\normalfont,
	postfoothook={\postBoxSkipCmd},
	preheadhook={\preBoxSkipCmd},
	mdframed={
		linecolor = black,
		innerlinewidth=1pt,outerlinewidth=1pt,
		middlelinewidth=1pt,
		linecolor=black,middlelinecolor=white,
		startcode={},
	}
]{doubleStyle}
\declaretheoremstyle[
	bodyfont=\normalfont,
	postfoothook={\postBoxSkipCmd},
	preheadhook={\preBoxSkipCmd},
	mdframed={
		backgroundcolor = black!4,
		linecolor = black!4,
		startcode={},
}]{boxStyle}
\declaretheoremstyle[
	headfont=\normalfont\itshape, 
	notefont=\normalfont\itshape, 
	notebraces={}{},
	bodyfont=\normalfont,
	qed=\qedsymbol,
	numbered=no,
	headindent=0pt,
	postheadspace=1ex,
	name={Proof},
	postheadhook={},
	mdframed={
		hidealllines = true,
		innerrightmargin = 0pt,
		innerleftmargin = 0pt,
		innertopmargin = 0pt,
		innerbottommargin = 0pt,
		leftmargin = 0pt,
		rightmargin = 0pt,
	}
]{proofStyle}
\declaretheoremstyle[
	bodyfont=\normalfont,
	postfoothook={\postBoxSkipCmd},
	preheadhook={\preBoxSkipCmd},
	mdframed={
		backgroundcolor = white,
		linecolor = black,
		startcode={},
		leftline = false,
		rightline = false,
}]{tobBottomStyle}
\declaretheoremstyle[
bodyfont=\normalfont,
]{standardStyle}
%
\declaretheorem[style=ruledBoxStyle,name=Definition, numberwithin=section]{definition}
\declaretheorem[style=ruledBoxStyle,name=Lemma,numberwithin=section,numberlike=definition]{lemma}
\declaretheorem[style=ruledBoxStyle,name=Proposition,numberwithin=section,numberlike=definition]{proposition}
\declaretheorem[style=ruledBoxStyle,name=Theorem,numberwithin=section,numberlike=definition]{theorem}
\declaretheorem[style=ruledBoxStyle,name=Corollary,numberwithin=section,numberlike=definition]{corollary}
\declaretheorem[style=ruledBoxStyle,name=Theorem,numbered=no]{theorem*}
\declaretheorem[style=boxStyle,name=Remark,numberwithin=section,numberlike=definition]{remark}
\declaretheorem[style=boxStyle,name=Notation,numberwithin=section,numberlike=definition]{notation}
\declaretheorem[style=dashedStyle,name=Example,numberwithin=section,numberlike=definition]{example}


%
%
%
%
\numberwithin{equation}{section}
%
%

%
\addbibresource{literature.bib}
\title{Variance Inequalities for Transformed Fréchet Means in Hadamard Spaces}
\date{}
\author[1,2]{Christof Sch\"otz\thanks{math@christof-schoetz.de, \orcidthanks{0000-0003-3528-4544}}}
\affil[1]{Potsdam Institute for Climate Impact Research}
\affil[2]{Technical University of Munich}
\begin{document}
\maketitle
\begin{abstract}
	The Fréchet mean (or barycenter) generalizes the expectation of a random variable to metric spaces by minimizing the expected squared distance to the random variable. Similarly, the median can be generalized by its property of minimizing the expected absolute distance. We consider the class of transformed Fréchet means with nondecreasing, convex transformations that have a concave derivative. This class includes the Fréchet median, the Fréchet mean, the Huber loss-induced Fréchet mean, and other statistics related to robust statistics in metric spaces.
We study variance inequalities for these transformed Fréchet means. These inequalities describe how the expected transformed distance grows when moving away from a minimizer, i.e., from a transformed Fréchet mean. Variance inequalities are useful in the theory of estimation and numerical approximation of transformed Fréchet means.
Our focus is on variance inequalities in Hadamard spaces -- metric spaces with globally nonpositive curvature. Notably, some results are new also for Euclidean spaces. Additionally, we are able to characterize uniqueness of transformed Fréchet means, in particular of the Fréchet median.  

\end{abstract}
\tableofcontents
\section{Introduction}
\subsection{Transformed Fréchet Means}
Let $(\mc Q, d)$ be a metric space and use the short hand notation $\ol qp := d(q,p)$. When studying a $\mc Q$-valued random variable $Y$, a basic property to be considered is a mean value. A \emph{Fréchet mean} \cite{frechet48} (also called \emph{barycenter} or \emph{center of mass}) of $Y$ is a point in $\argmin_{q\in\mc Q} \Ex{\ol Yq^2}$. If $\mc Q$ is a Euclidean space, the Fréchet mean is the expectation, $m = \Ex{Y}$. Alternatively, one may consider a generalization of the median, which minimizes the absolute distance instead of the squared distance: A Fréchet median of $Y$ is any element of $\argmin_{q\in\mc Q}\Ex{\ol Yq}$.

We treat both, Fréchet mean and median (and more), in a common framework: Fix an arbitrary reference point $o\in\mc Q$. For a function $\tran\colon\Rp\to \R$, a point $m\in\mc Q$ is a \textit{transformed Fréchet mean} or \textit{$\tran$-Fréchet mean} of $Y$, if
\begin{equation}
    m\in\argmin_{q\in\mc Q} \Ex*{\tran(\ol Yq) - \tran(\ol Yo)}
    \eqfs
\end{equation}
Subtracting $\tran(\ol Yo)$ does not change the set of minimizers if $\Ex{\tran(\ol Yq)}<\infty$, but it allows a meaningful definition in some instances, where this expectation is infinite, e.g., in the case $\tran = (x\mapsto x^2)$, it is enough to have $\Ex{\ol Yq}<\infty$ (for one and hence for all $q\in\mc Q$), and for $\tran = (x\mapsto x)$, no moment condition is needed. We will restrict ourselves to $\tran\in\setcc$, where $\setcc$ is the set of nondecreasing, convex functions $\tran\colon\Rp\to\R$ with concave derivative $\dtran$. This allows for clean results and encompasses the most interesting transformation functions. Aside from $x\mapsto x^2$ (Fréchet mean) and  $x\mapsto x$ (Fréchet median), $\mc S$ contains a wealth of functions (see \cref{fig:transformations}) such as $x\mapsto x^\alpha$ for $\alpha\in[1,2]$ and the Huber loss $\tran_{\ms h,\delta}$ \cite{huber64} for $\delta\in\Rpp$,
\begin{equation}\label{eq:huber}
    \tran_{\ms h,\delta}(x) :=
    \begin{cases}
        \frac12 x^2 & \text{ for } x \leq \delta\eqcm\\
        \delta(x - \frac12 \delta) & \text{ for } x > \delta\eqfs
    \end{cases}
\end{equation}
The Huber loss is of great importance in robust statistics, as it allows to estimate a mean value in the presence of outliers. It is continuously differentiable, but not twice differentiable at $x=\delta$. If more smoothness is required, one may use the pseudo-Huber loss $\tran_{\ms{ph},\delta}\in\setcc$ \cite{charbonnier94},
\begin{equation}\label{eq:pseudohuber}
    \tran_{\ms{ph},\delta}(x) := \delta^2 \br{\sqrt{1 + \frac{x^2}{\delta^2}} - 1}
    \eqfs
\end{equation}
Another smooth element of $\setcc$ used in previous works is $x\mapsto \log(\cosh(x))$ \cite{green90}, where $\log$ is the natural logarithm, and $\cosh$ is the hyperbolic cosine.

This function and the two Huber-losses have a similar form: They look like $x\mapsto x^2$ close to $0$, which allows for an $L^2$-like theory locally, and like an increasing affine function for large $x$, which entails that we do not require finite moments to define the respective transformed Fréchet mean.
All functions in $\setcc$ are in some way between linear and quadratic and enjoy properties that are between those of the mean and the median. Thus, they form a suitable framework for robust statistics in metric spaces.
\begin{figure}
    \begin{center}
        \begin{tikzpicture}
            \begin{axis}[
                ymin=-0,
                ymax=3,
                xmin=-0,
                xmax=3,
                grid=both,
                ylabel={$\tran(x)$},
                xlabel=$x$,
                x label style={at={(axis description cs:0.5,0)},anchor=north},
                y label style={at={(axis description cs:0.15,0.5)},rotate=-90,anchor=east},
                xtick={0,1,2},
                ytick={0,1,2},
                axis lines = left,
                scale only axis=true,
                width=0.35\textwidth,
                height=0.35\textwidth,
                axis line style = thick,
                samples=100
            ]
                \addplot[domain=0:3, Dark2-A, line width=1pt] {x};
                \addplot[domain=0:3, Dark2-B, line width=1pt] {x^1.5/1.5};
                \addplot[domain=0:3, Dark2-C, line width=1pt] {x^2/2};
                \addplot[domain=0:1, Dark2-D, dash pattern={on 6pt off 3pt}, line width=2pt] {x^2/2};
                \addplot[domain=1:3, Dark2-D, dash pattern={on 6pt off 3pt}, line width=2pt] {x-1/2};
                \addplot[domain=0:3, Dark2-E, dash pattern={on 6pt off 3pt}, line width=2pt] {(1+abs(x)^2)^0.5-1};
                \node at (axis cs:0.7,0.7) [anchor=south east, Dark2-A] {$\tran_1$};
                \node at (axis cs:2.35,2.05) [anchor=center, Dark2-B] {$\tran_{3/2}$};
                \node at (axis cs:2.4,2.7) [anchor=south east, Dark2-C] {$\tran_2$};
                \node at (axis cs:1.6,0.5) [anchor=center, Dark2-D] {$\tran_{\ms h,1}$};
                \node at (axis cs:2.5,1.3) [anchor=center, Dark2-E] {$\tran_{\ms{ph},1}$};
            \end{axis}
        \end{tikzpicture}
        \hfill
        \begin{tikzpicture}
            \begin{axis}[
                ymin=-0,
                ymax=3,
                xmin=-0,
                xmax=3,
                grid=both,
                ylabel={$\dtran(x)$},
                xlabel=$x$,
                x label style={at={(axis description cs:0.5,0)},anchor=north},
                y label style={at={(axis description cs:0.15,0.5)},rotate=-90,anchor=east},
                xtick={0,1,2},
                ytick={0,1,2},
                axis lines = left,
                scale only axis=true,
                width=0.35\textwidth,
                height=0.35\textwidth,
                axis line style = thick,
                samples=100
             ]
                \addplot[domain=0:3, Dark2-A, line width=1pt] {1};
                \addplot[domain=0:3, Dark2-B, line width=1pt] {x^0.5};
                \addplot[domain=0:3, Dark2-C, line width=1pt] {x};
                \addplot[domain=0:1, Dark2-D, dash pattern={on 6pt off 3pt}, line width=2pt] {x};
                \addplot[domain=1:3, Dark2-D, dash pattern={on 6pt off 3pt}, line width=2pt] {1};
                \addplot[domain=0:3, Dark2-E, dash pattern={on 6pt off 3pt}, line width=2pt] {x/(1+x^2)^0.5};
                \node at (axis cs:0.5,1.0) [anchor=south, Dark2-A] {$\dtran_1$};
                \node at (axis cs:2.5,1.6) [anchor=south, Dark2-B] {$\dtran_{3/2}$};
                \node at (axis cs:2.35,2.6) [anchor=south, Dark2-C] {$\dtran_{2}$};
                \node at (axis cs:2.5,1.2) [anchor=center, Dark2-D] {$\dtran_{\ms h,1}$};
                \node at (axis cs:1.6,0.6) [anchor=center, Dark2-E] {$\dtran_{\ms{ph},1}$};
            \end{axis}
        \end{tikzpicture}
    \end{center}
    \caption{Different transformation functions $\tran\in\setcc$ and their derivatives. The functions shown are $\tran_\alpha(x) = \alpha^{-1}x^\alpha$ for $\alpha \in \{1, 3/2, 2\}$ as well as the Huber and pseudo-Huber loss functions with threshold $\delta=1$, see \eqref{eq:huber} and \eqref{eq:pseudohuber}, respectively.}\label{fig:transformations}
\end{figure}
\subsection{Variance Functional and Variance Inequalities}
Our main goal is to investigate properties of the \emph{variance functional} (or \emph{Fréchet functional}) $q\mapsto \Ex{\tran(\ol Yq) - \tran(\ol Yp)}$, $p\in\mc Q$, $\tran\in\setcc$, in certain classes of metric spaces. In particular, we will show \emph{variance inequalities}, i.e., inequalities of the form
\begin{equation}\label{eq:vi:general}
    \Ex{\tran(\ol Yq) - \tran(\ol Ym)} \geq f(\ol qm)
\end{equation}
for $q\in\mc Q$, where $m$ is the $\tran$-Fréchet mean of $Y$ and $f\colon\Rp\to\R$ is some function.

Let us first illustrate the meaning of variance inequalities in a Euclidean space $(\mc Q, \norm)$ for $\tran(x) = x^2$. In this case, the variance inequality is an equality: We have $m = \Ex{Y}$ and
\begin{equation}\label{eq:vareq}
    \Ex*{\normof{Y-q}^2 - \normof{Y-m}^2} = \normof{q - m}^2
\end{equation}
for all $q\in\mc Q$. By noting that $m$ minimizes the variance functional $v(q) := \Ex{\normof{Y-q}^2 - \normof{Y}^2}$, we gain following insight from \eqref{eq:vareq}: If $q$ minimizes the objective $v(q)$ up to at most $\delta$ for a $\delta\geq0$, in the sense that $v(q) \leq \delta + v(m)$, then $q$ can be at most a distance $\sqrt{\delta}$ away from the (unique) minimizer $m$ of $v$. For this statement to hold, it is enough to have \eqref{eq:vareq} as an inequality, i.e., using the metric notation,
\begin{equation}\label{eq:varinequhadamard}
    \Ex*{\ol Yq^2 - \ol Ym^2} \geq \ol qm^2
    \eqfs
\end{equation}

Variance inequalities are an essential ingredient in the theory of estimating the (transformed) Fréchet mean from observations. They typically appear as an assumption, e.g., \cite[assumptions (P2), (U2), (L2), (L3)]{petersen16}, \cite[condition B.5]{Lin2019}, \cite[assumption (A3)]{ahidar20}, \cite[assumption \texttt{Growth}]{schoetz19}, \cite[Theorem 8]{legouic23}, \cite[condition (M)]{Ghosal2023}. In contrast, we prove variance inequalities and provide them in a clean, ready-to-use form.

The study of variance inequalities also yields uniqueness results for the transformed Fréchet mean, including the Fréchet median.
\subsubsection{Transformed Fréchet Mean in General Metric Spaces}
In general metric spaces, we show  (\cref{thm:varineq:general}) that the variance functional behaves like the transformation function for far away points, i.e., $\Ex{\tran(\ol Yq) - \tran(\ol Yp)} \approx \tran(\ol qp)$ for $\tran\in\setcc$ and $\ol qp$ large, and it grows at most linearly for close points, i.e., $\Ex{\tran(\ol Yq) - \tran(\ol Yp)} \lesssim \ol qp\, \Eof{\dtran(\ol Yp)}$ for $\ol qp$ small. Most relevant for statistical results is a lower bound on the variance functional for $q$ close to the transformed Fréchet mean $p = m$, i.e., a variance inequality. This is not covered by these results, but can be established by restricting the geometry of $\mc Q$, as we show below.
\subsubsection{Fréchet Means in Hadamard Spaces}
In a general metric space, it is difficult to obtain variance inequalities. But after restriction to so-called Hadamard spaces, it is well-known that \eqref{eq:varinequhadamard} is true.
\begin{definition}
    A metric space $(\mc Q, d)$ is called \emph{Hadamard space}, if and only if it is complete and for all $y_0, y_1 \in \mc Q$, there exists $m \in \mc Q$ such that
    \begin{equation}\label{eq:def:hadamard}
        \frac12 \ol {y_0}q^2 + \frac12 \ol {y_1}q^2 - \frac14 \ol{y_0}{y_1}^2 \geq \ol qm^2
    \end{equation}
    for all $q\in\mc Q$.
\end{definition}
Hadamard spaces are geodesic metric spaces (each pair of points is connected by a geodesic) of nonpositive curvature (triangles are "thinner" than Euclidean ones). They are also called \emph{global NPC spaces} or \emph{complete $CAT(0)$ spaces}. Inequality \eqref{eq:def:hadamard} is the variance inequality \eqref{eq:varinequhadamard} for a random variable with $\Prof{Y=y_0} = \Prof{Y=y_1} = \frac12$.
\begin{proposition}[{\cite[Proposition 4.4]{sturm03}}]\label{prp:vi:hadamard}
    A complete metric space $(\mc Q, d)$ is Hadamard if and only if, for all $\mc Q$-valued random variables $Y$ with $\Eof{\ol Yo} < \infty$ for one (and hence all) $o\in\mc Q$, we have $\Ex{\ol Yq^2 - \ol Ym^2} \geq \ol qm^2$ for all $q\in\mc Q$, where $m$ is the Fréchet mean of $Y$.
\end{proposition}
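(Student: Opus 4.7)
The proof splits into two separate arguments with rather different flavors, and my plan is to handle the directions in order of difficulty.

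For the ``if'' direction I would specialize the assumed variance inequality to the two-point random variable $Y$ with $\Prof{Y=y_0}=\Prof{Y=y_1}=\frac12$, which trivially fulfils $\Eof{\ol Yo}<\infty$. With $m$ the Fréchet mean supplied by hypothesis, the assumed inequality reads $\frac12(\ol{y_0}q^2+\ol{y_1}q^2)-\frac12(\ol{y_0}m^2+\ol{y_1}m^2)\geq\ol qm^2$. Evaluating it at $q=y_0$ and $q=y_1$ and summing the two resulting lines yields $\ol{y_0}m^2+\ol{y_1}m^2\leq\frac12\ol{y_0}{y_1}^2$, while the triangle inequality combined with AM--GM supplies the reverse bound $\ol{y_0}m^2+\ol{y_1}m^2\geq\frac12\ol{y_0}{y_1}^2$. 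Equality then forces $\frac12(\ol{y_0}m^2+\ol{y_1}m^2)=\frac14\ol{y_0}{y_1}^2$, and substituting this back converts the variance inequality into \eqref{eq:def:hadmard} verbatim.

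For the ``only if'' direction the key input is the $CAT(0)$ inequality in the form of geodesic convexity of squared distances: iterating \eqref{eq:def:hadmard} shows that in a Hadamard space, for the geodesic $\gamma\colon[0,1]\to\mc Q$ joining $m$ to $q$ and every $y\in\mc Q$,
\[
\ol y{\gamma(t)}^2 \;\leq\; (1-t)\,\ol ym^2 + t\,\ol yq^2 - t(1-t)\,\ol qm^2\eqfs
\]
Subtracting $\ol yo^2$ on both sides (exploiting $(1-t)+t=1$), substituting $y=Y$ and taking expectations is legitimate because $|\ol Yq^2-\ol Yo^2|\leq\ol qo(2\ol Yo+\ol qo)$ has finite expectation under the moment assumption. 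Using the minimality $\Ex{\ol Y{\gamma(t)}^2-\ol Yo^2}\geq\Ex{\ol Ym^2-\ol Yo^2}$ of the Fréchet mean and rearranging, one obtains $\Ex{\ol Yq^2-\ol Ym^2}\geq(1-t)\ol qm^2$; sending $t\downarrow 0$ delivers \eqref{eq:varinequhadamard}.

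The main obstacle is not this differentiation-at-zero argument but the prerequisite that a Fréchet mean actually exists for every $Y$ with $\Eof{\ol Yo}<\infty$ on a complete Hadamard space. The standard route is a Banach--Saks-type argument: applying the midpoint inequality \eqref{eq:def:hadmard} to pairs drawn from a minimizing sequence of the variance functional forces that sequence to be Cauchy, and completeness closes the gap. I would cite this existence theorem rather than reprove it, since the content specific to the present proposition is really the passage between the midpoint inequality \eqref{eq:def:hadmard} and the variance inequality \eqref{eq:varinequhadamard} in both directions.
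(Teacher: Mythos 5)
Your proof is correct. The paper itself gives no proof of this proposition --- it is quoted from \cite{sturm03} --- and your argument is essentially the standard one found there: the ``only if'' direction via geodesic convexity of the squared distance plus minimality of $m$ along $\gamma_{m\to q}$, and the ``if'' direction by specializing to the uniform two-point measure and identifying $\frac12(\ol{y_0}m^2+\ol{y_1}m^2)$ with $\frac14\ol{y_0}{y_1}^2$ through the matching upper bound (evaluation at $q=y_0,y_1$) and lower bound (triangle inequality with $(a+b)^2\leq 2(a^2+b^2)$). Your deferral of the existence of the Fréchet mean to the standard Cauchy-sequence argument is appropriate, and your handling of the limit $t\downarrow 0$ and of integrability is sound.
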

\cref{prp:vi:hadamard} implies that the Fréchet mean is unique in Hadamard spaces, which is not guaranteed in general metric spaces.
\subsubsection{Transformed Fréchet Means in Hadamard Spaces}
Here, we generalize \eqref{eq:varinequhadamard} in Hadamard spaces $\mc Q$ from the square function to all nondecreasing, convex functions with concave derivative $\tran\in\setcc$: For a transformed Fréchet mean $m \in \argmin_{q\in\mc Q}\Ex*{\tran(\ol Yq) - \tran(\ol Yo)}$, we have
\begin{equation}\label{eq:glimpsatresult}
    \Ex*{\tran(\ol Yq) - \tran(\ol Ym)} \geq \frac12 \ol qm^2 \Ex*{\ddtran(\max(\ol Ym ,\ol Yq))}
\end{equation}
for all $q\in\mc Q$, assuming $\Ex{\dtran(\ol Yo)} < \infty$, see \cref{thm:varinequ}. This yields an at least quadratic growth of the variance functional close to $m$ for strictly convex transformations.
As the second derivative $\ddtran$ vanishes for linear $\tran$, we treat the Fréchet median separately and obtain a result of the form
\begin{equation}
	\Ex*{\ol Yq - \ol Ym} \geq \frac12 \eta^2 \,\ol qm^2\, \Ex*{\max\brOf{\ol Ym, \ol Yq}^{-1} \indOf{A(m, q, \eta)}(Y)}
	\eqcm
\end{equation}
where $\eta\in(0,1)$ and $A(m, q, \eta)$ restricts the expectation to points $Y$ that have geodesics to $m$ and $q$ hitting the geodesic between $m$ and $q$ at a steep enough angle (depending on $\eta$), see \cref{thm:near:median}.
\subsection{Related Work}
The article \cite{ahidar20} provides a condition using extendable geodesics to obtain variance inequalities. This idea is generalized in \cite{legouic23} using what the authors call a hugging function. For these results, it still may be difficult to verify the condition that yields the variance inequality.
\subsubsection{Hadamard Spaces}
Examples of Hadamard spaces include
\begin{itemize}
    \item the Euclidean spaces and -- more generally -- Hilbert spaces \cite[Proposition 3.5]{sturm03},
    \item Cartan--Hadamard manifolds, i.e., complete simply connected Riemannian manifolds of nonpositive sectional curvature \cite[Proposition 3.1]{sturm03},
    \item $\R$-trees (also called metric trees), i.e., geodesic metric spaces that contain no subset homeomorphic to a circle \cite{evans2008probability},
    \item the space of phylogenetic trees when endowed with the Billera--Holmes--Vogtmann metric \cite{billera01},
    \item the set of symmetric positive definite matrices with a metric that yields a geometric mean of such matrices as its induced Fréchet mean \cite{bhatia06}.
\end{itemize}
New Hadamard spaces can be created from given Hadamard spaces by taking closed convex subsets, as images of isometries, as product spaces, as $L^2$-spaces of functions with values in a Hadamard space, or by gluing together Hadamard spaces \cite[section 3]{sturm03}.
A reverse of the variance inequality has been studied in \cite{Sturm2005} for Hadamard spaces and in \cite{ohta12} for Alexandrov space with curvature bounded below.
\subsubsection{Quadratic Growth}
One often encounters variance inequalities \eqref{eq:vi:general}, where $f$ is quadratic as in \cref{prp:vi:hadamard} or under mild assumptions in \eqref{eq:glimpsatresult}. This can be explained in view of a Taylor expansion of the variance functional: For simplicity, we calculate on the real line. Let $Y$ be an $\R$-valued random variable and $\tran\colon\Rp\to\R$. Denote the variance functional as $v(q) := \Ex{\tran(\abs{Y-q}) - \tran(\abs{Y})}$ and assume the necessary integrability and smoothness conditions. Let $m\in\R$ denote a $\tran$-Fréchet mean of $Y$. The Taylor expansion yields
\begin{equation}\label{eq:taylor:thrid}
    v(q) = v(m) + (q-m)v\pr(m) + \frac12(q-m)^2 v\prr(m) + O(\abs{q-m}^3)
    \eqcm
\end{equation}
where $v\pr(m) = 0$ as $m$ is a minimizer of $v$. If $v\prr(m) \neq 0$ and $q$ is close enough to $m$, the quadratic term dominates.
On Riemannian manifolds, this strategy yields central limit theorems for the Fréchet mean \cite[Theorem 2.3]{Bhattacharya05Large}. In \cite[Assumption 2.6]{eltzner19}, the condition of a non-vanishing second order term in the Taylor approximation is weakened to a more general power series expansion. The higher order polynomial then yields a slower than parametric rate in the central limit theorem for generalized Fréchet means, which is termed \emph{smeariness}.
\subsubsection{Median and Huber Loss}
The Fréchet median, $\tran = (x \mapsto x)$, generalizes the geometric median:
If $\mc Q$ is a Banach space with norm $\norm$,
a \emph{geometric median} (also called \emph{spatial median} or \emph{Fermat--Weber point}) is any
\begin{equation}\label{eq:geometricmedian}
    m\in\argmin_{q\in\mc Q} \Ex{\normof{Y-q}-\normof{Y}}
    \eqfs
\end{equation}

There is a large body of literature on the geometric median. We mention only a few and refer the reader to the references therein.
For statistical properties of the geometric median in Euclidean spaces, see \cite{mottonen10}. Its application to robust statistics in Banach spaces is studied in \cite{minsker15}. In \cite{cardot17}, the authors consider properties of an algorithm for computing geometric medians in Hilbert spaces. Recent results on the statistical and numerical properties of the geometric median can be found in \cite{minsker2023geometric}.

The Fréchet median has been studied on Riemannian manifolds, e.g., \cite{arnaudon13}. For computational aspects of the Fréchet median in Hadamard spaces, see \cite{bacak14compute}. In \cite{Barbaresco2013}, the Fréchet median is applied in the context of information geometry. The Fréchet median has certain robustness properties and can be used to improve the robustness of the Fréchet mean via a median-of-means approach \cite{yun2023exponential}.
The Fréchet median or Fermat Weber point is also studied in metric spaces of phylogenetic trees \cite{LinYoshida18}.

In a very recent preprint \cite{lee2024hubermeansriemannianmanifolds}, the authors discuss the transformed Fréchet means induced by the (pseudo) Huber-loss function on Riemannian manifolds.
\subsubsection{Other Generalizations of the Fréchet Mean}
To capture Fréchet mean and Fréchet median in one definition, one may consider power Fréchet means, which are also called $\alpha$-Fréchet means. These are the elements of
\begin{equation}\label{eq:alphafm}
    \argmin_{q\in\mc Q}\Ex{\ol Yq^\alpha - \ol Yo^\alpha}
    \eqcm
\end{equation}
where $\alpha\in\Rpp$. Strong laws of large numbers are known for (sets of) $\alpha$-Fréchet means \cite{Sverdrup1981, schoetz22, evans2023limit, blanchard2022frechet}.
If $\alpha\in[1,2]$, the $\alpha$-Fréchet mean is a transformed Fréchet mean with transformation in $\setcc$, which we consider in this article. For such $\alpha$-Fréchet means, \cite[Proposition 2]{yun2023exponential} presents a sufficient condition for a variance inequality that may still be hard to check.

In a recent and parallel work \cite{romon2023convexgeneralizedfrechetmeans}, the authors consider transformed Fréchet means with convex transformation functions in $\R$-trees (also called metric trees) with finite diameter and a finite number of vertices. These are a special kind of Hadamard spaces. Our results are more general in that we discuss general Hadamard spaces and more restricted in that we assume concavity of the derivative of the transformation function. Furthermore, our focus is on variance inequalities, theirs on a phenomenon called \emph{stickiness} \cite{Lammers2023}, where a plug-in estimator is identical to the transformed Fréchet mean with positive probability. An intersection is the discussion of the Fréchet median set, \cite[Proposition 2.18 (applied to linear losses), Proposition 4.3]{romon2023convexgeneralizedfrechetmeans}, where our result, \cref{thm:medianConcentration}, is more general and precise.

One can generalize the idea of minimizing some loss or cost quite far and replace $d^\alpha$ by a generic function $\mf c\colon\mc Y \times \mc Q\to\R$, where $\mc Y$ is allowed to be a different space than $\mc Q$. Then $m\in\mc Q$ is a \emph{generalized Fréchet mean} of the $\mc Y$-valued random variable $Y$ if
\begin{equation}
    m\in \argmin_{q\in\mc Q}\Ex{\mf c(Y,q)}
    \eqfs
\end{equation}
This can be viewed as the definition of a generic M-estimator, e.g., \cite[section 3.2]{vaart23}. In the context of Fréchet means, it was introduced in \cite{Huckemann2011Intrinsic}. Strong laws of large numbers are available in \cite{schoetz22} and rates of convergence of empirical generalized Fréchet means are treated in \cite{schoetz19}.
\subsubsection{Nondecreasing, Convex Functions with Concave Derivatives }
The class $\setcc$ of nondecreasing, convex functions with concave derivatives has been studied in the context of Hadamard spaces in \cite{schotz2023quadruple}, where a relationship between the transformations of the six distances between any four points in the metric space is shown. In \cite{niculescu2024functionalinequalitiesframeworkbanach}, this class of functions is studied in Banach spaces and connected to higher order convexity. Strong laws of large numbers for most instances of the respective class of $\tran$-Fréchet means, $\tran\in\setcc$, have been derived in \cite[Section 4]{schoetz22}.
\subsection{Contribution}
\begin{enumerate}[label=(\Roman*)]
    \item
    We introduce the setting of $\tran$-Fréchet means with $\tran\in\setcc$ in Hadamard spaces, which is an elegant framework for robust statistics: We only require a minimal moment assumption and make no further topological restrictions (such as the common Heine-Borel assumption). Moreover, the framework covers many different transformations, including the prominent Huber loss, and at the same time yields results that do not require elusive conditions.
    \item
    First, we provide asymptotic properties of the variance functional for $\tran$-Fréchet means in general metric spaces in \cref{thm:varineq:general}. The result is useful to show that the set of  $\tran$-Fréchet means is bounded (\cref{lmm:closedBoundedConvex}).
    \item
    To obtain variance inequalities, we restrict ourselves to Hadamard spaces. In these spaces, the distances between a point and a geodesic have a well-known convexity property, which we here call $\mc G$-convexity. In \cref{lmm:semitaylor}, we present second order lower bounds for $\mc G$-convex functions and transformed $\mc G$-convex functions with transformation $\tran\in\setcc$. This bound is related to the idea presented in \eqref{eq:taylor:thrid}. But it uses a second order remainder term instead of a third order one and also takes care of non-Euclidean geometry.
    \item
    \cref{lmm:semitaylor} allows us to derive a ready-to-use variance inequality for $\tran$-Fréchet means in Hadamard spaces (\cref{thm:varinequ} or \eqref{eq:glimpsatresult}). It directly implies uniqueness of the $\tran$-Fréchet mean for most of the transformations $\tran\in\setcc$, see \cref{cor:nonuniquetran} and \cref{cor:uniquetran}. \cref{cor:varinequ} shows quadratic (and faster) growth for strictly convex $\tran$. A variance inequality for the special case where we have positive mass at the location of the $\tran$-Fréchet mean is shown in \cref{thm:point:convex}.
    \item
    As \cref{thm:varinequ} requires the second derivative of the transformation to be positive somewhere, we separately present a variance inequality for the Fréchet median in \cref{thm:near:median}, which yields quadratic (and potentially faster) growth under some mild conditions, e.g., \cref{cor:varinequ:median}. It improves upon the recent work \cite[Theorem 2.3]{minsker2023geometric} in Euclidean spaces and shows a general result in Hadamard spaces. \cref{thm:near:median} can also be applied to eventually affine transformations under certain circumstances as shown in \cref{thm:affinereduction}.
    \item
    The Fréchet median may be non-unique. This case is characterized in \cref{thm:medianConcentration}.
    On Riemannian manifolds, \cite{yang10} shows that the Fréchet median is unique if it is not concentrated on a geodesic. A similar result is known in Hilbert and Banach spaces \cite{Kemperman1987Median}. This criterion (the distribution not being concentrated on a geodesic) is not enough in Hadamard spaces in general. Instead, we require that the random variable is not concentrated on a union of geodesics that all intersect in a common geodesic segment of positive length. Only in non-branching spaces is no concentration on geodesics sufficient for uniqueness of the Fréchet median. These criteria are shown in \cref{cor:unqiueCriteriaGeod}. An alternative sufficient condition for uniqueness is the convexity of the support of the distribution, see \cref{cor:convexUnique}.
    \item
    Finally, \cref{thm:median:geodesic} presents a variance inequality for the Fréchet median when concentrated on a geodesic, a case that is not covered by \cref{thm:near:median}.
\end{enumerate}
\subsection{Outline}
We describe our basic setup and the set of transformations $\setcc$ in section \ref{sec:preliminaries}. Then we consider properties of the variance functional in general metric spaces in section \ref{sec:varineq:general}. In section \ref{sec:quadLowerBound}, we discuss a property called $\mc G$-convexity, which allows us to derive lower bounds on distance functions in Hadamard spaces. With these technical tools, we derive variance inequalities for $\tran$-Fréchet means and medians in Hadamard spaces in sections \ref{sec:varineq:hadmard} and \ref{sec:varineq:hadmard:median}, respectively. In these last sections, we also discuss uniqueness properties.

\section{Preliminaries}\label{sec:preliminaries}
\subsection{Basic Setup}
Throughout the entire article, we will assume following setup (in particular, the meaning of the symbols $\mc Q, d, \Pr, \E, Y, o$) without further mentioning it: Let $(\mc Q, d)$ be a nonempty metric space. For $q,p\in\mc Q$, we denote $\ol{q}{p} := d(q, p)$. This metric space is equipped with its Borel-$\sigma$-algebra. Let $(\Omega, \Sigma_\Omega, \Pr)$ be a probability space. The expectation of measurable functions $X \colon \Omega \to \R$ is denoted as $\Ex{X}$ if it exists. Let $Y$ be a measurable function $Y \colon \Omega \to \mc Q$, i.e., a $\mc Q$-valued random variable. Furthermore, we fix an arbitrary reference point $o\in\mc Q$.
\subsection{Nondecreasing, Convex Functions with Concave Derivative}\label{sec:ncfcd}
We study $\tran$-Fréchet means, which minimize the variance functional  $q \mapsto \Ex{\tran(\ol Yq)-\tran(\ol Yo)}$ in $\mc Q$, where $\tran\colon\Rp\to\R$ is a nondecreasing convex function with concave derivative, i.e., $\tran\in\setcc$ according to following definition.
\begin{definition}
	Let $\setcc$ be the set of nondecreasing convex functions $\tran\colon\Rp\to\R$ that are differentiable on $\Rpp$ with concave derivative $\dtran$. We extend the domain of $\dtran$ to $\Rp$ by setting  $\dtran(0) := \lim_{x\searrow0} \dtran(x)$, which exists as $\dtran$ is nonnegative and nondecreasing.
\end{definition}
Requiring differentiability of $\tran$ is not restrictive, as this is implied by convexity for Lebesgue almost all $x\in\Rpp$.
For technical reasons it is often more convenient to work with $\setcciz\subset\setcc$, the subset of strictly increasing functions $\tran\in\setcc$ with $\tran(0) = 0$,
\begin{align}
	\setcciz
	&:=
	\setByEle{\tran\in\setcc}{\tran(0) = 0 \text{ and } \forall x\in\Rpp\colon \dtran(x)>0}
	\\&=
	\setByEle{x \mapsto \tran(x)-\tran(0)}{\tran\in\setcc}\setminus\cb{x\mapsto 0}
	\eqfs
\end{align}
This is not restrictive, as we essentially only exclude constant functions, for which most results are trivial anyway.
To be able to talk about derivatives of $\tran\in\setcc$ at $0$ and second derivatives, let us recall the definition of the one-sided derivatives.
\begin{notation}
	Let $A\subset \R$ and $f\colon A \to \R$. Let $x_0\in A$ such that there is $\epsilon>0$ such that $(x_0-\epsilon, x_0] \subset A$. Then denote the left derivative of $f$ at $x_0$ as $\partial_- f(x_0) := f\ld(x_0) := \lim_{x \nearrow x_0} \frac{f(x) - f(x_0)}{x-x_0}$ if the limit exists. Similarly, for $x_0\in A$ with $\epsilon>0$ such that $[x_0, x_0+\epsilon) \subset A$, we denote the right derivative of $f$ at $x_0$ as $\partial_+ f(x_0) := f\rd(x_0) := \lim_{x \searrow x_0} \frac{f(x) - f(x_0)}{x-x_0}$ if the limit exists.
\end{notation}
First we show some basic continuity properties of functions in $\setcc$ and existence of one-sided derivatives. Proofs omitted from this section can be found in the appendix \ref{apendix:omitted:ncfcd}.
\begin{lemma}\label{lmm:tran:continuous}
	Let $\tran\in\setcc$. Then
	\begin{enumerate}[label=(\roman*)]
		\item $\tran$ and $\dtran$ are continuous on $\Rp$,
		\item the left and right derivatives of $\dtran$ exist on $\Rpp$, they are nonincreasing, and $\ddltran(x)\geq\ddrtran(x)$ for all $x\in\Rpp$,
		\item $\tran\rd(0)$ exists and $\tran\rd(0) = \dtran(0)$.
	\end{enumerate}
\end{lemma}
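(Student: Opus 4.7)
The plan is to prove the three parts in order, using the established convexity-concavity structure to reduce everything to standard facts about convex and concave functions on intervals.

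For (i), I would handle $\tau$ and $\tau'$ separately. On $\mathbb{R}_{++}$, continuity of $\tau$ is immediate from differentiability, and continuity of $\tau'$ follows from the fact that it is concave on the open interval $\mathbb{R}_{++}$ (concave functions are continuous on the interior of their domain). The only delicate point is continuity at $0$. For $\tau'$ this is the definitional right-continuity $\tau'(0) := \lim_{x \searrow 0} \tau'(x)$ (which exists because $\tau'$ is nonnegative and nondecreasing, so the limit is a finite nonnegative number). For $\tau$ itself I would combine two ingredients: convexity on $\mathbb{R}_+$ gives, for any fixed $z > 0$ and $t \in (0,1)$, $\tau(tz) \le (1-t)\tau(0) + t\tau(z)$, hence $\limsup_{x \searrow 0} \tau(x) \le \tau(0)$; and monotonicity gives $\tau(x) \ge \tau(0)$ for all $x \ge 0$, so $\liminf_{x \searrow 0} \tau(x) \ge \tau(0)$.

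For (ii), since $\tau'$ is concave on the open interval $\mathbb{R}_{++}$, I would invoke the standard structure theorem for concave (equivalently, $-\tau'$ convex) functions on an open interval: the one-sided derivatives exist at every interior point, both $x \mapsto \tau''^{\ominus}(x)$ and $x \mapsto \tau''^{\oplus}(x)$ are nonincreasing, and $\tau''^{\ominus}(x) \ge \tau''^{\oplus}(x)$ pointwise, with the left derivative dominating the right for concave functions.

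For (iii), the idea is to pass from the derivative-limit definition of $\tau'(0)$ to the difference-quotient definition of $\tau'^{\oplus}(0)$ via the mean value theorem. Using part (i), $\tau$ is continuous on $[0, x]$ and differentiable on $(0, x)$ for any $x > 0$, so by MVT there exists $\xi_x \in (0, x)$ with
\begin{equation}
\frac{\tau(x) - \tau(0)}{x} = \tau'(\xi_x)\eqfs
\end{equation}
As $x \searrow 0$, we have $\xi_x \searrow 0$, and by the continuity of $\tau'$ at $0$ just established in (i), $\tau'(\xi_x) \to \tau'(0)$. Hence the right-derivative limit exists and equals $\tau'(0)$.

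The only place requiring care is the right-continuity of $\tau$ at $0$ in (i): convexity alone would leave open the possibility of a downward jump at the boundary of the domain, and without it part (iii) cannot invoke MVT on $[0,x]$. That step is what makes the monotonicity hypothesis in the definition of $\setcc$ pull its weight; everything else is routine once it is in place.
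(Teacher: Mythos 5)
Your proposal is correct, and parts (i) and (ii) follow essentially the same route as the paper (continuity from convexity/concavity plus monotonicity at the boundary; the standard structure theorem for concave functions, which the paper cites from Rockafellar). Part (iii) is where you genuinely diverge. The paper first establishes existence of $\tran\rd(0)$ from the monotonicity of $h\mapsto (\tran(h)-\tran(0))/h$, and then identifies its value with $\dtran(0)$ through an explicit $\epsilon$-argument: uniform continuity of $\dtran$ on $[0,2]$, the integral representation $\tran(x+h)-\tran(x)=\int_0^h\dtran(x+z)\,\dl z$ for $x>0$, and continuity of $\tran$, assembled by a four-term triangle inequality into a $5\epsilon$ bound. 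Your mean value theorem argument gets existence and the value in one stroke: continuity of $\tran$ on $[0,x]$ (from (i)) and differentiability on $(0,x)$ give $\xi_x\in(0,x)$ with $(\tran(x)-\tran(0))/x=\dtran(\xi_x)$, and since $0<\xi_x<x$ forces $\xi_x\to 0$, the right-hand side converges to $\dtran(0)$ by the very definition of $\dtran(0)$ as a one-sided limit. This is shorter and more elementary, and you correctly identify the one load-bearing hypothesis — continuity of $\tran$ at $0$, which needs monotonicity on top of convexity — without which the MVT on $[0,x]$ would be unavailable. The paper's longer route buys nothing extra here beyond stylistic consistency with its use of integral representations of $\dtran$ elsewhere; both proofs are complete.
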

Using a Taylor approximation and the convexity and concavity properties of $\tran\in\setcc$, we can see that these functions are, in some sense, between affine and quadratic: If $\tran\in\setcc$ is three times continuously differentiable, we have
\begin{equation}\label{eq:linquad}
	\tran(x_0) + x \dtran(x_0) \leq \tran(x_0 + x) \leq \tran(x_0) +  x \dtran(x_0) + \frac12 x^2 \ddtran(x_0)
\end{equation}
for $x_0\in\Rpp, x\in\Rp$. Fixing $x_0$ in \eqref{eq:linquad} shows a linear lower bound and quadratic upper bound on $x \mapsto \tran(x_0 + x)$.

For $\alpha>0$, let $\tran_\alpha := (x \mapsto x^\alpha)$. Then  $\tran_\alpha\in\setcc$ if and only if $\alpha\in[1,2]$. Further members of $\setcc$ include the Huber loss $\tran_{\ms h,\delta}$ \eqref{eq:huber}, the Pseudo-Huber loss $\tran_{\ms{ph},\delta}$ \eqref{eq:pseudohuber}, as well as $x\mapsto \log(\cosh(x))$.
The set $\setcc$ is a real convex cone in the following sense: If $\tran,\tilde\tran\in\setcc$, $b,\tilde b\in\Rp$, then $(x\mapsto b\tran(x)+\tilde b\tilde\tran(x))\in\setcc$.

The next two lemmas show bounds on the difference of a function in $\setcc$ evaluated at different points. These will be useful in the proofs of the main results.
\begin{lemma}\label{lmm:ccdiff}
	Let $\tran\in\setcc$.
	Let $x,y\in\Rp$, $x \neq y$.
	Then
	\begin{equation}
		\frac{\dtran(x)+\dtran(y)}2 \leq \frac{\abs{\tran(x) - \tran(y)}}{\abs{x-y}} \leq \dtran\brOf{\frac{x+y}{2}}
		\eqfs
	\end{equation}
\end{lemma}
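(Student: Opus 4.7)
The plan is to reduce the claim to the classical Hermite--Hadamard inequality applied to the concave function $\dtran$ on the interval between $x$ and $y$. Without loss of generality, I would assume $x<y$, so that $\abs{x-y}=y-x$ and, since $\tran$ is nondecreasing, also $\abs{\tran(x)-\tran(y)} = \tran(y)-\tran(x)$.

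The key step is to express the difference quotient as a mean value of $\dtran$:
\[
    \frac{\tran(y)-\tran(x)}{y-x} \;=\; \frac{1}{y-x}\int_x^y \dtran(t)\,\dl t\eqfs
\]
For $0 < x < y$, this follows directly from the fundamental theorem of calculus, since $\tran$ is differentiable on $\Rpp$ with derivative $\dtran$, and $\dtran$ is continuous by \cref{lmm:tran:continuous}(i), hence Riemann integrable on $[x,y]$. For $x=0$ I would take the limit $x\searrow 0$ and use continuity of $\tran$ at $0$ (\cref{lmm:tran:continuous}(i)) together with boundedness of $\dtran$ on $[0,y]$ to pass the limit through the integral; equivalently, \cref{lmm:tran:continuous}(iii) identifies $\tran\rd(0)=\dtran(0)$ so the same integral representation holds.

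Both inequalities then reduce to Hermite--Hadamard for the concave function $\dtran$ on $[x,y]$. For the upper bound, I would use that concavity of $\dtran$ provides a supporting line at the midpoint $c := (x+y)/2$: picking any slope $s \in [\ddrtran(c),\ddltran(c)]$ (a nonempty interval by \cref{lmm:tran:continuous}(ii), since $c>0$), one has $\dtran(t) \leq \dtran(c) + s(t-c)$ for all $t\in[x,y]$; integrating over $[x,y]$, the linear correction vanishes by symmetry around $c$ and leaves $\frac{1}{y-x}\int_x^y \dtran(t)\,\dl t \leq \dtran(c)$. For the lower bound, I would use the pointwise inequality $\dtran(t) + \dtran(x+y-t) \geq \dtran(x) + \dtran(y)$ for every $t\in[x,y]$: writing $t$ and $x+y-t$ as convex combinations of $x,y$ with complementary weights, the two concavity bounds add up to $\dtran(x)+\dtran(y)$. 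Integrating and applying the change of variables $t\mapsto x+y-t$ to fold the integral onto itself then gives $\frac{2}{y-x}\int_x^y \dtran(t)\,\dl t \geq \dtran(x)+\dtran(y)$.

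There is no substantial obstacle: the argument is entirely standard once the integral representation of $\tran$ is in place. The only point requiring mild care is the boundary case $x=0$, which is handled by the extension of $\dtran$ to $0$ built into the definition of $\setcc$ and reaffirmed by \cref{lmm:tran:continuous}.
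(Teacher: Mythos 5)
Your proposal is correct and follows essentially the same route as the paper: both write the difference quotient as the average $\frac{1}{y-x}\int_x^y \dtran(t)\,\dl t$ and then apply the two halves of the Hermite--Hadamard inequality for the concave function $\dtran$ — a supporting line at the midpoint for the upper bound, and the pointwise concavity combination (which the paper phrases as $\dtran((1-t)y+tx)\geq(1-t)\dtran(y)+t\dtran(x)$ integrated over $t\in[0,1]$, equivalent to your folding argument) for the lower bound. Your explicit treatment of the boundary case $x=0$ is a small point of extra care that the paper leaves implicit.
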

\begin{lemma}\label{lmm:tranlower}
		Let $\tran\in\setcciz$.
		Let $x,y\in\Rp$.
		Then
		\begin{equation}
			\tran(x) + \tran(y)  \leq \tran(\abs{x-y}) + 2 y \dtran(x)
			\eqfs
		\end{equation}
\end{lemma}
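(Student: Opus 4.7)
The plan is to fix $x\in\Rp$ and study the auxiliary function
\begin{equation}
    h(y) := \tran(x) + \tran(y) - \tran(\abs{x-y}) - 2y\dtran(x)\eqcm \qquad y\in\Rp\eqcm
\end{equation}
for which the claim becomes $h(y) \le 0$. The key observation is that $h(0) = 0$ (using $\tran(0) = 0$), so it suffices to show that $h$ is nonincreasing on $\Rp$. The case $x = 0$ reduces to the trivial $0 \le 2y\dtran(0)$, so in what follows I assume $x > 0$ and compute the derivative $h\pr$ piecewise on $\Rpp\setminus\cb{x}$.

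For $y\in(0,x)$ one has $\abs{x-y} = x-y$, and hence $h\pr(y) = \dtran(y) + \dtran(x-y) - 2\dtran(x)$. Midpoint concavity of $\dtran$ applied to $y$ and $x-y$ (whose average is $x/2$) yields $\dtran(y) + \dtran(x-y) \le 2\dtran(x/2)$, and since $\dtran$ is nondecreasing this is at most $2\dtran(x)$, so $h\pr(y) \le 0$. For $y > x$ one has $\abs{x-y} = y-x$ and $h\pr(y) = \dtran(y) - \dtran(y-x) - 2\dtran(x)$. Concavity of $\dtran$ implies that its increments of fixed step size $x$ are nonincreasing in the base point, so
\begin{equation}
    \dtran(y) - \dtran(y-x) \le \dtran(x) - \dtran(0) \le \dtran(x)\eqfs
\end{equation}
Therefore $h\pr(y) \le -\dtran(x) \le 0$, the last inequality using that $\dtran \ge 0$ because $\tran$ is nondecreasing.

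By \cref{lmm:tran:continuous} the function $h$ is continuous on $\Rp$, and I have just shown $h\pr \le 0$ on $\Rpp\setminus\cb{x}$. Combined, these force $h$ to be nonincreasing, and together with $h(0) = 0$ this gives $h(y) \le 0$ for all $y\in\Rp$, which is the desired inequality. The only mildly subtle point is the kink of $h$ at $y = x$, but continuity of $h$ together with the one-sided derivative bounds on either side is enough to conclude monotonicity, so there is no real obstacle.
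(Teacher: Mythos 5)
Your proof is correct and takes essentially the same route as the paper's: fix $x$, differentiate the difference in $y$, and control the sign of the derivative using that $\dtran$ is nondecreasing on $(0,x)$ and concave (equivalently, subadditive) on $(x,\infty)$. The only cosmetic difference is that you propagate the bound from $y=0$ across the kink at $y=x$ by continuity, whereas the paper restarts the second case at $y=x$ and separately checks $\tran(x)\le x\,\dtran(x)$ there.
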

\begin{remark}
	In the proofs of \cref{lmm:ccdiff} and \cref{lmm:tranlower} as well as upcoming proofs, we make extensive use of the properties of $\tran\in\setcc$. In particular, the facts that $\tran$, $\dtran$ are nondecreasing and $\dtran$ is subadditive (follows from nonnegativity and concavity, see \cref{lmm:tranconcave}) are applied again and again. This is a technical reason for not considering a larger class of transformations than $\setcc$.
\end{remark}
\section{The Variance Functional in General Metric Spaces}\label{sec:varineq:general}
In this section, we show asymptotic bounds for the variance functional $q\mapsto \Eof{\tran(\ol Yq)-\tran(\ol Yp)}$ in general metric spaces (\cref{thm:varineq:general}). The results are stated for an arbitrary reference point $p\in\mc Q$. But the case $p = m$ where $m$ is a $\tran$-Fréchet mean of the $\mc Q$-valued random variable $Y$ is most illustrating for the purpose of this article. In this general setting, we are not able to derive variance inequalities for $q$ close to $m$. Only after restriction to Hadamard spaces in section \ref{sec:varineq:hadmard} will we be able to find fully satisfying results. Nonetheless, the results here give us a first bound on the variance functional and will be used for showing that the set of $\tran$-Fréchet means of $Y$ is bounded (\cref{lmm:closedBoundedConvex}).

We start here with a trivial statement, which is useful to show that the main result, \cref{thm:varineq:general}, is optimal in some sense.
\begin{proposition}\label{prp:point:trivial}
	Let $q,p \in \mc Q$.
	\begin{enumerate}[label=(\roman*)]
		\item Let $\tran\in \setcciz$. Assume $Y = p$ almost surely. Then
		\begin{equation}
			\Ex*{\tran(\ol Yq) - \tran(\ol Yp)} = \tran(\ol qp)
			\eqfs
		\end{equation}
		\item We have
		\begin{equation}
			\Ex*{\ol Yq - \ol Yp} \geq \ol qp\, \br{\Prof{Y = p} - \Prof{Y \neq p}}
			\eqfs
		\end{equation}
	\end{enumerate}
\end{proposition}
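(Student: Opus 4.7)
The plan is to dispatch both parts by straightforward case analysis on the event $\{Y = p\}$, together with the triangle inequality. Both statements are essentially bookkeeping, but they serve the paper as sanity checks / optimality witnesses, so the proof should make the relevant cases explicit.

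For part (i), the idea is that under the assumption $Y = p$ almost surely, the random distances collapse: $\ol Yq = \ol pq = \ol qp$ and $\ol Yp = 0$ almost surely. Since $\tran \in \setcciz$, the definition gives $\tran(0) = 0$, so $\tran(\ol Yp) = 0$ and $\tran(\ol Yq) = \tran(\ol qp)$ almost surely. Taking expectations yields the claimed equality. The only thing to check is that the constant $\tran(\ol qp)$ is well-defined, which is immediate.

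For part (ii), I would split the expectation along the partition $\{Y = p\} \sqcup \{Y \neq p\}$. On $\{Y = p\}$, the integrand $\ol Yq - \ol Yp$ equals $\ol qp$. On $\{Y \neq p\}$, the reverse triangle inequality gives
\begin{equation}
\ol Yq - \ol Yp \geq -\ol qp
\eqfs
\end{equation}
Combining the two contributions yields
\begin{equation}
\Ex*{\ol Yq - \ol Yp} \geq \ol qp \,\Prof{Y = p} - \ol qp\, \Prof{Y \neq p}
\eqcm
\end{equation}
which is exactly the claim. Note that the integrability of $\ol Yq - \ol Yp$ is not assumed separately; since the integrand is bounded above by $\ol qp$ and below by $-\ol qp$ in absolute value by the reverse triangle inequality, the expectation is automatically well-defined and finite, so no moment assumption on $Y$ is required.

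No step here is a real obstacle; the only minor subtlety is making sure to use $\tran(0) = 0$ from $\setcciz$ in (i) and the reverse triangle inequality (not just the forward one) in (ii), so that the bound stays sharp in the regime where $Y$ equals $p$ with large probability.
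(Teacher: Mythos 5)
Your proof is correct and follows essentially the same route as the paper: the same decomposition of the expectation over $\{Y=p\}$ and $\{Y\neq p\}$, with the (reverse) triangle inequality controlling the second piece. The paper does not even spell out part (i), so your explicit treatment of $\tran(0)=0$ there is a harmless addition.
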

\begin{proof}
	We have
	\begin{equation}
		\Ex*{\br{\ol Y{q} - \ol Yp}\ind_{\cb{Y=p}}} = \ol qp\, \PrOf{Y=p}
	\end{equation}
	and the triangle inequality implies
	\begin{equation}
		\abs{\Ex*{\br{\ol Y{q} - \ol Yp}\ind_{\cb{Y\neq p}}}} \leq \ol qp\, \PrOf{Y\neq p}
		\eqfs
	\end{equation}
\end{proof}
In the following, we will typically make the assumption that $\Ex{\dtran(\ol Yo)}$ is finite. Recall that $o\in\mc Q$ is a fixed arbitrary element of the metric space. This assumption ensures that the variance functional is well-defined and finite:
\begin{proposition}\label{prp:moment}
	Let $\tran\in\setcc$.
    \begin{enumerate}[label=(\roman*)]
        \item We have $\Ex{\dtran(\ol Yo)} < \infty$ if and only if $\Ex{\dtran(\ol Yq)} < \infty$ for all $q\in\mc Q$.
        \item
        Assume $\Ex{\dtran(\ol Yo)} < \infty$.
        For $q,p\in\mc Q$, define
        \begin{equation}\label{eq:altmetricdef}
            D(q,p) := \Ex*{\abs{\tran(\ol Yq) - \tran(\ol Yp)}}
            \eqfs
        \end{equation}
        Then, $(\mc Q, D)$ is a pseudometric space, i.e.,
        \begin{enumerate}[label=(\alph*)]
            \item $D(q,p) \in \Rp$,
            \item $D(q,q) = 0$,
            \item $D(q,p) = D(p,q)$,
            \item $D(q, p) \leq D(q, z) + D(z, p)$,
        \end{enumerate}
        for all $q,p,z\in \mc Q$.
        \item
        Assume $\Ex{\dtran(\ol Yo)} < \infty$.
        Let $r\in\Rpp$. Then there is a constant $C\in\Rpp$ such that, for all $q,p\in \ball or$,
        \begin{equation}\label{eq:equivalentmetric}
            D(q, p) \leq C\, \ol qp
        \end{equation}
        with $D$ as in \eqref{eq:altmetricdef}. Moreover, we can set $C := \Ex{\dtran(\ol Yo)} + \dtran(r)$ in \eqref{eq:equivalentmetric}.
    \end{enumerate}
\end{proposition}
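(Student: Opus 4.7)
The plan is to repeatedly exploit three elementary facts about $\tran\in\setcc$: the derivative $\dtran$ is nondecreasing (since $\tran$ is convex), $\dtran$ is nonnegative and concave and hence subadditive on $\Rp$ (the standard fact that a nonnegative concave function $f$ with $f(0)\geq 0$ satisfies $f(x+y)\leq f(x)+f(y)$, obtained by writing $x=\tfrac{x}{x+y}(x+y)+\tfrac{y}{x+y}\cdot 0$ and applying concavity), and the bound from \cref{lmm:ccdiff}, namely $|\tran(x)-\tran(y)|\leq|x-y|\,\dtran((x+y)/2)$. Combined with the triangle inequality on $(\mc Q,d)$, these three ingredients are enough for all three claims.

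For (i), the triangle inequality gives $\ol{Y}{o}\leq \ol{Y}{q}+\ol{o}{q}$, so monotonicity and subadditivity of $\dtran$ yield the pointwise bound $\dtran(\ol{Y}{q})\leq \dtran(\ol{Y}{o})+\dtran(\ol{o}{q})$. Taking expectation gives $\Eof{\dtran(\ol{Y}{q})}\leq \Eof{\dtran(\ol{Y}{o})}+\dtran(\ol{o}{q})<\infty$; swapping the roles of $o$ and $q$ gives the reverse implication.

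For (ii), nonnegativity, $D(q,q)=0$, symmetry, and the triangle inequality all follow from the corresponding pointwise properties of $|\cdot|$ (applied inside the expectation for the triangle inequality, via $|\tran(\ol{Y}{q})-\tran(\ol{Y}{p})|\leq|\tran(\ol{Y}{q})-\tran(\ol{Y}{z})|+|\tran(\ol{Y}{z})-\tran(\ol{Y}{p})|$ and monotonicity of the expectation). The only nontrivial step is finiteness $D(q,p)<\infty$. Using \cref{lmm:ccdiff} together with the reverse triangle inequality $|\ol{Y}{q}-\ol{Y}{p}|\leq\ol{q}{p}$, we obtain the pointwise bound $|\tran(\ol{Y}{q})-\tran(\ol{Y}{p})|\leq\ol{q}{p}\,\dtran((\ol{Y}{q}+\ol{Y}{p})/2)$. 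Since $(\ol{Y}{q}+\ol{Y}{p})/2\leq\ol{Y}{o}+(\ol{o}{q}+\ol{o}{p})/2$, monotonicity and subadditivity of $\dtran$ reduce the expectation to finite quantities by part (i).

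For (iii), we specialize the pointwise estimate from (ii) to $q,p\in\ball{o}{r}$, where $(\ol{o}{q}+\ol{o}{p})/2\leq r$, so that $\dtran((\ol{Y}{q}+\ol{Y}{p})/2)\leq\dtran(\ol{Y}{o}+r)\leq\dtran(\ol{Y}{o})+\dtran(r)$ by monotonicity and subadditivity. Taking expectation yields $D(q,p)\leq\ol{q}{p}\,(\Eof{\dtran(\ol{Y}{o})}+\dtran(r))$, which is the claim with $C:=\Eof{\dtran(\ol{Y}{o})}+\dtran(r)$. I do not foresee a substantive obstacle here: the entire proposition amounts to careful bookkeeping of the triangle inequality together with monotonicity and subadditivity of $\dtran$, and these ingredients are already available.
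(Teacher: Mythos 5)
Your proposal is correct and follows essentially the same route as the paper: part (i) via the triangle inequality plus monotonicity and subadditivity of $\dtran$, part (ii) via \cref{lmm:ccdiff} combined with the same subadditivity argument to get the pointwise bound $\abs{\tran(\ol Yq) - \tran(\ol Yp)} \leq \ol qp\,(\dtran(\ol Yo) + \dtran((\ol qo + \ol po)/2))$, and part (iii) as the specialization of that bound to $q,p\in\ball or$. The only cosmetic difference is where you apply subadditivity in (iii) (to $\dtran(\ol Yo + r)$ rather than to the already-separated constant term), which changes nothing.
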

\begin{proof}\mbox{ }
    \begin{enumerate}[label=(\roman*)]
        \item
        Let $q\in\mc Q$.
        By \cref{lmm:tranconcave}, $\dtran$ is subadditive.
        Together with the triangle inequality, we obtain
        \begin{equation}
            \dtran(\ol Yq) \leq \dtran(\ol Yo) + \dtran(\ol qo)
            \eqfs
        \end{equation}
        Taking expectations and exchanging the arbitrary points shows the first claim.
        \item
        Combining the subadditivity of $\dtran$ with the triangle inequality and \cref{lmm:ccdiff} yields
        \begin{equation}
            \abs{\tran(\ol Yq) - \tran(\ol Yp)}
            \leq
            \abs{\ol Yq - \ol Yp} \dtran\brOf{\frac{\ol Yq + \ol Yp}2}
            \leq
            \ol qp \br{\dtran(\ol Yo) + \dtran\brOf{\frac{\ol qo + \ol po }2}}
            \eqfs
        \end{equation}
        After taking expectations, we obtain
        \begin{equation}\label{eq:altmetricbound}
            D(q, p) \leq \ol qp \br{\Ex{\dtran(\ol Yo)} + \dtran\brOf{\frac{\ol qo + \ol po }2}} < \infty\eqfs
        \end{equation}
        Clearly, $D(q,p)\geq 0$, $D(q,q) = 0$, and $D(q,p) = D(p,q)$. The triangle inequality follows from
        \begin{equation}
            \abs{\tran(\ol Yq) - \tran(\ol Yp)} \leq \abs{\tran(\ol Yq) - \tran(\ol Yz)} + \abs{\tran(\ol Yz) - \tran(\ol Yp)}
            \eqfs
        \end{equation}
        \item
        Follows directly from \eqref{eq:altmetricbound}.
    \end{enumerate}
\end{proof}
The somewhat technical inequalities derived in the next lemma allow us to prove the main result of this section, \cref{thm:varineq:general}, which provides bounds of the variance functional for points $q$ close to the reference point $p$ and $q$ far away from $p$.
\begin{lemma}\label{lmm:generalBound}
	Let $\tran\in\setcciz$.
	Assume $\Ex{\dtran(\ol Yo)} < \infty$.
	Let $q,p\in\mc Q$. 	Let $s\in\Rp$.
	\begin{enumerate}[label=(\roman*)]
		\item \label{lmm:generalBound:upper}Then
		\begin{align}
			\Ex*{\tran(\ol Yq) - \tran(\ol Yp)}
			\leq
			&\ \ol qp\, \Ex*{\dtran\brOf{\frac{\ol qp}{2} + \ol Yp } \indOfOf{[s,\infty)}{\ol Yp}}
			\\& +
			\tran(\ol pq + s) \PrOf{\ol Yp < s}
			\eqfs
		\end{align}
		\item \label{lmm:generalBound:upper:near}  Assume $\ol qp \leq s$ and $s>0$. Then
		\begin{align}
			\Ex*{\tran(\ol Yq) - \tran(\ol Yp)}
			\leq&\
			\PrOf{Y = p} \tran(\ol qp)
			\\& +
			\frac32 \, \ol qp\, \dtran\brOf{s}\PrOf{\ol Yp \in (0,s)}
			\\& +
			\ol qp\, \br{\frac{\ol qp}{2 s} + 1} \Ex*{\dtran\brOf{\ol Yp} \indOfOf{[s,\infty)}{\ol Yp}}
			\eqfs
		\end{align}
		\item \label{lmm:generalBound:lower} Assume $s \leq \ol qp$. Then
		\begin{align}
			\Ex*{\tran(\ol Yq) - \tran(\ol Yp)}
			\geq&\
			\Ex*{\br{\tran(\ol qp) - 2\,\ol qp\, \dtran(\ol Yp)}\indOfOf{[s,\infty)}{\ol Yp}}
			\\& +
			\br{\tran(\ol qp - s) - \tran(s)}\PrOf{\ol Yp < s}
			\eqfs
		\end{align}
	\end{enumerate}
\end{lemma}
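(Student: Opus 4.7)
The overall strategy is to split the expectation according to whether $\ol Yp$ is small or large relative to the truncation parameter $s$, and to control each piece using the two-sided finite-difference bounds \cref{lmm:ccdiff} and \cref{lmm:tranlower}, combined with the triangle inequality $\abs{\ol Yq - \ol Yp} \leq \ol qp$. Throughout I will use monotonicity and nonnegativity of $\tran$ (as $\tran(0)=0$ and $\tran$ is nondecreasing), and the sub-homogeneity $\dtran(\lambda x) \leq \lambda \dtran(x)$ for $\lambda \geq 1$, which follows from concavity of $\dtran$ on $\Rp$ together with $\dtran(0) \geq 0$ by writing $x = \lambda^{-1}(\lambda x) + (1 - \lambda^{-1})\cdot 0$.

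For part \ref{lmm:generalBound:upper}, I would partition into the events $\{\ol Yp \geq s\}$ and $\{\ol Yp < s\}$. On $\{\ol Yp \geq s\}$, \cref{lmm:ccdiff} gives $\tran(\ol Yq) - \tran(\ol Yp) \leq \abs{\ol Yq - \ol Yp}\, \dtran((\ol Yq + \ol Yp)/2)$, and the triangle inequality together with monotonicity of $\dtran$ upgrades this to $\ol qp \cdot \dtran(\ol qp/2 + \ol Yp)$. On $\{\ol Yp < s\}$, the triangle inequality gives $\ol Yq \leq s + \ol qp$, so by monotonicity and nonnegativity of $\tran$ one has $\tran(\ol Yq) - \tran(\ol Yp) \leq \tran(\ol pq + s)$. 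Taking expectations and summing the two regions yields the claim.

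For part \ref{lmm:generalBound:upper:near}, with $\ol qp \leq s$, I would refine the partition of \ref{lmm:generalBound:upper} to three events $\{Y = p\}$, $\{\ol Yp \in (0,s)\}$, and $\{\ol Yp \geq s\}$. On $\{Y = p\}$, the integrand equals $\tran(\ol qp)$ exactly. On $\{\ol Yp \in (0,s)\}$, the same Lemma~\ref*{lmm:ccdiff} bound gives $\ol qp \cdot \dtran(\ol Yp + \ol qp/2)$; since $\ol Yp + \ol qp/2 < 3s/2$, sub-homogeneity $\dtran(3s/2) \leq \tfrac32 \dtran(s)$ yields the middle term. On $\{\ol Yp \geq s\}$, I rewrite $\ol qp/2 + \ol Yp = \lambda \ol Yp$ with $\lambda = 1 + \ol qp/(2\ol Yp) \in [1, 1 + \ol qp/(2s)]$ and again invoke sub-homogeneity to turn the bound from \ref{lmm:generalBound:upper} into $\ol qp(1 + \ol qp/(2s))\dtran(\ol Yp)$, producing the third term.

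For part \ref{lmm:generalBound:lower}, with $s \leq \ol qp$, I want lower bounds, so I partition once more into $\{\ol Yp < s\}$ and $\{\ol Yp \geq s\}$. On the first event, the reverse triangle inequality gives $\ol Yq \geq \ol qp - \ol Yp \geq \ol qp - s \geq 0$, so monotonicity yields $\tran(\ol Yq) \geq \tran(\ol qp - s)$ while $\tran(\ol Yp) \leq \tran(s)$, producing the second summand. On the second event, the key step is \cref{lmm:tranlower} applied with $x = \ol Yp$ and $y = \ol qp$, which rearranges to $\tran(\abs{\ol Yp - \ol qp}) \geq \tran(\ol qp) + \tran(\ol Yp) - 2 \ol qp\, \dtran(\ol Yp)$; the reverse triangle inequality $\ol Yq \geq \abs{\ol Yp - \ol qp}$ combined with monotonicity of $\tran$ then gives $\tran(\ol Yq) - \tran(\ol Yp) \geq \tran(\ol qp) - 2 \ol qp\, \dtran(\ol Yp)$. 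The only genuinely delicate choice is orienting \cref{lmm:tranlower} so that the subtracted $2y \dtran(x)$ term places $\dtran$ on $\ol Yp$ (which is controlled by the indicator) and not on $\ol qp$; with the wrong assignment the final bound would depend on $\dtran(\ol qp)$ and the statement would fail. Integrating the two regions and summing yields the lower bound.
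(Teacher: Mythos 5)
Your proposal is correct and follows essentially the same route as the paper's proof: the same partition of the expectation by the size of $\ol Yp$ relative to $s$, the same use of \cref{lmm:ccdiff} with the triangle inequality for the upper bounds, the same sub-homogeneity of $\dtran$ (via \cref{lmm:tranconcave}) for part \ref{lmm:generalBound:upper:near}, and the same application of \cref{lmm:tranlower} with $x=\ol Yp$, $y=\ol qp$ for part \ref{lmm:generalBound:lower}. Your remark about orienting \cref{lmm:tranlower} so that $\dtran$ lands on $\ol Yp$ is exactly the choice the paper makes.
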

\begin{proof}
	First note that  $\Ex{\dtran(\ol Yp)}$ is finite because of $\Ex{\dtran(\ol Yo)} < \infty$, see \cref{prp:moment}.
	\begin{enumerate}[label=(\roman*)]
		\item
		On one hand, \cref{lmm:ccdiff}, the triangle inequality, and $\dtran$ being nondecreasing imply
		\begin{align}
			\tran(\ol Yq) - \tran(\ol Yp)
			&\leq
			\ol qp\, \dtran\brOf{\frac{\ol Yq + \ol Yp}{2}}
			\\&\leq
			\ol qp\, \dtran\brOf{\frac{\ol qp}{2} + \ol Yp}
			\eqfs
		\end{align}
		On the other hand, if $\ol Yp < s$, then the triangle inequality and $\tran$ being nondecreasing imply
		\begin{align}
			\tran(\ol Yq) - \tran(\ol Yp)
			&\leq
			\tran(\ol Yp + \ol pq)
			\\&\leq
			\tran(s + \ol pq)
			\eqfs
		\end{align}
		Taking expectations of these two inequalities on $\ol Yp \geq s$ and $\ol Yp < s$, respectively, we obtain the desired result.
	\item
		We split $\Ex{\tran(\ol Yq) - \tran(\ol Yp)}$ into the three expectations on the events $Y = p$, $\ol Yp \in (0, s)$, and $\ol Yp \geq s$. If $Y=p$, then $\tran(\ol Yq) - \tran(\ol Yp) = \tran(\ol qp)$.
		In general, as before
		\begin{equation}
			\tran(\ol Yq) - \tran(\ol Yp) \leq \ol qp\, \dtran\brOf{\frac{\ol qp}2 + \ol Yp}
			\eqfs
		\end{equation}
		If $0 < \ol Yp < s$ and $\ol qp \leq s$, then, with \cref{lmm:tranconcave} \ref{lmm:tranconcave:factor},
		\begin{align}
			 \dtran\brOf{\frac{\ol qp}2 + \ol Yp}
			 &\leq
			  \dtran\brOf{\frac32 s}
			 \\&\leq
			 \frac32 \dtran\brOf{s}
			 \eqfs
		\end{align}
		If $\ol Yp \geq s$, then, again using \cref{lmm:tranconcave} \ref{lmm:tranconcave:factor},
		\begin{align}
			\dtran\brOf{\frac{\ol qp}2 + \ol Yp}
			&=
			\dtran\brOf{\br{\frac{\ol qp}{2\ol Yp} + 1}\ol Yp}
			\\&\leq
			\dtran\brOf{\br{\frac{\ol qp}{2s} + 1}\ol Yp}
			\\&\leq
			\br{\frac{\ol qp}{2s} + 1}\dtran\brOf{\ol Yp}
			\eqfs
		\end{align}
	\item
		By the triangle inequality and $\tran$ being nondecreasing, we have $\tran(\ol Yq) \geq \tran(|\ol Yp-\ol qp|)$.
		From this, on one hand, we obtain by \cref{lmm:tranlower},
		\begin{align}
			\tran(\ol Yq) - \tran(\ol Yp)
			&\geq
			\tran(\ol qp) - 2 \,\ol qp\, \dtran(\ol Yp)
			\eqfs
		\end{align}
		On the other hand, if $\ol Yp < s \leq \ol qp$, then, by the triangle inequality,
		\begin{align}
			\tran(\ol Yq) - \tran(\ol Yp)
			&\geq
			\tran(\ol qp-s) - \tran(s)
			\eqfs
		\end{align}
		Taking expectations of these two inequalities on $\ol Yp \geq s$ and $\ol Yp < s$, respectively, we obtain the desired result.
	\end{enumerate}
\end{proof}
\begin{notation}
For $p\in\mc Q$ and $r\in \Rpp$, we denote the \emph{open ball} with center $p$ and radius $r$ as $\ball pr := \setByEleInText{q\in \mc Q}{\ol qp < r}$. The \emph{diameter} of a set $A\subset\mc Q$ is $\diam(A) := \sup_{q,p\in A}\ol qp$.
\end{notation}
\begin{theorem}\label{thm:varineq:general}
	Let $\tran\in\setcciz$. Assume $\Ex{\dtran(\ol Yo)} < \infty$. Fix $p\in\mc Q$.
	\begin{enumerate}[label=(\roman*)]
		\item \label{thm:varineq:general:infty}
		Assume $\diam(\mc Q) = \infty$. Then
		\begin{equation}
			\liminf_{r \to \infty} \inf_{q\in\mc Q \setminus\ball or}  \frac{\Ex*{\tran(\ol Yq) - \tran(\ol Yp)}}{\tran(\ol qp)}
			=
			\limsup_{r \to \infty} \sup_{q\in\mc Q \setminus\ball or}  \frac{\Ex*{\tran(\ol Yq) - \tran(\ol Yp)}}{\tran(\ol qp)}
			=
			1
			\eqfs
		\end{equation}
		\item \label{thm:varineq:general:zero}
		Assume $p$ is an accumulation point of $\mc Q$. Then
		\begin{equation}
			\limsup_{r \to 0} \sup_{q\in\ball pr \setminus\cb{p}}  \frac{\Ex*{\tran(\ol Yq) - \tran(\ol Yp)}}{\ol qp} \leq \Ex{\dtran(\ol Yp)}
			\eqfs
		\end{equation}
	\end{enumerate}
\end{theorem}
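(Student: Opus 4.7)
The plan is to apply the estimates of \cref{lmm:generalBound} and let the auxiliary parameter $s$ and the distance $\ol qp$ tend to their appropriate limits.

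For part \ref{thm:varineq:general:infty}, any $q\in\mc Q\setminus\ball or$ satisfies $\ol qp\geq r-\ol op$, so $\ol qp\to\infty$ uniformly as $r\to\infty$. For the upper bound $\limsup\leq 1$, I apply part (i) of \cref{lmm:generalBound} and divide by $\tran(\ol qp)$. The second summand $\tran(\ol qp+s)\Prof{\ol Yp<s}/\tran(\ol qp)$ tends to a quantity bounded by $1$, since the estimate $\tran(\ol qp+s)-\tran(\ol qp)\leq s\dtran(\ol qp+s)$, combined with the lower bound $\tran(\ol qp)\geq\ol qp\dtran(\ol qp)/2$ from \cref{lmm:ccdiff} and subadditivity of the concave nondecreasing function $\dtran$, gives $\tran(\ol qp+s)/\tran(\ol qp)-1\leq 4s/\ol qp\to 0$ for fixed $s$. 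For the first summand, splitting $\dtran(\ol qp/2+\ol Yp)\leq\dtran(\ol qp/2)+\dtran(\ol Yp)$ via subadditivity yields the bound $2\Prof{\ol Yp\geq s}+2\Ex{\dtran(\ol Yp)\ind_{\ol Yp\geq s}}/\dtran(\ol qp)$, which can be made arbitrarily small by first choosing $s$ large (using $\Ex{\dtran(\ol Yp)}<\infty$ from \cref{prp:moment}) and then letting $\ol qp\to\infty$. The lower bound $\liminf\geq 1$ is obtained analogously from part (iii) of \cref{lmm:generalBound}: the dominant term $\Prof{\ol Yp\geq s}+(\tran(\ol qp-s)/\tran(\ol qp))\Prof{\ol Yp<s}$ tends to $1$ via the twin estimate $1-\tran(\ol qp-s)/\tran(\ol qp)\leq 2s/\ol qp$, and the correction $2\ol qp\Ex{\dtran(\ol Yp)\ind_{\ol Yp\geq s}}/\tran(\ol qp)\leq 4\Ex{\dtran(\ol Yp)\ind_{\ol Yp\geq s}}/\dtran(\ol qp)$ is negligible under the same parameter ordering.

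For part \ref{thm:varineq:general:zero}, I fix $s>0$ and take $r\leq s$, so that every $q\in\ball pr\setminus\cb p$ has $\ol qp<r\leq s$ and part (ii) of \cref{lmm:generalBound} applies. Dividing by $\ol qp$ and passing to the limit $r\to 0$, one has $\tran(\ol qp)/\ol qp\to\tran\rd(0)=\dtran(0)$ by \cref{lmm:tran:continuous} and $\ol qp/(2s)\to 0$, so that
\begin{equation*}
\limsup_{r\to 0}\sup_{q\in\ball pr\setminus\cb p}\frac{\Ex{\tran(\ol Yq)-\tran(\ol Yp)}}{\ol qp}\leq\Prof{Y=p}\dtran(0)+\tfrac{3}{2}\dtran(s)\Prof{\ol Yp\in(0,s)}+\Ex{\dtran(\ol Yp)\ind_{\ol Yp\geq s}}\eqfs
\end{equation*}
Letting $s\searrow 0$, the middle term vanishes (since $\Prof{\ol Yp\in(0,s)}\to 0$ and $\dtran(s)\to\dtran(0)<\infty$) and, by monotone convergence ($\{\ol Yp\geq s\}\nearrow\{Y\neq p\}$), the last term tends to $\Ex{\dtran(\ol Yp)\ind_{Y\neq p}}$. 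Combined with $\Prof{Y=p}\dtran(0)=\Ex{\dtran(\ol Yp)\ind_{Y=p}}$ (since $\dtran(\ol Yp)=\dtran(0)$ on $\{Y=p\}$), the total equals $\Ex{\dtran(\ol Yp)}$, as claimed.

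The main delicacy is the two-parameter limit in part \ref{thm:varineq:general:infty}: the truncation threshold $s$ must be chosen large only after $\ol qp$ has grown enough for the ratios $\tran(\ol qp\pm s)/\tran(\ol qp)$ to be close to $1$. A split according to whether $\lim_{x\to\infty}\dtran(x)$ is finite (as for the Huber loss, where the correction vanishes via $\Ex{\dtran(\ol Yp)\ind_{\ol Yp\geq s}}\to 0$) or infinite (where it vanishes via $1/\dtran(\ol qp)\to 0$) would streamline some of the estimates, but both cases are handled uniformly by the arguments above.
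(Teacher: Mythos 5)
Your proposal is correct and rests on exactly the same skeleton as the paper's proof: both parts are obtained from \cref{lmm:generalBound} with the same ordering of limits (the truncation level $s$ is fixed, the limit in $r$ is taken, and only then is $s$ sent to its extreme). The one substantive difference is in part \ref{thm:varineq:general:infty}: the paper splits into the cases $T := \sup_x \dtran(x) < \infty$ and $T = \infty$ and invokes \cref{lmm:limittran:finiteT}, \cref{lmm:limittran}, and \cref{lem:aux} to evaluate the resulting ratios, whereas you handle both cases uniformly via the elementary bounds $\tran(x) \geq \tfrac12 x \dtran(x)$ and $\dtran(x+s) \leq \dtran(x) + \dtran(s)$ from \cref{lmm:ccdiff} and \cref{lmm:tranconcave}, which makes the argument slightly more self-contained; the only (harmless) omission is the term $\tran(s)\PrOf{\ol Yp < s}/\tran(\ol qp)$ in the lower bound, which vanishes trivially since $\tran(\ol qp) \to \infty$ for $\tran\in\setcciz$. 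Your iterated-limit treatment of part \ref{thm:varineq:general:zero} (first $r\to 0$, then $s\searrow 0$ with monotone convergence) is likewise a valid, marginally cleaner substitute for the paper's coupling $\ol qp \leq 2\epsilon s$.
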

\begin{proof}
	\mbox{ }
	\begin{enumerate}[label=(\roman*)]
		\item
		First assume that $T := \sup_{x\in\Rp}\dtran(x) < \infty$. Then, for $s\in[0,\infty)$, \cref{lmm:generalBound} \ref{lmm:generalBound:upper} yields
		\begin{equation}
			\Ex*{\tran(\ol Yq) - \tran(\ol Yp)}
			\leq
			\ol qp\, T \PrOf{\ol Yp \geq s}
			+
			\tran(\ol pq + s) \PrOf{\ol Yp < s}
			\eqfs
		\end{equation}
		For $\epsilon\in \Rpp$, there is $s = s_\epsilon\in \Rpp$, such that $\max(1,T) \PrOf{\ol Yp \geq s} \leq \epsilon$. Let $r = r_{s, \epsilon}$ be large enough so that $q\in\mc Q \setminus\ball or$ fulfills $s \leq \epsilon \, \ol qp$. Then
		\begin{equation}
			\ol qp\, T \PrOf{\ol Yp \geq s}
			+
			\tran(\ol pq + s) \PrOf{\ol Yp < s}
			\leq
			\epsilon \, \ol qp + \tran((1+\epsilon)\ol qp)
		\end{equation}
		for all $q\in\mc Q \setminus\ball or$.
		Dividing by $\tran(\ol qp)$ and using  \cref{lmm:limittran:finiteT} with $0 < T < \infty$ yields
		\begin{equation}
			\limsup_{r \to \infty} \sup_{q\in\mc Q \setminus\ball or}\frac{\epsilon \, \ol qp + \tran((1+\epsilon)\ol qp)}{\tran(\ol qp)}
			=
			\frac{\epsilon}{T} + 1+\epsilon
			\eqfs
		\end{equation}
		As $\epsilon\in\Rpp$ can be chosen arbitrarily, we obtain
		\begin{equation}
			\limsup_{r \to \infty} \sup_{q\in\mc Q \setminus\ball or}\frac{\Ex*{\tran(\ol Yq) - \tran(\ol Yp)} }{\tran(\ol qp)} \leq 1
			\eqfs
		\end{equation}
		To show a corresponding lower bound, we use \cref{lmm:generalBound} \ref{lmm:generalBound:lower} to obtain
		\begin{equation}\label{eq:lower:step1}
			\Ex*{\tran(\ol Yq) - \tran(\ol Yp)}
			\geq
			\br{\tran(\ol qp) - 2\,\ol qp\, T}\PrOf{\ol Yp \geq s}
			+
			\br{\tran(\ol qp - s) - \tran(s)}\PrOf{\ol Yp < s}
			\eqfs
		\end{equation}
		For $\epsilon\in\Rpp$, we can choose $s= s_\epsilon$ large enough so that $\Prof{\ol Yp \geq s} < \epsilon$, and then choose $r = r_{s, \epsilon}$ large enough so that $s \leq \epsilon \, \ol qp$ for all $q\in\mc Q \setminus\ball or$.
		Hence, \eqref{eq:lower:step1} yields
        \begin{equation}\label{eq:lower:step2}
            \Ex*{\tran(\ol Yq) - \tran(\ol Yp)}
            \geq
            \br{1-\epsilon}\tran\brOf{\br{1-\epsilon}\ol qp}
            - 2 T \epsilon\,\ol qp
            - \tran\brOf{\epsilon\,\ol qp}
            \eqfs
        \end{equation}
        Dividing by $\tran(\ol qp)$ and using  \cref{lmm:limittran:finiteT} with $0 < T < \infty$ yields
        \begin{equation}
            \liminf_{r \to \infty} \inf_{q\in\mc Q \setminus\ball or} \frac{\br{1-\epsilon}\tran\brOf{\br{1-\epsilon}\ol qp}
            - 2 T \epsilon\,\ol qp
            - \tran\brOf{\epsilon\,\ol qp}}{\tran(\ol qp)} =  (1-\epsilon)^2 - 2\epsilon - \epsilon
            \eqfs
        \end{equation}
		With $\epsilon\in\Rpp$ being arbitrary, we obtain the asymptotic lower bound
		\begin{equation}\label{eq:prp:limit:lower:finite}
			\liminf_{r \to \infty} \inf_{q\in\mc Q \setminus\ball or}   \frac{\Ex*{\tran(\ol Yq) - \tran(\ol Yp)}}{\tran(\ol qp)}
			\geq
			1
			\eqfs
		\end{equation}
		Now assume that $T = \infty$. Then the lower bound \eqref{eq:prp:limit:lower:finite} follows immediately from \cref{lmm:generalBound} \ref{lmm:generalBound:lower} with $s = 0$ as $x/\tran(x) \xrightarrow{x\to\infty} 0$ (as $T=\infty$). For the upper bound, \cref{lmm:generalBound} \ref{lmm:generalBound:upper} yields
		\begin{align}
			&\Ex*{\tran(\ol Yq) - \tran(\ol Yp)}
			\\&\leq
			\ol qp\, \dtran\brOf{\frac{\ol qp}{2}} \PrOf{\ol Yp \geq s} + \ol qp\, \Ex*{\dtran\brOf{\ol Yp}}
			+
			\tran(\ol pq + s) \PrOf{\ol Yp < s}
			\eqcm
		\end{align}
		where we used that $\dtran$ is subadditive (\cref{lmm:tranconcave}). As $x \dtran(x) \leq 2\tran(x)$ (\cref{lem:aux}), and $\tran(x)/x \xrightarrow{x\to\infty} \infty$ (as $T=\infty$), the expression is dominated by $\tran(\ol pq + s) \Prof{\ol Yp < s}$ for large $s$ and larger $\ol qp$. Thus, as $\tran(x + s)/\tran(x) \xrightarrow{x\to\infty} 1$ (see \cref{lmm:limittran}), we have
		\begin{equation}
			\limsup_{r \to \infty} \sup_{q\in\mc Q \setminus\ball or} \frac{\Ex*{\tran(\ol Yq) - \tran(\ol Yp)}}{\tran(\ol qp)}
			\leq
			1
			\eqfs
		\end{equation}
		\item
		Fix $\epsilon > 0$. Choose $s= s_\epsilon \in (0,1]$ small enough so that $\PrOf{\ol Yp \in (0,s)} < \epsilon$. For $q\in\mc Q$ with $\ol qp\leq 2 \epsilon s$, we have,
		by \cref{lmm:generalBound} \ref{lmm:generalBound:upper:near},
		\begin{align}
			&\Ex*{\tran(\ol Yq) - \tran(\ol Yp)}
			\\&\leq
			\PrOf{Y = p} \tran(\ol qp)
			\ +\
			\frac32 \epsilon \, \ol qp\, \dtran\brOf{1}
			\ +\
			\ol qp\, \br{1 + \epsilon} \Ex*{\dtran\brOf{\ol Yp} \ind_{Y\neq p}}
			\eqfs
		\end{align}
		Thus,
		\begin{align}
			&\limsup_{r \to 0} \sup_{q\in\ball pr \setminus\cb{p}} \frac{\Ex*{\tran(\ol Yq) - \tran(\ol Yp)}}{\ol qp}
			\\&\leq
			\PrOf{Y = p} \br{\limsup_{r \to 0} \sup_{q\in\ball pr\setminus\cb{p}}  \frac{\tran(\ol qp)}{\ol qp}}
			\ +\
			\Ex*{\dtran\brOf{\ol Yp} \ind_{Y\neq p}}
			\eqfs
		\end{align}
		As $\tran(0) = 0$, $\lim_{x\to 0} \frac{\tran(x)}{x} = \dtran(0)$. Thus, we have shown the desired result.
	\end{enumerate}
\end{proof}
\cref{thm:varineq:general} shows that, for $q$ far enough away from a reference point, the variance functional $q\mapsto \Ex{\tran(\ol Yq) - \tran(\ol Yp)}$ looks like $q\mapsto \tran(\ol qp)$.
For $q$ very close to the reference point $p$, the variance functional grows at most linearly, and can grow linearly, see \cref{prp:point:trivial}.
Particular interest is in a lower bound for $q$ near the reference point $p$, which is not described by the theorem.
Note that, for arbitrary $p$, the variance functional can be negative for some $q$ close to $p$. By restricting $p$ to be a $\tran$-Fréchet mean of $Y$ and the geometry of the metric space to have nonpositive curvature, we will be able to paint an almost complete picture of the growth behavior of the variance functional in section \ref{sec:varineq:hadmard}. In the next section, we develop the tools we need for this task.

\section{Quadratic Lower Bound of Distance Functions in Hadamard Spaces}\label{sec:quadLowerBound}
\cite[Section 4.4]{burago01} discusses distance functions in Hadamard spaces (and others) and a concept called $\mc E$-convexity. We modify and extend their treatment slightly in section \ref{ssec:gconv} and \ref{ssec:distInHad}, and show its connection to strong convexity in section \ref{ssec:strongconvexity}. With the concepts described there, we are able to obtain quadratic lower bounds of distance functions in Hadamard spaces in sections \ref{ssec:quadLower} (\cref{lmm:semitaylor}), which are the major tool for deriving variance inequalities in Hadamard spaces in sections \ref{sec:varineq:hadmard} and \ref{sec:varineq:hadmard:median}.
Proofs omitted from this section can be found in appendix \ref{apendix:omitted:quadLowerBound}.
\subsection{$\mc G$-convexity}\label{ssec:gconv}
\begin{definition}\label{def:Fconvex}
	Let $I\subset\R$. Let $\mc G$ be a set of functions $I \to \R$.
	A function $f\colon I\to\R$ is called \emph{$\mc G$-convex} if and only if for every $t_0\in I$ there is a function $g\in\mc G$ such that $g(t_0) = f(t_0)$ and $g(t) \leq f(t)$ for all $t\in I$. In this case, the function $g$ is called \emph{$\mc G$-tangent} of $f$ at $t_0$.
\end{definition}
If $\mc G$ is the set of affine functions, then a function is $\mc G$-convex if and only if it is convex.
For the rest of the article, we fix $\mc G$ to be
\begin{equation}
	\mc G := \setByEle{t\mapsto \sqrt{(t-t_0)^2+h^2}}{t_0 \in \R, h\geq 0}
	\eqfs
\end{equation}
The next proposition helps us to better understand this set $\mc G$.
\begin{proposition}[On the set $\mc G$]\label{prp:gconv:set}\mbox{ }
	\begin{enumerate}[label=(\roman*)]
		\item\label{prp:gconv:set:parabola}
		The set $\mc G$ is the set of square roots of nonnegative quadratic polynomials with $1$ as quadratic coefficient, i.e.,
		\begin{equation}
			\mc G = \setByEle{t\mapsto \sqrt{f(t)}}{f(t) = t^2 + at + b\,,\ a\in\R,b\in\left[\frac{a^2}4, \infty \right)}
			\eqfs
		\end{equation}
		\item
		Let $(\mc Q, d)$ be a Hilbert space of at least dimension $2$. Then $\mc G$ is the set of distance functions between a point and a line, i.e.,
		\begin{equation}
			\mc G = \setByEle{t \mapsto \ol{y}{\gamma(t)}}{y \in \mc Q \ \text{ and } \ \gamma(t) = u + v t\,,\ u,v\in\mc Q\,,\ \normof{v} = 1}
			\eqfs
		\end{equation}
	\end{enumerate}
\end{proposition}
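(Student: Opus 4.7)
The proof is essentially a computation that identifies $\mc G$ via its coefficients, so my plan is to reduce part (ii) to part (i) and prove (i) by squaring and matching quadratic polynomials.

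For part (i), the plan is to expand $(t-t_0)^2 + h^2 = t^2 - 2t_0 t + (t_0^2 + h^2)$, so that any element of $\mc G$ is the square root of a polynomial $t^2 + at + b$ with $a = -2t_0$ and $b = t_0^2 + h^2$. Since $h \geq 0$, this yields $b - a^2/4 = h^2 \geq 0$, giving the inclusion $\subseteq$. Conversely, given any polynomial $t^2 + at + b$ with $b \geq a^2/4$, I set $t_0 := -a/2$ and $h := \sqrt{b - a^2/4}$ and verify the same identity. The requirement $b \geq a^2/4$ is also exactly the condition that the polynomial be nonnegative on $\R$ (nonpositive discriminant), so the characterization matches the statement.

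For part (ii), the plan is to compute, for a line $\gamma(t) = u + v t$ with $\normof{v} = 1$ and a point $y\in\mc Q$,
\begin{equation}
    \normOf{y - \gamma(t)}^2 = t^2 - 2\ip{y-u}{v} t + \normOf{y-u}^2
    \eqcm
\end{equation}
which is of the form $t^2 + at + b$ with $a = -2\ip{y-u}{v}$ and $b = \normof{y-u}^2$. By Cauchy--Schwarz, $b - a^2/4 = \normof{y-u}^2 - \ip{y-u}{v}^2 \geq 0$, so part (i) shows the distance function lies in $\mc G$. For the reverse inclusion, given $a\in\R$ and $b\geq a^2/4$, I exploit the assumption $\dim\mc Q \geq 2$ to pick orthonormal vectors $e_1, e_2\in\mc Q$ and define $u := 0$, $v := e_1$, $y := -\frac{a}{2} e_1 + \sqrt{b - a^2/4}\, e_2$. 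A direct check gives $\ip{y-u}{v} = -a/2$ and $\normof{y-u}^2 = b$, so $\normof{y - \gamma(t)}^2 = t^2 + at + b$ as required.

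Neither step is a real obstacle; the only subtlety worth flagging is that the condition $b \geq a^2/4$ plays a double role (nonnegativity of the polynomial and realizability by $h\geq 0$ or by a Cauchy--Schwarz defect), and that the dimension hypothesis $\dim\mc Q \geq 2$ is used precisely to produce the second orthogonal direction $e_2$ needed when $b > a^2/4$ (the boundary case $b = a^2/4$ corresponds to $y$ lying on the line and only needs one dimension).
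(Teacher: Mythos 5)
Your proposal is correct and follows essentially the same route as the paper: part (i) by matching coefficients of the expanded square, and part (ii) by identifying the squared distance as a quadratic in $t$ (the paper phrases this via the Pythagorean theorem at the orthogonal projection, which is the completed-square form of your expansion, with Cauchy--Schwarz playing the role of the nonnegativity of the orthogonal component). Your explicit construction of $y$ from two orthonormal vectors for the reverse inclusion in (ii) is a welcome elaboration of the paper's terser ``we can clearly choose $y$''.
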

When generalizing from Hilbert or Euclidean spaces to Hadamard spaces, the distance functions can look different from the functions in $\mc G$. But they retain some essential properties of the Euclidean distance functions: As stated below in \cref{lem:distFun}, distance functions in Hadamard spaces are $\mc G$-convex. The study of $\mc G$-convex functions is central for proving our main results. The next two propositions provide properties of the functions in $\mc G$ and of $\mc G$-convex functions.

We say a function is \emph{1-Lipschitz} if it is Lipschitz-continuous with Lipschitz constant $1$. The term \emph{nonexpanding} is also used in the literature for this property.
\begin{proposition}[On functions in $\mc G$]\label{prp:gconv:elem}\mbox{ }
	\begin{enumerate}[label=(\roman*)]
		\item
		Every $g \in \mc G$ is $\mc G$-convex.
		\item \label{prp:gconv:elem:equal}
		Let $g_1, g_2 \in \mc G$. If there are $s,t\in\R$, $s\neq t$ such that $g_1(s) = g_2(s)$ and $g_1(t) = g_2(t)$, then $g_1 = g_2$.
		\item \label{prp:gconv:elem:noWiggle}
		Let $g_1, g_2 \in \mc G$. If there are $r,s,t\in\R$, $r<s<t$ such that $g_1(r) \leq g_2(r)$, $g_1(s) \geq g_2(s)$, and $g_1(t) \leq g_2(t)$, then $g_1 = g_2$.
		\item \label{prp:gconv:elem:intersect}
		Let $t_1,t_2\in\R$. If $f\colon\R\to\Rp$ is $1$-Lipschitz with $f(t_1) + f(t_2) \geq \abs{t_1 - t_2}$, then there is a function $g\in\mc G$ such that $g(t_1) = f(t_1)$ and $g(t_2) = f(t_2)$.
	\end{enumerate}
\end{proposition}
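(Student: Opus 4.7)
Part (i) is immediate: for $g \in \mc G$, the function $g$ itself serves as a $\mc G$-tangent of $g$ at every $t_0 \in \R$. For (ii) and (iii), the central observation is that for $g_i(t) = \sqrt{(t-a_i)^2 + h_i^2} \in \mc G$, the square $g_i^2$ is a monic quadratic polynomial in $t$, so the difference $\phi := g_1^2 - g_2^2$ is an affine function of $t$.

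For (ii), two distinct zeros of the affine function $\phi$ force $\phi \equiv 0$, i.e., $g_1^2 = g_2^2$ pointwise; since $g_1, g_2 \geq 0$, we conclude $g_1 = g_2$. For (iii), using $g_1, g_2 \geq 0$, the hypotheses translate into $\phi(r) \leq 0$, $\phi(s) \geq 0$, $\phi(t) \leq 0$ with $r < s < t$. A nonconstant affine function is strictly monotone and cannot exhibit this sign alternation, while a nonzero constant cannot simultaneously be $\leq 0$ and $\geq 0$. Hence $\phi \equiv 0$, and (ii) yields $g_1 = g_2$.

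The main work is in (iv). The most transparent viewpoint is the geometric one suggested by \cref{prp:gconv:set}: identify $g(t) = \sqrt{(t-t_0)^2+h^2}$ with the Euclidean distance in $\R^2$ from $(t,0)$ to $(t_0, h) \in \R \times \Rp$. The task becomes: find a point in the closed upper half-plane at distance $y_i := f(t_i)$ from $(t_i, 0)$ for $i = 1,2$. This two-circle intersection is nonempty in the closed upper half-plane if and only if the distance $|t_1-t_2|$ between the centers lies in $[|y_1-y_2|, y_1+y_2]$. The upper bound is exactly the hypothesis $f(t_1)+f(t_2) \geq |t_1-t_2|$, and the lower bound $|t_1-t_2| \geq |y_1-y_2|$ follows from $f$ being 1-Lipschitz.

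To verify this concretely: if $t_1 \neq t_2$, subtracting the two equations $(t_i - t_0)^2 + h^2 = y_i^2$ eliminates $h^2$ and yields $t_0$ uniquely, after which a short algebraic manipulation produces the factorization
\begin{equation*}
4(t_1-t_2)^2\, h^2 = (y_1+y_2 + |t_1-t_2|)(y_1+y_2 - |t_1-t_2|)(|t_1-t_2| + |y_1-y_2|)(|t_1-t_2| - |y_1-y_2|),
\end{equation*}
each factor being nonnegative by one of the two hypotheses, so $h \geq 0$ can be chosen and the resulting $g$ lies in $\mc G$. The degenerate case $t_1 = t_2$ forces $y_1 = y_2$, and $g(t) := \sqrt{(t-t_1)^2 + y_1^2}$ works. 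The only real bookkeeping is this factorization; no step is a genuine obstacle.
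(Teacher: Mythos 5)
Your proof is correct and follows essentially the same route as the paper: (i) by self-tangency, (ii) and (iii) via the observation that $g_1^2-g_2^2$ is affine, and (iv) by solving the two equations for the parameters $(t_0,h)$ and checking $h^2\geq 0$ under the two hypotheses, which is exactly the content of the paper's auxiliary \cref{lmm:gconv:solve} (your factorization of $4(t_1-t_2)^2h^2$ agrees with the expression derived there). The only difference is cosmetic: you also treat the degenerate case $t_1=t_2$ explicitly, which the paper leaves implicit.
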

\begin{proposition}[On $\mc G$-convex functions]\label{prp:gconv:fun}
	Let $I\subset \R$ be convex. Let $f\colon I\to\R$ be $\mc G$-convex.  Then
	\begin{enumerate}[label=(\roman*)]
		\item\label{prp:gconv:fun:nonneg} $f$ is nonnegative,
		\item\label{prp:gconv:fun:Lipschitz} $f$ is $1$-Lipschitz,
		\item\label{prp:gconv:fun:convex} $f$ is convex,
		\item\label{prp:gconv:fun:addLower} for all $t_1, t_2 \in I$, we have $f(t_1) + f(t_2) \geq \abs{t_1 - t_2}$,
		\item\label{prp:gconv:fun:distOnIsAbs}  if there is $t_0\in I$ such that $f(t_0) = 0$, then $f(t) = \abs{t-t_0}$ for all $t\in I$.
	\end{enumerate}
\end{proposition}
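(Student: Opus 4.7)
My plan is to attack the five properties essentially in order, exploiting throughout the two key elementary facts about elements of $\mc G$: every $g\in\mc G$ has the form $g(t) = \sqrt{(t-t_0)^2+h^2}$ with $h\geq 0$, so (a) $g$ is nonnegative, convex, and $1$-Lipschitz (the derivative $(t-t_0)/\sqrt{(t-t_0)^2+h^2}$ lies in $[-1,1]$), and (b) $g(t_1)+g(t_2)\geq |t_1-t_0|+|t_2-t_0|\geq |t_1-t_2|$. Each of (i)--(iv) then follows from choosing an appropriate $\mc G$-tangent and transferring the property from $g$ to $f$.

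Concretely: for \ref{prp:gconv:fun:nonneg}, fix $t\in I$ and let $g$ be a $\mc G$-tangent at $t$; then $f(t)=g(t)\geq 0$. For \ref{prp:gconv:fun:Lipschitz}, given $s,t\in I$, let $g$ be a $\mc G$-tangent at $t$; then $f(t)-f(s)=g(t)-f(s)\leq g(t)-g(s)\leq |t-s|$, and swapping $s,t$ gives the matching lower bound. For \ref{prp:gconv:fun:convex}, let $u=(1-\lambda)s+\lambda t$ with $\lambda\in[0,1]$ and pick a $\mc G$-tangent $g$ at $u$; by convexity of $g$ we get $f(u)=g(u)\leq (1-\lambda)g(s)+\lambda g(t)\leq (1-\lambda)f(s)+\lambda f(t)$. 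For \ref{prp:gconv:fun:addLower}, take any $\mc G$-tangent $g$ at (say) $t_1$; using fact (b) above, $f(t_1)+f(t_2)=g(t_1)+f(t_2)\geq g(t_1)+g(t_2)\geq |t_1-t_2|$.

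Part \ref{prp:gconv:fun:distOnIsAbs} is the most interesting, but still short. Suppose $f(t_0)=0$ and let $g\in\mc G$ be a $\mc G$-tangent at $t_0$, so $g(t_0)=0$. Since $g(t)=\sqrt{(t-t_0')^2+h^2}$ vanishes only when $h=0$ and $t=t_0'$, we must have $g(t)=|t-t_0|$. Thus $f(t)\geq g(t)=|t-t_0|$ for all $t\in I$. The reverse inequality is immediate from \ref{prp:gconv:fun:Lipschitz} applied to $(t,t_0)$, giving $f(t)=|f(t)-f(t_0)|\leq |t-t_0|$.

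I do not expect any serious obstacle: the entire proposition is a routine transfer of elementary properties from the explicit family $\mc G$ to its tangential envelope, and the only care needed is to pick the tangent at the right base point in each item so that $f$ appears on the correct side of the inequality.
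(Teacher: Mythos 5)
Your proof is correct, and for items \ref{prp:gconv:fun:nonneg}, \ref{prp:gconv:fun:Lipschitz}, \ref{prp:gconv:fun:addLower}, and \ref{prp:gconv:fun:distOnIsAbs} it is essentially the argument the paper gives: evaluate or compare against a $\mc G$-tangent chosen at the right base point, using that every $g\in\mc G$ is nonnegative, convex, $1$-Lipschitz, and satisfies $g(t_1)+g(t_2)\geq\abs{t_1-t_2}$ (the paper cites \cref{lmm:gconv:solve} for this last fact where you use the triangle inequality through the vertex $t_0'$ of $g$; both are fine). The one genuine divergence is item \ref{prp:gconv:fun:convex}: the paper derives convexity from \cref{prp:gconv:characterization} \ref{prp:gconv:characterization:aboveBelow}, i.e.\ from the fact that the element of $\mc G$ interpolating $f$ at two points lies above $f$ between them, which in turn requires constructing such an interpolant via \cref{prp:gconv:elem} \ref{prp:gconv:elem:intersect} and hence leans on items \ref{prp:gconv:fun:nonneg}, \ref{prp:gconv:fun:Lipschitz}, \ref{prp:gconv:fun:addLower}. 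Your argument --- take the tangent at the interior point $u=(1-\lambda)s+\lambda t$ and chain $f(u)=g(u)\leq(1-\lambda)g(s)+\lambda g(t)\leq(1-\lambda)f(s)+\lambda f(t)$ --- is the standard fact that a function admitting a convex minorant touching it at every point is convex; it is more self-contained and avoids the two-point interpolation machinery entirely. Both routes are valid; the paper's buys a stronger structural statement (the above/below dichotomy of \cref{prp:gconv:characterization}) that it needs elsewhere anyway, while yours is the shorter path if one only wants convexity.
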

\begin{notation}\label{not:leftright}\mbox{ }
  For $I \subset \R$ convex, denote $\woinf{I} := I \setminus \cb{\inf I}$ and $\wosup{I} := I \setminus \cb{\sup I}$. For a function $f\colon I\to\R$, denote $\semiDerivs f(t) = [\min(f\ld(t), f\rd(t)), \max(f\ld(t), f\rd(t))]$ for $t\in \woinf I \cap \wosup I$. If $\inf I > -\infty$, set $\semiDerivs f(\inf I) = \cb{f\rd(\inf I)}$, and, if $\sup I < \infty$, set $\semiDerivs f(\sup I) = \cb{f\ld(\sup I)}$, assuming the left and right derivatives exist on $\woinf I$ and $\wosup I$, respectively.
\end{notation}
\begin{proposition}[Characterization of $\mc G$-convex functions]\label{prp:gconv:characterization}
	Let $I\subset \R$ be convex.
	\begin{enumerate}[label=(\roman*)]
		\item\label{prp:gconv:characterization:aboveBelow}
			Let $f\colon I\to\R$ be $\mc G$-convex. Let $g\in\mc G$ and $t_1, t_2\in I$, $t_1 < t_2$ such that $g(t_i) = f(t_i)$, $i=1,2$. Then $g(t) \geq f(t)$ for all $t \in [t_1, t_2]$ and $g(t) \leq f(t)$ for all $t \in I \setminus[t_1, t_2]$.
		\item\label{prp:gconv:characterization:lower}
			Let $f\colon I\to\R$. Assume for every $t_1, t_2\in I$, $t_1 < t_2$, there is $g\in\mc G$ such that $g(t_i) = f(t_i)$, $i=1,2$ and $g(t) \geq f(t)$ for all $t \in [t_1, t_2]$. Then the right derivative of $f$ exists on $\wosup I$ and the left derivative on $\woinf I$. Let $t_0\in I$ and $v_0\in\semiDerivs f(t_0)$. Then
			\begin{align}
				s
				&\mapsto
				\sqrt{\br{s - t_0 + f(t_0)v_0}^2 + \br{1 - v_0^2} f(t_0)^2}
				\\& =
				\sqrt{f(t_0)^2 + 2 (s-t_0) f(t_0)v_0 + (s-t_0)^2}
			\end{align}
			is a $\mc G$-tangent of $f$ at $t_0$.
			In particular, $f$ is $\mc G$-convex.
	\end{enumerate}
\end{proposition}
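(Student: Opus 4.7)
I would treat the two inequalities separately, establishing the inside claim first since the outside argument uses it. For $g \geq f$ on $[t_1, t_2]$, suppose for contradiction that $g(t) < f(t)$ at some $t \in (t_1, t_2)$. Since $f$ is $\mc G$-convex, there is a $\mc G$-tangent $g_t$ at $t$ with $g_t(t) = f(t)$ and $g_t \leq f$ on $I$. Then $g_t(t_1) \leq g(t_1)$, $g_t(t) > g(t)$, and $g_t(t_2) \leq g(t_2)$, so \cref{prp:gconv:elem} \ref{prp:gconv:elem:noWiggle} forces $g_t = g$, contradicting $g_t(t) > g(t)$. For $g \leq f$ outside $[t_1, t_2]$, consider $s > t_2$ in $I$; the case $s < t_1$ is symmetric. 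Take a $\mc G$-tangent $g_2$ to $f$ at $t_2$, so $g_2(t_2) = f(t_2) = g(t_2)$ and $g_2 \leq f$ on $I$. If $g = g_2$, the conclusion is immediate. Otherwise, by \cref{prp:gconv:set} \ref{prp:gconv:set:parabola}, $g^2$ and $g_2^2$ are monic quadratic polynomials, so $g^2 - g_2^2$ is a nonzero affine function; its zero at $t_2$ yields $g(s)^2 - g_2(s)^2 = c(s - t_2)$ for some $c \neq 0$. Combining the already established inside inequality with $g_2 \leq f$ gives $g \geq f \geq g_2$ on $[t_1, t_2]$, forcing $c \leq 0$ and hence $c < 0$. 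For $s > t_2$ it follows that $g(s)^2 < g_2(s)^2$, and nonnegativity yields $g(s) < g_2(s) \leq f(s)$.

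\textbf{Part (ii).} First, each hypothesized function $g := g_{t_1, t_2} \in \mc G$ is both convex and $1$-Lipschitz, and from $f \leq g$ on $[t_1, t_2]$ with $f(t_i) = g(t_i)$ one readily deduces that $f$ inherits both properties. In particular, $f\rd$ exists on $\wosup I$ and $f\ld$ on $\woinf I$, both valued in $[-1, 1]$, so $\semiDerivs f(t_0) \subset [-1, 1]$; moreover $f(t_0) \geq 0$ since $f(t_0) = g_{t_0, t}(t_0) \geq 0$ for any $t \in I \setminus \cbOf{t_0}$. Second, the formula $h(s) = \sqrt{(s - t_0 + f(t_0) v_0)^2 + (1 - v_0^2) f(t_0)^2}$ defines a function in $\mc G$ by \cref{prp:gconv:set} \ref{prp:gconv:set:parabola} since $(1 - v_0^2) f(t_0)^2 \geq 0$, and direct computation gives $h(t_0) = f(t_0)$ and $h'(t_0) = v_0$. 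Third, to show $h(s) \leq f(s)$ for $s \in I$, fix $s > t_0$ (the case $s < t_0$ is symmetric). Take $\tilde g := g_{t_0, s}$ from the hypothesis: $\tilde g(t_0) = f(t_0)$, $\tilde g(s) = f(s)$, and $\tilde g \geq f$ on $[t_0, s]$. Since $\tilde g - f \geq 0$ vanishes at $t_0$, we get $\tilde g'(t_0) \geq f\rd(t_0) \geq v_0 = h'(t_0)$. As $h, \tilde g \in \mc G$ agree at $t_0$, $h^2 - \tilde g^2$ is affine with a zero at $t_0$ and derivative $2 f(t_0)(h'(t_0) - \tilde g'(t_0)) \leq 0$ there. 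Hence $h(s)^2 \leq \tilde g(s)^2$, and nonnegativity yields $h(s) \leq \tilde g(s) = f(s)$. The ``in particular'' clause follows by varying $v_0$ over $\semiDerivs f(t_0)$.

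\textbf{Main obstacle.} The delicate point is Part (ii), Step 3: the hypothesis supplies \emph{upper} $\mc G$-bounds on $f$, whereas $\mc G$-convexity demands \emph{lower} $\mc G$-bounds. The bridge is that squares of $\mc G$-functions are monic quadratic polynomials, so differences of such squares are affine; the slope comparison at $t_0$ (coming from $\tilde g \geq f$ touching at $t_0$ together with $v_0 \leq f\rd(t_0)$) then pins down the sign of this affine function at every $s \neq t_0$, extracting the required lower bound $h \leq f$ from the upper bound $\tilde g \geq f$.
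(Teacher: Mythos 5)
Your proof is correct, and for the substantive half of the statement it takes a genuinely different route from the paper. For part (i) the inside inequality is argued exactly as in the paper (tangent at an interior point plus \cref{prp:gconv:elem} \ref{prp:gconv:elem:noWiggle}); for the outside inequality the paper instead builds an interpolant through $(t,f(t))$ and $(t_2,f(t_2))$ via \cref{prp:gconv:elem} \ref{prp:gconv:elem:intersect} and applies \ref{prp:gconv:elem:noWiggle} again, whereas you compare $g$ with the tangent at $t_2$ and exploit that $g^2-g_2^2$ is affine — which is really the same mechanism that powers \ref{prp:gconv:elem:noWiggle}, just inlined. The real divergence is in part (ii): the paper first proves the "outside" inequality for the interpolants $g_{t_1,t_2}$, then obtains the tangent as a limit of interpolants $g_n$ at $t_n^\pm\to t_0$, computing their parameters explicitly via \cref{lmm:gconv:solve} and passing to the limit. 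You instead verify the explicitly stated candidate $h$ directly: for each $s$ you compare $h$ with $g_{t_0,s}$ through the one-sided slope inequality $\tilde g\rd(t_0)\geq f\rd(t_0)\geq v_0$ and the affine difference of squares. This is shorter, avoids the limit argument and the parameter computation entirely, and does not need the preliminary "outside" step; what it gives up is that the paper's construction \emph{derives} the tangent formula, while yours must be handed the formula (which the statement supplies anyway). One cosmetic point: when $f(t_0)=0$ the derivatives $h\pr(t_0)$ and $\tilde g\pr(t_0)$ do not exist two-sidedly, but the slope of the affine function $h^2-\tilde g^2$ should then be computed from the polynomials themselves (it is $2f(t_0)v_0 - 2\tilde g(t_0)\tilde g\rd(t_0)=0$), and your conclusion $h\leq\tilde g=f$ still holds, in fact with equality; stating the slope as a right derivative of the squares throughout removes the issue.
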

\begin{proof}\mbox{ }
	\begin{enumerate}[label=(\roman*)]
		\item
		Assume there would be $t\in(t_1, t_2)$ with $g(t) < f(t)$. Let $\tilde g$ be the $\mc G$-tangent of $f$ at $t$. Then $\tilde g(t) = f(t) > g(t)$ and $\tilde g(t_i) \leq f(t_i) = g(t_i)$, $i=1,2$. By \cref{prp:gconv:elem} \ref{prp:gconv:elem:noWiggle}, that is not possible.

		Assume there would be $t\in I\setminus[t_1, t_2]$ with $g(t) > f(t)$. Without loss of generality, $t < t_1$. Then, by \cref{prp:gconv:fun} \ref{prp:gconv:fun:nonneg}, \ref{prp:gconv:fun:Lipschitz}, \ref{prp:gconv:fun:addLower} and \cref{prp:gconv:elem} \ref{prp:gconv:elem:intersect}, there is $\tilde g\in\mc G$ with $\tilde g(t) = f(t) < g(t)$, $\tilde g(t_2) = f(t_2) = g(t_2)$. Then, according to the first part of this item, which we just proved, $\tilde g(t_1) \geq f(t_1) = g(t_1)$. By \cref{prp:gconv:elem} \ref{prp:gconv:elem:noWiggle} that is not possible.
		\item
		For $t_1,t_2\in I$, $t_1 < t_2$ denote $g_{t_1, t_2} \in \mc G$ a function with $g_{t_1, t_2}(t_i) = f(t_i)$, $i=1,2$ and $g_{t_1, t_2}(t) \geq f(t)$ for all $t \in [t_1, t_2]$.
		Assume there would be $t\in\R\setminus[t_1, t_2]$ with $g_{t_1, t_2}(t) > f(t)$. Without loss of generality, $t < t_1$. Then we have $g_{t, t_2}\in\mc G$ with $g_{t, t_2}(t) = f(t) < g_{t_1, t_2}(t)$, $g_{t, t_2}(t_2) = f(t_2) = g_{t_1, t_2}(t_2)$ and $g_{t, t_2}(t_1) \geq f(t_1) = g_{t_1, t_2}(t_1)$ as $t_1 \in [t, t_2]$. By \cref{prp:gconv:elem} \ref{prp:gconv:elem:noWiggle} that is not possible. We will use this property later.

		Before, we state two further properties of $f$: The function $f$ is $1$-Lipschitz as $\abs{f(t_1)-f(t_2)} = \abs{g_{t_1, t_2}(t_1) - g_{t_1, t_2}(t_2)} \leq \abs{t_1 -t_2}$. Moreover, $f$ is convex because $f(t)\leq g_{t_1, t_2}(t)$ for all $t\in[t_1,t_2]$ and  $g_{t_1, t_2}$ is convex.

		Let $t_0\in I$. As $f$ is convex and Lipschitz, $\semiDerivs f(t_0)$ exists. For all $v_0\in \semiDerivs f(t_0)$, there are $(t_n^+)_{n\in\N}, (t_n^-)_{n\in\N} \subset I$ with $t_n^+ > t_n^-$ and $t_0\in[t_n^-, t_n^+]$ such that both sequences converge to $t_0$, and
		\begin{equation}
			\lim_{n\to\infty} \frac{f(t_n^+) - f(t_n^-)}{t_n^+ - t_n^-} = v_0
			\eqcm
		\end{equation}
		see \cref{lmm:middleDeriv}.
		Let $g_n\in\mc G$, such that $g_n(t_n^+) = f(t_n^+)$, $g_n(t_n^-) = f(t_n^-)$. Then $g_n(t) \geq f(t)$ for all $t \in [t_n^-, t_n^+]$ by assumption and $g_n(t) \leq f(t)$ for all $t \in I \setminus[t_n^-, t_n^+]$ as we have shown in the first paragraph of this proof.
		Let $s_n\in\R$, $h_n\in\Rp$ be the parameters of $g_n$, i.e.,
		\begin{equation}
			g_n(x) = \sqrt{(x-s_n)^2 + h_n^2}
			\eqfs
		\end{equation}
		Using \cref{lmm:gconv:solve}, we can explicitly calculate $s_n$ and $h_n$ and their limits $s_\infty := \lim_{n\to\infty} s_n$ and $h_\infty := \lim_{n\to\infty} h_n$: As $f$ is convex, its one-sided derivatives exits. Thus, using continuity of $f$, we obtain
		\begin{align}
			s_n &=
			\frac{\br{t_n^+}^2 - \br{t_n^-}^2 + f(t_n^-)^2 - f(t_n^+)^2 }{2 (t_n^+ - t_n^-)}
			\\&=
			\frac{t_n^+ + t_n^-}2  - \frac{f(t_n^+) + f(t_n^-)}2 \cdot \frac{f(t_n^+) - f(t_n^-)}{t_n^+ - t_n^-}
			\\&\xrightarrow{n\to\infty}
			t_0 - f(t_0)v_0
			\eqfs
		\end{align}
		For the other parameter, we obtain
		\begin{align}
			h_n^2 &=
			\frac14 \br{2\br{f(t_n^+)^2 + f(t_n^-)^2} - (t_n^+ - t_n^-)^2 - \br{\frac{f(t_n^-)^2 - f(t_n^+)^2}{t_n^+ - t_n^-}}^2}
			\\&=
			\frac{f(t_n^+)^2 + f(t_n^-)^2}2 - \br{\frac{t_n^+ - t_n^-}2}^2 - \br{\frac{f(t_n^+) + f(t_n^-)}2\cdot\frac{f(t_n^+) - f(t_n^-)}{t_n^+ - t_n^-}}^2
			\\&\xrightarrow{n\to\infty}
			\br{1 - v_0^2} f(t_0)^2
			\eqfs
		\end{align}
		Define the function
		\begin{equation}
			g_\infty(x) = \sqrt{(x-s_\infty)^2 + h_\infty^2}
			\eqfs
		\end{equation}
		Obviously, $g_\infty\in\mc G$. Let $x\in\R$. As $(s,h) \mapsto \sqrt{(x-s)^2 + h^2}$ is continuous, $g_\infty(x) = \lim_{n\to\infty} g_n(x)$. Furthermore, if $x\neq t_0$, there is $n_0\in\N$ such that $g_n(x) \leq f(x)$ for all $n \geq n_0$. Thus, $g_\infty(x) \leq f(x)$. Lastly, $g_\infty(t_0) = \lim_{n\to\infty} g_n(t_0) \geq f(t_0)$. Thus, $g_\infty\in\mc G$ is a $\mc G$-tangent of $f$ at $t_0$.
	\end{enumerate}
\end{proof}
A central property of $\mc G$-convex functions for the proof of the main result of this section, \cref{lmm:semitaylor}, is the following lower bound on the second derivative.
\begin{notation}\label{not:interior}\mbox{ }
   For a convex set $I\subset \R$, let $\mathring I$ be the \emph{interior interval}, $\mathring I := I\rd \cap I\ld$.
\end{notation}
\begin{lemma}\label{lmm:gconv:second:deriv}
    Let $I\subset \R$ be convex.
	Assume $f\colon I\to \R$ is $\mc G$-convex. Let $s\in \mathring I$ such that $f$ is twice differentiable at $s$. Then $f(s) > 0$ and
	\begin{equation}
		f\prr(s) \geq
		\frac{1-f\pr(s)^2}{f(s)}
		\eqfs
	\end{equation}
\end{lemma}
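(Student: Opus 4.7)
The plan is to exhibit a $\mc G$-tangent $g$ of $f$ at $s$, compute its second derivative at $s$ explicitly, and then invoke the standard second-order comparison at a contact point of two ordered twice-differentiable functions. The only genuinely delicate step is ruling out the degenerate case $f(s) = 0$; everything else reduces to elementary calculus with the square root parametrization of $\mc G$.

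First, I would establish $f(s) > 0$. By \cref{prp:gconv:fun} \ref{prp:gconv:fun:nonneg}, $f$ is nonnegative, so $f(s) \geq 0$. Suppose towards a contradiction that $f(s) = 0$. Any $\mc G$-tangent $g(t) = \sqrt{(t-s_0)^2 + h^2}$ of $f$ at $s$ would then satisfy $(s-s_0)^2 + h^2 = 0$, forcing $s_0 = s$ and $h = 0$, and hence $g(t) = \abs{t-s}$. The inequality $g \leq f$ combined with $f(s) = 0$ yields $f(t) \geq \abs{t-s}$ for all $t \in I$, so the right and left difference quotients of $f$ at $s$ would lie in $[1, \infty)$ and $(-\infty, -1]$, respectively, contradicting the existence of $f\pr(s)$ granted by twice differentiability.

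Next, let $g(t) = \sqrt{(t-s_0)^2 + h^2}$ be any $\mc G$-tangent of $f$ at $s$, which exists by $\mc G$-convexity. Since $g(s)^2 = f(s)^2 > 0$, the radicand is bounded away from zero near $s$, so $g$ is smooth on a neighborhood of $s$. The inequality $g \leq f$ with equality at $s$ means $f - g$ attains its minimum on $I$ at the interior point $s \in \mathring I$. Twice differentiability of $f-g$ at $s$ together with the first- and second-order necessary conditions for an interior minimum give
\begin{equation}
    g\pr(s) = f\pr(s) \qquad \text{and} \qquad f\prr(s) \geq g\prr(s)\eqfs
\end{equation}

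It remains to compute $g\prr(s)$. Direct differentiation yields $g\pr(t) = (t-s_0)/g(t)$ and $g\prr(t) = h^2/g(t)^3$. At $t = s$, the first identity together with $g\pr(s) = f\pr(s)$ gives $s - s_0 = f(s) f\pr(s)$. Substituting into $(s-s_0)^2 + h^2 = g(s)^2 = f(s)^2$ produces $h^2 = f(s)^2 (1 - f\pr(s)^2)$, which is nonnegative because $f$ is $1$-Lipschitz by \cref{prp:gconv:fun} \ref{prp:gconv:fun:Lipschitz}. Therefore
\begin{equation}
    g\prr(s) = \frac{h^2}{f(s)^3} = \frac{1 - f\pr(s)^2}{f(s)}\eqcm
\end{equation}
and the inequality $f\prr(s) \geq g\prr(s)$ established above gives the claim.
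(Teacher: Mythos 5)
Your proof is correct and follows essentially the same route as the paper: take a $\mc G$-tangent $g$ at $s$, use the first- and second-order conditions for the interior minimum of $f-g$ at $s$, and evaluate $g\prr(s)$ via the explicit parametrization. The only cosmetic differences are that you rule out $f(s)=0$ by the lower bound $f(t)\geq\abs{t-s}$ rather than citing \cref{prp:gconv:fun} \ref{prp:gconv:fun:distOnIsAbs}, and you compute $g\prr(s)g(s)=1-g\pr(s)^2$ directly instead of invoking \cref{lmm:secondDerivInG}.
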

\begin{proof}
	If $f(s) = 0$, then $f$ is not differentiable at $s$ by \cref{prp:gconv:fun} \ref{prp:gconv:fun:distOnIsAbs}. Thus, $f(s) > 0$.
	Let $g$ be the $\mc G$-tangent of $f$ at $s$.
	Define $h(t) := f(t)-g(t)$. As $g(s) = f(s) > 0$, $g$ is twice differentiable at $s$. Thus, $h$ is twice differentiable at $s$.
	As $g$ is a $\mc G$-tangent of $f$ at $s$, we have $h(s) = 0$ and $h(t)\geq 0$ for all $t\in I$. Thus, as $s\in\mathring I$, $h\pr(s) = 0$ and $h\prr(s) \geq 0$. Thus, $f\pr(s) = g\pr(s)$ and $f\prr(s) \geq g\prr(s)$.
	By \cref{lmm:secondDerivInG}, $g\prr(s) g(s) = 1 - g\pr(s)^2$. Thus,
	\begin{equation}
		f\prr(s) \geq g\prr(s) = \frac{1 - g\pr(s)^2}{g(s)} = \frac{1 - f\pr(s)^2}{f(s)}
		\eqfs
	\end{equation}
\end{proof}
\subsection{Distance Functions in Hadamard Spaces}\label{ssec:distInHad}
We first recall the definition of a geodesic in Hadamard spaces and state some basic properties. In some context, one may want to distinguish between \textit{shortest paths} that are globally minimizing the length of a curve and \textit{geodesics} that are only required to minimize length locally. As our main focus is on Hadamard spaces, where these two terms are identical \cite[Theorem 9.2.2]{burago01}, we only use the term geodesic here. For a full treatment see, e.g., \cite{burago01}.
\begin{definition}\label{def:geodesic}
	Let $I\subset \R$ be convex.
	\begin{enumerate}[label=(\roman*)]
		\item A function $\gamma\colon I \to \mc Q$ is called \emph{geodesic} if and only if
		\begin{equation}
			\ol{\gamma(r)}{\gamma(t)} =  \ol{\gamma(r)}{\gamma(s)} + \ol{\gamma(s)}{\gamma(t)}
		\end{equation}
		for all $r,s,t\in I$ with $r<s<t$.
		\item Let  $\gamma\colon I \to \mc Q$ be a geodesic. If there is $L\in\Rp$ such that $\ol{\gamma(s)}{\gamma(t)} = L\abs{s-t}$ for all $s,t\in I$, then the geodesic is said to have \emph{constant speed}. If $L = 1$, we call $\gamma$ a \emph{unit-speed geodesic}. Denote by $\GeodUS$ the set of unit-speed geodesics in $\mc Q$.
		\item The metric space $(\mc Q, d)$ is called \emph{geodesic space}, if and only if each pair of points in $\mc Q$ is connected by a geodesic.
	\end{enumerate}
\end{definition}
\begin{proposition}\label{prp:geod:basics}
	Assume $(\mc Q, d)$ is Hadamard. Let $q,p\in\mc Q$.
	\begin{enumerate}[label=(\roman*)]
		\item There is a unique constant speed geodesic $\gamma_{q,p}\colon[0,1]\to\mc Q$ with $\gamma_{q,p}(0) = q$ and $\gamma_{q,p}(1) = p$.
		\item Let $t\in[0,1]$, then $(q,p)\mapsto \gamma_{q,p}(t)$ is continuous.
		\item Let $y\in\mc Q$ and $t\in[0,1]$. Then
		\begin{equation}
			\ol{y}{\gamma_{q,p}(t)}^2 \leq (1-t) \,\ol{y}{q}^2 + t\,\ol{y}{p}^2 - t(t-1)\,\ol{q}{p}^2
			\eqfs
		\end{equation}
	\end{enumerate}
\end{proposition}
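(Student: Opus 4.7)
The plan is to bootstrap everything from the single-point NPC inequality in the definition of Hadamard space.

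First, I would establish existence and uniqueness of midpoints of any pair $y_0, y_1\in\mc Q$. Existence is supplied by the definition; testing the inequality with $q\in\cb{y_0, y_1}$ yields $\ol{y_0}{m}, \ol{y_1}{m} \leq \frac12\ol{y_0}{y_1}$, and the triangle inequality forces both to be equalities, so $m$ is a genuine midpoint. For uniqueness, let $m, m'$ be two midpoints of $y_0, y_1$ and let $\tilde m$ be any midpoint of the pair $m, m'$ (which exists by the same definition). Applying the NPC inequality to the pair $m, m'$ at the test points $q = y_0$ and $q = y_1$, then summing, gives
\begin{equation*}
	\ol{y_0}{\tilde m}^2 + \ol{y_1}{\tilde m}^2 \leq \tfrac12\ol{y_0}{y_1}^2 - \tfrac12\ol{m}{m'}^2;
\end{equation*}
combining with $(a+b)^2 \leq 2(a^2+b^2)$ and the triangle inequality $\ol{y_0}{y_1} \leq \ol{y_0}{\tilde m} + \ol{\tilde m}{y_1}$ forces $\ol{m}{m'} = 0$.

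For (i), I would construct $\gamma_{q,p}$ by dyadic interpolation: set $\gamma(0) := q$, $\gamma(1) := p$, and recursively define $\gamma((2k+1)/2^{n+1})$ as the unique midpoint of $\gamma(k/2^n)$ and $\gamma((k+1)/2^n)$. An induction on $n$, again invoking the NPC inequality and the midpoint characterisation, shows $\ol{\gamma(k/2^n)}{\gamma(l/2^n)} = \ol{q}{p}\cdot|k-l|/2^n$ for all dyadic parameters; in particular, the dyadic partial map is $\ol{q}{p}$-Lipschitz in its argument. Completeness of $\mc Q$ then extends it uniquely to a continuous, constant-speed geodesic $\gamma_{q,p}\colon[0,1]\to\mc Q$. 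Uniqueness of the full geodesic follows from uniqueness of midpoints iterated over dyadic scales.

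For (iii), the case $t = 1/2$ is precisely the NPC inequality with $m = \gamma_{q,p}(1/2)$. For a general dyadic $t = k/2^n \leq 1/2$ I induct on $n$: the restriction of $\gamma_{q,p}$ to $[0, 1/2]$ reparametrises to a constant-speed geodesic from $q$ to $\gamma_{q,p}(1/2)$, so the inductive hypothesis at parameter $2t$ bounds $\ol{y}{\gamma_{q,p}(t)}^2$ in terms of $\ol{y}{q}^2$ and $\ol{y}{\gamma_{q,p}(1/2)}^2$; applying the $t=1/2$ case to the latter and simplifying using $\ol{q}{\gamma_{q,p}(1/2)} = \frac12\ol{q}{p}$ produces the stated inequality. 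The case $t \in [1/2, 1]$ is symmetric, and continuity of $t \mapsto \gamma_{q,p}(t)$ extends the bound from dyadic parameters to all $t\in[0,1]$. For (ii), I would combine (iii) with the triangle inequality $\ol{\gamma_{q,p}(t)}{\gamma_{q',p'}(t)} \leq \ol{\gamma_{q,p}(t)}{\gamma_{q,p'}(t)} + \ol{\gamma_{q,p'}(t)}{\gamma_{q',p'}(t)}$ and bound each summand via (iii) applied to the appropriate geodesic, where one endpoint is shared; the resulting bound vanishes as $(q',p')\to(q,p)$. The main obstacle is the dyadic bookkeeping in (i): one must inductively verify — leaning heavily on uniqueness of midpoints at every level — that the recursively defined midpoints genuinely fit together into a constant-speed geodesic on the dyadic grid, after which completeness handles the extension.
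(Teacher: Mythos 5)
Your proposal is correct, but it is worth noting that the paper does not actually prove this proposition at all: it simply cites \cite[Proposition 2.3]{sturm03}. What you have written is, in effect, a reconstruction of Sturm's own argument from the single two-point inequality \eqref{eq:def:hadmard} used as the definition of a Hadamard space: extract metric midpoints and their uniqueness from the defining inequality, build the geodesic by dyadic bisection plus completeness, prove the convexity inequality (iii) first at $t=\tfrac12$ (where it \emph{is} the defining inequality) and then at all dyadic $t$ by a self-similar induction, and finally deduce joint continuity (ii) from (iii). All the individual steps check out: the uniqueness argument via the midpoint $\tilde m$ of $m,m'$ does force $\ol m{m'}=0$ (one can even shortcut it by testing the defining inequality for the pair $(y_0,y_1)$ at $q=m'$, which gives $\ol{m'}{m}^2\le 0$ directly); the induction for (iii) closes because $t(1-t)=\tfrac{s}{4}(2-s)$ for $s=2t$; and for (ii) the crude bound obtained from (iii) with one shared endpoint, combined with the triangle inequality through $\gamma_{q,p'}(t)$, does vanish as $(q',p')\to(q,p)$, which is all continuity requires. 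What your route buys over the paper's is self-containedness from the stated definition; what it costs is exactly the dyadic bookkeeping you flag.

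One small remark on the target statement itself: as printed, the proposition has $-t(t-1)\,\ol qp^2=+t(1-t)\,\ol qp^2$, which is \emph{weaker} than the standard NPC inequality with $-t(1-t)\,\ol qp^2$ that you actually prove (and that the paper silently uses later, in \eqref{eq:hadamard:geodesic} in the proof of \cref{lem:distFun}). This is evidently a sign typo in the proposition; your argument establishes the stronger, intended form, so nothing is lost.
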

See, e.g., \cite[Proposition 2.3]{sturm03} for a proof.
\begin{notation}\mbox{ }
	\begin{enumerate}[label=(\roman*)]
		\item For $q,p\in\mc Q$, denote $\geodft qp\in\GeodUS$ the unit speed geodesic $\geodft qp\colon[0, \ol qp] \to \mc Q$ from $\geodft qp(0) = q$ to $\geodft qp(\ol qp) = p$.
		\item If $\gamma$ is a geodesic, we denote its domain by $I_\gamma$, i.e., $\gamma\colon I_\gamma \to \mc Q$.
		\item For a geodesic $\gamma$, we abuse notation and sometimes treat $\gamma$ as its image, i.e., as the set $\gamma(I_\gamma) = \setByEleInText{\gamma(t)}{t \in I_\gamma}$, and write $q \in \gamma$ for $q \in \gamma(I_\gamma)$ and  $\gamma \subset A$ for $A\subset \mc Q$ with $\gamma(I_\gamma) \subset A$.
	\end{enumerate}
\end{notation}
\begin{lemma}\label{lem:distFun}
	Assume $(\mc Q, d)$ is Hadamard. Let $\gamma\in\GeodUS$. Let $y\in\mc Q$. Define the function
	\begin{equation}
		\ol{y}{\gamma} \colon I_\gamma \to \R,\, t\mapsto \ol{y}{\gamma(t)}\eqfs
	\end{equation}
	Then $\ol{y}{\gamma}$ is $\mc G$-convex.
\end{lemma}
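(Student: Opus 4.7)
The plan is to apply \cref{prp:gconv:characterization}\ref{prp:gconv:characterization:lower}: it suffices to exhibit, for every pair $t_1,t_2\in I_\gamma$ with $t_1<t_2$, a function $g\in\mc G$ satisfying $g(t_i)=\ol{y}{\gamma(t_i)}$ for $i=1,2$ and $g(t)\geq \ol{y}{\gamma(t)}$ for all $t\in[t_1,t_2]$. The existence of a $g\in\mc G$ matching the two prescribed boundary values follows from \cref{prp:gconv:elem}\ref{prp:gconv:elem:intersect}, whose hypotheses are verified directly: the function $\ol{y}{\gamma}$ is $1$-Lipschitz on $I_\gamma$ because $\abs{\ol{y}{\gamma(s)}-\ol{y}{\gamma(t)}}\leq\ol{\gamma(s)}{\gamma(t)}=\abs{s-t}$ (triangle inequality plus unit speed), and $\ol{y}{\gamma(t_1)}+\ol{y}{\gamma(t_2)}\geq\ol{\gamma(t_1)}{\gamma(t_2)}=t_2-t_1$ by a second application of the triangle inequality.

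The main substance lies in showing $g(t)\geq\ol{y}{\gamma(t)}$ for $t\in[t_1,t_2]$, which is the step in which nonpositive curvature is used. Setting $s=(t-t_1)/(t_2-t_1)\in[0,1]$, the point $\gamma(t)$ coincides with the value at parameter $s$ of the constant-speed geodesic connecting $\gamma(t_1)$ to $\gamma(t_2)$ (uniqueness provided by \cref{prp:geod:basics}~(i)). The NPC inequality of \cref{prp:geod:basics}~(iii), combined with $\ol{\gamma(t_1)}{\gamma(t_2)}=t_2-t_1$, bounds $\ol{y}{\gamma(t)}^2$ from above by an expression that, once rewritten in the variable $t$, is a monic quadratic polynomial in $t$ (the linear interpolation of the squared boundary values contributes only a term linear in $t$, while the quadratic piece comes entirely from the curvature correction and has leading coefficient $+1$). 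On the other hand, by \cref{prp:gconv:set}\ref{prp:gconv:set:parabola}, $g(t)^2=(t-s_0)^2+h_0^2$ is itself a monic quadratic in $t$. Two monic quadratics which agree at the two distinct points $t_1$ and $t_2$ coincide identically, so the NPC upper bound equals $g(t)^2$, and taking nonnegative square roots yields $\ol{y}{\gamma(t)}\leq g(t)$ on $[t_1,t_2]$.

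I do not foresee a serious obstacle: the argument chains together earlier results with a one-line algebraic identity between two monic quadratics that agree at two points. The only edge case worth mentioning is when $I_\gamma$ is a singleton, in which case $\mc G$-convexity holds vacuously by choosing any $g\in\mc G$ passing through the unique image point.
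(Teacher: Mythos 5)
Your proof is correct and follows essentially the same route as the paper's: both reduce to \cref{prp:gconv:characterization}~\ref{prp:gconv:characterization:lower} and use the NPC inequality of \cref{prp:geod:basics} to produce, on each $[t_1,t_2]$, a monic quadratic upper bound for $\ol{y}{\gamma}^2$ whose square root lies in $\mc G$ and matches the endpoint values. The only (cosmetic) difference is that the paper takes $g$ to be the square root of the NPC bound directly, via \cref{prp:gconv:set}~\ref{prp:gconv:set:parabola}, whereas you first obtain $g$ from \cref{prp:gconv:elem}~\ref{prp:gconv:elem:intersect} and then identify $g^2$ with the NPC bound by the two-monic-quadratics argument.
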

\begin{proof}
	Let $t_1, t_2\in I_\gamma$ with $t_1 < t_2$. By \cref{prp:geod:basics}, there is a constant speed geodesic $\tilde\gamma\colon[0, 1]\to\mc Q$ with $\tilde\gamma(0) = \gamma(t_1)$ and $\tilde\gamma(1) = \gamma(t_2)$, with
	\begin{equation}\label{eq:hadamard:geodesic}
		\ol{y}{\tilde\gamma(t)}^2
		\leq
		(1-t) \, \ol{y}{\tilde\gamma(0)}^2 +
		t  \,\ol{y}{\tilde\gamma(1)}^2 -
		t(1-t)\,\ol{\tilde\gamma(0)}{\tilde\gamma(1)}^2
		\eqfs
	\end{equation}
	As such a geodesic is unique, we have $\tilde\gamma(t) = \gamma(t_1 + t (t_2 - t_1))$. Set $s = t_1 + t (t_2 - t_1)$. Then $t = (s-t_1)/(t_2 - t_1)$ and
	\begin{align}
		\ol{y}{\gamma}(s)^2
		&=
		\ol{y}{\gamma(s)}^2
		\\&\leq
		\br{1-\frac{s-t_1}{t_2 - t_1}}  \ol{y}{\gamma(t_1)}^2 +
		\frac{s-t_1}{t_2 - t_1}  \ol{y}{\gamma(t_2)}^2 -
		\br{s - t_1}\br{t_2 - s}
		\\&=: f(s)
	\end{align}
	for $s\in[t_1, t_2]$ with equality at the boundaries.
	The function $f$ is a nonnegative, quadratic polynomial with $1$ as coefficient of the squared term. Thus, $\sqrt{f} \in \mc G$ by \cref{prp:gconv:set} \ref{prp:gconv:set:parabola}. \cref{prp:gconv:characterization} \ref{prp:gconv:characterization:lower} implies that $s \mapsto \ol{y}{\gamma}(s)$ is $\mc G$-convex.
\end{proof}
\subsection{Strong Convexity}\label{ssec:strongconvexity}
The notion of $\mc G$-convexity is related to \textit{strong convexity}, a term possibly more common in the literature \cite{poljak66,merentes2010remarks, Nikodem2014, zhou2018fenchel}. Although we will not use strong convexity later, we want to make the connection clear.
In the following $\norm$ denotes the Euclidean norm in $\R^k$, $k\in\N$.
\begin{definition}
	Let $k\in\N$, $D \subset \R^k$ be convex, $f\colon D \to \R$, and $a\in\Rpp$. The function $f$ is \emph{strongly convex with modulus $a$} if and only if, for all $x,y\in D$ and $t\in[0,1]$, we have
	\begin{equation}
		f(t x + (1-t)y) \leq t f(x) + (1-t) f(y) - a t (1-t) \normof{x-y}^2
		\eqfs
	\end{equation}
\end{definition}
Let $k\in\N$, $D \subset \R^k$ be convex, $f\colon D \to \R$. A \textit{subgradient} of $f$ at $x_0\in D$ is a vector $v\in\R^k$ such that $f(x) - f(x_0) \geq v\tr (x-x_0)$ for all $x\in\R^k$. The set of subgradients at $x_0$ is called \textit{subdifferential} and denoted as $\partial f(x_0)$. It is a convex set.
\begin{proposition}[Characterization of strong convexity]\label{prp:strongconv:chara}
	Let $k\in\N$, $D \subset \R^k$ be convex, $f\colon D \to \R$, and $a\in\Rpp$. Then, the following statements are equivalent:
	\begin{enumerate}[label=(\roman*)]
		\item $f$ is strongly convex with modulus $a$.
		\item $x\mapsto f(x) - a \normof{x}^2$ is convex.
		\item $f(x) \geq f(x_0) + v\tr(x-x_0) + a\normof{x-x_0}^2$ for all $x,x_0\in D$ and $v\in\partial f(x_0)$.
		\item $\br{u-v}\tr(x-z) \geq 2 a \normof{x-z}^2$ for all $x,z\in D$ and $u\in\partial f(x)$, $v\in\partial f(z)$.
	\end{enumerate}
\end{proposition}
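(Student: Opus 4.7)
The plan is to establish the cycle (i) $\Leftrightarrow$ (ii), (ii) $\Rightarrow$ (iii) $\Rightarrow$ (iv) $\Rightarrow$ (ii), which proves the four conditions are equivalent. For (i) $\Leftrightarrow$ (ii), I would set $g(x) := f(x) - a \normof{x}^2$ and exploit the identity $\normof{t x + (1-t) y}^2 = t\normof{x}^2 + (1-t)\normof{y}^2 - t(1-t) \normof{x-y}^2$. Substituting this into the strong-convexity inequality for $f$ at the point $t x + (1-t) y$ shows that the extra quadratic term $-a t(1-t)\normof{x-y}^2$ appearing in the definition of strong convexity is precisely what is produced by the curvature of $a\normof{\cdot}^2$, so the two statements are algebraically equivalent.

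For (ii) $\Rightarrow$ (iii), convexity of $g$ yields at each $x_0 \in D$ with nonempty subdifferential the inequality $g(x) \geq g(x_0) + w\tr(x - x_0)$ for every $w \in \partial g(x_0)$. Since $x \mapsto a \normof{x}^2$ is smooth, standard subdifferential calculus gives $\partial g(x_0) = \partial f(x_0) - 2 a x_0$, so any such $w$ is of the form $v - 2 a x_0$ with $v \in \partial f(x_0)$. Rewriting and applying the identity $\normof{x}^2 - \normof{x_0}^2 - 2 x_0\tr(x - x_0) = \normof{x - x_0}^2$ turns the affine lower bound for $g$ into the quadratic lower bound (iii) for $f$. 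For (iii) $\Rightarrow$ (iv), I apply (iii) once with base point $z$ and subgradient $v \in \partial f(z)$, and once with base point $x$ and subgradient $u \in \partial f(x)$, swapping the roles of $x$ and $z$; adding the two inequalities cancels the $f$-values and leaves $(u - v)\tr(x - z) \geq 2 a \normof{x - z}^2$.

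For (iv) $\Rightarrow$ (ii), I return to $g = f - a\normof{\cdot}^2$. Writing $u = u' + 2 a x$ with $u' \in \partial g(x)$ and $v = v' + 2 a z$ with $v' \in \partial g(z)$, condition (iv) becomes $(u' - v')\tr(x - z) + 2 a \normof{x - z}^2 \geq 2 a \normof{x - z}^2$, i.e.\ the subdifferential of $g$ is monotone as a set-valued map. The classical characterization of convexity via monotonicity of the subdifferential then yields convexity of $g$, which is (ii).

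The \emph{main obstacle} is this last implication: it relies on the nontrivial fact that a function with monotone subdifferential is convex. On the interior of a convex domain this is the standard Rockafellar--Minty result; the general case can be handled by restricting to line segments in $D$ and invoking the one-dimensional statement that a function on an interval with nondecreasing (sub)derivative is convex, which follows by integration. A minor technicality throughout is that (iii) and (iv) have content only where the relevant subdifferentials are nonempty, which is automatic at interior points of the effective domain of a convex function.
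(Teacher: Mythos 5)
The paper does not actually prove this proposition --- it defers entirely to \cite{zhou2018fenchel} --- so any self-contained argument is already a different route. Your chain (i) $\Leftrightarrow$ (ii) $\Rightarrow$ (iii) $\Rightarrow$ (iv) is correct: the norm identity settles (i) $\Leftrightarrow$ (ii); the inclusion $\partial f(x_0)\subseteq\partial g(x_0)+2ax_0$ needed for (ii) $\Rightarrow$ (iii) is legitimate there because $g$ is convex under (ii) and $a\normof{\cdot}^2$ is differentiable; and adding the two instances of (iii) with the roles of the base points swapped gives (iv).

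The gap is in (iv) $\Rightarrow$ (ii), precisely at the point you set aside as a minor technicality. First, writing $u=u'+2ax$ with $u'\in\partial g(x)$ presupposes $\partial f(x)\subseteq\partial g(x)+2ax$, which holds only when $g$ is convex --- the very thing you are trying to prove. (The harmless direction, $u'\in\partial g(x)\Rightarrow u'+2ax\in\partial f(x)$, does yield monotonicity of $\partial g$, but only on the set where $\partial g\neq\emptyset$.) Second, and more seriously, ``monotone subdifferential implies convex'' is a theorem about the Clarke or limiting subdifferential of a lower semicontinuous function; for the subdifferential used in this paper (global affine minorants) it is false, because a non-convex function can have an empty subdifferential everywhere, which makes monotonicity vacuous. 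Indeed, the equivalence as literally stated fails without assuming $f$ convex: $f=-a\normof{\cdot}^2$ on $D=\R^k$ has $\partial f(x_0)=\emptyset$ for every $x_0$, hence satisfies (iii) and (iv) vacuously, yet violates (i) and (ii). The repair is to take convexity of $f$ as a standing hypothesis (both applications in \cref{prp:gconvVSstrong} satisfy it) and to close the cycle through $\partial f$ rather than $\partial g$: fix $x,z\in D$ and set $\phi(t):=f(z+t(x-z))$ for $t\in[0,1]$; since $f$ is convex, $\phi$ is convex and its one-sided derivatives at relative-interior parameters are attained by subgradients of $f$, so (iv) says that $t\mapsto\phi\rd(t)-2at\normof{x-z}^2$ is nondecreasing; integrating twice shows that $t\mapsto\phi(t)-at^2\normof{x-z}^2$ is convex, which is exactly the strong-convexity inequality (i) along the segment from $z$ to $x$.
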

See, e.g., \cite{zhou2018fenchel} for a proof.
\begin{proposition}[$\mc G$-convexity vs strong convexity]\label{prp:gconvVSstrong}
	Let $I\subset \R$ be convex.
	\begin{enumerate}[label=(\roman*)]
		\item Let $f\colon I\to\R$. Assume $f$ is $\mc G$-convex. Then $f^2$ is strongly convex with modulus $1$.
		\item Let $f\colon I\to\R$. Assume $f$ is nonnegative and strongly convex with modulus $1$, and $\sqrt{f}$ is $1$-Lipschitz. Then $\sqrt{f}$ is $\mc G$-convex.
	\end{enumerate}
\end{proposition}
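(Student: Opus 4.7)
My plan for part (i) is to apply the equivalence in Proposition~\ref{prp:strongconv:chara} that reduces $f^2$ being strongly convex with modulus~$1$ to convexity of the real-valued function $t \mapsto f(t)^2 - t^2$ on $I$. For any $t_0 \in I$, I would take the $\mc G$-tangent $g$ of $f$ at $t_0$ provided by $\mc G$-convexity: writing $g(t) = \sqrt{(t-s)^2 + h^2}$ for appropriate $s \in \R$ and $h \geq 0$, the crucial observation is that $g(t)^2 - t^2 = s^2 + h^2 - 2st$ is \emph{affine} in $t$. Since $0 \leq g \leq f$ on $I$ with $g(t_0) = f(t_0)$, squaring gives $f^2 - t^2 \geq g^2 - t^2$ with equality at $t_0$, so $f^2 - t^2$ admits an affine supporting function at every point of $I$. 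A one-line argument (evaluate the supporting line at a convex combination of two points) then upgrades this to convexity of $f^2 - t^2$, completing part~(i).

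For part~(ii), set $h := \sqrt{f}$, which is nonnegative and $1$-Lipschitz by hypothesis, and note that $f - t^2$ is convex on $I$ by the same characterization. Fixing $t_0 \in I$, I would pick any subgradient $u$ of $f - t^2$ at $t_0$ (or, at an endpoint of $I$, the relevant one-sided derivative) and read the subgradient inequality as $f(t) \geq t^2 + u(t - t_0) + f(t_0) - t_0^2$ for all $t \in I$. Completing the square rewrites the right-hand side as $(t - s)^2 + h_g^2$ with $s := -u/2$ and $h_g^2 := f(t_0) - (t_0 - s)^2$. Provided $h_g^2 \geq 0$, the function $g(t) := \sqrt{(t-s)^2 + h_g^2}$ lies in $\mc G$; by construction $g(t_0)^2 = f(t_0)$ and $g^2 \leq f$ on $I$, so nonnegativity of $g$ and $h$ gives $g \leq h$ with equality at $t_0$, exhibiting $g$ as the desired $\mc G$-tangent of $h$ at $t_0$.

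The main obstacle, and the point at which the $1$-Lipschitz hypothesis must enter, is the verification that $h_g^2 \geq 0$, equivalently that $|v| \leq 2 h(t_0)$ where $v := u + 2t_0$ is a subgradient of $f$ itself at $t_0$. I plan to obtain this via the factorization
\[
f(t_0 \pm \epsilon) - f(t_0) = \bigl(h(t_0 \pm \epsilon) - h(t_0)\bigr)\bigl(h(t_0 \pm \epsilon) + h(t_0)\bigr),
\]
in which the $1$-Lipschitz property bounds the first factor by $\epsilon$ in absolute value, while continuity of $h$ drives the second factor to $2h(t_0)$ as $\epsilon \to 0^+$; combining with the one-sided subgradient inequalities yields $|v| \leq 2h(t_0)$. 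Endpoints of $I$ require only the corresponding one-sided version of this estimate, and the construction of $g$ is otherwise unchanged. Beyond this core bound I do not anticipate further subtleties: the whole argument is essentially dictated by matching the $\mc G$-tangent formula from Proposition~\ref{prp:gconv:characterization}~\ref{prp:gconv:characterization:lower} to the subgradient datum of $f - t^2$.
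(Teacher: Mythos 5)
Your argument is correct, and it rests on the same core observation as the paper's proof --- squaring identifies $\mc G$-tangents of $f$ with quadratic minorants of $f^2$ having leading coefficient $1$ --- but the technical packaging differs at both ends in ways worth noting. For (i), the paper squares the explicit tangent formula of \cref{prp:gconv:characterization} \ref{prp:gconv:characterization:lower} and verifies characterization (iii) of \cref{prp:strongconv:chara}, which additionally requires the subdifferential identity $\partial f^2(t_0)=\setByEle{2f(t_0)v}{v\in\partial f(t_0)}$; you instead observe that $t\mapsto g(t)^2-t^2 = s^2+h^2-2st$ is affine for every $g\in\mc G$, so an arbitrary $\mc G$-tangent immediately yields an affine support of $f^2-t^2$ at each point of $I$, and characterization (ii) finishes the job. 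This bypasses both the explicit tangent formula and the subdifferential computation, at the cost of the (standard, and correctly sketched) step that a function admitting an affine support everywhere is convex. For (ii), both proofs complete the square on the same quadratic minorant $s\mapsto f(t_0)+v(s-t_0)+(s-t_0)^2$ with $v$ a subgradient of $f$ at $t_0$; the difference lies in how the discriminant condition $h_g^2=f(t_0)-v^2/4\ge 0$ is obtained. The paper first argues that $\sqrt f$ is convex, writes the one-sided derivative of $f$ as $2\sqrt{f(t_0)}\,v_0$ with $v_0\in\semiDerivs \sqrt f(t_0)$, and invokes $v_0^2\le 1$; your factorization $f(t_0\pm\epsilon)-f(t_0)=\brOf{\sqrt{f}(t_0\pm\epsilon)-\sqrt f(t_0)}\brOf{\sqrt f(t_0\pm\epsilon)+\sqrt f(t_0)}$ extracts the bound $\abs{v}\le 2\sqrt{f(t_0)}$ directly from the $1$-Lipschitz hypothesis, without needing convexity or one-sided differentiability of $\sqrt f$ up front. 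That makes the degenerate case $f(t_0)=0$ and the endpoint cases more transparent (your remark that the one-sided increment alone yields both inequalities $\pm v\le 2\sqrt{f(t_0)}$ is exactly what is needed there). Both routes are sound; yours is somewhat more self-contained.
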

See appendix \ref{apendix:omitted:quadLowerBound} for a proof.
According to \cite[Theorem 9.2.19]{burago01} the property that squared distance functions $\ol y\gamma^2$ for $y\in\mc Q$, $\gamma\in\GeodUS$ are strongly convex with modulus $1$ characterizes Hadamard spaces among complete metric spaces with so-called \emph{strictly intrinsic metric} \cite[Definition 2.1.10]{burago01}.
\subsection{Quadratic Lower Bound}\label{ssec:quadLower}
For $\mc G$-convex functions $f$, we derive a quadratic lower bound of $f$ and of $\tran \circ f$ for $\tran\in\setcc$ in \cref{lmm:semitaylor} below. The inequality for  $\tran \circ f$ depends on a second derivative, which is bounded in the next lemma.
\begin{lemma}\label{lmm:secondDeriv}
	Let $\tran\in\setcc$.
	Let $I\subset \R$ be convex.
	Let $f\colon I\to\R$ be $\mc G$-convex.
	Let $t\in \mathring I$.
	Assume that $f$ is twice differentiable at $t$.
	Then
	\begin{enumerate}[label=(\roman*)]
		\item $(\tran \circ f)^{\prime\ominus}  (t) \geq \ddrtran (f(t))$ and
		\item $	(\tran \circ f)^{\prime\oplus}  (t) \geq \ddrtran (f(t))$.
	\end{enumerate}
\end{lemma}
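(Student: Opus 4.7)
The plan is to reduce the inequality to a chain-rule-type computation for $(\tran\circ f)\pr$, combined with three ingredients: the pointwise lower bound $f\prr(t) \geq (1-f\pr(t)^2)/f(t)$ from \cref{lmm:gconv:second:deriv}; the fact that $f$ is $1$-Lipschitz (\cref{prp:gconv:fun}); and the supporting-line inequality for the concave $\dtran$.

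Let $u := f(t)$, $v := f\pr(t)$, $a := f\prr(t)$. By \cref{lmm:gconv:second:deriv}, $u > 0$ and $a \geq (1-v^2)/u$; by \cref{prp:gconv:fun}, $|v| \leq 1$, so $1-v^2 \geq 0$ and in particular $a \geq 0$. Twice differentiability of $f$ at the interior point $t$ means $f\pr$ exists on a two-sided neighborhood of $t$, so $(\tran\circ f)\pr(s) = \dtran(f(s))\,f\pr(s)$ is well-defined there, with $\dtran$ evaluated close to $u>0$.

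For the left second derivative I would split the difference quotient as
\[
    \frac{(\tran\circ f)\pr(t) - (\tran\circ f)\pr(s)}{t-s}
    = \dtran(u)\cdot\frac{f\pr(t)-f\pr(s)}{t-s}
      + \frac{\dtran(u)-\dtran(f(s))}{u-f(s)}\cdot\frac{u-f(s)}{t-s}\cdot f\pr(s),
\]
with the convention that the last product equals $0$ where $f(s)=u$. As $s\nearrow t$, the first summand tends to $\dtran(u)\,a$. In the second summand, $(u-f(s))/(t-s)\to v$ and $f\pr(s)\to v$, while the remaining factor tends to $\ddltran(u)$ (if $v>0$, since then $f(s)\nearrow u$), to $\ddrtran(u)$ (if $v<0$, since then $f(s)\searrow u$), or is multiplied by a vanishing factor (if $v=0$; here one uses that $\dtran$ is Lipschitz on any compact subset of $\Rpp$). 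Because $\ddltran(u)\geq\ddrtran(u)$ by \cref{lmm:tran:continuous}, every case produces
\[
    (\tran\circ f)^{\prime\ominus}(t) \;\geq\; \dtran(u)\,a + \ddrtran(u)\,v^2,
\]
and the analogous split with $s\searrow t$ yields the same lower bound for $(\tran\circ f)^{\prime\oplus}(t)$.

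To finish, $a \geq (1-v^2)/u$ and $\dtran(u)\geq 0$ give
\[
    \dtran(u)\,a + \ddrtran(u)\,v^2 \;\geq\; (1-v^2)\,\frac{\dtran(u)}{u} + v^2\,\ddrtran(u).
\]
Concavity of $\dtran$ together with $\dtran(0)\geq 0$ yields the supporting-line inequality $\dtran(0) \leq \dtran(u) - u\,\ddrtran(u)$, hence $\dtran(u)/u \geq \ddrtran(u)$. Combined with $1-v^2 \geq 0$, this implies $(1-v^2)\dtran(u)/u + v^2\,\ddrtran(u) \geq \ddrtran(u)$, the desired bound. The main obstacle is the case split on $\sign(v)$ in the second summand above: whether $f(s)$ approaches $u$ from below or from above determines whether the chord slope of $\dtran$ tends to $\ddltran(u)$ or to $\ddrtran(u)$, and only the uniform inequality $\ddltran(u)\geq\ddrtran(u)$ lets us handle both directions with a single bound.
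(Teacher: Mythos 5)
Your proof is correct and follows essentially the same route as the paper: both compute the one-sided second derivative of $\tran\circ f$ via a product/chain-rule decomposition, split cases according to whether $f(s)$ approaches $f(t)$ from below or above (resolving the resulting $\ddltran$ versus $\ddrtran$ ambiguity with $\ddltran\geq\ddrtran$), and then combine \cref{lmm:gconv:second:deriv} with the concavity bound $\dtran(u)\geq u\,\ddrtran(u)$. Your explicit difference-quotient splitting is in fact a slightly more careful rendering of the paper's chain-rule step, particularly in the $f\pr(t)=0$ case.
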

\begin{proof}
	If $\tran$ is constant, the result is trivial. Assume $\tran$ is not constant. Then $\dtran(x) > 0$ for all $x \in \Rpp$ as $\dtran$ is concave, nondecreasing, and nonnegative.

	If $f(t) = 0$, then $f(s) = \abs{s-t}$, which is not differentiable at $t$. As we assume that $f$ is twice differentiable at $t$, we have $f(t) \neq 0$. The product rule applies for the left (and right) derivative, so that we obtain
	\begin{align}
		(\tran \circ f)^{\prime\ominus}  (t)
		&=
		\br{(\dtran \circ f) \cdot f\pr}\ld (t)
		\\&=
		(\dtran \circ f)\ld(t) f\pr(t) + \dtran(f(t)) f\prr(t)
		\eqfs
	\end{align}
	If $f$ is nondecreasing in $[t-\epsilon, t]$ for some $\epsilon>0$, we can apply the chain rule to obtain $(\dtran \circ f)\ld(t) = \ddltran(f(t)) f\pr(t)$. If $f$ is nonincreasing in $[t-\epsilon, t]$, the left derivative becomes a right derivative in the chain rule, i.e., we have $(\dtran \circ f)\ld(t) = \ddrtran(f(t)) f\pr(t)$. We can choose $\epsilon$ small enough so that one of the two cases is true. As we always have $\ddltran(f(t)) \geq \ddrtran(f(t))$, we obtain in both cases
	\begin{equation}
		(\tran \circ f)^{\prime\ominus}(t) \geq \ddrtran(f(t)) f\pr(t)^2 + \dtran(f(t)) f\prr(t)
		\eqfs
	\end{equation}
	Note that $\dtran(f(t)) > 0$ as $f(t) > 0$.
	We use \cref{lmm:gconv:second:deriv} to get
	\begin{equation}
		\dtran(f(t)) f\prr(t) \geq \dtran(f(t)) \frac{1 - f\pr(t)^2}{f(t)}
		\eqfs
	\end{equation}
	Thus,
	\begin{align}
		(\tran \circ f)^{\prime\ominus} (t)
		&\geq
		\br{\frac{\ddrtran(f(t))f(t)}{\dtran(f(t))} f\pr(t)^2 + \br{1 - f\pr(t)^2}}\frac{\dtran(f(t))}{f(t)}
		\\&=
		\br{1 - \br{1 - \frac{f(t)\ddrtran(f(t))}{\dtran(f(t))}} f\pr(t)^2}\frac{\dtran(f(t))}{f(t)}
		\eqfs
	\end{align}
	With $\dtran$ is nondecreasing, $\ddrtran$ nonincreasing ($\dtran$ is concave), and \cref{lmm:integration}, we obtain
	\begin{equation}
		\dtran(x) - \dtran(0) \geq \int_0^x \ddrtran(z) \dl z \geq x \ddrtran(x)
	\end{equation}
	for all $x\in\Rp$.
	Thus, with $\dtran(0) \geq 0$, we get
	\begin{equation}
		1 - \frac{x \ddrtran(x)}{\dtran(x)}  \geq 0
		\eqfs
	\end{equation}
	Furthermore, $f\pr(t)^2\leq 1$ as $f$ is $1$-Lipschitz. Thus,
	\begin{equation}
		1 - \br{1 - \frac{f(t)\ddrtran(f(t))}{\dtran(f(t))}} f\pr(t)^2 \geq \frac{f(t)\ddrtran(f(t))}{\dtran(f(t))}
		\eqfs
	\end{equation}
	Finally, we get
	\begin{equation}
		(\tran \circ f)\ld (t)
		\geq
		\br{1 - \br{1 - \frac{\ddrtran(f(t))f(t)}{\dtran(f(t))}} f\pr(t)^2}\frac{\dtran(f(t))}{f(t)}
		\geq
		\ddrtran(f(t)) \eqfs
	\end{equation}
	The calculations for $(\tran \circ f)\rd (t)$ are essentially the same.
\end{proof}
\begin{theorem}\label{lmm:semitaylor}
	Let $I\subset \R$ be convex with $0\in I$.
	Let $f\colon I\to\R$ be $\mc G$-convex.
	\begin{enumerate}[label=(\roman*)]
		\item\label{lmm:semitaylor:median} Then, for all $t\in I$, $t\neq0$,
		\begin{equation}\label{eq:semitaylor:median}
			f(t) - f(0)
			\geq
			tf\rd(0) + \frac12 t^2 \frac{1 - \max\brOf{f\rd(0)^2, f\ld(t)^2}}{\max(f(0), f(t))}
			\eqfs
		\end{equation}
		\item\label{lmm:semitaylor:tran} Let $\tran\in\setcc$.
		Then, for all $t\in I$, $t\neq0$,
		\begin{equation}\label{eq:semitaylor:tran}
			(\tran \circ f)(t) - (\tran \circ f)(0) \geq t (\tran \circ f)\rd(0) + \frac12 t^2 \ddrtran\brOf{\max\brOf{f(0), f(t)}}
			\eqfs
		\end{equation}
	\end{enumerate}
\end{theorem}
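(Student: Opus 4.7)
The strategy is to integrate twice the pointwise second-derivative lower bounds that have already been established: \cref{lmm:gconv:second:deriv} gives $f\prr(r)\geq (1-f\pr(r)^2)/f(r)$ wherever $f$ is twice differentiable, and \cref{lmm:secondDeriv} gives $(\tran\circ f)\prr(r)\geq\ddrtran(f(r))$ there. By \cref{prp:gconv:fun}, $f$ is nonnegative, $1$-Lipschitz, and convex, hence twice differentiable almost everywhere on $\mathring I$ and locally absolutely continuous, with $f\rd$ nondecreasing and right-continuous. I would use convexity to promote the pointwise bounds to uniform lower bounds on the interval between $0$ and $t$, and then integrate.

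Assume first $t>0$; the argument for $t<0$ is analogous by integrating from $t$ to $0$. Convexity of $f$ gives $f(r)\leq\max(f(0),f(t))$ for $r\in[0,t]$, and monotonicity of $f\pr$ forces $f\pr(r)\in[f\rd(0),f\ld(t)]$ for a.e.\ $r\in(0,t)$, so $f\pr(r)^2\leq\max(f\rd(0)^2,f\ld(t)^2)\leq 1$ by the $1$-Lipschitz property. Combining with \cref{lmm:gconv:second:deriv} produces the uniform a.e.\ bound
\[
    f\prr(r)\ \geq\ \frac{1-\max(f\rd(0)^2,f\ld(t)^2)}{\max(f(0),f(t))}\eqfs
\]
Since $f\rd$ is monotone and right-continuous, its Lebesgue decomposition yields $f\rd(s)-f\rd(0)\geq\int_0^s f\prr(r)\,\dl r$ for every $s\in[0,t]$. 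Local absolute continuity of $f$ then gives
\[
    f(t)-f(0)-tf\rd(0)\ =\ \int_0^t\br{f\rd(s)-f\rd(0)}\,\dl s\ \geq\ \int_0^t\int_0^s f\prr(r)\,\dl r\,\dl s\ \geq\ \frac{t^2}{2}\cdot\frac{1-\max(f\rd(0)^2,f\ld(t)^2)}{\max(f(0),f(t))}\eqcm
\]
which is \eqref{eq:semitaylor:median}. For (ii), the same machine applies to $\tran\circ f$: this composition is convex ($\tran$ being convex nondecreasing and $f$ convex), and \cref{lmm:secondDeriv} gives $(\tran\circ f)\prr(r)\geq\ddrtran(f(r))$ a.e.; since $\ddrtran$ is nonincreasing (as $\dtran$ is concave) and $f(r)\leq\max(f(0),f(t))$, we obtain $(\tran\circ f)\prr(r)\geq\ddrtran(\max(f(0),f(t)))$ a.e.\ on $(0,t)$. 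Integrating twice exactly as above proves \eqref{eq:semitaylor:tran}.

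The main technical step is $\phi(s)-\phi(0)\geq\int_0^s\phi\pr(r)\,\dl r$ for a nondecreasing right-continuous $\phi$ (applied to $f\rd$ and $(\tran\circ f)\rd$), which is standard monotone-function theory. The one case the pointwise bound from \cref{lmm:gconv:second:deriv} might miss is when $f$ vanishes somewhere in $I$, but by \cref{prp:gconv:fun}~\ref{prp:gconv:fun:distOnIsAbs} this forces $f=\abs{\cdot-t_0}$, so $\max(f\rd(0)^2,f\ld(t)^2)=1$ and the right-hand side of \eqref{eq:semitaylor:median} collapses to $tf\rd(0)$; the inequality then reduces to ordinary convexity, and the analogous remark handles \eqref{eq:semitaylor:tran}.
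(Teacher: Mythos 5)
Your proposal is correct and follows essentially the same route as the paper's proof: both integrate the pointwise bounds from \cref{lmm:gconv:second:deriv} and \cref{lmm:secondDeriv} twice via \cref{lmm:integration}, after using monotonicity of $f\rd$ and convexity of $f$ to replace the a.e.\ second-derivative bound by the uniform one involving $\max(f\rd(0)^2,f\ld(t)^2)$ and $\max(f(0),f(t))$. Your explicit treatment of the degenerate case $f(t_0)=0$ (where $f=\abs{\cdot-t_0}$ and the quadratic term vanishes) is a small clarification the paper leaves implicit, but it does not change the argument.
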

\begin{proof}
 	If $t<0$, we can replace $f$ by $t\mapsto f(-t)$. Thus, we can assume $t>0$ without loss of generality.
	\begin{enumerate}[label=(\roman*)]
		\item
		As $f$ is $\mc G$-convex, it is convex, see \cref{prp:gconv:fun} \ref{prp:gconv:fun:convex}.
		Thus, $f$ is twice differentiable almost everywhere (Alexandrov's theorem, cf.\ \cref{thm:alexandrov}).
		Let $B\subset I$ be the set of points where $f$ is not twice differentiable including potential boundary points of $I$.
		As $f$ is Lipschitz continuous, it is absolutely continuous. Thus, by \cref{lmm:integration},
		\begin{equation}
			f(t) - f(0) = \int_{[0,t]} f\rd(s) \dl s\eqfs
		\end{equation}
		As $f$ is convex, $f\rd$ is nondecreasing. Thus, by \cref{lmm:integration},
		\begin{equation}
			f\rd(s) - f\rd(0) \geq \int_{[0,s]\setminus B} f\prr(r) \dl r
			\eqfs
		\end{equation}
		By \cref{lmm:gconv:second:deriv}, for $r\in I\setminus B \subset \mathring I$,
		\begin{equation}
			f\prr(r) \geq \frac{1 - f\pr(r)^2}{f(r)}
			\eqfs
		\end{equation}
		Thus,
		\begin{align}
			f(t) - f(0)
			&\geq
			\int_{[0,t]} \br{ f\rd(0) + \int_{[0,s]\setminus B} \frac{1 - f\pr(r)^2}{f(r)}  \dl r } \dl s
			\\&\geq
			t f\rd(0) + \frac12 t^2 \inf_{s\in[0,t]\setminus B} \frac{1 - f\pr(s)^2}{f(s)}
			\eqfs
		\end{align}
		As $f\pr$ is nondecreasing, $([0,t]\setminus B) \to \R, s\mapsto  f\pr(s)^2$ is maximized at the extremes of the domain.
		As $f$ is convex, $[0,t]\to \R, s\mapsto f(s)$ is also maximized at the extremes. Thus,
		\begin{align}
			\inf_{s\in[0,t]\setminus B} \frac{1 - f\pr(s)^2}{f(s)}
			&\geq
			\frac{1 - \max\brOf{f\rd(0)^2, f\ld(t)^2}}{\max(f(0), f(t))}
			\eqfs
		\end{align}
		\item
		As $\tran$ and $f$ are convex and $\tran$ is nondecreasing, $\tran \circ f$ is convex. Thus, $\tran \circ f$ is twice continuously differentiable almost everywhere by Alexandrov's theorem, (cf.\ \cref{thm:alexandrov}). Let $B\subset I$ be the set of points where $\tran \circ f$ is not twice differentiable.
		As $\tran \circ f$ convex on $\R$, it is Lipschitz on $[0, t]$ and, hence, absolutely continuous on $[0, t]$. Thus, by \cref{lmm:integration},
		\begin{equation}
			(\tran \circ f)(t) - (\tran \circ f)(0) = \int_0^t (\tran \circ f)\rd(s) \dl s
			\eqfs
		\end{equation}
		Furthermore, as $(\tran \circ f)\rd$ is nondecreasing, by \cref{lmm:integration},
		\begin{equation}
			(\tran \circ f)\rd(s) - (\tran \circ f)\rd(0) \geq \int_{[0,s]\setminus B} (\tran \circ f)\prr(r) \dl r
			\eqfs
		\end{equation}
		For $r \in I\setminus B \subset \mathring I$, by \cref{lmm:secondDeriv},
		\begin{equation}
			(\tran \circ f)\prr(r) \geq \ddrtran(f(r))
			\eqfs
		\end{equation}
		As $\dtran$ is concave, $\ddrtran$ is nonincreasing. Thus, if $r\in[0, t]$, then
		\begin{equation}
			\ddrtran(f(r)) \geq \ddrtran\brOf{ \sup_{s\in [0, t]} f(s)} \geq \ddrtran\brOf{\max\brOf{f(0), f(t)}}
		\end{equation}
		as $f$ is convex.
		Hence,
		\begin{align}
			(\tran \circ f)(t) - (\tran \circ f)(0)
			&\geq
			\int_{[0,t]} \br{  (\tran \circ f)\rd(0) + \int_{[0,s]\setminus B} \ddrtran(f(r)) \dl r } \dl s
			\\&\geq
			t (\tran \circ f)\rd(0) + \frac12 t^2 \ddrtran\brOf{\max\brOf{f(0) , f(t)}}
			\eqfs
		\end{align}
	\end{enumerate}
\end{proof}
\begin{remark}[on \cref{lmm:semitaylor}]
	The inequalities seem rather sharp. For $f(t) = \abs{t}$, \eqref{eq:semitaylor:median} is an equality. Additionally, for $\tran = (x\mapsto x^\alpha)$, $\alpha\in[1,2]$, \eqref{eq:semitaylor:tran} is
	\begin{equation}
		t^\alpha \geq \frac{\alpha(\alpha-1)}{2} t^\alpha
		\eqcm
	\end{equation}
	which is an equality for $\alpha=2$ and trivial for $\alpha = 1$.
\end{remark}
\section{Transformed Fréchet Means in Hadamard Spaces}\label{sec:varineq:hadmard}
In this section, we restrict our discussion to Hadamard spaces. Basic properties of the $\tran$-Fréchet mean, $\tran\in\setcciz$, and its variance functional are discussed in section \ref{ssec:tran:basics} (\cref{lmm:closedBoundedConvex}). The variance inequality is shown in section \ref{ssec:tran:varineq} (\cref{thm:varinequ}). To complete the discussion, \cref{thm:point:convex} in section \ref{ssec:tran:pointmass} shows a result for $\Prof{Y = m} > 0$.
\subsection{Basics}\label{ssec:tran:basics}
The notion of convexity can be transferred to Hadamard spaces, see, e.g., \cite[chapter 2]{bacak14convex}. We use the term \emph{convex} here, but some authors prefer \emph{geodesically convex} in this context.
\begin{definition}
	Assume $(\mc Q, d)$ is Hadamard.
	\begin{enumerate}[label=(\roman*)]
		\item A set $A\subset \mc Q$ is called \emph{convex} if and only if for any $q,p\in A, q\neq p$, have $\geodft qp\subset A$.
		\item A function $f\colon\mc Q\to\R$ is called \emph{convex} if and only if for any $q,p\in\mc Q, q\neq p$, we have $f \circ \geodft qp$ is convex.
	\end{enumerate}
\end{definition}
\begin{proposition}\label{lmm:closedBoundedConvex}
	Let $\tran\in\setcciz$.
	Assume $(\mc Q, d)$ is Hadamard.
	Assume $\Ex{\dtran(\ol Yo)} < \infty$.
	\begin{enumerate}[label=(\roman*)]
		\item The variance 	functional $v \colon \mc Q \to \R$, $v(q) := \Ex{\tran(\ol Yq) - \tran(\ol Yo)}$ is convex.
		\item Let $M := \argmin_{q\in\mc Q} v(q)$ be the set of $\tran$-Fréchet means.
		Then $M$ is nonempty, closed, bounded, and convex.
        Furthermore, $M$ does not depend on the choice of the reference point $o$.
        \item\label{lmm:closedBoundedConvex:convexsup} If $Y$ is concentrated on a closed convex set $\mc Y\subset \mc Q$, i.e., $\Prof{Y\in\mc Y}=1$, then $M \subset \mc Y$.
	\end{enumerate}
\end{proposition}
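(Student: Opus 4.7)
The plan is to first show convexity of $v$, deduce convexity and closedness of $M$ from it, use \cref{thm:varineq:general} for coercivity and hence boundedness of $M$, invoke weak compactness in Hadamard spaces for nonemptiness, and finally use a metric projection argument for item (iii).

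For (i), by \cref{prp:geod:basics} the map $q \mapsto \ol yq$ is convex along every geodesic for each fixed $y \in \mc Q$; this may also be read off from \cref{lem:distFun} combined with \cref{prp:gconv:fun}. Since $\tran$ is nondecreasing and convex, the composition $q \mapsto \tran(\ol yq)$ is convex along geodesics. The moment assumption combined with \cref{prp:moment} ensures that $v$ is finite everywhere; taking expectations preserves convexity, and subtracting the $q$-independent constant $\Ex{\tran(\ol Yo)}$ does not affect it.

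For (ii), convexity of $M$ is immediate since $M$ is a sublevel set of the convex function $v$. Closedness follows because $v$ is continuous by \cref{prp:moment}, which implies local Lipschitz continuity. Independence of the reference point is immediate: replacing $o$ by $o'$ shifts $v$ by the $q$-independent constant $\Ex{\tran(\ol Y{o'}) - \tran(\ol Yo)}$, leaving the $\argmin$ unchanged. Boundedness of $M$ follows from \cref{thm:varineq:general} \ref{thm:varineq:general:infty}: the ratio $v(q)/\tran(\ol qo)$ tends to $1$ as $\ol qo \to \infty$, and since $\tran \in \setcciz$ is a strictly increasing convex function with $\tran(0) = 0$ it is unbounded above, so $v$ is coercive and $M$ is bounded. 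The delicate point is nonemptiness: a minimizing sequence $(q_n)$ is bounded by coercivity, and in a Hadamard space every bounded sequence admits a weakly ($\Delta$-)convergent subsequence; convex continuous functions are weakly lower semicontinuous, so the weak limit $m$ satisfies $v(m) \leq \liminf_n v(q_n) = \inf v$ and hence $m \in M$. Equivalently, $M = \bigcap_n \{q \in \mc Q : v(q) \leq \inf v + 1/n\}$ is a decreasing intersection of nonempty bounded closed convex sets, which is nonempty in Hadamard spaces.

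For (iii), let $q \in \mc Q \setminus \mc Y$ and write $\pi := \pi_{\mc Y}(q) \in \mc Y$ for the metric projection, which exists uniquely in a Hadamard space. The projection inequality $\ol qy^2 \geq \ol q\pi^2 + \ol \pi y^2$ for all $y \in \mc Y$, combined with $\ol q\pi > 0$, gives $\ol \pi y < \ol qy$ strictly for every $y \in \mc Y$. Since $Y \in \mc Y$ almost surely and $\tran \in \setcciz$ is strictly increasing, $\tran(\ol Y\pi) < \tran(\ol Yq)$ almost surely; integrating and subtracting the common term $\tran(\ol Yo)$ yields $v(\pi) < v(q)$, so $q \notin M$.

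The chief technical obstacle is the existence assertion in (ii): in a general Hadamard space one cannot rely on local compactness, and the argument requires either weak ($\Delta$-)compactness of bounded sets or an analogue of Cantor's intersection theorem for nested bounded closed convex sets. All remaining claims reduce to convexity, continuity, coercivity, and the projection property, each either established earlier in the paper or standard in Hadamard space theory.
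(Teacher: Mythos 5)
Your proposal is correct and follows essentially the same route as the paper: convexity of $v$ via the ($\mc G$-)convexity of distance functions along geodesics, continuity and closed convex sublevel sets via \cref{prp:moment}, coercivity via \cref{thm:varineq:general} \ref{thm:varineq:general:infty}, existence of a minimizer by the standard Hadamard-space argument (which the paper delegates to Bac\'ak's monograph and you spell out via $\Delta$-compactness / nested closed convex sets), and the metric projection inequality for item (iii).
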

\begin{proof}
	For a geodesic $\gamma\in\GeodUS$, the distance function $\ol y\gamma$ is $\mc G$-convex, see \cref{lem:distFun}. Thus, $t\mapsto\tran(\ol Y\gamma(t)) - \tran(\ol Yp)$ is convex. Hence, $\Ex{\dtran(\ol Yo)} < \infty$ implies that the variance functional $v$ is convex.

	By \cref{prp:moment}, the bound $\Ex{\dtran(\ol Yo)} < \infty$ yields continuity of $v$ via $\abs{v(q) - v(p)}\leq D(q,p)$ with $D$ from \eqref{eq:altmetricdef}.
	Using \cite[Example 2.1.3]{bacak14convex}, the sublevel sets of $v$, $\setByEleInText{q\in\mc Q}{v(q) \leq a}$ for $a\in\R$, are closed and convex.
	Furthermore, by \cref{thm:varineq:general} \ref{thm:varineq:general:infty}, $v(q) \to \infty$ whenever $\ol qo\to\infty$.
	Following the proof of \cite[Lemma 2.2.19]{bacak14convex} and using \cite[Proposition 2.1.16]{bacak14convex}, we obtain that $M$ is nonempty, closed, bounded, and convex. It does not depend on $o$, as all expectations are finite independent of the reference point (\cref{prp:moment}).

	Now consider a closed and convex set $\mc Y\subset \mc Q$ such that $\Prof{Y \in \mc Y} = 1$.
	By \cite[Theorem 2.1.12]{bacak14convex}, for every $q\in \mc Q \setminus\mc Y$, there is $p\in\mc Y$ such that
	\begin{equation}
		\ol yq^2 \geq \ol yp^2 + \ol qp^2
	\end{equation}
	for all $y\in\mc Y$. Thus, $\ol yq > \ol yp$. As we exclude constant transformations, $\dtran$ is concave, and $\dtran(x) \geq 0$ for all $x\in\Rp$, we obtain that $\tran$ is strictly increasing and, therefore, $\tran(\ol yq) > \tran(\ol yp)$. Integration over $Y \in \mc Y$ yields $\Ex{\tran(\ol Yq) - \tran(\ol Yp)} > 0$. Hence, $q\not\in M$.
\end{proof}
\begin{remark}\mbox{ }
    \begin{enumerate}[label=(\roman*)]
        \item
        By the Hopf--Rinow theorem \cite[Theorem 2.5.28]{burago01}, if $\mc Q$ is a locally compact Hadamard space, every closed and bounded subset of $\mc Q$ is compact (Heine--Borel property). Hence, \cref{lmm:closedBoundedConvex} implies compactness of the $\tran$-Fréchet mean set in locally compact Hadamard spaces.
        \item By \cref{lmm:closedBoundedConvex} \ref{lmm:closedBoundedConvex:convexsup}, if $\mc Q$ is separable, then $M$ is contained in the closed convex hull of the support of $Y$, i.e., in the intersection of all closed convex supersets of the support of $Y$.
        Note that, if $\mc Q$ is not separable, $Y$ can have an empty support.
    \end{enumerate}
\end{remark}
\subsection{Variance Inequality}\label{ssec:tran:varineq}
\begin{theorem}\label{thm:varinequ}
	Let $\tran\in\setcciz$.
	Assume $(\mc Q, d)$ is Hadamard.
	Assume $\Ex{\dtran(\ol Yo)} < \infty$.
	Let $m \in\argmin_{q\in\mc Q}\Ex{\tran(\ol Yq) - \tran(\ol Yo)}$. Let $q\in\mc Q\setminus\cb{m}$.
	Then
	\begin{equation}
		\Ex*{\tran(\ol Yq) - \tran(\ol Ym)} \geq \frac12 \ol qm^2 \Ex*{\ddrtran(\max(\ol Ym, \ol Yq))}\eqfs
	\end{equation}
\end{theorem}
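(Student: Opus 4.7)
The plan is to reduce to the one-dimensional quadratic lower bound \cref{lmm:semitaylor} along the geodesic from $m$ to $q$, integrate pointwise, and use optimality of $m$ to dispatch the resulting first-order term.

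First, let $\gamma := \geodft mq \colon [0, \ol qm] \to \mc Q$ be the unit-speed geodesic with $\gamma(0) = m$ and $\gamma(\ol qm) = q$, which exists and is unique by \cref{prp:geod:basics}. For each $y \in \mc Q$, set $f_y(t) := \ol y{\gamma(t)}$. By \cref{lem:distFun}, each $f_y$ is $\mc G$-convex on $[0, \ol qm]$, so part (ii) of \cref{lmm:semitaylor} applied at $t = \ol qm > 0$ gives the pointwise bound
\begin{equation*}
\tran(\ol yq) - \tran(\ol ym) \geq \ol qm\, (\tran\circ f_y)\rd(0) + \tfrac12 \ol qm^2\, \ddrtran(\max(\ol ym, \ol yq)).
\end{equation*}

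Next, I would integrate this inequality against the distribution of $Y$. The quadratic term is exactly the right-hand side of the target inequality, so it suffices to prove $\Ex{(\tran\circ f_Y)\rd(0)} \geq 0$. Set $v(q) := \Ex{\tran(\ol Yq) - \tran(\ol Yo)}$. For $t \in (0, \ol qm]$, linearity of expectation yields
\begin{equation*}
\frac{v(\gamma(t)) - v(m)}{t} = \Ex*{\frac{\tran(f_Y(t)) - \tran(f_Y(0))}{t}}.
\end{equation*}
By convexity of $\tran\circ f_Y$ (composition of a nondecreasing convex $\tran$ with the convex $f_Y$), the integrand decreases monotonically to $(\tran\circ f_Y)\rd(0)$ as $t \searrow 0$. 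Its absolute value is dominated by $\dtran(\ol Ym + \ol qm/2)$: indeed, \cref{lmm:ccdiff} bounds the difference quotient by $\dtran((f_Y(t)+f_Y(0))/2)$ times $|f_Y(t)-f_Y(0)|/t \leq 1$, and the triangle inequality gives $(f_Y(t)+f_Y(0))/2 \leq \ol Ym + \ol qm/2$. Subadditivity of $\dtran$ (\cref{lmm:tranconcave}) together with \cref{prp:moment} makes this dominating function integrable, so dominated convergence yields
\begin{equation*}
\Ex{(\tran\circ f_Y)\rd(0)} = (v\circ \gamma)\rd(0).
\end{equation*}

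Finally, by \cref{lmm:closedBoundedConvex}, $v$ is convex and $m$ is a global minimizer, so the restriction $v \circ \gamma$ is a convex function on $[0, \ol qm]$ attaining its minimum at $0$; in particular $(v\circ \gamma)\rd(0) \geq 0$. Combining this with the integrated pointwise bound proves the theorem. The only genuinely technical step is the interchange of expectation and one-sided derivative in the middle paragraph; this is precisely where the moment hypothesis $\Ex{\dtran(\ol Yo)} < \infty$ and the fine control on $\tran, \dtran$ from \cref{lmm:ccdiff} and \cref{lmm:tranconcave} are needed. Everything else is a clean packaging of the pointwise $\mc G$-convex machinery developed in the preceding section with the first-order optimality of $m$.
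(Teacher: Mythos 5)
Your proposal is correct and follows essentially the same route as the paper's proof: the pointwise second-order bound from \cref{lmm:semitaylor}~(ii) along $\geodft mq$, integration against the law of $Y$, and first-order optimality of $m$ to discard $\Ex{(\tran\circ f_Y)\rd(0)}\geq 0$. The only (immaterial) difference is that you justify the interchange via dominated convergence of difference quotients, whereas the paper bounds $\Ex{\sup_s\abs{\partial_+\tran(\ol Y{\gamma}(s))}}$ to differentiate under the expectation directly.
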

\begin{proof}
	Let $\gamma := \geodft mq$. For $t\in[0, \ol qm]$ and $y\in \mc Q$, define $V_y(t) := \tran(\ol y\gamma(t)) - \tran(\ol y\gamma(0))$. Thus, $V_y(0) = 0$.
	By \cref{lmm:semitaylor} \ref{lmm:semitaylor:tran},
	\begin{equation}\label{eq:taylorraw}
		V_y(t)
		\geq
		t  V_y\rd(0) + \frac12 t^2 \ddrtran\brOf{\max(\ol y\gamma(0), \ol y\gamma(t))}
		\eqfs
	\end{equation}
	By \cref{prp:moment}, $\Ex{\dtran(\ol Yo)} < \infty$ yields $\Ex*{\abs{V_Y(t)}} < \infty$.
	Hence, $G(t) := \Ex{V_Y(t)}$ is well-defined. Furthermore, as $\ol y\gamma$ is $1$-Lipschitz, we have, for $s\in[0,\ol qm)$,
	\begin{align}
		\abs{\partial_+ \tran(\ol y\gamma(s))}
		&\leq
		\max\brOf{\abs{\dtran(\ol y\gamma(s))\ol y\gamma\rd(s)}, \abs{\dtran(\ol y\gamma(s))\ol y\gamma\ld(s)}}
		\\&\leq
		\dtran(\ol y\gamma(s))
		\\&\leq
		\dtran(\ol ym + s)
		\\&\leq
		\dtran(\ol ym) + \dtran(s)
		\eqfs
	\end{align}
	Thus,
	\begin{equation}
		\Ex*{\sup_{s\in[0,\ol qm)} \abs{\partial_{+} \tran(\ol Y\gamma(s))}} \leq \Ex*{\dtran(\ol Ym)} + \dtran(\ol qm) < \infty
		\eqfs
	\end{equation}
	Hence, we can swap integral and derivative to get
	\begin{equation}
		G\rd(t) = \Ex*{\partial_+ \tran(\ol Y\gamma(t))}
	\end{equation}
	for $t\in[0,\ol qm)$.
	As $G$ is convex and  minimized at $0$,
	\begin{equation}
		0 \leq G\rd(0) = \Ex*{\partial_+ \tran(\ol Y\gamma(0))}
		\eqfs
	\end{equation}
	Thus, integrating \eqref{eq:taylorraw} yields,
	\begin{equation}
		G(t) \geq \frac12 t^2 \Ex*{\ddrtran\brOf{\max(\ol Y\gamma(0), \ol Y\gamma(t))}}
		\eqfs
	\end{equation}
	Plugging in $t = \ol qm$ yields the desired result.
\end{proof}
\begin{remark}[on \cref{thm:varinequ}]\mbox{ }
	\begin{enumerate}[label=(\roman*)]
		\item \cref{thm:varinequ} applied to $\tran = (x\mapsto x^2)$, yields exactly the well-known variance inequality \eqref{eq:varinequhadamard}, which is an equality in Euclidean spaces, see \eqref{eq:vareq}. See also \cref{exa:pm1} and \cref{exa:huber} for a further evaluation of the sharpness of the result.
		\item If $\Ex*{\ddrtran(\max(\ol Ym,\ol Yq))} \xrightarrow{q\to m} \infty$, the lower bound in \cref{thm:varinequ} can be steeper than quadratic for $q$ close to $m$. A sufficient condition for such a growth behavior is given in \cref{cor:varinequ} below.
		\item
		On one hand, if $\ddrtran(x) \xrightarrow{x\to\infty} 0$, the lower bound can be less steep than linear for $q$ far away from $m$. It can even vanish for finite distance, e.g., for the Huber loss \eqref{eq:huber}.
		On the other hand, the variance functional is convex, see \cref{lmm:closedBoundedConvex}. Thus, the slope of the variance functional cannot decrease. In other words, if the lower bound in \cref{thm:varinequ} is nontrivial for some $q$ close to $m$, it can be extended by a suitable affine function to all $q$ further away from $m$.
        Furthermore, for $q$ far away from $m$, the variance functional looks like $\tran(\ol qm)$ as stated in \cref{thm:varineq:general}.
		\item By \cite[Lemma C.3]{schoetz22}, for every random variable $Y$ with values in $\mc Q$, there is $\tran\in\setcciz$ with $\ddrtran(x) > 0$ for all $x\in\Rpp$ and $\Ex{\dtran(\ol Yo)} < \infty$. Thus, there is always a $\tran\in\setcciz$ such that \cref{thm:varinequ} does not yield merely a trivial result.
	\end{enumerate}
\end{remark}
\begin{example}\label{exa:pm1}\mbox{ }
	\begin{enumerate}[label=(\roman*)]
		\item\label{exa:pm1:tran} Let $(\mc Q, d)$ be the real line with Euclidean metric $(\R, \abs{\cdot - \cdot})$. Let $\Prof{Y = 1}=\Prof{Y = -1}=\frac12$. Assume $\tran\in\setcc$ is three times continuously differentiable on $\Rpp$. Because of the symmetry of the distribution of $Y$, $m = 0$ is a $\tran$-Fréchet mean of $Y$.
		Then, for $q\in(-1,1)$,
		\begin{equation}
			v(q) := \Ex{\tran(\abs{Y-q}) -\tran(\abs{Y})} = \frac{\tran(1+q)+\tran(1-q)}2 - \tran(1)\eqfs
		\end{equation}
		Using a Taylor approximation, we have
		\begin{equation}
			v(q) = \frac12 q^2 \ddtran(1) + \frac1{12} q^3 \br{\dddtran(1+\xi) -\dddtran(1-\xi)}
		\end{equation}
		for some $\xi\in[0,q]$.
		Evaluating the lower bound in \cref{thm:varinequ}, yields
		\begin{equation}
			\Ex{\tran(\abs{Y-q}) -\tran(\abs{Y})} \geq \frac14 q^2 \br{\ddtran(1)+\ddtran(1+\abs{q})}
			\eqfs
		\end{equation}
		The result obtained from \cref{thm:varinequ} is close to the correct second order term for $q$ close to $0$.
		\item
		Let us consider $\tran = (x \mapsto x^\alpha)$ for $\alpha \in(1,2]$. Then, in the same setup as in \ref{exa:pm1:tran}, we obtain
		\begin{equation}\label{eq:pm1:alpha}
			\frac12 \alpha(\alpha-1) q^2 \leq \Ex{\abs{Y-q}^\alpha -\abs{Y}^\alpha} \leq \frac12 \alpha (\alpha-1) q^2 + \frac43 \abs{q}^3
		\end{equation}
		for $q\in[-\frac12, \frac12]$ via a Taylor approximation and explicit bounds on the third order remainder term. Using \cref{thm:varinequ} as in \ref{exa:pm1:tran} and the bounds on $q$ and $\alpha$, we obtain the lower bound $\frac5{12} \alpha(\alpha-1) q^2$. Let us compare this result with the power transform variance inequality in \cite[Proposition 2]{yun2023exponential}. Applied to our setting here, it establishes an inequality of the form
		\begin{equation}\label{eq:pm1:yun}
			B_\alpha  \abs{q}^\alpha \leq \Ex{\abs{Y-q}^\alpha -\abs{Y}^\alpha}
		\end{equation}
		for some constant $B_\alpha \in\Rp$, which depends on $\alpha$ and the distribution of $Y$. Because of \eqref{eq:pm1:alpha}, we must have $B_\alpha = 0$ for $\alpha\in(1,2)$, i.e., \eqref{eq:pm1:yun} only yields a trivial result here, in contrast to \cref{thm:varinequ}.
 	\end{enumerate}
\end{example}
In the following, the infimum of an empty set is $\infty$.
\begin{corollary}\label{cor:nonuniquetran}
	Let $\tran\in\setcciz$.
	Assume $(\mc Q, d)$ is Hadamard.
	Assume $\Ex{\dtran(\ol Yo)} < \infty$.
	Let $m \in\argmin_{q\in\mc Q}\Ex{\tran(\ol Yq) - \tran(\ol Yo)}$.
	Let $x_0 = \inf\setByEleInText{x\in\Rpp}{\ddrtran(x) = 0}$.
	Assume $\PrOf{\ol Ym < x_0} > 0$.
	Then $m$ is the only $\tran$-Fréchet mean of $Y$.
\end{corollary}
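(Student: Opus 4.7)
The plan is to argue by contradiction: suppose there is a second $\tran$-Fréchet mean $q \neq m$. By \cref{lmm:closedBoundedConvex}, the set $M$ of $\tran$-Fréchet means is convex in the Hadamard sense, so the entire geodesic from $m$ to $q$ lies in $M$. For $\epsilon \in (0, \ol mq]$, write $q_\epsilon := \geodft mq(\epsilon)$, which satisfies $\ol m{q_\epsilon} = \epsilon$ and is again a $\tran$-Fréchet mean. This gives us the freedom to take $\epsilon$ as small as we please.

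Applying \cref{thm:varinequ} to $q_\epsilon$ in place of $q$ and using that $q_\epsilon$ is a minimizer of the variance functional yields
\begin{equation}
    0 = \Ex*{\tran(\ol Y{q_\epsilon}) - \tran(\ol Ym)} \geq \frac12 \epsilon^2 \Ex*{\ddrtran\brOf{\max(\ol Ym, \ol Y{q_\epsilon})}}.
\end{equation}
Since the integrand is nonnegative, this forces
\begin{equation}
    \Ex*{\ddrtran\brOf{\max(\ol Ym, \ol Y{q_\epsilon})}} = 0,
\end{equation}
so $\ddrtran(\max(\ol Ym, \ol Y{q_\epsilon})) = 0$ almost surely.

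The remaining step, which is the core of the argument, is to produce an event of positive probability on which this max is strictly less than $x_0$, since $\ddrtran$ is nonincreasing and strictly positive on $[0, x_0)$ by the definition of $x_0$. By continuity of measure, from $\PrOf{\ol Ym < x_0} > 0$ I can choose $c < x_0$ with $\PrOf{\ol Ym < c} > 0$ (taking $c$ arbitrarily large if $x_0 = \infty$; taking $c = x_0 - 1/n$ for suitable $n$ otherwise). Now I fix $\epsilon \in (0, x_0 - c)$, which is the main place where the flexibility of choosing $q_\epsilon$ arbitrarily close to $m$ is used. On the event $\cb{\ol Ym < c}$ the triangle inequality gives $\ol Y{q_\epsilon} \leq \ol Ym + \epsilon < c + \epsilon < x_0$, so $\max(\ol Ym, \ol Y{q_\epsilon}) < x_0$ and hence $\ddrtran(\max(\ol Ym, \ol Y{q_\epsilon})) > 0$ on this event of positive probability. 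This contradicts the almost-sure vanishing derived above, proving that no such $q$ exists.

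The only real obstacle is the one just handled: the lower bound from \cref{thm:varinequ} involves $\max(\ol Ym, \ol Yq)$ rather than $\ol Ym$, so the hypothesis $\PrOf{\ol Ym < x_0} > 0$ does not by itself guarantee a positive contribution from the variance inequality. The trick of sliding $q$ along the geodesic inside $M$ (so that $q_\epsilon$ is arbitrarily close to $m$) is what bridges this gap, and the two cases $x_0 < \infty$ and $x_0 = \infty$ are handled uniformly by the choice of $c < x_0$.
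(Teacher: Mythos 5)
Your proposal is correct and follows essentially the same route as the paper: use convexity of the $\tran$-Fréchet mean set (\cref{lmm:closedBoundedConvex}) to slide the second minimizer along the geodesic to a point $q_\epsilon$ arbitrarily close to $m$, apply \cref{thm:varinequ} there to force $\ddrtran(\max(\ol Ym, \ol Y{q_\epsilon})) = 0$ almost surely, and contradict this via the triangle inequality on the positive-probability event $\cb{\ol Ym < c}$ with $c + \epsilon < x_0$. The paper phrases the last step by bounding $\ddrtran(\max(\ol Ym, \ol Y{q_\epsilon})) \geq \ddrtran(\ol Ym + \epsilon)$ using monotonicity of $\ddrtran$, which is the same observation in a slightly different order.
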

\begin{proof}
	Let $p,m\in M := \argmin_{q\in\mc Q}\Ex{\tran(\ol Yq) - \tran(\ol Yo)}$, $p\neq m$. Then $\gamma := \geodft mp\subset M$ as $M$ is convex by \cref{lmm:closedBoundedConvex}.
	\cref{thm:varinequ} yields
	\begin{equation}
		0 = \Ex*{\tran(\ol Y\gamma(t)) - \tran(\ol Ym)} \geq \frac12 \ol m\gamma(t)^2 \Ex*{\ddrtran\brOf{\max(\ol Ym, \ol Y\gamma(t))}}\eqfs
	\end{equation}
	for $t\in (0, \ol pm]$.
	As $\ddrtran$ is nonnegative and nonincreasing, we obtain, using the triangle inequality,
	\begin{equation}\label{eq:zeroddtran}
		0 = \ddrtran\brOf{\max(\ol Ym, \ol Y\gamma(t))} \geq  \ddrtran\brOf{\ol Ym + t} \geq 0
	\end{equation}
	almost surely.
	If $\Prof{\ol Ym < x_0} > 0$, then we can make $t>0$ small enough so that $\Prof{\ol Ym + t < x_0} > 0$.
	But then $\Prof{\ddrtran\brOf{\ol Ym + t} > 0} > 0$ by the definition of $x_0$ in contradiction to \eqref{eq:zeroddtran}.
	Hence, $M$ cannot contain two different elements.
\end{proof}
\begin{corollary}[Unique $\tran$-Fréchet mean]\label{cor:uniquetran}
	Let $\tran\in\setcciz$.
	Assume $(\mc Q, d)$ is Hadamard.
	Assume $\Ex{\dtran(\ol Yo)} < \infty$.
	Assume $\ddrtran(x) > 0$ for all $x\in\Rpp$.
	Then the $\tran$-Fréchet mean is unique.
\end{corollary}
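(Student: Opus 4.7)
The plan is to derive this as an immediate consequence of \cref{cor:nonuniquetran}. First, I would invoke \cref{lmm:closedBoundedConvex} to guarantee that the set $M$ of $\tran$-Fréchet means is nonempty (this uses the finite moment assumption $\Ex{\dtran(\ol Yo)} < \infty$ and the Hadamard assumption on $\mc Q$), so we may fix some $m \in M$.

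Next, I would examine the quantity $x_0 := \inf\setByEleInText{x \in \Rpp}{\ddrtran(x) = 0}$ appearing in \cref{cor:nonuniquetran}. By the hypothesis that $\ddrtran(x) > 0$ for all $x \in \Rpp$, this infimum is taken over the empty set, so by the convention stated just before \cref{cor:nonuniquetran} we have $x_0 = \infty$. Consequently the condition $\PrOf{\ol Ym < x_0} > 0$ reduces to $\PrOf{\ol Ym < \infty} = 1 > 0$, which is trivially satisfied.

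Applying \cref{cor:nonuniquetran} then yields that $m$ is the only $\tran$-Fréchet mean of $Y$, completing the proof. There is no real obstacle here: the corollary is a straightforward specialization of the previous one to the case where the second derivative never vanishes, and the whole argument is essentially a one-line deduction once the convention on $\inf \emptyset = \infty$ is made explicit.
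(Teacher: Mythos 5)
Your proposal is correct and follows exactly the paper's route: the paper proves this corollary by citing \cref{cor:nonuniquetran} directly, and your argument simply makes explicit the observation that $\ddrtran > 0$ on $\Rpp$ forces $x_0 = \inf\emptyset = \infty$, so the hypothesis $\PrOf{\ol Ym < x_0} > 0$ holds trivially. Nothing is missing.
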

\begin{proof}
	This directly follows from \cref{cor:nonuniquetran}.
\end{proof}
The following corollary draws a rather complete picture of the behavior of the variance functional in form of upper and lower bounds for most settings where $\tran$ is not linear. It distinguishes the growth behavior close to $m$ from the one far away.
\begin{corollary}\label{cor:varinequ}
	Let $\tran\in\setcciz$.
	Assume $(\mc Q, d)$ is Hadamard.
	Assume $\Ex{\dtran(\ol Yo)} < \infty$.
	Let $m \in\argmin_{q\in\mc Q}\Ex{\tran(\ol Yq) - \tran(\ol Yo)}$.
	Let $x_0 = \inf\setByEleInText{x\in\Rpp}{\ddrtran(x) = 0}$. Assume $\PrOf{\ol Ym < x_0} > 0$.
	Set $\beta := 0 $ or assume $\beta\in[0, 1)$ such that
	\begin{equation}\label{eq:fastratecondition}
		\liminf_{x\searrow 0} \frac{\ddrtran(2x) \PrOf{\ol Ym \leq x}}{x^{-\beta}} > 0
		\eqfs
	\end{equation}
	Then, there are $c_1,\dots,c_4,\delta\in\Rpp$ with the following property:
	\begin{enumerate}[label=(\roman*)]
		\item For all $q\in\ball{m}{\delta}$
		\begin{equation}
			c_1\ol qm
			\geq
			\EOf{\tran(\ol Y{q}) - \tran(\ol Ym)}
			\geq
			c_2 \ol qm^{2-\beta}
			\eqfs
		\end{equation}
		\item For all $q\in\mc Q\setminus \ball{m}{\delta}$
		\begin{equation}
			c_3 \tran(\ol qm)
			\geq
			\EOf{\tran(\ol Y{q}) - \tran(\ol Ym)}
			\geq
			c_4 \tran(\ol qm)
			\eqfs
		\end{equation}
	\end{enumerate}
\end{corollary}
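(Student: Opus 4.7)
The plan is to partition $\mc Q$ into three regions based on $\ol qm$: a small neighborhood of $m$ (handled by part (i)), an intermediate annulus $\delta \leq \ol qm \leq R$, and a far region $\ol qm \geq R$ (the last two together forming part (ii)). The near-field bounds will follow from \cref{thm:varinequ} and \cref{thm:varineq:general}\,(ii); the far region from \cref{thm:varineq:general}\,(i); and the intermediate annulus will be bridged by \cref{prp:moment}\,(iii) above, and by convexity of the variance functional along geodesics (\cref{lmm:closedBoundedConvex}) combined with the already-proved near-field lower bound below. Throughout, write $v(q) := \Ex{\tran(\ol Yq) - \tran(\ol Yo)}$, so that $v(q) - v(m) = \Ex{\tran(\ol Yq) - \tran(\ol Ym)}$.

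For the near-field upper bound, \cref{thm:varineq:general}\,(ii) applied at $p = m$ yields $v(q) - v(m) \leq (\Ex{\dtran(\ol Ym)} + 1)\, \ol qm$ for $\ol qm$ small. For the lower bound, start from \cref{thm:varinequ}:
\[
    v(q) - v(m) \geq \tfrac12 \ol qm^2\, \Ex{\ddrtran(\max(\ol Ym, \ol Yq))},
\]
and restrict the expectation to $\{\ol Ym \leq x\}$; by the triangle inequality $\ol Yq \leq x + \ol qm$ on this event, and since $\ddrtran$ is nonincreasing the expectation is bounded below by $\ddrtran(x + \ol qm)\,\PrOf{\ol Ym \leq x}$. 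In the $\beta = 0$ case, choose $\tilde x \in (0, x_0)$ with $\PrOf{\ol Ym \leq \tilde x} > 0$ (which exists by monotone continuity since $\PrOf{\ol Ym < x_0} > 0$) and pick $\delta$ so small that $\tilde x + \delta < x_0$; then $\ddrtran(\tilde x + \delta) > 0$ and $c_2 := \tfrac12 \ddrtran(\tilde x + \delta)\,\PrOf{\ol Ym \leq \tilde x}$ gives the quadratic lower bound. In the case $\beta \in (0,1)$ under \eqref{eq:fastratecondition}, take instead $x := \ol qm$, so that $\max(\ol Ym, \ol Yq) \leq 2\ol qm$ on $\{\ol Ym \leq \ol qm\}$; the liminf condition yields constants $c, \eta > 0$ with $\ddrtran(2 \ol qm)\,\PrOf{\ol Ym \leq \ol qm} \geq c\,\ol qm^{-\beta}$ for $\ol qm < \eta$, and choosing $\delta \leq \eta$ gives $v(q) - v(m) \geq \tfrac{c}{2}\,\ol qm^{2-\beta}$.

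For part (ii), fix $\epsilon \in (0,1)$. By \cref{thm:varineq:general}\,(i) at $p = m$, there is $r_\epsilon > 0$ with $(1-\epsilon)\tran(\ol qm) \leq v(q) - v(m) \leq (1+\epsilon)\tran(\ol qm)$ whenever $\ol qo \geq r_\epsilon$. Set $R := r_\epsilon + \ol mo$; by the reverse triangle inequality, $\ol qm \geq R$ forces $\ol qo \geq r_\epsilon$, which covers the far region directly. For the intermediate annulus $\delta \leq \ol qm \leq R$, the upper bound follows from \cref{prp:moment}\,(iii): $v(q) - v(m) \leq D(q, m) \leq C\,\ol qm \leq CR$, and since $\tran(\ol qm) \geq \tran(\delta) > 0$ (as $\tran \in \setcciz$ is strictly increasing), the ratio is bounded by $CR/\tran(\delta)$. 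For the lower bound on the annulus, apply \cref{lmm:closedBoundedConvex} to get convexity of $h(t) := v(\geodft mq(t)) - v(m)$ on $[0, \ol qm]$ with $h(0) = 0$ and $h \geq 0$, so that $t \mapsto h(t)/t$ is nondecreasing; evaluating at $t = \delta$ with the near-field lower bound gives $h(\ol qm) \geq (h(\delta)/\delta)\,\ol qm \geq c_2 \delta^{1-\beta}\,\ol qm$, which in turn exceeds $(c_2 \delta^{2-\beta}/\tran(R))\,\tran(\ol qm)$ on the annulus since $\ol qm \geq \delta$ and $\tran(\ol qm) \leq \tran(R)$. Taking maxima/minima of the constants over the three regions yields admissible $c_3$ and $c_4$.

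The main obstacle is the lower bound on the intermediate annulus: neither the asymptotic \cref{thm:varineq:general}\,(i), which is only valid for $\ol qo \to \infty$, nor the pointwise inequality \cref{thm:varinequ}, whose right-hand side may degenerate once $\max(\ol Ym, \ol Yq)$ enters the region where $\ddrtran$ vanishes, directly produces a $\tran$-shaped bound there. The convexity-of-$v$-along-geodesics trick, anchored at $t = \delta$ by the near-field bound, is the decisive device: it propagates a linear lower bound across the whole annulus, which then dominates the bounded function $\tran(\ol qm)$ on that annulus.
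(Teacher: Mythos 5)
Your proposal is correct and follows essentially the same route as the paper: \cref{thm:varinequ} plus the triangle inequality for the near-field lower bound, \cref{thm:varineq:general} for the near-field upper bound and the far region, and convexity of the variance functional to bridge the intermediate annulus. You merely spell out two steps the paper leaves implicit -- the separate treatment of the $\beta=0$ case via a fixed $\tilde x\in(0,x_0)$ with $\PrOf{\ol Ym\leq\tilde x}>0$, and the explicit monotonicity-of-$h(t)/t$ argument that propagates the lower bound across the annulus.
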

\begin{proof}
	Using \cref{thm:varinequ} and the triangle inequality, we obtain
	\begin{equation}
		\Ex*{\tran(\ol Yq) - \tran(\ol Ym)}
		\geq
		\frac12 \ol qm^2 \Ex*{\ddrtran(\ol Ym + \ol qm)}
		\geq
		\frac12 \ol qm^2 \ddrtran(2 \ol qm) \PrOf{\ol Ym \leq \ol qm}
		\eqfs
	\end{equation}
	Thus, by \eqref{eq:fastratecondition}, there are $\delta_0, c_0 \in\Rpp$ such that
	\begin{equation}\label{eq:trancorproof:inner:lower}
		\Ex*{\tran(\ol Yq) - \tran(\ol Ym)} \geq c_0 \ol qm^{2-\beta}
	\end{equation}
	for all $q\in\ball{m}{\delta_0}$. Furthermore, by \cref{thm:varineq:general} \ref{thm:varineq:general:zero}, there is $\delta_1 \in\Rpp$ such that
	\begin{equation}\label{eq:trancorproof:inner:upper}
		\Ex*{\tran(\ol Yq) - \tran(\ol Ym)} \leq 2\, \ol qm \EOf{\dtran\brOf{\ol Ym}}
	\end{equation}
	for all $q\in\ball{m}{\delta_1}$.
	Moreover, by \cref{thm:varineq:general} \ref{thm:varineq:general:infty}, there is $\delta_2 \in\Rpp$ such that
	\begin{equation}\label{eq:trancorproof:outer}
		2 \tran(\ol qm) \geq \Ex*{\tran(\ol Yq) - \tran(\ol Ym)} \geq \frac12 \tran(\ol qm)
	\end{equation}
	for all $q\in\mc Q\setminus \ball{m}{\delta_2}$.
	Finally, as the variance functional is convex (\cref{lmm:closedBoundedConvex}), we can close the gap between $\ol qm < \min(\delta_0, \delta_1)$, where the bounds \eqref{eq:trancorproof:inner:lower} and \eqref{eq:trancorproof:inner:upper} hold, and $\ol qm \geq \delta_2$, where \eqref{eq:trancorproof:outer} is true, by setting $\delta$ and the constant factors $c_1, \dots, c_4$ in the statement of the corollary appropriately.
\end{proof}
\subsection{Point Mass}\label{ssec:tran:pointmass}
When $\Prof{Y=m}>0$, \cref{thm:varinequ} yields a variance inequality of the form
\begin{equation}
	\Ex*{\tran(\ol Yq) - \tran(\ol Ym)} \geq \frac12 \Prof{Y=m} \ol qm^2 \ddrtran(\ol mq)\eqfs
\end{equation}
As $\dtran(x) \geq 0$ and $\ddrtran$ is nonincreasing, we have $\frac12 x^2 \ddrtran(x) \leq \tran(x)$ for all $x\in\Rpp$ (and this inequality can be strict), so that the following theorem is an improvement.
\begin{theorem}\label{thm:point:convex}
	Let $\tran\in\setcciz$ with $\dtran(0) = 0$.
	Assume $(\mc Q, d)$ is Hadamard.
	Assume $\Ex{\dtran(\ol Yo)} < \infty$.
	Then
	\begin{equation}
		\Ex*{\tran(\ol Yq) - \tran(\ol Ym)} \geq \tran(\ol qm) \Prof{Y = m}
	\end{equation}
	for all $q\in\mc Q$.
\end{theorem}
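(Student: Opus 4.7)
The plan is to parametrize the variance functional along the unit-speed geodesic $\gamma := \geodft mq$ and peel off the contribution of the point mass of $Y$ at $m$. For $t \in [0, \ol qm]$, set
\begin{align*}
F(t) &:= \Ex{\tran(\ol Y{\gamma(t)}) - \tran(\ol Ym)}, \\
G(t) &:= \Ex{\br{\tran(\ol Y{\gamma(t)}) - \tran(\ol Ym)}\indOfEvent{Y \neq m}}.
\end{align*}
Since $\ol m{\gamma(t)} = t$, the integrand equals $\tran(t)$ on the event $\cb{Y = m}$, so
\begin{equation*}
F(t) = \PrOf{Y = m}\, \tran(t) + G(t)\eqfs
\end{equation*}
The theorem therefore reduces to showing $G(\ol qm) \geq 0$.

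Both $F$ and $G$ are convex on $[0, \ol qm]$: by \cref{lem:distFun}, for each $y\in\mc Q$ the map $t\mapsto \ol y{\gamma(t)}$ is $\mc G$-convex and in particular convex, post-composition with the nondecreasing convex function $\tran$ preserves convexity, and so does taking expectations, the relevant integrals being finite by \cref{prp:moment}. Clearly $F(0) = G(0) = 0$, and $F(t) \geq 0$ on $[0, \ol qm]$ because $m$ minimizes the variance functional.

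The hypothesis $\dtran(0) = 0$ now enters decisively: together with $\tran(0) = 0$, it yields $\tran(t)/t \to 0$ as $t \searrow 0$. Dividing $F(t) = \PrOf{Y=m}\tran(t) + G(t)$ by $t > 0$ and using $F(t)/t \geq 0$ then gives $\liminf_{t\searrow 0} G(t)/t \geq 0$. Since $G$ is convex with $G(0) = 0$, the slope function $t\mapsto G(t)/t$ is nondecreasing on $(0, \ol qm]$, so
\begin{equation*}
G(t)/t \geq \lim_{s \searrow 0} G(s)/s \geq 0 \qquad \text{for all } t \in (0, \ol qm]\eqfs
\end{equation*}
Evaluating at $t = \ol qm$ yields $G(\ol qm) \geq 0$, which is the claim.

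I do not expect any real obstacle; the proof is notably shorter than that of \cref{thm:varinequ} because no second-order information via \cref{lmm:semitaylor} is needed. The conceptual point to recognize is that the assumption $\dtran(0) = 0$ is exactly what guarantees that the first-order slack of $F$ at $t = 0$ is entirely carried by $G$, allowing convexity of $G$ alone to force nonnegativity. For $\tran$ with $\dtran(0) > 0$ (e.g.\ the linear case), the point-mass contribution $\PrOf{Y=m}\tran(t)$ already consumes a positive share of the slope at the origin and this simple argument fails — consistent with the fact that the analogous bound is not true in the median setting without further structure.
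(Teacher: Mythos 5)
Your proposal is correct and follows essentially the same route as the paper's proof: the same decomposition of the variance functional along $\geodft mq$ into the point-mass part $\PrOf{Y=m}\tran(t)$ and the remainder $G$, the same use of convexity of both pieces, and the same exploitation of $\dtran(0)=0$ (with $\tran(0)=0$) to show $\liminf_{t\searrow 0}G(t)/t\geq 0$. The only cosmetic difference is that you conclude directly from the monotonicity of the slope function of the convex function $G$, whereas the paper phrases the same step as a proof by contradiction.
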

\begin{proof}
	Let $\gamma\in\GeodUS$ with $\gamma(0) = m$. Define
	\begin{align}
		f_1(t) &:= \Ex*{\br{\tran(\ol Y\gamma(t)) - \tran(\ol Ym)}\ind_{\cb{Y=m}}} = \tran(t)\PrOf{Y=m} \eqcm\\
		f_2(t) &:= \Ex*{\br{\tran(\ol Y\gamma(t)) - \tran(\ol Ym)}\ind_{\cb{Y\neq m}}}\eqfs
	\end{align}
	Then
	\begin{equation}
		\Ex*{\tran(\ol Y\gamma(t)) - \tran(\ol Ym)} = f_1(t) + f_2(t)
		\eqfs
	\end{equation}
	We will show $f_2(t) \geq 0$ for all $t\in\Rp$ to prove the theorem.
	The functions
	$f_1, f_2, f_1 + f_2$ are all convex as $\tran$ is convex and $t\mapsto \ol y\gamma(t)$ is convex for all $y\in\mc Q$ (\cref{lem:distFun} and \cref{prp:gconv:fun} \ref{prp:gconv:fun:convex}).
	As $\lim_{t\to 0} f_1(t)/t = 0$ by  $\dtran(0) = 0$ and $f_1 + f_2 \geq 0$,
	\begin{align}\label{eq:pointmass:liminf}
		\liminf_{t\searrow0} \frac{f_2(t)}{t}
		&\geq
		\liminf_{t\searrow0} \frac{f_1(t)+f_2(t)}{t} -
		\limsup_{t\searrow0} \frac{f_1(t)}{t}
		\geq
		0\eqfs
	\end{align}
	Assume there were $t_0\in\Rpp$ such that $f_2(t_0) < 0$.
	As $f_2(0) = 0$ and $f_2$ is convex, we obtain, for all $s\in(0,1]$, $f_2(s t_0) \leq s f_2(t_0)$. Thus, we would have
	\begin{equation}
		\frac{f_2(s t_0)}{s t_0} \leq  \frac{f_2(t_0)}{t_0} < 0
		\eqfs
	\end{equation}
	In other words, $\frac{f_2(t)}{t}$ would be smaller than a negative constant for all $t\in(0,t_0]$.
	But this contradicts \eqref{eq:pointmass:liminf}.
	Hence, $f_2(t) \geq 0$ for all $t\geq 0$.
\end{proof}
\section{The Fréchet Median in Hadamard Spaces}\label{sec:varineq:hadmard:median}
First, we show that focusing only on the Fréchet median is enough to essentially complete the discussion of variance inequalities for $\tran\in\setcciz$ (section \ref{ssec:median:reduction}, \cref{thm:affinereduction}). In section \ref{ssec:median:unique}, we discuss criteria for uniqueness of the Fréchet median (\cref{thm:medianConcentration}). A variance inequality for the Fréchet median is derived in section \ref{ssec:median:varineq} (\cref{thm:near:median}). It requires the random object $Y$ to not be concentrated on a geodesic. The case $\PrOf{Y\in\gamma} = 1$ for a geodesic $\gamma$ is discussed in section \ref{ssec:median:geodesic} (\cref{thm:median:geodesic}).
\subsection{Reduction to the Median}\label{ssec:median:reduction}
The next theorem shows that the cases not covered in the section \ref{sec:varineq:hadmard}, i.e., when $\PrOf{\ol Ym < x_0} = 0$, $x_0 = \inf\setByEleInText{x\in\Rpp}{\ddrtran(x) = 0}$, essentially reduce to the case of the Fréchet median.
\begin{theorem}\label{thm:affinereduction}
    Let $\tran \in \setcciz$.
    Let $x_0 = \inf\setByEleInText{x\in\Rpp}{\ddrtran(x) = 0}$.
    Assume $x_0 < \infty$.
    Let $m \in\argmin_{q\in\mc Q}\Eof{\tran(\ol Yq) - \tran(\ol Yo)}$.
    Define $B_0 := \setByEleInText{q\in\mc Q}{\Prof{\ol Y{q} < x_0} = 0}$.
    Assume $m \in B_0$.
    Then
    \begin{equation}\label{eq:affinemedian:charac}
        \argmin_{q\in\mc Q}\Eof{\tran(\ol Yq) - \tran(\ol Yo)}
        =
        B_0 \cap \argmin_{q\in\mc Q}\Eof{\ol Yq - \ol Yo}
    \end{equation}
    and, for all $q\in\mc Q$,
    \begin{equation}\label{eq:affinemedian:varinequ}
        \EOf{\tran(\ol Yq) - \tran(\ol Ym)}
        \geq
        \dtran(x_0) \EOf{\ol Yq - \ol Ym}
        \eqfs
    \end{equation}
\end{theorem}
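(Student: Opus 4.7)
The plan rests on decomposing $\tran$ into its affine limit plus a nonnegative residual. By the definition of $x_0$ and concavity of $\dtran$, the derivative $\dtran$ is constant equal to $c := \dtran(x_0)$ on $[x_0, \infty)$, so $\tran$ coincides there with the affine function $L(x) := \tran(x_0) + c\,(x-x_0)$, and the residual $R := \tran - L$ satisfies $R \geq 0$ with $R(x) = 0$ if and only if $x \geq x_0$. The bound $\dtran \leq c$ yields $\EOf{\dtran(\ol Yo)} \leq c < \infty$ automatically, so both functionals $V(q) := \EOf{\tran(\ol Yq) - \tran(\ol Yo)}$ and $W(q) := \EOf{\ol Yq - \ol Yo}$ are well-defined ($W$ via the reverse triangle inequality). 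Substituting the decomposition gives the identity
\[
V(q) - V(m) \;=\; c\,\br{W(q) - W(m)} \,+\, \EOf{R(\ol Yq)} \,-\, \EOf{R(\ol Ym)}\, .
\]
Because $m \in B_0$ forces $\ol Ym \geq x_0$ almost surely, $\EOf{R(\ol Ym)} = 0$, and combining with $\EOf{R(\ol Yq)} \geq 0$ gives \eqref{eq:affinemedian:varinequ} immediately.

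For the set identity \eqref{eq:affinemedian:charac}, the inclusion $\supseteq$ follows from the same identity: if $p \in B_0 \cap \argmin W$, then $\EOf{R(\ol Yp)} = 0$ and $W(p) \leq W(m)$, hence $V(p) \leq V(m)$, and the reverse inequality from $m \in \argmin V$ forces $p \in \argmin V$. The inclusion $\subseteq$ hinges on first establishing that $m$ itself minimizes $W$. My plan is a first-order variational argument along geodesics: fix any unit-speed geodesic $\gamma$ starting at $m$; dominated convergence with dominator $c$ (using \cref{lmm:ccdiff} together with $\dtran \leq c$) permits differentiation under the expectation, and since $m \in B_0$ forces $\ol Ym \geq x_0 > 0$ almost surely and $\tran$ is $C^1$ at every such point with derivative $c$ (by continuity of $\dtran$ from \cref{lmm:tran:continuous}), the chain rule yields
\[
(V \circ \gamma)\rd(0) \;=\; c\,(W \circ \gamma)\rd(0)\, .
\]
Minimality of $m$ for the convex function $V$ (\cref{lmm:closedBoundedConvex}) makes the left-hand side nonnegative, hence $(W \circ \gamma)\rd(0) \geq 0$ for every geodesic direction, and convexity of $W \circ \gamma$ (from \cref{lem:distFun} and \cref{prp:gconv:fun}) propagates this to $W(q) \geq W(m)$ for all $q \in \mc Q$. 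The degenerate case $x_0 = 0$ makes $\tran$ linear, $B_0 = \mc Q$, and the whole theorem trivial.

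With $m \in \argmin W$ in hand, the inclusion $\subseteq$ closes off: for any $p \in \argmin V$, the identity forces $c\,(W(p) - W(m)) + \EOf{R(\ol Yp)} = 0$ with both summands nonnegative, so both vanish. Therefore $W(p) = W(m)$ (so $p \in \argmin W$) and $R(\ol Yp) = 0$ almost surely (so $p \in B_0$), finishing \eqref{eq:affinemedian:charac}. The main obstacle is the chain-rule/dominated-convergence step producing $(V \circ \gamma)\rd(0) = c\,(W \circ \gamma)\rd(0)$; the cleanness of this identity (no left/right derivative ambiguity at the corner $x_0$) is entirely due to the hypothesis $m \in B_0$, which ensures $\ol Ym$ lives in the region where $\tran$ has slope exactly $c$.
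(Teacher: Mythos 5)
Your decomposition $\tran = L + R$, with $L$ the supporting affine function of slope $c=\dtran(x_0)$ at $x_0$ and $R\geq 0$ vanishing exactly on $[x_0,\infty)$, is the same device the paper uses (there written as $\tran(x)\geq a+bx$ with equality iff $x\geq x_0$), and your derivations of \eqref{eq:affinemedian:varinequ} and of the inclusion $\supseteq$ in \eqref{eq:affinemedian:charac} coincide with the paper's and are sound in any metric space; so is your observation that the moment condition is automatic. The genuine divergence is in the inclusion $\subseteq$, which hinges on $m$ minimizing $W(q):=\Eof{\ol Yq - \ol Yo}$. The paper obtains this by selecting some $m_0\in B_0\cap\argmin W$ and comparing $m$ to $m_0$ through the pointwise affine minorant; you instead prove directly that $m\in\argmin W$ by a first-order argument along geodesics, $(V\circ\gamma)\rd(0)=c\,(W\circ\gamma)\rd(0)\geq 0$ followed by convexity of $W\circ\gamma$. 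The details of that step (domination of the difference quotients by $c$, differentiability of $\tran$ at $\ol Ym\geq x_0>0$, the one-sided chain rule, the degenerate case $x_0=0$) all check out, and your route has the merit of actually establishing that $B_0\cap\argmin W$ is nonempty — a fact the paper's proof presupposes when it picks $m_0$.

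The caveat is scope. The theorem as printed carries no Hadamard hypothesis, and the remark following it asserts validity in all metric spaces, whereas your variational step uses geodesics, \cref{lem:distFun}, and convexity of $t\mapsto W(\gamma(t))$, none of which is available in a general metric space. This restriction is not cosmetic: in a general metric space a $\tran$-Fréchet mean $m\in B_0$ need not minimize $W$. For instance, take the four-point space $\{m,p,y_1,y_2\}$ with $\ol{y_1}{m}=\ol{y_2}{m}=\ol mp=1$, $\ol{y_1}{p}=0.5$, $\ol{y_2}{p}=1.4$, $\ol{y_1}{y_2}=1.5$, let $Y$ be uniform on $\{y_1,y_2\}$ and $\tran=\tran_{\ms h,1}$ (so $x_0=1$): then $m\in\argmin_q V(q)\cap B_0$, yet $W(y_1)=W(y_2)<W(m)$ and $B_0\cap\argmin W=\emptyset$ while $\argmin V=\{m,y_1,y_2\}$, so \eqref{eq:affinemedian:charac} fails there. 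In other words, your proof is correct precisely in the Hadamard setting of this section, and the generality you do not reach is one the statement itself cannot support without some geodesic structure (or the added hypothesis that $B_0\cap\argmin W\neq\emptyset$, which is where the paper's own proof is silent).
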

\begin{remark}\mbox{ }
    \begin{enumerate}[label=(\roman*)]
        \item \cref{thm:affinereduction} applies not only in Hadamard metric spaces, but in all metric spaces.
        \item If the Fréchet median is unique and we can establish a variance inequality for it, \cref{thm:affinereduction} implies that the $\tran$-Fréchet mean (with the conditions on $\tran$ and $Y$ stated in the theorem) is unique and exhibits the same variance inequality up to a positive constant factor.
    \end{enumerate}
\end{remark}
\begin{proof}[Proof of \cref{thm:affinereduction}]
    As $x_0 < \infty$, we have $b := \sup_{x\in\Rpp}\dtran(x) = \dtran(x_0)$ with $b > 0$ as $\tran$ is nondecreasing and not constant. As $\tran$ is convex with concave derivative, we further have a constant $a\in\R$ such that
    \begin{equation}\label{eq:affinemedian:lower}
        \begin{array}{rl}
            \tran(x) > a + bx &\text{if } x \in [0, x_0)\eqcm\\
            \tran(x) = a + bx &\text{if } x \in [x_0, \infty)\eqfs
        \end{array}
    \end{equation}
    Let $m_0 \in \argmin_{q\in\mc Q}\Eof{\ol Yq - \ol Yo} \cap B_0$.
    As $b>0$ and $m_0$ minimizes $\Eof{\ol Yq - \ol Yo}$ with respect to $q\in\mc Q$, it also minimizes $a + b\Eof{\ol Yq - \ol Yo}$. As we assume $\PrOf{\ol Y{m_0} < x_0} = 0$, we obtain
    \begin{equation}
        \Eof{\tran(\ol Y{m_0}) - b\ol Yo} = a + b\Eof{\ol Y{m_0} - \ol Yo}
        \eqfs
    \end{equation}
    Hence, $m_0$ minimizes $\Eof{\tran(\ol Yq) - b\ol Yo}$ and therefore also $\Eof{\tran(\ol Yq) - \tran(\ol Yo)}$.
    Thus, we have shown
    \begin{equation}\label{eq:affinemedian:sup}
        \argmin_{q\in\mc Q}\Eof{\tran(\ol Yq) - \tran(\ol Yo)}
        \supseteq
        B_0 \cap \argmin_{q\in\mc Q}\Eof{\ol Yq - \ol Yo}
        \eqfs
    \end{equation}
    Because of \eqref{eq:affinemedian:lower}, we further obtain
    \begin{equation}\label{eq:costquality}
        \EOf{\tran(\ol Yq) - \tran(\ol Y{m_0})}
        \geq
        b \EOf{\ol Yq - \ol Y{m_0}}
    \end{equation}
    with equality if and only if $q\in B_0$.
    For $m \in \argmin_{q\in\mc Q}\Eof{\tran(\ol Yq) - \tran(\ol Yo)}$, we have $0 = \EOf{\tran(\ol Y{m}) - \tran(\ol Y{m_0})}$ because of \eqref{eq:affinemedian:sup}.
    Hence, \eqref{eq:costquality} and its condition for equality imply $m \in \argmin_{q\in\mc Q}\Eof{\ol Yq - \ol Yo}$ and $m\in B_0$.
    Thus, we have
    \begin{equation}\label{eq:affinemedian:sub}
        \argmin_{q\in\mc Q}\Eof{\tran(\ol Yq) - \tran(\ol Yo)}
        \subseteq
        B_0 \cap \argmin_{q\in\mc Q}\Eof{\ol Yq - \ol Yo}
        \eqfs
    \end{equation}
    Equations \eqref{eq:affinemedian:sup} and \eqref{eq:affinemedian:sub} imply \eqref{eq:affinemedian:charac}, and \eqref{eq:costquality} then shows \eqref{eq:affinemedian:varinequ}.
\end{proof}
In the next example, we consider the transformed Fréchet mean induced by the Huber loss (the \textit{Huber-Fréchet mean}). We apply either \cref{thm:varinequ} or \cref{thm:affinereduction} depending on the threshold parameter of the Huber loss and compare the resulting lower bounds with the exact variance functionals.
\begin{example}[Huber loss]\label{exa:huber}
    Let us use the real line as our metric space $(\mc Q, d) = (\R, \abs{\cdot - \cdot})$. Let $\delta\in\Rpp$ and set $\tran := \tran_{\ms h,\delta}$, see \eqref{eq:huber}.
    Let $z\in\Rpp$ and let $Y$ be an $\R$-valued random variable with $\Prof{Y = -z} = \Prof{Y = z} = \frac12$. Then $m=0$ is a $\tran$-Fréchet mean and a Fréchet median, and we have the following variance equations for $\tran$:
    For $z\leq \delta$,
    \begin{equation}\label{exa:huber:vareq:unique}
    	\EOf{\tran(\ol Yq) - \tran(\ol Y0)} =
    	\begin{cases}
    		\frac12 q^2 & \text{ for } \abs{q} \in [0, \delta-z]\eqcm\\
    		\frac14 q^2 + \frac12(\delta-z) \abs{q} - \frac14 (\delta - z)^2 &\text{ for } \abs{q} \in [\delta-z, \delta+z]\eqcm\\
    		\delta \abs{q} -\frac12 (\delta^2+z^2) &\text{ for } \abs{q} \in [\delta+z, \infty) \eqcm
    	\end{cases}
    \end{equation}
    for $z\geq \delta$,
    \begin{equation}\label{exa:huber:vareq:median}
        \EOf{\tran(\ol Yq) - \tran(\ol Y0)} =
        \begin{cases}
            0 & \text{ for } \abs{q} \in [0, z-\delta]\eqcm\\
            \frac14 q^2 + \frac12 (\delta-z)\abs{q} + \frac14 (\delta-z)^2  &\text{ for } \abs{q} \in [z-\delta, z+\delta]\eqcm\\
            \delta (\abs{q} -z) &\text{ for } \abs{q} \in [\delta+z, \infty) \eqfs
        \end{cases}
    \end{equation}
    Moreover, for the median variance functional, we have
    \begin{equation}
        \EOf{\ol Yq - \ol Y0} =
        \begin{cases}
            0 & \text{ for } \abs{q} \in [0, z]\eqcm\\
            \abs{q} - z &\text{ for } \abs{q} \in [z, \infty) \eqfs
        \end{cases}
    \end{equation}
    Furthermore, in terms of \cref{thm:affinereduction}, we have $x_0 = \delta$ and
    \begin{equation}
        B_0 =
        \begin{cases}
            (-\infty, -(\delta + z)] \cup [\delta + z, \infty) & \text{ for } z < \delta\eqcm\\
            (-\infty, -(\delta + z)] \cup [\delta-z, z-\delta] \cup [\delta + z, \infty) & \text{ for } z \geq \delta\eqfs
        \end{cases}
    \end{equation}
    The Huber-Fréchet mean (set) is
    \begin{equation}
        \argmin_{q\in\mc Q} \EOf{\tran(\ol Yq) - \tran(\ol Yo)} =
        \begin{cases}
            0 & \text{ for } z \leq \delta\eqcm\\
            [\delta-z, z-\delta] &\text{ for } z \geq \delta \eqfs
        \end{cases}
    \end{equation}
    and the Fréchet median set is
    \begin{equation}
        \argmin_{q\in\mc Q} \EOf{\ol Yq - \ol Yo} = [-z, z]
        \eqfs
    \end{equation}
    In the case $z < \delta$, \cref{thm:varinequ} yields
    \begin{equation}\label{exa:huber:varineq}
        \EOf{\tran(\ol Yq) - \tran(\ol Y0)} \geq
        \begin{cases}
            \frac12 q^2 & \text{ for } \abs{q} \in [0, \delta-z)\eqcm\\
            \frac14 q^2  &\text{ for } \abs{q} \in [\delta-z, \delta+z)\eqcm\\
            0 &\text{ for } \abs{q} \in [\delta+z, \infty) \eqfs
        \end{cases}
    \end{equation}
    In the case $z \geq \delta$, \cref{thm:affinereduction} yields
    \begin{equation}\label{exa:huber:reduc}
        \EOf{\tran(\ol Yq) - \tran(\ol Y0)} \geq \dtran(x_0) \EOf{\ol Yq - \ol Ym} = \begin{cases}
            0 & \text{ for } \abs{q} \in [0, z]\eqcm\\
            \delta(\abs{q} - z) &\text{ for } \abs{q} \in [z, \infty) \eqfs
        \end{cases}
    \end{equation}
\end{example}
\begin{remark}\mbox{ }
    \begin{enumerate}[label=(\roman*)]
        \item \label{rem:exa:huber:convex}One can extend the bound in \ref{exa:huber:varineq} using the convexity of the variance functional (\cref{lmm:closedBoundedConvex}): For $\abs{q} \in [\delta+z, \infty)$, we obtain the lower bound $\frac12(\delta+z)\abs{q} - \frac14(\delta+z)^2$ on $\EOf{\tran(\ol Yq) - \tran(\ol Y0)}$.
        \item Using \ref{rem:exa:huber:convex}, the lower bounds derived from \cref{thm:varinequ} in the case $z < \delta$ show the correct growth behavior of the variance functional up to a constant.
        \item To apply \cref{thm:affinereduction} we require $B_0$ to contain a transformed Fréchet mean. This is only true in the case $z\geq \delta$ but not in the case $z < \delta$.
        \item In the case $z\geq \delta$, \cref{thm:affinereduction} yields a non-trivial lower bound. But, in the specific example here, it does not capture the growth behavior perfectly as the lower bound is constant for a larger interval than in the case of the true variance functional.
    \end{enumerate}
\end{remark}
\subsection{Uniqueness of the Fréchet Median}\label{ssec:median:unique}
To state the theorem on uniqueness of the Fréchet median, we introduce some terminology.
Recall $I\ld = I\setminus\cb{\inf I}$, $I\rd = I\setminus\cb{\sup I}$, and $\mathring I =I\ld \cap I\rd$ for an interval $I\subset \R$ (\cref{not:leftright}).
\begin{notation}\mbox{ }
	\begin{enumerate}[label=(\roman*)]
		\item
			For a set $A$, let $\# A$ be the \emph{cardinality} of $A$.
		\item
			A \emph{geodesic segment} is a closed and bounded set that is an image of a geodesic.
		\item
            For a geodesic $\gamma\colon I_{\gamma} \to \mc Q$, let $\mathring \gamma$ denote the \emph{interior geodesic image}, $\mathring \gamma := \gamma(\mathring I_{\gamma})$.
		\item
			Let $\gamma\in\GeodUS$. Define the \emph{left and right points of $\gamma$} as
			\begin{align}
				L(\gamma) &:= \setByEle{y\in\mc Q}{\forall t \in I\rd_\gamma\colon \ol y\gamma\rd(t) = 1}\eqcm\\
				R(\gamma) &:= \setByEle{y\in\mc Q}{\forall t \in I\ld_\gamma\colon \ol y\gamma\ld(t) = -1}\eqfs
			\end{align}
	\end{enumerate}
\end{notation}
\begin{theorem}\label{thm:medianConcentration}
	Assume $(\mc Q, d)$ is Hadamard.
	Set $M := \argmin_{q\in\mc Q}\Eof{\ol Yq - \ol Yo}$.
	\begin{enumerate}[label=(\roman*)]
		\item Then $M$ is a geodesic segment.
		\item Assume $\#M > 1$. Let $\gamma_M\in\GeodUS$ such that $\gamma_M(I_\gamma) = M$. Then
		\begin{equation}\label{eq:medianLeftRightHalfProb}
			\PrOf{Y\in L(\gamma_M)} = \PrOf{Y\in R(\gamma_M)} = \frac12 \qquad\text{and}\qquad \PrOf{Y \in \mathring \gamma_M} = 0
			\eqfs
		\end{equation}
	\end{enumerate}
\end{theorem}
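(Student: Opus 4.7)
The plan is to reduce both parts to a single key observation: whenever $\gamma\in\GeodUS$ satisfies $\gamma(I_\gamma)\subset M$, one has $Y\in L(\gamma)\cup R(\gamma)$ almost surely. To establish this, $\gamma\subset M$ makes $t\mapsto\Eof{\ol Y{\gamma(t)}}$ constant on $I_\gamma$ (it differs from the variance functional only by the constant $\Eof{\ol Yo}$). Each $f_y(t):=\ol y{\gamma(t)}$ is $\mc G$-convex by \cref{lem:distFun}, hence convex and $1$-Lipschitz, so dominated convergence yields $\Eof{f_Y\rd(t)}=0$ on $I\rd_\gamma$. Since $f_y\rd$ is nondecreasing by convexity, $\Eof{f_Y\rd(t_2)-f_Y\rd(t_1)}=0$ with nonnegative integrand forces $f_Y\rd(t_1)=f_Y\rd(t_2)$ almost surely; a countable intersection over rational pairs in $I\rd_\gamma$ then makes $f_Y\rd$ almost surely constant on $I\rd_\gamma$, so $f_Y$ is almost surely affine on $I_\gamma$ with some slope $a_Y$. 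The main obstacle is now showing $a_Y\in\{-1,+1\}$ almost surely: a direct calculation with the candidate $\mc G$-tangent $g(t)=\sqrt{(t-s)^2+h^2}$ at an interior $t_0$ shows that for an affine $f_y(t)=at+b$ with $f_y(t_0)>0$, the inequality $g\le f_y$ reduces to $(1-a^2)(t-t_0)^2\le 0$, forcing $a^2=1$. Combined with the Lipschitz bound $|a|\le 1$, we get $a_Y\in\{-1,+1\}$ a.s., which is the key observation.

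For part (ii), assume $\#M>1$ and invoke part (i) to pick $\gamma_M$ with image $M$. The key observation gives $\PrOf{Y\in L(\gamma_M)\cup R(\gamma_M)}=1$; the two sets are disjoint since membership forces incompatible affine slopes $+1$ and $-1$ on the non-degenerate interval $I_{\gamma_M}$. The identity $\Eof{f_Y\rd(t)}=0$ rewrites as $\PrOf{Y\in L(\gamma_M)}-\PrOf{Y\in R(\gamma_M)}=0$, so both probabilities equal $\tfrac12$. Finally, if $Y=\gamma_M(t_0)$ for some $t_0\in\mathring I_{\gamma_M}$, then $f_Y(t)=\abs{t-t_0}$ has a kink at $t_0$ and is not affine on $I_{\gamma_M}$, so $Y\notin L(\gamma_M)\cup R(\gamma_M)$; the almost sure inclusion yields $\PrOf{Y\in\mathring\gamma_M}=0$.

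For part (i), the case $\#M=1$ is trivial, so assume $\#M>1$. I first show that any three distinct $m_1, m_2, m_3\in M$ lie on a common geodesic. By convexity of $M$ (\cref{lmm:closedBoundedConvex}), each $\gamma_{ij}:=\geodft{m_i}{m_j}$ lies in $M$, so applying the key observation to all three places $Y$ almost surely in one of the $2^3=8$ intersections indexed by $\{L,R\}^3$. Each intersection encodes three triangle equalities among $\ol Y{m_i}$ and $\ol{m_i}{m_j}$; a direct substitution shows that six of the cases (LLL, LLR, LRR, RLL, RRL, RRR) force one of $m_1, m_2, m_3$ to lie between the other two on a geodesic, while the two cross-cases (LRL, RLR) force $\ol{m_1}{m_2}+\ol{m_1}{m_3}+\ol{m_2}{m_3}=0$, impossible for distinct points. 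Hence some consistent case has positive probability, yielding collinearity. To upgrade triple-wise collinearity to $M$ being a geodesic segment, fix $m_1\ne m_2\in M$ and define $s\colon M\to\R$ by $s(m)=\ol{m_1}{m}$ if $m\in\geodft{m_1}{m_2}$, $s(m)=-\ol{m_1}{m}$ if $m_1\in\geodft{m_2}{m}$, and $s(m)=\ol{m_1}{m_2}+\ol{m_2}{m}$ if $m_2\in\geodft{m_1}{m}$. Potential branching among two points $m, m'$ in the same extension case is ruled out by applying triple-wise collinearity to the triple $(m_2, m, m')$: the only consistent sub-cases force either $\ol{m_1}{m_2}=0$, $m=m_1$, or $m'=m_1$. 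Hence $\abs{s(m)-s(m')}=\ol m{m'}$ for all $m, m'\in M$, so $s$ is an isometric embedding; since $M$ is closed, bounded, and path-connected (via geodesics inside $M$), $s(M)$ is a closed bounded interval in $\R$ (closedness uses completeness of $\mc Q$), and $M$ is a geodesic segment.
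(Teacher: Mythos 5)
Your proposal is correct and reaches the same three structural milestones as the paper's proof (distance functions to geodesics in $M$ are a.s.\ affine with slope $\pm1$; any three points of $M$ are collinear; $M$ is a geodesic segment; then the probability identities), but it implements two of them quite differently. For the affineness and the slope $\pm1$, the paper packages these as \cref{lmm:convexAffine} (via the convexity inequality at each $t$) and \cref{lmm:affineDist} (via the sign of $g\prr$ for the $\mc G$-tangent); your derivative-based argument ($\Eof{f_Y\rd(t)}\equiv 0$ plus monotonicity of $f_Y\rd$, then the explicit tangent computation $(1-a^2)(t-t_0)^2\le 0$) is an equivalent route -- just note that you should say a word about why an affine $f_y$ admits an interior point with $f_y(t_0)>0$ (an affine, nonnegative, $\mc G$-convex function on a nondegenerate interval cannot vanish in the interior, by \cref{prp:gconv:fun}~\ref{prp:gconv:fun:distOnIsAbs}). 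The larger divergence is in part (i): the paper deduces collinearity of triples by producing a single point $y$ lying on extensions of all three connecting geodesics and invoking \cref{lmm:geodesicSegmentCharac}, and then upgrades to a segment via the maximality-and-completeness construction of \cref{lmm:geodesicSegmentFour}; you instead run an explicit $\{L,R\}^3$ case analysis on triangle equalities (six cases give betweenness, two are contradictory) and then build a coordinate map $s\colon M\to\R$ and verify it is an isometric embedding onto a closed bounded interval. Your route is more computational and avoids the two geometric lemmas, at the cost of a somewhat terse verification that $\abs{s(m)-s(m')}=\ol m{m'}$ across all case combinations; the paper's route localizes the geometry into reusable lemmas. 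Part (ii) of your argument is essentially identical to the paper's.
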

For the proof of this theorem, we first derive four simple lemmas.
\begin{lemma}\label{lmm:convexAffine}
	Let $r,s\in \R$ with $r \leq s$.
	For $y\in\mc Q$, let $g_y\colon[r,s]\to\R$ be convex.
	Assume $G \colon [r,s]\to\R$ with $G(t) = \Eof{g_Y(t)}$ exists and $\min_{t\in[r,s]} G(t) = G(r) = G(s)$. Then
	$G$ is constant on $[r,s]$ and $g_y$ is affine almost surely, i.e, there are $a_y,b_y\in\R$ (measurable functions $\mc Q \to \R$) such that $g_Y(t) = a_Y + b_Y t$ for all $t\in[r,s]$ almost surely.
\end{lemma}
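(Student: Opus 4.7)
The plan is to exploit the convexity of each $g_y$ in two layers: first to pin down $G$ on $[r,s]$, and then to pin down $g_Y$ almost surely. Since $g_y$ is convex in $t$ for each $y$ and expectation preserves the convex combination inequality, $G$ is convex on $[r,s]$. A convex function that attains its minimum at both endpoints must be constant on the interval (any interior point is a convex combination of the endpoints, giving $G(t) \le G(r) = G(s) = \min G$, while the reverse inequality is trivial). So the first assertion is immediate.

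For the second assertion, I would fix $t\in(r,s)$ and write $\lambda_t = (t-r)/(s-r)$, so $t = (1-\lambda_t)r+\lambda_t s$. The convexity of $g_Y$ gives a pointwise inequality
\begin{equation}
    (1-\lambda_t)\,g_Y(r) + \lambda_t\, g_Y(s) - g_Y(t) \ge 0
    \eqcm
\end{equation}
and taking expectations shows this nonnegative random variable has mean equal to $(1-\lambda_t)G(r)+\lambda_t G(s) - G(t) = 0$, using that $G$ is constant. Hence, for each fixed $t$, the above expression vanishes almost surely, i.e.\ $g_Y(t) = (1-\lambda_t)g_Y(r) + \lambda_t g_Y(s)$ a.s.

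To upgrade this from \textquotedblleft for each $t$\textquotedblright\ to \textquotedblleft a.s.\ for all $t$\textquotedblright, I would pick a countable dense set $D\subset[r,s]$ containing $r$ and $s$, take the union $N$ of the corresponding null sets (still null), and observe that for $Y\notin N$, the convex function $g_Y$ coincides on $D$ with the affine function $h_Y(t) := (1-\lambda_t)g_Y(r) + \lambda_t g_Y(s) = a_Y + b_Y t$ with $a_Y = (sg_Y(r)-rg_Y(s))/(s-r)$ and $b_Y = (g_Y(s)-g_Y(r))/(s-r)$. Since $g_Y$ is convex on $[r,s]$, it is continuous on $(r,s)$, and it agrees with the continuous function $h_Y$ on a dense subset of $(r,s)$, hence everywhere on $(r,s)$; at the endpoints the equality $g_Y(r)=h_Y(r)$, $g_Y(s)=h_Y(s)$ is by construction. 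This yields $g_Y = a_Y + b_Y(\cdot)$ on $[r,s]$ a.s., as desired.

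The only mild subtlety is the step from \textquotedblleft for each $t$, a.s.\textquotedblright\ to \textquotedblleft a.s., for all $t$\textquotedblright; this is resolved routinely by the countable-dense-set trick combined with continuity of convex functions on the open interval, and also covers the degenerate case $r = s$ where the claim is vacuous.
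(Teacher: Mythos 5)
Your proof is correct and follows essentially the same route as the paper's: convexity of $G$ forces it to be constant on $[r,s]$, and the vanishing expectation of the nonnegative convexity defect $\frac{s-t}{s-r}g_Y(r) + \frac{t-r}{s-r}g_Y(s) - g_Y(t)$ forces $g_Y(t)$ to equal the affine interpolant almost surely for each fixed $t$. Your countable-dense-set argument for passing from ``for each $t$, a.s.'' to ``a.s., for all $t$'' is a welcome bit of extra care that the paper's proof leaves implicit.
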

\begin{proof}
	If $r = s$, the statement is trivial. Assume $r < s$.
	As all $g_y$ are convex, so is $G$. Thus, $G(t) = G(r) = G(s)$ for all $t\in[r,s]$. Thus,
	\begin{align}
		0 &=
		\frac{s-t}{s-r}G(r) + \frac{t-r}{s-r}G(s) - G(t)
		\\&=
		\EOf{\frac{s-t}{s-r}g_Y(r) + \frac{t-r}{s-r}g_Y(s) - g_Y(t)}
		\eqfs
	\end{align}
	As $g_y$ is convex,
	\begin{equation}
		\frac{s-t}{s-r}g_y(r) + \frac{t-r}{s-r}g_y(s) - g_y(t) \geq 0\eqfs
	\end{equation}
	Therefore,
	\begin{equation}
		\PrOf{g_Y(t) = \frac{s-t}{s-r}g_Y(r) + \frac{t-r}{s-r}g_Y(s)} = 1
		\eqfs
	\end{equation}
	Hence, we can set
	\begin{align}
		a_y &:= \frac{s}{s-r}g_y(r) + \frac{-r}{s-r}g_y(s)\eqcm\\
		b_y &:= \frac{-1}{s-r}g_y(r) + \frac{1}{s-r}g_y(s)\eqfs
	\end{align}
\end{proof}
\begin{lemma}\label{lmm:affineDist}
	Assume $(\mc Q, d)$ is Hadamard.
	Let $y\in\mc Q$ and $\gamma\in\GeodUS$. Assume $\ol y\gamma$ is affine on $I_\gamma$. Then
	\begin{enumerate}[label=(\roman*)]
		\item there is $\tilde\gamma\in\GeodUS$ with $\gamma\subset\tilde\gamma$ and $y\in\tilde\gamma$,
		\item there is $t_0\in\R\setminus\mathring{I}_\gamma$ such that $\ol y\gamma(t) = \abs{t-t_0}$ on $I_\gamma$.
	\end{enumerate}
\end{lemma}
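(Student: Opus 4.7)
The plan is to prove (ii) first by a direct analysis of affine $\mc G$-convex functions, and then to obtain (i) by concatenating $\gamma$ with a geodesic from $y$, using the Hadamard assumption only where it is truly needed.

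For (ii), set $f := \ol y \gamma$. By \cref{lem:distFun}, $f$ is $\mc G$-convex, and by \cref{prp:gconv:fun} it is nonnegative and $1$-Lipschitz. Since $f$ is affine on $I_\gamma$, there exist $a, b \in \R$ with $|b| \leq 1$ such that $f(t) = a + bt$. I split into two cases. If $f(t_*) = 0$ for some $t_* \in I_\gamma$, then \cref{prp:gconv:fun} \ref{prp:gconv:fun:distOnIsAbs} forces $f(t) = |t - t_*|$ on $I_\gamma$, which is affine only when $t_* \notin \mathring I_\gamma$, so I set $t_0 := t_*$. Otherwise $f > 0$ on $\mathring I_\gamma$; at any interior point $s$, affineness gives $f\prr(s) = 0$, and substituting into \cref{lmm:gconv:second:deriv} yields $0 \geq (1 - b^2)/f(s)$, forcing $|b| = 1$. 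Setting $t_0 := -a/b$ then gives $f(t) = |t - t_0|$, and nonnegativity of $f$ on $I_\gamma$ forces $t_0 \notin \mathring I_\gamma$.

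For (i), by symmetry I may assume $b = 1$, so $t_0 \leq \inf I_\gamma$ and $\ol y{\gamma(t)} = t - t_0$ on $I_\gamma$. If $t_0 = \inf I_\gamma$ then $y = \gamma(t_0) \in \gamma$ and $\tilde\gamma := \gamma$ works. Otherwise, since a Hadamard space is geodesic, I choose a unit-speed geodesic $\eta \colon [t_0, \inf I_\gamma] \to \mc Q$ with $\eta(t_0) = y$ and $\eta(\inf I_\gamma) = \gamma(\inf I_\gamma)$, and define $\tilde\gamma$ on $[t_0, \sup I_\gamma]$ by concatenation: $\tilde\gamma := \eta$ on $[t_0, \inf I_\gamma]$ and $\tilde\gamma := \gamma$ on $I_\gamma$. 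Then clearly $\gamma \subset \tilde\gamma$ and $y = \tilde\gamma(t_0) \in \tilde\gamma$.

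It remains to verify $\tilde\gamma \in \GeodUS$, i.e.\ $\ol{\tilde\gamma(s)}{\tilde\gamma(t)} = t - s$ for $s \leq t$ in the domain. The two non-mixed cases are immediate since $\eta$ and $\gamma$ are unit-speed geodesics. For the mixed case $s \in [t_0, \inf I_\gamma]$, $t \in I_\gamma$, the triangle inequality through $\gamma(\inf I_\gamma)$ gives the upper bound $(\inf I_\gamma - s) + (t - \inf I_\gamma) = t - s$, and the reverse triangle inequality through $y$, combined with $\ol y{\eta(s)} = s - t_0$ (unit-speed) and $\ol y{\gamma(t)} = t - t_0$ (from (ii)), gives
\[
\ol{\eta(s)}{\gamma(t)} \geq \ol y{\gamma(t)} - \ol y{\eta(s)} = t - s.
\]
The main conceptual hurdle is the second-derivative argument in (ii): it is precisely the $\mc G$-convexity inherited from the CAT(0) hypothesis that rules out affine distance functions with slope $|b| < 1$ and thereby makes the clean extension in (i) possible.
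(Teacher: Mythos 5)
Your proof is correct and follows essentially the same route as the paper: part (ii) via the second-order rigidity of $\mc G$-convex functions at an interior point (you invoke \cref{lmm:gconv:second:deriv} where the paper argues directly with the $\mc G$-tangent, but this is the same idea), and part (i) via concatenation with $\geodft{y}{\gamma(\inf I_\gamma)}$ verified by the two triangle inequalities through $y$ and through the gluing point. The only omission is the trivial case $\#I_\gamma = 1$, which the paper dispatches explicitly before assuming an interior point exists.
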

\begin{proof}
	If $\# I_\gamma = 1$, the statement is trivial. Assume $I_\gamma$ has positive length. Then there is a point in the interior, $s\in\mathring{I}_\gamma$. As $\ol y\gamma$ is $\mc G$-convex, it has a $\mc G$-tangent $g\in\mc G$ at $s$. Then $g\prr(s)\leq 0$, as $g$ is touched from above by an affine function. The only elements of $\mc G$ that do not have a positive second derivative are of the form $t\mapsto \abs{t-t_0}$ for some $t_0\in\R$. As $g$ is $\mc G$-tangent to $\ol y\gamma$ and $\ol y\gamma$ is affine, we have $g(t) = \ol y\gamma(t) = \abs{t - t_0}$ for $t\in I_\gamma$, where $t_0\not\in\mathring{I}_\gamma$.

	Assume $I_\gamma = [t_-, t_+]$ for $t_-, t_+ \in\R$.
	Without loss of generality, assume $t_0 \leq t_-$ so that $\ol y\gamma(t) = t - t_0$ for $t\in I_\gamma$. Let $p:=\gamma(t_-)$. Let $\tilde\gamma\colon[t_0, t_+]\to\mc Q$ be the concatenation of $\geodft yp$ and $\gamma$, i.e.,
	\begin{equation}
		\tilde\gamma(t) :=
		\begin{cases}
			\geodft yp(t-t_0) & \text{ if } t\in[t_0, t_-]\eqcm\\
			\gamma(t)  & \text{ if } t\in[t_-, t_+]\eqfs
		\end{cases}
	\end{equation}
	Clearly, $y\in\tilde \gamma$ and $\gamma\subset\tilde\gamma$.
	Furthermore, $\ol y{\tilde\gamma}(t) = t- t_0$ for all $t\in I_{\tilde\gamma}$.
	Let $t_1 \in [t_0,  t_-]$ and $t_2\in[t_-, t_+]$. Using the triangle inequality, we obtain on one hand,
	\begin{equation}
		\ol{\tilde\gamma(t_1)}{\tilde\gamma(t_2)} \geq \ol{y}{\gamma}(t_2) - \ol{y}{\geodft yp}(t_1-t_0) = t_2 - t_1
	\end{equation}
	and on the other hand,
	\begin{equation}
		\ol{\tilde\gamma(t_1)}{\tilde\gamma(t_2)} \leq \ol{p}{\geodft yp}(t_1-t_0) + \ol{p}{\gamma}(t_2) = t_2 - t_1
		\eqfs
	\end{equation}
	Thus, $\ol{\tilde\gamma(r)}{\tilde\gamma(s)} = \abs{r - s}$ for all $r, s \in [t_0, t_+]$. Hence, $\tilde\gamma\in\GeodUS$. Similar arguments can be employed if $I_\gamma$ is not a closed and bounded interval of the form $I_\gamma = [t_-, t_+]$.
\end{proof}
\begin{lemma}\label{lmm:geodesicSegmentCharac}
	Assume $(\mc Q, d)$ is Hadamard.
	Let $\mc I\subset\N$ be finite.
	Let $y,p_i\in\mc Q$ for $i\in\mc I$. Let $T \in \Rpp$. Assume there are $\gamma_{ij} \in \GeodUS$ for $i,j\in\mc I$ with $I_{\gamma_{ij}} = [0, T]$ such that $p_i, p_j\in \gamma_{ij}$ and $y = \gamma_{ij}(0)$.
	Then $p_i\in\geodft{y}{p_{\ell}}$ for all $i\in\mc I$, where $\ell := \argmax_{i\in\mc I} \ol yp_i$.
\end{lemma}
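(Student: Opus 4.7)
The plan is to exploit uniqueness of geodesics in Hadamard spaces (\cref{prp:geod:basics}) together with the unit-speed parametrization, and to observe that we only need the geodesics $\gamma_{i\ell}$ for $i\in\mc I$ (not the full family $\gamma_{ij}$) to conclude.

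First, I would fix $\ell\in\argmax_{i\in\mc I}\ol y{p_i}$. For each $i\in\mc I$, I would examine the geodesic $\gamma_{i\ell}$, which by hypothesis is a unit-speed geodesic on $[0,T]$ with $\gamma_{i\ell}(0)=y$ and containing both $p_i$ and $p_\ell$. Since the geodesic is unit-speed and starts at $y$, the points of its image are labeled by their distance to $y$: if $\gamma_{i\ell}(t_i)=p_i$ then $t_i = \ol{y}{\gamma_{i\ell}(t_i)} = \ol y{p_i}$, and likewise $\gamma_{i\ell}(\ol y{p_\ell}) = p_\ell$. In particular $\ol y{p_i},\ol y{p_\ell}\in[0,T]$.

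Next, because $\ol y{p_i}\leq \ol y{p_\ell}$ by the choice of $\ell$, the restriction $\gamma_{i\ell}\restrict{[0,\ol y{p_\ell}]}$ is a unit-speed geodesic from $y$ to $p_\ell$, and it contains $p_i = \gamma_{i\ell}(\ol y{p_i})$. By the uniqueness of the constant-speed geodesic between two points in a Hadamard space (\cref{prp:geod:basics}), this restricted geodesic coincides, up to reparametrization to $[0,1]$, with $\geodft{y}{p_\ell}$. Hence $p_i\in \geodft{y}{p_\ell}$, which is exactly what we need.

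There is really no substantial obstacle: the full hypothesis involving all pairs $\gamma_{ij}$ is stronger than required, but specialising to $j=\ell$ already gives the conclusion. The only point that requires care is verifying that the natural parametrization $t\mapsto t_i=\ol y{p_i}$ is forced by the unit-speed condition, and that $\gamma_{i\ell}\restrict{[0,\ol y{p_\ell}]}$ indeed qualifies as a geodesic from $y$ to $p_\ell$ of the correct length so that uniqueness applies.
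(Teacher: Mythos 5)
Your proposal is correct and follows essentially the same route as the paper's proof: both exploit that the unit-speed parametrization starting at $y$ forces $\gamma_{i\ell}(\ol y{p_i}) = p_i$, and both invoke uniqueness of geodesics in Hadamard spaces to identify the initial segment of $\gamma_{i\ell}$ up to $p_\ell$ with $\geodft{y}{p_\ell}$, so that $\ol y{p_i}\leq \ol y{p_\ell}$ places $p_i$ on it. Your observation that only the geodesics $\gamma_{i\ell}$ are needed is also implicit in the paper's argument.
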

\begin{proof}
	Let $\gamma = \geodft{y}{p_{\ell}}$. Then $p_{\ell}\in\gamma$. As geodesics are unique in Hadamard spaces and $\gamma_{j\ell}$ also connects $y$ and $p_\ell$, we have $\gamma\subset\gamma_{j\ell}$ for all $j\in\mc I$. As $\gamma_{j\ell}(\ol yp_j) = p_j$ and $\ol yp_j \leq \ol yp_{\ell}$ for $j\in\mc I$, we have $p_j\in\gamma$.
\end{proof}
\begin{lemma}\label{lmm:geodesicSegmentFour}
	Assume $(\mc Q, d)$ is Hadamard.
	Let $M\subset\mc Q$ be nonempty, closed, bounded, and convex. If every $3$ points in $M$ lie on a common geodesic, then $M$ is a geodesic segment.
\end{lemma}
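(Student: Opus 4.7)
The plan is to parametrize $M$ by a signed distance from a fixed reference point, exhibiting an isometry between $M$ and a compact interval of $\R$. Fix any $q \in M$ (the cases $\card M \leq 1$ being trivial) and, for $p, r \in M \setminus \cb{q}$, declare $p \sim_q r$ if and only if $\ol{p}{r} = \abs{\ol{q}{p} - \ol{q}{r}}$. Since by hypothesis $\cb{q, p, r}$ lies on a common geodesic, this is equivalent to $q$ not being strictly between $p$ and $r$ on that geodesic, and, using uniqueness of geodesics in Hadamard spaces, also to ``$r \in \geodft{q}{p}$ or $p \in \geodft{q}{r}$''.

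First I would verify that $\sim_q$ is an equivalence relation. Reflexivity and symmetry are immediate. For transitivity, assume $p \sim_q r$ and $r \sim_q s$. The only nontrivial case is when $r$ lies on both $\geodft{q}{p}$ and $\geodft{q}{s}$; in the other cases one of the three geodesics $\geodft{q}{p}, \geodft{q}{r}, \geodft{q}{s}$ is an initial subgeodesic of another, and then $p \sim_q s$ is immediate by uniqueness. In the remaining case, the hypothesis places $\cb{p, q, s}$ on a common geodesic $\gamma$; if $q$ were strictly between $p$ and $s$ on $\gamma$, then by uniqueness $\geodft{q}{p}$ and $\geodft{q}{s}$ would coincide with the two opposite half-geodesics of $\gamma$ emanating from $q$, forcing $r \in \geodft{q}{p} \cap \geodft{q}{s} = \cb{q}$, a contradiction. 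Hence $q$ is not strictly between $p$ and $s$, so $p \sim_q s$.

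Next I would show that $\sim_q$ has at most two classes. If $p, r, s \in M \setminus \cb{q}$ were pairwise non-equivalent, then $q$ would lie strictly between each pair. On the common geodesic $\gamma'$ of $\cb{p, r, s}$, one point, say $r$, sits strictly between the other two, so in particular $q \in \geodft{p}{s} \subset \gamma'$. But a single point $q \in \gamma'$ distinct from $r$ cannot simultaneously be strictly between $p, r$ and strictly between $r, s$, a contradiction. Denoting the at most two classes by $M_q^+$ and $M_q^-$, I define
\begin{equation}
    \phi\colon M \to \R, \qquad \phi(q) := 0, \quad \phi(p) := \ol{q}{p} \text{ for } p \in M_q^+, \quad \phi(p) := -\ol{q}{p} \text{ for } p \in M_q^-,
\end{equation}
and check $\abs{\phi(p) - \phi(r)} = \ol{p}{r}$ by splitting into the same-class case (directly by the definition of $\sim_q$) and the opposite-class case (where $q$ is strictly between $p$ and $r$, giving $\ol{p}{r} = \ol{q}{p} + \ol{q}{r}$).

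Finally, $\phi(M) \subset \R$ is bounded as $M$ is bounded; since $\phi$ is an isometry and $M$ is geodesically convex (hence path-connected), $\phi(M)$ is path-connected; and since $\phi$ is an isometric image of the complete metric space $M$ (closed in the complete space $\mc Q$), $\phi(M)$ is complete, hence closed in $\R$. Thus $\phi(M) = [a, b]$ for some $a \leq 0 \leq b$, and $\phi^{-1}\colon [a, b] \to M$ is a unit-speed geodesic with image $M$. The main obstacle is the transitivity step: it is precisely where Hadamard uniqueness of geodesics is combined with the ``three points on a common geodesic'' hypothesis to rule out the branching that would otherwise spread $M$ across several directions at $q$.
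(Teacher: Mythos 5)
Your proof is correct, but it follows a genuinely different route from the paper's. The paper fixes two points $p,q\in M$ and maximally extends the geodesic through them inside $M$: the three-point hypothesis forces all such extensions to be nested, a monotone sequence of them converges to a maximal geodesic $\bar\gamma\subset M$ using completeness of $\mc Q$ and closedness of $M$, and convexity together with the three-point hypothesis applied to $y$ and the endpoints of $\bar\gamma$ then rules out any $y\in M\setminus\bar\gamma$. You instead coordinatize $M$ directly: the relation ``$q$ is not strictly between $p$ and $r$'' is an equivalence relation with at most two classes (both verifications consuming exactly the three-point hypothesis plus uniqueness of geodesics in Hadamard spaces), so the signed distance $\phi$ is a well-defined isometry of $M$ into $\R$; convexity gives connectedness of $\phi(M)$, closedness and boundedness give compactness, hence $\phi(M)=[a,b]$ and $\phi^{-1}$ is the required unit-speed geodesic. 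Your version confines the use of completeness to a single step at the end and produces the parametrization explicitly, at the price of a case analysis for transitivity; the paper's version reuses the nested-geodesics picture that also drives \cref{lmm:geodesicSegmentCharac}. One small presentational point: in the ``other cases'' of your transitivity argument, the subcase $p\in\geodft qr$ and $s\in\geodft qr$ is not a chain of inclusions among the three geodesics --- the conclusion there rests on the (easy, but worth stating) fact that two initial segments of $\geodft qr$ issued from $q$ are automatically nested.
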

\begin{proof}
	If $\# M=1$, the statement is trivial. As $M$ is nonempty, we assume $\# M>1$.
	Let $p,q \in M$, $p\neq q$.

	Let $\Lambda\subset \Gamma_1$ be the set of all unit-speed geodesics $\gamma\colon[0, b_\gamma]\to\mc Q$ with $b_\gamma\in[\ol qp, \infty)$ such that $\gamma(0) = q$, $\gamma(\ol qp) = p$, and $\gamma\subset M$. As $M$ is convex, this set contains at least one element, $\geodft qp\in\Lambda$.
	As $M$ is bounded, we have $b_\infty := \sup_{\gamma\in\Lambda} b_\gamma < \infty$.
	Let $(\gamma_n)_{n\in\N}\subset\Lambda$ be a sequence such that $(b_{\gamma_n})_{n\in\N}\subset\R$ is a nondecreasing sequence with $b_{\gamma_n} \xrightarrow{n\to\infty} b_\infty$.
	Let $n,k\in\N$, $k < n$. By the assumption of the lemma, the points $q, \gamma_k(b_k), \gamma_n(b_n)$ lie on a common geodesic, say $\gamma$. As geodesics are unique in Hadamard spaces, $\gamma_k,\gamma_n\subset \gamma$. Hence, the restriction of $\gamma_n$ to $[0, b_k]$ is exactly $\gamma_k$.
	With this, $\gamma_\infty \colon [0, b_\infty) \to \mc Q$ is well-defined by $\gamma_\infty(t) = \gamma_n(t)$ for any $n\in\N$ with $b_n \geq t$.
	As $\mc Q$ is complete, $M$ is closed, and $\gamma_\infty\subset M$, we obtain the existence of $m := \lim_{t\nearrow b_\infty} \gamma_\infty(t)$ with $m\in M$.
	Thus, we can extend the domain of $\gamma_\infty$ to $[0, b_\infty]$ with $\gamma_\infty(b_\infty) = m$. By construction, $\gamma_\infty\in\GeodUS$. Furthermore, for $t > b_\infty$ and any $\gamma\in\GeodUS$ with $\gamma(0) = q$, $\gamma(\ol qp) = p$, and $t\in I_\gamma$, we have $\gamma(t)\not\in M$.

	We can do the same in the other direction (the roles of $p$ and $q$ swapped) to obtain $\bar\gamma\colon[a_\infty,b_\infty]\to\mc Q$ such that $\bar\gamma\in\GeodUS$, $\bar\gamma\subset M$, and for any $\gamma\in\GeodUS$ with $\bar\gamma\subset \gamma\subset M$, we have $\bar\gamma = \gamma$ (as sets).
	Assume there is $y\in M\setminus\bar\gamma$, then $y$, $\bar\gamma(a_\infty)$, $\bar\gamma(b_\infty)$ are connected by a geodesic $\gamma$. As $M$ is convex, $\gamma\subset M$. This contradicts the maximality property of $\bar\gamma$. Hence, $M\setminus\bar\gamma$ is empty. As $\bar\gamma\subset M$, we have $\bar\gamma = M$ and $M$ is a geodesic segment.
\end{proof}
\begin{proof}[Proof of \cref{thm:medianConcentration}]
	If $\#M = 1$, $M$ is a geodesic segment. From now on assume that $\#M > 1$. Then $\#M = \infty$ as $M$ is convex by \cref{lmm:closedBoundedConvex}. Let $m,p\in M$, $p\neq m$. Let $\gamma = \gamma_{m \to p}$. Define $G(t) := \Eof{\ol Y\gamma(t) - \ol Ym}$. We have $G(0) = G(\ol pm) = 0$ by definition of $M$. \cref{lmm:convexAffine} implies that $G(t) = 0$ for all $t\in [0, \ol pm]$ and there are $\R$-valued random variables $a_Y, b_Y$ such that $\ol Y\gamma(t) - \ol Ym = a_Y + b_Y t$ almost surely. By \cref{lmm:affineDist}, we know that $\PrOf{b_Y\in\cb{-1, 1}} = 1$. As $G(0) = G(\ol pm) = 0$, we must have $a_Y = 0$ almost surely and
	\begin{equation}\label{eq:byPmOne}
		\PrOf{b_Y = -1} = \PrOf{b_Y = 1} = \frac12
		\eqfs
	\end{equation}
	We will use this result shortly. First, another consequence of \cref{lmm:affineDist} is
	\begin{equation}\label{eq:yInGamma}
		\PrOf{Y\in \bigcup \setByEle{\bar\gamma\in\GeodUS}{\gamma_{p\to m} \subset \bar\gamma} \setminus\mathring\gamma_{p\to m}} = 1\eqfs
	\end{equation}
	For any three pairwise distinct points $p_1, p_2, p_3\in M$, we can apply \eqref{eq:yInGamma} to each pair $m=p_i, p=p_j$, $i,j\in\cb{1,2,3}$. This yields the existence of a point $y\in\mc Q$ and $\gamma_{ij}\in\GeodUS$ for $i,j\in\{1,2,3\}$ with following property: $y\not\in\mathring\gamma_{p_i\to p_j}$ and $y, p_i, p_j\in\gamma_{ij}$ for all $i,j\in\cb{1,2,3}$. With this, \cref{lmm:geodesicSegmentCharac} ensures that there is a common geodesic that contains $p_1,p_2, p_3$. Furthermore, by \cref{lmm:closedBoundedConvex}, $M$ is convex, bounded, and closed. Thus, \cref{lmm:geodesicSegmentFour} implies that $M$ is a geodesic segment. Hence, there are $m_{\ms{L}}, m_{\ms{R}} \in M$ such that $M = \geodft{m_{\ms{L}}}{m_{\ms{R}}} =: \gamma_M$. Applying \eqref{eq:yInGamma} to $m_{\ms{L}}, m_{\ms{R}}$, yields $\Prof{Y\in\mathring \gamma_M} = 0$, and our earlier observation \eqref{eq:byPmOne} implies the remaining part of \eqref{eq:medianLeftRightHalfProb}.
\end{proof}
\begin{remark}[on \cref{thm:medianConcentration}]\mbox{ }\label{rem:nearmedianunique}
    \begin{enumerate}[label=(\roman*)]
        \item
        On the real line, $m\in\R$ is a median of $Y$ if and only if $\Prof{Y\geq m}\geq \frac12$ and $ \Prof{Y\leq m} \geq \frac12$. Furthermore, if the median is not unique,  $\Prof{Y\geq m} = \Prof{Y\leq m} = \frac12$ for all medians $m$. The latter statement is generalized in \eqref{eq:medianLeftRightHalfProb} to Hadamard spaces.
        \item
        Assume there are $p, m\in M$ with $p\neq m$. Define
        \begin{equation}
            \mc Y := \bigcup \setByEle{\bar\gamma\in\GeodUS}{\gamma_{p\to m} \subset \bar\gamma}
            \eqfs
        \end{equation}
        Then by \cref{thm:medianConcentration}, $\Prof{Y\in\mc Y} = 1$. Furthermore, $\mc Y = L(\gamma_M) \cup \mathring\gamma_M \cup  R(\gamma_M)$, where $\gamma_M\in\GeodUS$ is the geodesic with $M = \gamma_M$ and the unions are disjoint.
    \end{enumerate}
\end{remark}
\begin{definition}
	Assume $(\mc Q, d)$ is a geodesic space, see \cref{def:geodesic}.
	We say that $(\mc Q, d)$ is \textit{non-branching} \cite[Definition 2.8]{sturm2006on1} if following condition is fulfilled:
	Let $y,p,q\in\mc Q$ and $\geodft yp$, $\geodft yq$ unit-speed geodesics connecting $y$ and $p$, and $y$ and $q$, respectively.
	If $\geodft yp(\ol yp/2) = \geodft yq(\ol yq/2)$, then $q = p$.
\end{definition}
According to \cite[Remark 2.9]{sturm2006on1}, if $(\mc Q, d)$ is a geodesic space with finite lower curvature bound, then it is non-branching.
\begin{corollary}\label{cor:unqiueCriteriaGeod}
	Assume $(\mc Q, d)$ is Hadamard.
	\begin{enumerate}[label=(\roman*)]
		\item If $Y$ is not concentrated on a union of geodesics that all intersect in a common geodesic segment of positive length, then the Fréchet median is unique.
		\item Assume $(\mc Q, d)$ is non-branching. If $Y$ is not concentrated on a geodesic, then the Fréchet median is unique.
	\end{enumerate}
\end{corollary}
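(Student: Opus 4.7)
The plan is to prove both statements by contraposition, with \cref{thm:medianConcentration} and \cref{rem:nearmedianunique}(ii) providing the starting point.

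For part (i), I would assume the Fréchet median set $M$ contains at least two points, so that \cref{thm:medianConcentration} identifies $M$ with a geodesic segment of positive length. Fixing any $p, m \in M$ with $p \neq m$, \cref{rem:nearmedianunique}(ii) gives that $Y$ is almost surely contained in the union $\mc Y := \bigcup \setByEleInText{\bar\gamma \in \GeodUS}{\geodft p m \subset \bar\gamma}$, which is exactly a union of geodesics all containing the common geodesic segment $\geodft p m$ of positive length. This is the contrapositive of (i).

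For part (ii), the same setup applies, and it remains to show that in a non-branching Hadamard space, $\mc Y$ is contained in a single geodesic, so that $Y$ is concentrated on that geodesic. I would prove the stronger claim that any two $\bar\gamma_1, \bar\gamma_2 \in \GeodUS$ both containing $\geodft p m$ agree on $I_{\bar\gamma_1} \cap I_{\bar\gamma_2}$ after a compatible parametrization (say $\bar\gamma_i(0) = p$ and $\bar\gamma_i(\ol p m) = m$). Taking the union of all such geodesics then yields a single geodesic $\gamma_*$ with $\PrOf{Y \in \gamma_*} = 1$, contradicting non-concentration on a geodesic and establishing the contrapositive of (ii).

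The main step, and the main obstacle, is a doubling argument based on the non-branching condition, which only provides rigidity at a single scale (the midpoint of a geodesic determines its endpoint given the starting point). For the forward extension, set $b^* := \sup \setByEleInText{b \geq \ol p m}{\bar\gamma_1 = \bar\gamma_2 \text{ on } [0,b] \cap I_{\bar\gamma_1} \cap I_{\bar\gamma_2}}$. Initial agreement on $[0, \ol p m]$ gives $b^* > 0$, and continuity of geodesics ensures the agreement set is closed. Suppose, for contradiction, $b^* < \sup (I_{\bar\gamma_1} \cap I_{\bar\gamma_2})$. Pick $s \in (b^*, 2b^*] \cap I_{\bar\gamma_1} \cap I_{\bar\gamma_2}$. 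Then $s/2 \in (0, b^*]$ gives $\bar\gamma_1(s/2) = \bar\gamma_2(s/2)$, so $\bar\gamma_1|_{[0,s]}$ and $\bar\gamma_2|_{[0,s]}$ are two unit-speed geodesics from $p$ sharing a common midpoint. The non-branching condition at $y = p$ then forces $\bar\gamma_1(s) = \bar\gamma_2(s)$, contradicting maximality of $b^*$. A symmetric argument starting from $y = m$ and extending in the reverse direction completes the proof.
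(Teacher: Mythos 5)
Your proposal is correct and follows the same route as the paper, whose own proof consists of the single line that both parts are a direct consequence of \cref{thm:medianConcentration}: part (i) is exactly the contrapositive you describe, and your doubling argument for part (ii) correctly supplies the rigidity step (non-branching forces any two unit-speed geodesics containing $\geodft pm$ to agree on the overlap of their domains) that the paper leaves implicit. The one point you gloss over is that the union of these pairwise-compatible geodesics is again a geodesic, but this is routine in a Hadamard space -- for instance, $s\mapsto d(\bar\gamma_1(r),\bar\gamma_2(s))$ is convex and $1$-Lipschitz and equals $s-r$ on the common segment of positive length, hence on all larger $s$ as well -- so there is no genuine gap.
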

\begin{proof}
	This is a direct consequence of \cref{thm:medianConcentration}.
\end{proof}
\begin{figure}
	\newcommand{\drawBase}[1]{%
		\draw[gray!30,step=0.5] (-1,-4.5) grid (1,1);
		\node at (-0.75,0.75) {(#1)};
		%
		\coordinate (headTop) at (0,+0.5);
		\fill[spacestyle] (0,0) coordinate (headCenter) circle (0.5);
		\coordinate (bodyCenter) at (0,-1.5);
		\draw[spacestyle] (0,-0.5) coordinate (bodyTop) -- (0,-2.5) coordinate (bodyBottom);
		\draw[spacestyle] (0,-1.) coordinate (rightArmInner) -- (0.5,-1.) coordinate (rightArmOuter) ;
		\draw[spacestyle] (-0.5,-1.) coordinate (leftArmOuter) -- (0,-1.) coordinate (leftArmInner);
		\draw[spacestyle] (0,-2.5) coordinate (leftLegTop) -- (-0.5,-4) coordinate (leftLegBottom);
		\draw[spacestyle] (0,-2.5) coordinate (rightLegTop) -- (0.5,-4) coordinate (rightLegBottom);
		\coordinate (bodyTopHalf) at (0,-1);
		\coordinate (bodyBottomHalf) at (0,-2);
	}%
	\begin{center}
	\begin{tikzpicture}[
		spacestyle/.style={line width=1.5pt,color=metricSpacepColor,fill=metricSpacepColor},
		pointmass/.style={fill=myOrange},
		meanseg/.style={draw=myBlue,line width=1.5pt},
		scale=0.948
		]
		\drawBase{a}
		\fill[meanseg] (bodyBottom) -- (bodyTop);
		\fill[spacestyle,fill=red!90!black] (0,0) coordinate (headCenter) circle (0.501);
		\draw[line width=1.51pt,color=green!90!black] (0,-2.5) coordinate (leftLegTop) -- (-0.5,-4) coordinate (leftLegBottom);
		\draw[line width=1.51pt,color=green!90!black] (0,-2.5) coordinate (rightLegTop) -- (0.5,-4) coordinate (rightLegBottom);
		\node at (0,0) {$L(\gamma)$};
		\node at (0,-3.75) {$R(\gamma)$};
		\node[left] at (bodyCenter) {$\gamma$};
		\draw[->] (0.1, -1.1) -- (0.1, -1.9);
	\end{tikzpicture}%
	\begin{tikzpicture}[
			spacestyle/.style={line width=1.5pt,color=metricSpacepColor,fill=metricSpacepColor},
			pointmass/.style={fill=myOrange},
			meanseg/.style={draw=myBlue,line width=1.5pt},
			scale=0.948
		]
		\drawBase{b}
		\fill[pointmass] (bodyCenter) circle (4.5pt);
		\node[left] at (bodyCenter) {$1$};
		\fill[myBlue] (bodyCenter) circle (2pt);
	\end{tikzpicture}%
	\begin{tikzpicture}[
		spacestyle/.style={line width=1.5pt,color=metricSpacepColor,fill=metricSpacepColor},
		pointmass/.style={fill=myOrange},
		meanseg/.style={draw=myBlue,line width=1.5pt},
		scale=0.948
		]
		\drawBase{c}
		\fill[pointmass] (bodyTop) circle (3pt);
		\fill[pointmass] (bodyCenter) circle (3pt);
		\node[below left,yshift=0.1cm] at (bodyTop) {$\frac12$};
		\node[left] at (bodyCenter) {$\frac12$};
		\fill[meanseg] (bodyCenter) -- (bodyTop);
	\end{tikzpicture}%
	\begin{tikzpicture}[
		spacestyle/.style={line width=1.5pt,color=metricSpacepColor,fill=metricSpacepColor},
		pointmass/.style={fill=myOrange},
		meanseg/.style={draw=myBlue,line width=1.5pt},
		scale=0.948
		]
		\drawBase{d}
		\fill[pointmass] (headCenter) circle (3pt);
		\fill[pointmass] (bodyBottom) circle (3pt);
		\node[right,xshift=0.4cm] at (headCenter) {$\frac12$};
		\node[left] at (bodyBottom) {$\frac12$};
		\fill[meanseg] (bodyBottom) -- (headCenter) ;
	\end{tikzpicture}%
	\begin{tikzpicture}[
		spacestyle/.style={line width=1.5pt,color=metricSpacepColor,fill=metricSpacepColor},
		pointmass/.style={fill=myOrange},
		meanseg/.style={draw=myBlue,line width=1.5pt},
		scale=0.948
		]
		\drawBase{e}
		\fill[pointmass] (-0.5,0) -- (0.5,0) arc(0:180:0.5) --cycle;
		\fill[pointmass] (leftLegBottom) circle (3pt);
		\fill[pointmass] (rightLegBottom) circle (2pt);
		\node[below] at (headTop) {$1/2$};
		\node[left] at (leftLegBottom) {$\frac13$};
		\node[right] at (rightLegBottom) {$\frac16$};
		\fill[meanseg] (bodyBottom) -- (bodyTop);
	\end{tikzpicture}%
	\begin{tikzpicture}[
		spacestyle/.style={line width=1.5pt,color=metricSpacepColor,fill=metricSpacepColor},
		pointmass/.style={fill=myOrange},
		meanseg/.style={draw=myBlue,line width=1.5pt},
		scale=0.948
		]
		\drawBase{f}
		\fill[pointmass] (bodyTop) circle (3pt);
		\fill[pointmass] (bodyBottom) circle (3pt);
		\fill[pointmass] (leftArmOuter) circle (2.5pt);
		\fill[pointmass] (rightArmOuter) circle (2pt);
		\node[below left,yshift=0.1cm] at (bodyTop) {$\frac13$};
		\node[left] at (bodyBottom) {$\frac13$};
		\node[left] at (leftArmOuter) {$\frac29$};
		\node[right] at (rightArmOuter) {$\frac19$};
		\fill[myBlue] (rightArmInner) circle (2pt);
	\end{tikzpicture}%
	\begin{tikzpicture}[
		spacestyle/.style={line width=1.5pt,color=metricSpacepColor,fill=metricSpacepColor},
		pointmass/.style={fill=myOrange},
		meanseg/.style={draw=myBlue,line width=1.5pt},
		scale=0.948
		]
		\drawBase{g}
		\fill[pointmass] (-0.35, 0.3) rectangle (-0.05, 0.0);
		\fill[pointmass] (0.25, 0.1) circle (2pt);
		\draw[pointmass,color=myOrange,line width=1.5pt] (-0.3, -0.3) -- (0.3, -0.3);
		\draw[pointmass,color=myOrange,line width=1.6pt] (rightLegTop) -- (rightLegBottom);
		\fill[pointmass] (leftLegBottom) circle (3pt);
		\fill[meanseg] (bodyBottom) -- (bodyTop);
		\draw [decorate,decoration={brace,amplitude=5pt}] (0.55, 0.5) -- (0.55,-0.5) node[midway,xshift=1em]{$\frac12$};
		\draw [decorate,decoration={brace,amplitude=5pt}] (0.55,-2.5) -- (0.55,-4) node[midway,xshift=1em]{$\frac12$};
	\end{tikzpicture}%
	\end{center}
	\caption{%
		The Fréchet median in the stick figure space. The stick figure shown here is to be taken as a subset of $\R^2$ with its intrinsic distance. It is a Hadamard space: Convex subsets of the Euclidean plane are Hadamard spaces. Moreover, gluing Hadamard spaces together yields a new Hadamard space according to Reshetnyak’s Gluing Theorem \cite[Theorem 3.12]{sturm03}. Thus, the stick figure, which is obtained from gluing together a disk and some line segments is Hadamard.
		For (a): Let $\gamma\in\GeodUS$ be the geodesic starting at the stick figures neck such that its image is the purple torso. Then the red head is the left set $L(\gamma)$ and the green legs make up  the right set $R(\gamma)$. The arms do not belong to either set.
		In (b) -- (f) the orange color depicts a distribution where small circles with a number on the side indicate a point mass with that probability and the orange half circle in (e) is to be taken as a uniform distribution on the colored area of one half of the total mass. In (f) the probability mass in the legs and in the head each make up one half in total. The Fréchet median set is shown in purple. It is either a geodesic segment of positive length in (c) -- (e), (g) or a single point in (b), (f).}\label{fig:stickfig}
\end{figure}
A simple criterion for uniqueness for any $\tran$-Fréchet mean with $\tran\in\setcciz$ is the following.
\begin{corollary}\label{cor:convexUnique}
	Let $\tran\in\setcciz$.
	Assume $(\mc Q, d)$ is Hadamard and separable.
    Assume $\Eof{\dtran(\ol Yo)} < \infty$.
    Assume that the support of $Y$ is convex.
    Then the $\tran$-Fréchet mean of $Y$ is unique.
\end{corollary}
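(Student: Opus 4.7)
The plan is a case analysis on $x_0 := \inf\setByEleInText{x \in \Rpp}{\ddrtran(x) = 0} \in [0, \infty]$, designed so that the two easier regimes are disposed of by prior results and only one genuinely new argument is needed. If $x_0 = \infty$, then $\ddrtran > 0$ on $\Rpp$ and \cref{cor:uniquetran} applies directly. If $0 < x_0 < \infty$, I apply \cref{lmm:closedBoundedConvex} \ref{lmm:closedBoundedConvex:convexsup} to the closed convex set $\supp(Y)$ (well-defined because $\mc Q$ is separable) to place every $\tran$-Fréchet mean $m$ in $\supp(Y)$; hence every open ball around $m$ carries positive $Y$-mass, so in particular $\Prof{\ol Ym < x_0} > 0$, and \cref{cor:nonuniquetran} gives uniqueness.

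The remaining subcase is $x_0 = 0$. Since $\dtran$ is nondecreasing and concave, $\ddrtran$ is nonnegative and nonincreasing, so $x_0 = 0$ forces $\ddrtran \equiv 0$ on $\Rpp$. Thus $\dtran$ is constant, equal to some $c > 0$ (because $\tran \in \setcciz$), and $\tran(x) = cx$. Consequently the $\tran$-Fréchet mean set coincides with the Fréchet median set of $Y$, and it suffices to show that the latter is a singleton.

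Suppose for contradiction that there are two distinct Fréchet medians $m_1, m_2$; by \cref{lmm:closedBoundedConvex} \ref{lmm:closedBoundedConvex:convexsup} both lie in $\supp(Y)$. By \cref{thm:medianConcentration}, the Fréchet median set is then a geodesic segment $\gamma_M \in \GeodUS$ with $I_{\gamma_M} = [0, T]$, $T > 0$, endpoints $m_L := \gamma_M(0)$, $m_R := \gamma_M(T)$, satisfying $\Prof{Y \in \mathring\gamma_M} = 0$ and $\Prof{Y \in L(\gamma_M) \cup R(\gamma_M)} = 1$. Convexity of $\supp(Y)$ forces the geodesic from $m_1$ to $m_2$—a subarc of $\gamma_M$—to lie in $\supp(Y)$, so I can pick an interior point $p = \gamma_M(s_p)$ with $s_p \in (0, T)$; then $\Prof{Y \in B(p, \epsilon)} > 0$ for every $\epsilon > 0$.

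The main obstacle is the geometric separation argument that completes the contradiction. For every $y \in L(\gamma_M)$, the map $t \mapsto \ol y{\gamma_M(t)}$ is $\mc G$-convex by \cref{lem:distFun}, hence convex and $1$-Lipschitz by \cref{prp:gconv:fun}, and its right derivative equals $1$ on $[0, T)$ by definition of $L(\gamma_M)$; integrating along the absolutely continuous function yields $\ol y{\gamma_M(t)} = \ol y{m_L} + t$ for all $t \in [0, T]$, so in particular $\ol yp \geq s_p$. Symmetrically $\ol yp \geq T - s_p$ for $y \in R(\gamma_M)$. Hence with $\delta := \min(s_p, T - s_p) > 0$ the ball $B(p, \delta)$ is disjoint from $L(\gamma_M) \cup R(\gamma_M)$, giving $\Prof{Y \in B(p, \delta)} = 0$, which contradicts $p \in \supp(Y)$.
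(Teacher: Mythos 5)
Your proof is correct and follows essentially the same route as the paper: force $x_0 = 0$ (hence linearity of $\tran$) by combining \cref{cor:nonuniquetran} with $M \subset \supp(Y)$, then invoke \cref{thm:medianConcentration} and contradict the positivity of $Y$-mass near an interior point of the median segment. The only cosmetic difference is that you justify the separation of $L(\gamma_M) \cup R(\gamma_M)$ from that interior point by integrating the one-sided derivatives of $t\mapsto \ol y{\gamma_M(t)}$, whereas the paper appeals to the union-of-geodesics structure of the support; both are valid.
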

\begin{proof}
	Let $x_0 = \inf\setByEleInText{x\in\Rpp}{\ddrtran(x) = 0}$.
	Let $M = \argmin_{q\in\mc Q}\Eof{\tran(\ol Yq) - \tran(\ol Yo)}$ be the set of $\tran$-Fréchet means of $Y$. Assume $\# M >1 $. Then \cref{cor:nonuniquetran} implies that $\PrOf{\ol Ym < x_0} = 0$ for all $m\in M$.
    Let $\mc Y$ denote the support of $Y$. It is closed by definition and assumed to be convex. From \cref{lmm:closedBoundedConvex}, we then obtain $M \subset \mc Y$. Thus, for all $\epsilon\in\Rpp$, $\PrOf{\ol Ym < \epsilon} > 0$. Hence, $x_0 = 0$, which means that $\tran$ is linear.

    By \cref{thm:medianConcentration}, $\mc Y$ is a subset of a union of geodesics that all intersect in the geodesics segment $\gamma_M = M$ for a $\gamma_M\in\GeodUS$. Furthermore, $M \subset \mc Y$ and $\Prof{Y \in \mathring\gamma_M} = 0$. Let $m \in \mathring\gamma_M$. On one hand, as $m \in \mc Y$, for all $\epsilon>0$, we have $\Prof{Y \in \ball m\epsilon} > 0$. On the other hand, there is $\epsilon>0$ such that $\ball{m}{\epsilon}\cap \mc Y \subset \mathring\gamma_M$, which implies
    \begin{equation}
        \PrOf{Y \in \ball{m}{\epsilon}} = \PrOf{Y \in \ball m\epsilon \cap \mc Y} \leq \PrOf{Y \in \mathring\gamma_M} = 0
        \eqfs
    \end{equation}
    As this is contradictory, our initial assumption that $\# M > 1$ must be wrong.
\end{proof}
\subsection{Variance Inequality}\label{ssec:median:varineq}
For $\eta\in[0,1]$ and $p,q\in\mc Q$, $q\neq p$, define the \textit{bowtie complement} (see \cref{fig:medianCutOut})
\begin{equation}\label{eq:bowtie}
    A(p, q, \eta) := \setByEle{y \in \mc Q}{\max\brOf{\ol y{\geodft pq}\rd(0)^2, \ol y{\geodft pq}\ld(\ol qp)^2} \leq 1-\eta^2}
    \eqfs
\end{equation}
\begin{theorem}\label{thm:near:median}
    Assume $(\mc Q, d)$ is Hadamard.
    Let $m \in \argmin_{q\in\mc Q}\Eof{\ol Yq - \ol Yo}$.
    Let $q\in\mc Q\setminus \cb{m}$.
    Then
    \begin{equation}\label{eq:near:median}
        \EOf{\ol Yq - \ol Ym} \geq \frac12 \eta^2 \,\ol qm^2\, \EOf{\max\brOf{\ol Ym, \ol Yq}^{-1} \indOf{A(m, q, \eta)}(Y)}
        \eqfs
    \end{equation}
\end{theorem}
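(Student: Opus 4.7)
The plan is to mirror the proof of \cref{thm:varinequ}, but to replace \cref{lmm:semitaylor}~(ii) by its part (i) and to absorb the indicator of the bowtie complement $A(m,q,\eta)$ into the resulting second-order term.

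The first step is to set $\gamma := \geodft{m}{q}$, so that $\gamma\colon [0, \ol qm] \to \mc Q$ with $\gamma(0) = m$ and $\gamma(\ol qm) = q$. By \cref{lem:distFun}, for every $y \in \mc Q$ the function $s \mapsto \ol y{\gamma(s)}$ is $\mc G$-convex on $[0, \ol qm]$, so \cref{lmm:semitaylor}~(i) applied at $t = \ol qm$ will yield
\begin{equation*}
    \ol yq - \ol ym
    \geq
    \ol qm \, \ol y\gamma\rd(0)
    +
    \frac12 \ol qm^2 \, \frac{1 - \max\brOf{\ol y\gamma\rd(0)^2, \ol y\gamma\ld(\ol qm)^2}}{\max(\ol ym, \ol yq)}
    \eqfs
\end{equation*}
Since $\ol y\gamma$ is $1$-Lipschitz by \cref{prp:gconv:fun}~(ii), the fractional term is nonnegative, and on $A(m, q, \eta)$ its numerator is bounded below by $\eta^2$. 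Dropping the nonnegative fractional term off $A(m,q,\eta)$ and using the lower bound $\eta^2$ on $A(m,q,\eta)$ then produces the weaker but cleaner pointwise bound
\begin{equation*}
    \ol yq - \ol ym
    \geq
    \ol qm \, \ol y\gamma\rd(0)
    +
    \frac12 \eta^2 \, \ol qm^2 \, \frac{\indOf{A(m,q,\eta)}(y)}{\max(\ol ym, \ol yq)}
    \eqfs
\end{equation*}

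Next, I would take expectations of this pointwise inequality. All three integrands are bounded: $\abs{\ol Yq - \ol Ym} \leq \ol qm$ by the triangle inequality, $\abs{\ol Y\gamma\rd(0)} \leq 1$ by $1$-Lipschitzness, and the fraction is at most $2/\ol qm$ since the triangle inequality forces $\max(\ol Ym, \ol Yq) \geq \tfrac12\ol qm$. So all expectations exist without any moment assumption on $Y$, and the only remaining task is to show that the linear term has the correct sign, that is, $\EOf{\ol Y\gamma\rd(0)} \geq 0$. For this, define $G(t) := \EOf{\ol Y{\gamma(t)} - \ol Ym}$ on $[0, \ol qm]$. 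Then $G(0) = 0$, $G \geq 0$ because $m$ minimizes the median variance functional, and $G$ is convex because each integrand $s \mapsto \ol y{\gamma(s)} - \ol ym$ is convex by \cref{prp:gconv:fun}~(iii). Dominated convergence (with the constant dominating function $1$) gives $G\rd(0) = \EOf{\ol Y\gamma\rd(0)}$, and the three listed properties of $G$ force $G\rd(0) \geq 0$.

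The only genuinely new idea is the pointwise step: one has to recognize the bowtie complement $A(m, q, \eta)$ as precisely the set on which the second-order term in \cref{lmm:semitaylor}~(i) is quantitatively nondegenerate, so that a uniform constant $\eta^2$ can be extracted in front of the expectation. Everything else—the application of \cref{lem:distFun}, the integrability verification, and the sign of the linear term—is a direct transposition of the argument used for \cref{thm:varinequ}.
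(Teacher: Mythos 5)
Your proposal is correct and follows essentially the same route as the paper's proof: apply \cref{lem:distFun} and \cref{lmm:semitaylor}~(i) along $\geodft mq$, handle the first-order term via convexity and minimality of $G$ with dominated convergence, and extract the factor $\eta^2$ on the bowtie complement. The only (immaterial) difference is that you restrict to $A(m,q,\eta)$ pointwise before integrating, whereas the paper takes expectations first and then restricts.
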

\begin{proof}
    Set $\gamma := \geodft mq$.
    For $t\in[0, \ol qm]$, define
    \begin{align}
        G(t) &:= \EOf{\ol Y\gamma(t) - \ol Y\gamma(0)}
        \eqfs
    \end{align}
    We want to swap limit and integral to calculate $G\rd(0)$. For this we use dominated convergence, which we can apply as
    \begin{equation}
        \EOf{\sup_{t\in(0,\ol qm]}\abs{\frac{\ol Y\gamma(t)-\ol Y\gamma(0)}{t}}}
        \leq 1\eqcm
    \end{equation}
    as $\ol Y\gamma$ is 1-Lipschitz.
    As $G(t)$ is convex and minimized at $t=0$, we have
    \begin{equation}\label{eq:near:median:1}
        0 \leq  G\rd(0) = \Eof{\ol Y\gamma\rd(0)}
        \eqfs
    \end{equation}
    Applying \cref{lmm:semitaylor} \ref{lmm:semitaylor:median} yields, for $t\in(0, \ol qm]$,
    \begin{equation}\label{eq:near:median:2}
        G(t)
        \geq
        t  G\rd(0) + \frac12 t^2 \EOf{\frac{1 - \max\brOf{\ol Y\gamma\rd(0)^2, \ol Y\gamma\ld(t)^2}}{\max\brOf{\ol Y\gamma(0), \ol Y\gamma(t)}}}
        \eqfs
    \end{equation}
    Now fix $t = \ol qm$. As $\ol Y\gamma$ is $1$-Lipschitz, we have
    \begin{align}
        &\EOf{\frac{1 - \max\brOf{\ol Y\gamma\rd(0)^2, \ol Y\gamma\ld(t)^2}}{\max\brOf{\ol Y\gamma(0), \ol Y\gamma(t)}}}
        \\&\geq
        \EOf{\frac{1 - \max\brOf{\ol Y\gamma\rd(0)^2, \ol Y\gamma\ld(t)^2}}{\max\brOf{\ol Y\gamma(0), \ol Y\gamma(t)}} \indOf{A(m, q, \eta)}(Y)}
        \\&\geq
        \eta^2 \EOf{\max\brOf{\ol Y\gamma(0), \ol Y\gamma(t)}^{-1} \indOf{A(m, q, \eta)}(Y)}
        \eqfs
    \end{align}
    Thus, together with \eqref{eq:near:median:1} and \eqref{eq:near:median:2}, we obtain
    \begin{equation}
        G(t)
        \geq
        \frac12 t^2 \eta^2 \EOf{\max\brOf{\ol Y\gamma(0), \ol Y\gamma(t)}^{-1} \indOf{A(m, q, \eta)}(Y)}
        \eqfs
    \end{equation}
\end{proof}
\begin{remark}[on \cref{thm:near:median}]\label{rem:near:median}\mbox{ }
    \begin{enumerate}[label=(\roman*)]
        \item\label{rem:near:median:hilbert}
        Let $(\mc Q, d)$ be a separable Hilbert space with inner product $\ip{\cdot}{\cdot}$, induced norm $\norm$, and induced metric $d$. Let $v \in \mc Q$ with $\normof{v} = 1$.
        Let $\gamma\in\GeodUS$ be given by $\gamma \colon \R\to\mc Q, t\mapsto tv$. Let $y\in \mc Q$. Then
        \begin{equation}
            \ol y\gamma(t) = \sqrt{(t-t_0)^2 + h^2}\eqcm
        \end{equation}
        where
        \begin{align}
            t_0 &:= \ip{y}{v}\eqcm&	h &:= \sqrt{\normof{y}^2 - \ip{y}{v}^2}
            \eqfs
        \end{align}
        Then
        \begin{align}
            &\ol y{\gamma}\rd(t)^2 \leq 1-\eta^2
            \\\Leftrightarrow\ &\label{eq:rem:near:median:dist}
            \eta \,\ol y{\gamma}(t) \leq h
            \\\Leftrightarrow\ &
            \br{t - t_0}^2  \leq \frac{1 - \eta^2}{\eta^2} h^2
            \eqfs
        \end{align}
        As an example, $y \in A(\gamma(0), \gamma(t), \sqrt{2}/2)$ is equivalent to
        \begin{equation}
            \max\brOf{\abs{t - t_0}, \abs{t_0}} \leq h
            \eqcm
        \end{equation}
        where $\abs{t - t_0}$ is the distance between $\gamma(t)$ and $y$ projected to $\gamma$ and $h$ is the distance between $y$ and $\gamma$. See \cref{fig:medianCutOut} for an illustration.
        \item
        Assume $(\mc Q, d)$ is Hadamard and locally compact. Let $m,q,y\in\mc Q$, $q\neq m\neq y$. Let $\gamma := \geodft mq$. By the \emph{First Variational Formula} \cite[Corollary 4.5.7]{burago01}, we have $\ol y\gamma\rd(0) = -\cos(\alpha)$, where $\alpha$ is the angle \cite[Definition 3.6.26]{burago01} between $\gamma$ and $\geodft my$. Hence, we can interpret the bowtie complement $A(m, q, \eta)$ as a subset of $\mc Q$ where all geodesics are removed that intersect $\gamma$ at $\gamma(0)$ or $\gamma(\ol qm)$ with an angle of $\alpha_0$ or less, where $\alpha_0$ depends on $\eta$.
    \end{enumerate}
\end{remark}
\begin{example}
    We want to compare \cref{thm:near:median} with the variance inequality \cite[Theorem 2.3]{minsker2023geometric} for the Fréchet median in Euclidean spaces (geometric median). To this end, we apply our lower bound to the uniform distribution on a sphere $\mb S_{\sqrt{k}}^{k-1}$ of radius $\sqrt{k}$ in $\R^k$, $k \geq 2$, centered at the origin, as the authors do in \cite[Remark 2.4]{minsker2023geometric}.
    The Fréchet median is $m=0$. Let $q\in \R^k$ with $0 < \normof{q} \leq \sqrt{k}$. Because of symmetry, without loss of generality we can assume  $q = (r,0,\dots,0)\in\R^k$ with $r:=\normof{q}$. Let $s\in[0,1]$ and set $A_s := \setByEleInText{y \in \R^k}{\normof{y}^2 = k \text{ and } \abs{y_1} \leq s \sqrt{k}}$.
    If $y \in A_s$, we have $\max\brOf{\normof{y}, \normof{y-q}}^2 \leq 2(1+s)k$. Hence, \cref{thm:near:median} yields
    \begin{align}
        \EOf{\normof{Y-q} - \normof{Y}}
        &\geq
        \frac12 \eta^2 \normof q^2\, \EOf{\max\brOf{\normof{Y}, \normof{Y-q}}^{-1} \indOf{A(0, q, \eta)}(Y)}
        \\&\geq
        \frac1{2\sqrt{2(1+s)}} \eta^2 \normof q^2\, k^{-\frac12} \PrOf{Y \in A(0, q, \eta) \cap A_s}
        \eqfs
    \end{align}
    Using \eqref{eq:rem:near:median:dist}, we have
    \begin{align}
        A(0, q, \eta) \cap A_s
        &=
        \setByEle{y \in \R^k}{\eta^2 \max\brOf{\normof{y}^2, \normof{y - q}^2} \leq \normof{y}^2 - \ip{y}{v}^2}\cap A_s
        \\&\supseteq
        \setByEle{y \in \R^k}{2 \eta^2 (1+s)k \leq k - y_1^2} \cap A_s
        \\&=
        \setByEle{y \in \mb S_{\sqrt{k}}^{k-1}}{y_1^2 \leq \min\brOf{s^2 k, k - 2 \eta^2 (1+s)k}}
        \eqfs
    \end{align}
    The random variable $Y$ can be written as $Y = \sqrt{k} X / \normof{X}$ where $X$ is a standard normal random vector in dimension $k$. Let $U := \sum_{i=2}^k X_i^2$. Then $U$ and $X_1^2$ are independent and have a $\chi^2$ distribution with $k-1$ and $1$ degree of freedom, respectively. Hence,
    \begin{equation}
        B_k := k^{-1} Y_1^2 = \frac{X_1^2}{X_1^2 + U} \sim \mathrm{Beta}\brOf{\frac12, \frac{k-1}2}
        \eqcm
    \end{equation}
    where $\mathrm{Beta}(\alpha, \beta)$, $\alpha,\beta\in\Rpp$ is the Beta distribution with density
    \begin{equation}
        [0,1] \to \R,\, x\mapsto c_{\alpha,\beta} x^{\alpha-1} (1-x)^{\beta-1}
    \end{equation}
    for a constant $c_{\alpha,\beta}\in\Rpp$. Hence,
    \begin{align}
        \PrOf{Y_1^2 \leq \min\brOf{s^2 k, k - 2 \eta^2 (1+s)k}}
        &=
        \PrOf{B_k \leq  \min\brOf{s^2, 1 - 2 \eta^2 (1+s)}}
        \eqfs
    \end{align}
    We obtain the lower bound on the variance functional,
    \begin{align}
        \EOf{\normof{Y-q} - \normof{Y}}
        &\geq
        \frac1{2\sqrt{2+2s}} \eta^2 \normof q^2\, k^{-\frac12} \PrOf{B_k \leq  \min\brOf{s^2, 1 - 2 \eta^2 (1+s)}}
        \eqcm
    \end{align}
    for $q\in \R^k$ with $0 < \normof{q} < \sqrt{k}$, where $B_k \sim \mathrm{Beta}\brOf{\frac12, \frac{k-1}2}$. Let $\epsilon\in(0,1]$. Then
    \begin{equation}
        \PrOf{B_k \leq \epsilon} \xrightarrow{k\to\infty} 1
        \eqfs
    \end{equation}
    Set $s = \sqrt{\epsilon}$ and $\eta^2 = \frac{1-\epsilon}{2(1+s)}$ to obtain
    \begin{equation}\label{eq:exa:lowerepsi}
        \liminf_{k\to\infty}\inf_{q\in\ball{0}{\sqrt{k}}}
        \frac{\EOf{\normof{Y-q} - \normof{Y}}}
        {\normof q^2\, k^{-\frac12}}
        \geq
        \frac1{2\sqrt{2(1+\sqrt{\epsilon})}}  \frac{1-\epsilon}{2(1+\sqrt{\epsilon})}
        \eqfs
    \end{equation}
    As $\epsilon\in(0,1]$ is arbitrary, \eqref{eq:exa:lowerepsi} also holds with the lower bound $\frac{\sqrt{2}}8$.
    Thus, we obtain a better constant than derived in \cite[Remark 2.4]{minsker2023geometric}.
\end{example}
\begin{figure}
    \begin{center}
        \begin{tikzpicture}[blend group=normal]
            \fill[black!10] (-6,-3) rectangle (6,3);
            \draw[draw=none,pattern={Dots[radius=1pt,distance=5pt,xshift=2.5pt,yshift=2.5pt]},pattern color=myBlue] (-6,-2) -- (6,2) -- (6,-2) -- (-6,2) -- cycle;
            \draw[draw=none,pattern={Dots[radius=1pt,distance=5pt]},pattern color=myOrange] (-6,-8/3) -- (6,4/3) -- (6,-4/3) -- (-6,8/3) -- cycle;
            \draw[line width=0.5pt,myBlue] (-6,-2) -- (6,2);
            \draw[line width=0.5pt,myBlue] (6,-2) -- (-6,2);
            \draw[line width=0.5pt,myOrange] (-6,-8/3) -- (6,4/3);
            \draw[line width=0.5pt,myOrange] (6,-4/3) -- (-6,8/3);
            \draw[line width=1pt] (-6,0) -- (6,0);
            \fill (0,0) circle (2pt);
            \fill (2,0) circle (2pt);
            \node[above,fill=black!10,yshift=2pt, inner sep=1pt] at (-5,0) {$\gamma$};
            \node[below,fill=black!10,yshift=-2pt, inner sep=1pt] at (0,0) {$m$};
            \node[below,fill=black!10,yshift=-2pt, inner sep=1pt] at (2,0) {$q$};
            \node at (1,2) {$A(m, q, \eta)$};
        \end{tikzpicture}
        \caption{Visualization of the bowtie complement $A(m, q, \eta)$ in $\R^2$. The Euclidean plane is depicted as a gray area. The image of a geodesic $\gamma\colon\R\to\R^2$ with $\gamma(0) = m$ and $\gamma(\ol qm) = q$ is shown in black. The sets $\setByEleInText{y\in\R^2}{\ol y\gamma\rd(0)^2 > 1 - \eta^2}$ and $\setByEleInText{y\in\R^2}{\ol y\gamma\ld(\ol qm)^2 > 1 - \eta^2}$ are depicted as purple and orange dotted areas, respectively. The set $A(m, q, \eta)$ is the gray area without the dotted areas.}\label{fig:medianCutOut}
    \end{center}
\end{figure}
With \cref{thm:near:median}, we can show quadratic and faster variance inequalities for the Fréchet median in Hadamard spaces. To able to deal easily with the bowtie complement $A(m, q, \eta)$, we restrict ourselves to Euclidean spaces in the next corollary.
\begin{corollary}\label{cor:varinequ:median}
	Let $k\in\N$, $k\geq 2$, and $(\mc Q, d) = (\R^k, \normof{\cdot - \cdot})$ be Euclidean.
	Let $m \in \argmin_{q\in\R^k}\Eof{\normof{ Y-q} -\normof{Y}}$ be a Fréchet median of $Y$.
	Assume that $Y$ has a Lebesgue density $\rho \colon \R^k\to\Rp$ with constants $\delta,c \in\Rpp$, $\zeta\in[0,1)$ with one of the following properties:
    \begin{enumerate}[label=(\roman*)]
        \item
        We have $\zeta = 0$ and, for all $x\in \R^k$ with $\normof{x} = \delta$, $\rho$ is continuous at $m+x$ and
        \begin{equation}\label{eq:varinequ:median:density:const}
            \rho(m+x) \geq c
            \eqfs
        \end{equation}
        \item
        We have $\zeta \in (0,1)$ and
        \begin{equation}\label{eq:varinequ:median:density}
            \rho(m+x) \geq c \normof{x}^{-\zeta-k+1}
        \end{equation}
        for all $x\in\ball 0\delta\setminus\cb{0}$.
    \end{enumerate}
    Then $m$ is the only Fréchet median of $Y$ and there is $C \in \Rpp$ depending only on $k, c, \zeta$ such that
	\begin{equation}
		\EOf{\normOf{Y-q} -\normOf{Y-m}} \geq C \normOf{q-m}^{2-\zeta}
	\end{equation}
	for all $q\in\ball m\delta$.
\end{corollary}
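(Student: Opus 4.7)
The plan is to apply \cref{thm:near:median} with a fixed $\eta \in (0,1)$ (for instance, $\eta = 1/\sqrt{2}$) and estimate the resulting expectation via integration over a carefully chosen region. Write $r := \normof{q-m}$ and $v := (q-m)/r$, and decompose $y - m = s v + w$ with $s \in \R$ and $w \in v^\perp \cong \R^{k-1}$. By the computation in \cref{rem:near:median}, the bowtie complement simplifies to
\[
A(m, q, \eta) = \cb{y = m + sv + w : \normof{w} \geq \lambda \max(\abs{s}, \abs{s-r})},
\]
with $\lambda := \eta/\sqrt{1-\eta^2}$, and $\max(\normof{y-m}^2, \normof{y-q}^2) = \max(s^2, (s-r)^2) + \normof{w}^2$.

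The key integration region will be the cylindrical shell
\[
R_r := \cb{(s,w) \in \R \times \R^{k-1} : 0 \leq s \leq r,\ \lambda r \leq \normof{w} \leq 2\lambda r},
\]
on which the bowtie condition $\normof{w} \geq \lambda r \geq \lambda \max(\abs{s}, \abs{s-r})$ holds, $\max(\normof{y-m}, \normof{y-q}) \leq r\sqrt{1+4\lambda^2}$, and the Lebesgue volume is of order $\lambda^{k-1} r^k$. In case (ii), for $r \leq \delta/\sqrt{1+4\lambda^2}$, the assumed density bound yields $\rho \geq c' r^{-\zeta-k+1}$ uniformly on $R_r$; combining density, volume, and the pointwise bound on the integrand gives
\[
\EOf{\max(\normof{Y-m},\normof{Y-q})^{-1} \indOf{A(m,q,\eta)}(Y)} \geq C' r^{-\zeta},
\]
whence \cref{thm:near:median} yields $\EOf{\normof{Y-q}-\normof{Y-m}} \geq C r^{2-\zeta}$ for such $r$.

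In case (i), the density lower bound is only pointwise on the sphere $\partial\ball{0}{\delta}$; a Lebesgue-number compactness argument combined with the hypothesized pointwise continuity of $\rho$ on this sphere upgrades it to a uniform bound $\rho(m+x) \geq c_0 > 0$ on an annulus $\cb{x : \abs{\normof{x} - \delta} \leq \delta_0}$ for some $\delta_0 > 0$. Integrating instead over a cross-section such as $\cb{(s, w) : 0 \leq s \leq r,\ \delta \leq \normof{w} \leq \delta + \delta_0/2}$ (which lies in the annulus for small $r$, satisfies the bowtie condition trivially, and has constant volume and a constant bound on $\max(\normof{y-m}, \normof{y-q})$) produces the desired quadratic lower bound for small $r$. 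For the remaining range of $q \in \ball{m}{\delta}$, convexity of the variance functional $v(q) := \EOf{\normof{Y-q}-\normof{Y-m}}$ together with $v(m) = 0$ (\cref{lmm:closedBoundedConvex}) propagates the small-$r$ bound up to a constant factor, yielding a bound comparable to $\normof{q-m}^{2-\zeta}$ on the entire ball.

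For uniqueness, the median set $M$ is convex by \cref{lmm:closedBoundedConvex}; the variance inequality just proved shows $v(q) > 0$ whenever $q \in \ball{m}{\delta}\setminus\cb{m}$, while if $M$ contained any $p\neq m$, convexity of $M$ would force $v$ to vanish identically on the segment from $m$ to $p$, contradicting this strict positivity at points of that segment near $m$. The main technical obstacle is the case-(i) passage from a pointwise bound on the sphere to a uniform bound on a surrounding annulus via compactness; the remainder is careful bookkeeping of scales on $R_r$ to ensure the simultaneous validity of the bowtie condition, the density lower bound, and the boundedness of $\max(\normof{Y-m},\normof{Y-q})$.
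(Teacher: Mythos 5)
Your overall strategy is the same as the paper's: apply \cref{thm:near:median} with a fixed $\eta$, identify an explicit subset of the bowtie complement on which both the density and $\max(\normof{Y-m},\normof{Y-q})^{-1}$ are controlled, and integrate. Your case (ii) argument is sound: the cylindrical shell $R_r$ has volume of order $\lambda^{k-1}r^k$, the density is bounded below by a multiple of $r^{-\zeta-k+1}$ on it, and the distance factor contributes $r^{-1}$, giving $\EOf{\max(\cdot)^{-1}\ind_{A}}\gtrsim r^{-\zeta}$ as required. (The paper instead takes $\eta=5^{-1/2}$ and a region contained in $\ball{m}{r}$, which makes the bound valid for all $r<\delta$ at once; your shell protrudes to radius $r\sqrt{1+4\lambda^2}$, so you genuinely need the convexity-propagation step for the outer range of $r$ in \emph{both} cases, not just case (i) — state this explicitly, and check that the resulting constant remains $\delta$-free, which it does since $r\geq r^{2-\zeta}\delta^{\zeta-1}$ on $\ball{m}{\delta}$.)

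There is, however, a genuine error in case (i). Your region $\cb{(s,w): 0\leq s\leq r,\ \delta\leq\normof{w}\leq\delta+\delta_0/2}$ does \emph{not} have constant volume: its Lebesgue measure is $r$ times the $(k-1)$-volume of the annulus $\cb{\delta\leq\normof{w}\leq\delta+\delta_0/2}$, hence proportional to $r$. Consequently $\PrOf{Y\in A(m,q,\eta)}\gtrsim r$ rather than $\gtrsim 1$, and \cref{thm:near:median} then yields only $\EOf{\normof{Y-q}-\normof{Y-m}}\gtrsim r^{3}$, a cubic rather than quadratic bound. The fix is to let $s$ range over an interval of \emph{fixed} length, e.g.\ $s\in[-\delta_1,\delta_1]$ for a constant $\delta_1>0$ chosen so small that $s^2+\normof{w}^2$ stays in $[\delta^2,(\delta+\delta_0)^2]$; the bowtie condition $\normof{w}\geq\lambda\max(\abs{s},\abs{s-r})$ then requires $\delta\geq\lambda(\delta_1+\delta)$, which you can arrange by taking $\eta$ (hence $\lambda$) small enough — it no longer holds ``trivially'' and must be checked against the now $r$-independent width of the slab. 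With that correction the probability of the region is bounded below by a positive constant, $\max(\normof{Y-m},\normof{Y-q})$ is bounded above by a constant, and the quadratic bound follows. Your compactness upgrade of the pointwise sphere bound to a uniform bound on an annulus, and your uniqueness argument via convexity of the median set, are both correct.
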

\begin{proof}
    Let $q\in\ball m\delta\setminus m$.
    As $\max(\ol ym, \ol yq) \leq 2\delta$ for $y\in\ball m\delta$, \cref{thm:near:median} yields
    \begin{equation}\label{eq:euclidean:quadratic}
        \EOf{\normOf{Y-q} -\normOf{Y-m}}
        \geq
        \frac14 \delta^{-1} \eta^2 \ol qm^2  \PrOf{Y \in A(m, q, \eta) \cap \ball m\delta}
        \eqfs
    \end{equation}
    For the case $\zeta = 0$, note that $A(m, q, \eta) \subset A(m, \tilde q, \eta)$ for $\tilde q = m + t (q - m)$ with $t\in(0,1)$.
    Hence, for any constant $\eta \in (0,1)$, one can show a quadratic lower bound using \eqref{eq:euclidean:quadratic} and \eqref{eq:varinequ:median:density:const}.

    Now consider the case $\zeta > 0$.
	Without loss of generality, $m=0$ and $q=(r,0, \dots, 0)\tr$ for $r = \normof{q} \in (0 ,\delta)$.
	If $\normOf{y} \leq r$, then $\max\brOf{\normOf{y-m}, \normOf{y-q}}^{-1} \geq (2r)^{-1}$.
	Thus, we obtain from \cref{thm:near:median}
	\begin{equation}
		\EOf{\normOf{Y-q} -\normOf{Y}}
		\geq
		\frac14 \eta^2 r  \PrOf{\cb{\normOf{Y} \leq r}\cap \cb{Y \in A(m, q, \eta)}}
		\eqfs
	\end{equation}
	Fix $\eta = 5^{-\frac12}$.
	Let
	\begin{equation}
		\tilde A_r := \setByEle{y\in\R^k}{\normOf{y} \in \br{\frac 34 r, r}\eqcm\, \frac{y_1}{\normof{y}} \in \br{0, \frac2{\sqrt{5}}}}
		\eqfs
	\end{equation}
	One can show that $\tilde A_r \subset A(0, q, \eta) \cap \ball0{r}$, see \cref{fig:bowtie:ball}.
	Thus, using \eqref{eq:varinequ:median:density},
	\begin{align*}
		\PrOf{\cb{\normOf{Y} \leq r}\cap \cb{Y \in A(m, q, \eta)}}
		&\geq
		c \int_{y\in\tilde A_r}  \normOf{y}^{-\zeta-k+1}\dl  y
		\\&=
		c \int_{\frac 34r}^r s^{-\zeta-k+1} \int_{y\in\tilde A_r, \normof{y}=s}  1 \dl  x  \dl  s
		\\&\geq
		c c_{k,\zeta} r^{1-\zeta}
		\eqcm
	\end{align*}
	where $c_{k,\zeta} \in\Rpp$ only depends on $k$ and $\zeta$.
	Hence,
	\begin{equation}
		\EOf{\normOf{Y-q} -\normOf{Y}}
		\geq
		\frac1{20} c c_{k,\zeta} r^{2-\zeta}
		\eqfs
	\end{equation}
\end{proof}
\begin{figure}
	\begin{center}
		\begin{tikzpicture}[blend group=normal]
			\fill[black!10] (-3,-3) rectangle (3,3);
			\draw[draw=none,pattern={Dots[radius=1pt,distance=5pt,xshift=2.5pt,yshift=2.5pt]},pattern color=myBlue] (-3,-1.5) -- (3,1.5) -- (3,-1.5) -- (-3,1.5) -- cycle;
			\draw[draw=none,pattern={Dots[radius=1pt,distance=5pt]},pattern color=myOrange] (-3,-2.5) -- (3,0.5) -- (3,-0.5) -- (-3,2.5) -- cycle;
			\draw[line width=0.5pt,myBlue] (-3,-1.5) -- (3,1.5);
			\draw[line width=0.5pt,myBlue] (3,-1.5) -- (-3,1.5);
			\draw[line width=0.5pt,myOrange] (-3,-2.5) -- (3,0.5);
			\draw[line width=0.5pt,myOrange] (3,-0.5) -- (-3,2.5);
			\draw[line width=1pt] (-3,0) -- (3,0);
			\fill (0,0) circle (2pt);
			\fill (2,0) circle (2pt);
			\node[below left,fill=black!10,yshift=-2pt, inner sep=1pt] at (0,0) {$m$};
			\node[below right,fill=black!10,yshift=-2pt, inner sep=1pt] at (2,0) {$q$};
			\node at (1.5,2.5) {$A(m, q, \eta)$};
			\node[green!50!black] at (1.5,1.8) {$\tilde A_r$};
			\fill[green!50!black] (0,2) -- (0, 1.5) arc (90:26.56505118:1.5) -- (1.788854382,0.8944271910)  arc (26.56505118:90:2);
			\fill[green!50!black] (0,-2) -- (0, -1.5) arc (-90:-26.56505118:1.5) -- (1.788854382,-0.8944271910)  arc (-26.56505118:-90:2);
			\draw (0,0) circle (2cm);
			\draw (0,0) circle (1.5cm);
			\draw (0,3) -- (0,-3);
		\end{tikzpicture}
		\caption{Construction of $\tilde A_r$ (green area) in the proof of \cref{cor:varinequ:median} illustrated in $\R^2$. Also confer \cref{fig:medianCutOut}.}\label{fig:bowtie:ball}
	\end{center}
\end{figure}
\begin{remark}\mbox{ }
    \begin{enumerate}[label=(\roman*)]
        \item
        Note that \eqref{eq:varinequ:median:density} implies
        \begin{equation}
            \PrOf{\ol Ym \leq x} \geq c_{k,\zeta} x^{1-\zeta}
        \end{equation}
        for $x \leq \delta$ and a constant $c_{k,\zeta}\in\Rpp$.
        \item
        Compare \cref{cor:varinequ:median} with \cref{cor:varinequ} using $\tran(x) := x^\alpha$, $\alpha \in (1, 2]$: Let $\delta\in\Rpp$. If
        \begin{equation}
            \PrOf{\ol Ym \leq x} \geq c x^{1-\zeta}
        \end{equation}
        for $x \leq \delta$ and a constant $c\in\Rpp$, we can choose $\beta = \zeta + 1 - \alpha$ if $\zeta \in [\alpha-1, 1)$, and \cref{cor:varinequ} yields
        \begin{equation}
            \EOf{\ol Yq^\alpha - \ol Ym^\alpha} \geq \tilde c \, \ol qm^{1 + \alpha - \zeta}
        \end{equation}
        for a constant $\tilde c\in\Rpp$.
        For $\alpha \searrow 1$, this variance inequality approaches the one obtained in \cref{cor:varinequ:median} (up to a constant factor). In this sense, the two results are consistent.
    \end{enumerate}
\end{remark}
\subsection{When Concentrated on a Geodesic}\label{ssec:median:geodesic}
We now discuss the growth behavior of the variance functional of the Fréchet median when the distribution of $Y$ is concentrated on a geodesic. This is almost equivalent to the case of the median on the real line as the image of a geodesic is isometric to a convex subset of $\R$. But we have to additionally establish a variance inequality for $q\in\mc Q$ not on the supporting geodesic. We first derive a lemma for the median on the real line.
\begin{lemma}\label{lmm:realMediVarIneq}
	Let $X$ be a real-valued random variable with median $0$.
	Define
	\begin{align}
		a_+ &:= \Prof{X > 0}\eqcm&
		a_0 &:= \Prof{X = 0}\eqcm&
		a_- &:= \Prof{X < 0}\eqfs
	\end{align}
	Then
	\begin{equation}
		\abs{a_- - a_+} \leq a_0
		\eqfs
	\end{equation}
	Let $t\in\R$. Then
	\begin{equation}
		\EOf{\abs{X-t} - \abs{X}} - \abs{t} a_0 - t \br{a_- - a_+} =
		\begin{cases}
			2\EOf{(t-X)\ind_{(0,t]}(X)} & \text{ if } t > 0\eqcm \\
			2\EOf{(X-t)\ind_{[t,0)}(X)} & \text{ if } t < 0\eqfs
		\end{cases}
	\end{equation}
\end{lemma}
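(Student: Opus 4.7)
For the first inequality, the plan is to translate the median property into inequalities on $a_-, a_0, a_+$. Since $0$ is a median of $X$, we have $\Prof{X \leq 0} \geq \frac12$ and $\Prof{X \geq 0} \geq \frac12$, i.e., $a_- + a_0 \geq \frac12$ and $a_+ + a_0 \geq \frac12$. Combined with the identity $a_- + a_0 + a_+ = 1$, these give $a_+ - a_- \leq a_0$ and $a_- - a_+ \leq a_0$, hence $\abs{a_- - a_+} \leq a_0$.

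For the second identity, the case $t = 0$ is immediate since all terms vanish. For $t > 0$, I would partition the underlying probability space according to the sign of $X$ and its position relative to $t$, namely into the four (disjoint) events
\begin{equation}
    \cb{X < 0},\quad \cb{X = 0},\quad \cb{X \in (0, t]},\quad \cb{X > t}\eqcm
\end{equation}
and evaluate $|X - t| - |X|$ on each of them: the value is $t$ on $\cb{X<0}$, equals $t$ on $\cb{X=0}$, equals $t - 2X$ on $\cb{X \in (0,t]}$, and equals $-t$ on $\cb{X > t}$. Taking expectations, setting $P := \PrOf{0 < X \leq t}$, and using $a_+ = P + \PrOf{X > t}$ to eliminate $\PrOf{X>t}$ yields
\begin{equation}
    \EOf{\abs{X-t}-\abs{X}} = t a_0 + t a_- + t a_+ - 2 t \PrOf{X>t} - 2\EOf{X\ind_{(0,t]}(X)}\eqfs
\end{equation}
Subtracting $t a_0 + t(a_- - a_+)$ from both sides and regrouping gives
\begin{equation}
    \EOf{\abs{X-t}-\abs{X}} - t a_0 - t(a_- - a_+) = 2 t P - 2\EOf{X\ind_{(0,t]}(X)} = 2\EOf{(t-X)\ind_{(0,t]}(X)}\eqcm
\end{equation}
as claimed. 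The case $t < 0$ follows by the same computation after reflecting $X \mapsto -X$ (which also swaps $a_-$ and $a_+$), or by repeating the above decomposition with the four sets $\cb{X < t}$, $\cb{X \in [t, 0)}$, $\cb{X = 0}$, $\cb{X > 0}$.

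No step looks to be a real obstacle; this is a bookkeeping argument, and the only care required is in correctly identifying the contribution of each of the four pieces in the partition and in keeping the sign conventions straight (in particular noting that the term $t(a_- - a_+)$ on the left is precisely what is needed to cancel the asymmetry introduced by the events $\cb{X<0}$ and $\cb{X>t}$).
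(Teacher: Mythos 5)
Your proposal is correct and follows essentially the same route as the paper: the median property gives $a_\pm + a_0 \geq \tfrac12$ and hence $\abs{a_- - a_+}\leq a_0$, and the identity for $t>0$ comes from the same piecewise evaluation of $\abs{x-t}-\abs{x}$ (the paper merges $\cb{X<0}$ and $\cb{X=0}$ into $\cb{X\leq 0}$, which is immaterial since the value is $t$ on both) followed by the same bookkeeping with $a_+ = \PrOf{X\in(0,t]} + \PrOf{X>t}$, with $t<0$ handled symmetrically.
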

\begin{proof}
	As $0$ is the median of $X$, $\Prof{X  \geq 0} \geq\frac12$ and $\Prof{X  \leq 0} \geq\frac12$. Thus,
	\begin{align}\label{eq:p0pm}
		a_- &\leq \frac12 \leq a_0 + a_+\eqcm&
		a_+ &\leq \frac12 \leq a_0 + a_-\eqfs
	\end{align}
	Therefore $\abs{a_+ - a_-} \leq a_0$.
	Assume $t > 0$.
	For all $x\in\R$, we have
	\begin{equation}
		\abs{x-t} - \abs{x} =
		\begin{cases}
			-t &\text{ if } x \geq t\eqcm\\
			t - 2x &\text{ if } x \in [0, t]\eqcm\\
			t &\text{ if } x \leq 0\eqfs
		\end{cases}
	\end{equation}
	Thus,
	\begin{equation}
		\EOf{\abs{X-t} - \abs{X}} - t a_0
		=
		- t \Prof{X > t} + \Eof{(t-2X)\ind_{(0,t]}(X)} + t a_-
		\eqfs
	\end{equation}
	Furthermore,
	\begin{align}
		- t \Prof{X > t}  + t a_- - t \br{a_- - a_+}
		&=
		t \br{a_+ - \Prof{X > t}}
		\\&=
		t \Prof{X \in (0, t]}
		\\&=
		\Eof{t\ind_{(0,t]}(X)}
		\eqfs
	\end{align}
	Taking the last two displays together, yields
	\begin{equation}
		\EOf{\abs{X-t} - \abs{X}} - t a_0 - t \br{a_- - a_+}
		=
		2\Eof{(t-X)\ind_{(0,t]}(X)}
		\eqfs
	\end{equation}
	The case $t < 0$ is similar.
\end{proof}
\begin{theorem}\label{thm:median:geodesic}
	Assume there is a geodesic $\gamma$ such that $\PrOf{Y\in\gamma}=1$. Let $m\in\argmin_{q\in\mc Q} \Eof{\ol Yq - \ol Yo}$.
	Let $q\in\mc Q$. Then the projection $p = \argmin_{z\in\gamma}\ol qz$ of $q$ onto $\gamma$ exists uniquely. Without loss of generality, assume $\gamma$ is unit-speed, $\gamma(0) = m$, and $\gamma^{-1}(p) \geq 0$.
	Define
	\begin{align}
		a_- &:= \PrOf{\gamma^{-1}(Y) < 0}\eqcm&
		a_0 &:= \Prof{Y = m}\eqcm&
		a_+ &:= \PrOf{\gamma^{-1}(Y) > 0}
		\eqfs
	\end{align}
	Then
	\begin{equation}
		\Eof{\ol Yq - \ol Ym}
		\geq
		\ol qm \, a_0 + \ol pm \br{a_- - a_+} + \EOf{\br{\ol qp + \ol pm - \ol Ym}\indOfOf{(0, \ol qp + \ol pm]}{\gamma^{-1}(Y)}}
		\eqfs
	\end{equation}
\end{theorem}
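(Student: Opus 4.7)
The plan is to combine the orthogonal projection property of closed convex subsets of Hadamard spaces with a pointwise case analysis in the parameter $X := \gamma^{-1}(Y)$. Passing to the closure of $\gamma(I_\gamma)$ if necessary (which changes neither the problem nor the expectations, since $Y \in \gamma$ almost surely), \cref{lmm:closedBoundedConvex} \ref{lmm:closedBoundedConvex:convexsup} applied to $\tran = (x \mapsto x) \in \setcciz$ forces the Fréchet median $m$ to lie on $\gamma$, and \cite[Theorem 2.1.12]{bacak14convex} (as invoked in the proof of \cref{lmm:closedBoundedConvex}) delivers $p$ together with the orthogonal decomposition
\begin{equation}\label{eq:plan:ortho}
    \ol yq^2 \geq \ol yp^2 + \ol pq^2 \qquad \text{for every } y \in \gamma \eqcm
\end{equation}
of which I only need the coarse consequences $\ol yq \geq \ol yp$ and $\ol yq \geq h := \ol pq$.

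Write $t_p := \gamma^{-1}(p) \geq 0$. Since $\gamma$ is unit-speed and $Y \in \gamma$ almost surely, $\ol Ym = |X|$ and $\ol Yp = |X - t_p|$ almost surely. Using \eqref{eq:plan:ortho}, I would establish the pointwise bounds
\begin{equation}\label{eq:plan:cases}
    \ol Yq - \ol Ym \geq
    \begin{cases}
        \ol qm & \text{if } X = 0, \\
        t_p & \text{if } X < 0, \\
        h - X & \text{if } 0 < X \leq h + t_p, \\
        -t_p & \text{if } X > h + t_p,
    \end{cases}
\end{equation}
arguing case by case: the first line is the identity $\ol mq = \ol qm$ at $Y = m$; the second uses $\ol Yq \geq \ol Yp = t_p + |X|$; the third uses the bare bound $\ol Yq \geq h$ together with $\ol Ym = X$; and the fourth uses $\ol Yq \geq \ol Yp = X - t_p$, which is valid since $X > t_p$ there.

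Taking expectations over the four disjoint events in \eqref{eq:plan:cases} gives $\ol qm \, a_0$ from the first piece, $t_p a_-$ from the second, $-t_p \PrOf{0 < X \leq h + t_p} + \EOf{(h + t_p - X) \indOfOf{(0,\,h+t_p]}{X}}$ from the third, and $-t_p \PrOf{X > h + t_p}$ from the fourth. The two $-t_p$ contributions combine into $-t_p a_+$, and rewriting $t_p = \ol pm$, $h = \ol qp$, and $X = \ol Ym$ on the event $\{0 < X \leq h + t_p\}$ yields the claimed inequality verbatim. The main obstacle is primarily bookkeeping, with two subtler points to handle carefully: one must verify that the coarse bound $\ol Yq \geq h$ is actually strong enough in the middle case (the claim deliberately discards the Pythagorean slack of \eqref{eq:plan:ortho}), and one must reduce to a closed geodesic image to make the projection step rigorous, since the paper permits $I_\gamma$ to be an arbitrary convex subset of $\R$.
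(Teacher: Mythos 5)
Your argument is correct, and it takes a genuinely different and more economical route than the paper's. The paper splits $\Eof{\ol Yq - \ol Ym}$ into the atom at $m$ plus $\EOf{(\ol Yq - \ol Yp)\indOf{Y\neq m}} + \EOf{(\ol Yp - \ol Ym)\indOf{Y\neq m}}$, evaluates the last term exactly via a separate real-line median identity (\cref{lmm:realMediVarIneq}), bounds the middle term by $\EOf{\max(0, \ol qp - \ol Yp)\indOf{Y\neq m}}$, and then recombines the pieces through a further case distinction on whether $\ol qp \geq \ol pm$ or $\ol pm \geq \ol qp$. You bypass all of that with a single four-way pointwise bound in $X = \gamma^{-1}(Y)$, using only the weak consequences $\ol Yq \geq \ol Yp$ and $\ol Yq \geq \ol qp$ of the projection inequality; summing your four cases does reproduce the claimed right-hand side exactly (the $-\ol pm$ contributions from the third and fourth events combine into $-\ol pm\, a_+$, and on the event $0 < X \leq \ol qp + \ol pm$ your bound $\ol qp - X$ is precisely $(\ol qp + \ol pm - X) - \ol pm$), so discarding the Pythagorean slack in the middle case loses nothing. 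Your two flagged caveats are real but handled: passing to the closure of the geodesic image makes both the projection and the placement of $m$ on $\gamma$ rigorous via \cref{lmm:closedBoundedConvex}, whereas the paper invokes Sturm's projection proposition directly and absorbs $m\in\gamma$ into the normalization in the statement. What the paper's longer route buys is the reusable exact identity of \cref{lmm:realMediVarIneq} as a by-product; what yours buys is brevity, a cleaner case structure, and no auxiliary lemma.
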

\begin{proof}
	According to \cite[Proposition 2.6]{sturm03}, the projection $p = \argmin_{z\in\gamma}\ol qz$ exists uniquely and fulfills
	\begin{align}
		\sqrt{\ol yp^2 + \ol qp^2} \leq \ol yq \leq \ol yp + \ol qp
	\end{align}
	for all $y\in \gamma$. We have
	\begin{equation}\label{eq:medianGeodSplit}
		\Eof{\ol Yq - \ol Ym}
		=
		\ol qm \,\Prof{Y = m} +  \EOf{\br{\ol Yq - \ol Yp} \indOf{Y\neq m}} +  \Eof{\br{\ol Yp - \ol Ym} \indOf{Y\neq m}}
		\eqfs
	\end{equation}
	For the third term, let $X := \gamma^{-1}(Y)$. Then $0 = \gamma^{-1}(m)$ is the median of $X$ as $\gamma$ is an isometry onto its image.
	Let $s := \gamma^{-1}(p) = \ol pm$. By \cref{lmm:realMediVarIneq},
	\begin{align}
		\Eof{\br{\ol Yp - \ol Ym} \indOf{Y\neq m}}
		&=
		\EOf{\abs{X-s} - \abs{X}} - \abs{s} a_0
		\\&=
		s \br{a_- - a_+} + 2\EOf{(s-X)\ind_{(0,s]}(X)}
		\eqfs
	\end{align}
	For the second term in \eqref{eq:medianGeodSplit},
	\begin{align}
		\EOf{\br{\ol Yq - \ol Yp} \indOf{Y\neq m}}
		&\geq
		\EOf{\br{\sqrt{\ol Yp^2 + \ol qp^2} - \ol Yp} \indOf{Y\neq m}}
		\\&\geq
		\EOf{\max\brOf{0, \ol qp - \ol Yp}\indOf{Y\neq m}}
		\eqfs
	\end{align}
	Let $h := \ol qp$. Then
	\begin{align}
		&\EOf{\br{\max\brOf{0, \ol qp - \ol Yp}} \indOf{Y\neq m}}
		\\&=
		\EOf{\max\brOf{0, h - \abs{X - s}}\indOf{X\neq 0}}
		\\&=
		\EOf{\br{h - s + X}\indOfOf{[s-h, s]}{X}\indOf{X\neq 0}} +
		\EOf{\br{h + s - X}\indOfOf{(s, s+h]}{X}}
		\eqfs
	\end{align}
	If $h \geq s \geq 0$,
	\begin{align}
		&\EOf{\max\brOf{0, h - \abs{X - s}}\indOf{X\neq 0}}  +
		2 \EOf{\br{s - X}\indOfOf{(0, s]}{X}}
		\\&=
		\EOf{\br{h + s - X}\indOfOf{(0, s+h]}{X}} +
		\EOf{\br{h - s + X}\indOfOf{[s-h, 0)}{X}}
		\\&\geq
		\EOf{\br{h + s - X}\indOfOf{(0, s+h]}{X}}
		\eqfs
	\end{align}
	If $s \geq h \geq 0$,
	\begin{align}
		&\EOf{\max\brOf{0, h - \abs{X - s}}\indOf{X\neq 0}}  +
		2 \EOf{\br{s - X}\indOfOf{(0, s]}{X}}
		\\&=
		\EOf{\br{h + s - X}\indOfOf{[s-h, s+h]}{X}} +
		2 \EOf{\br{s - X}\indOfOf{(0, s-h)}{X}}
		\\&\geq
		\EOf{\br{h + s - X}\indOfOf{(0, s+h]}{X}}
		\eqfs
	\end{align}
	Thus, \eqref{eq:medianGeodSplit} can be bounded from below by
	\begin{equation}
		\ol qm \, a_0 + s \br{a_- - a_+} + \EOf{\br{h + s - X}\indOfOf{(0, s+h]}{X}}
		\eqfs
	\end{equation}
	Finally, we obtain
	\begin{equation}
		\Eof{\ol Yq - \ol Ym}
		\geq
		\ol qm \, a_0 + \ol pm \br{a_- - a_+} + \EOf{\br{\ol qp + \ol pm - \ol Ym}\indOfOf{(0, \ol qp + \ol pm]}{\gamma^{-1}(Y)}}
		\eqfs
	\end{equation}
\end{proof}
\begin{example}
	Let $(\mc Q, d)$ be a Hilbert space with inner product $\ip{\cdot}{\cdot}$ and induced norm $\norm$.
	Let $U$ be an $\R$-valued random variable with a uniform distribution on $[-\frac12, \frac12]$. Let $v\in\mc Q$ have norm $\normof{v}=1$. Let $Y = U v$. As $Y$ has a convex support, its Fréchet median (or geometric median in this context),
	\begin{equation}
		m = \argmin_{q\in\mc Q} \Eof{\normof{Y-q}-\normof{Y}}
		\eqcm
	\end{equation}
	is unique by \cref{cor:convexUnique}. Because of symmetry, $m = 0$. Furthermore, $Y$ is concentrated on the unit-speed geodesic $\gamma\colon[-\frac12,\frac12] \to \mc Q, t\mapsto t v$. Let $q \in\mc Q$ with projection $p$ onto $\setByEleInText{tv}{t\in\R}$. Then $s := \ip{q}{v} = \normof{p}$ and  $h := \sqrt{\normof{q}^2 - \ip{q}{v}^2} = \normof{q - p}$. Set $r:=s+h$. From \cref{thm:median:geodesic}, we obtain
	\begin{align}
		\Eof{\normof{Y-q}-\normof{Y}}
		&\geq
		\EOf{\br{r -\normof{Y}}\indOfOf{(0,r]}{\gamma^{-1}(Y)}}
		\\&=
		r \PrOf{U \in [0, r]}
		-
		\EOf{U\indOfOf{[0, r]}{U}}
		\\&=
		\min\brOf{\frac12, r} \br{r - \frac12\min\brOf{\frac12, r}}
		\\&\geq
		\frac12 r \min\brOf{\frac12, r}
		\eqfs
	\end{align}
	Note that $\normof{q} \leq r \leq \sqrt{2}\normof{q}$.
	Thus, we obtain a quadratic lower bound for all $q\in\mc Q$ close to the median.
\end{example}
\begin{appendix}
\section{Reference Results}\label{sec:ref}
\begin{lemma}[Fundamental theorem of calculus for Lebesgue integrals]\label{lmm:integration}
	Let $a,b\in\R$ with $a < b$. Let $F\colon[a,b]\to\R$.
	\begin{enumerate}[label=(\roman*)]
		\item 
		Assume $F$ be nondecreasing. 
		Then $F$ is differentiable almost-everywhere with the derivative $F\pr(x) \geq 0$ almost everywhere. Furthermore,
		\begin{equation}
			\int_a^b F\pr(x) \dl x \leq F(b) -  F(a)\eqfs
		\end{equation}
		\item 
		Assume $F$ is absolutely continuous.
		Then $F$ is differentiable almost-everywhere, $F\pr$ is Lebesgue-integrable, and we have
		\begin{equation}
			\int_{a}^b F\pr(x) \dl x = F(b) - F(a)
			\eqfs
		\end{equation}
	\end{enumerate}
\end{lemma}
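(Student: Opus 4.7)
The plan is to treat this as a well-known reference result from real analysis and cite the standard measure-theoretic literature rather than reprove classical theorems. The statement combines three foundational facts: Lebesgue's monotone differentiation theorem, a one-sided integral inequality for monotone functions, and the characterization of absolutely continuous functions via their derivatives.

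For part (i), I would argue as follows. The fact that a nondecreasing $F \colon [a,b] \to \R$ is differentiable almost everywhere with $F'(x) \geq 0$ a.e.\ is Lebesgue's classical monotone differentiation theorem; it is proved, e.g., via the rising sun lemma or Vitali covering arguments (see \cite[Theorem 3.23]{folland99}). For the integral inequality, extend $F$ by $F(x) := F(b)$ for $x > b$ and consider the nonnegative measurable functions $g_n(x) := n(F(x + 1/n) - F(x))$, which converge pointwise a.e.\ to $F'(x)$. By monotonicity of $F$ and a translation/telescoping argument,
\begin{equation}
    \int_a^b g_n(x) \, \dl x = n \int_b^{b+1/n} F(x) \, \dl x - n \int_a^{a+1/n} F(x) \, \dl x \leq F(b) - F(a)\eqcm
\end{equation}
and Fatou's lemma then yields $\int_a^b F'(x) \, \dl x \leq F(b) - F(a)$.

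For part (ii), I would appeal directly to the standard characterization of absolutely continuous functions. An absolutely continuous $F$ is of bounded variation, hence the difference of two nondecreasing functions, so by part (i) it is differentiable a.e.\ with $F'$ locally integrable. The equality $\int_a^b F'(x) \, \dl x = F(b) - F(a)$ is precisely the Lebesgue fundamental theorem of calculus for absolutely continuous functions, proved in every standard real-analysis text (e.g.\ \cite[Theorem 3.35]{folland99} or \cite[Chapter 6]{royden88}). I would not expect any obstacle here beyond locating the cleanest reference; since the statement is used only as a technical tool inside the paper, a citation-based proof is entirely appropriate.
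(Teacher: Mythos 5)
Your proposal is correct and matches the paper's treatment: the paper proves this lemma purely by citation, writing only ``See \cite[chapter 3]{folland99}.'' Your additional sketch of the classical argument (Fatou's lemma applied to the difference quotients $g_n(x) = n(F(x+1/n)-F(x))$ for part (i), and the standard characterization of absolute continuity for part (ii)) is the proof Folland gives there, so nothing is missing.
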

\begin{proof}
	See \cite[chapter 3]{folland99}.
\end{proof}
\begin{lemma}\label{thm:alexandrov}\mbox{ }
	\begin{enumerate}[label=(\roman*)]
		\item \emph{Rademacher's Theorem:} Every locally Lipschitz function $f\colon\R^k\to\R^n$ is almost everywhere differentiable.
		\item \emph{Alexandrov's Theorem:} Every convex function $f\colon\R^k\to\R$ is twice differentiable almost everywhere.
	\end{enumerate}
\end{lemma}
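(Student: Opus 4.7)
The plan is to prove both classical statements via the standard arguments from geometric measure theory, treating (i) from scratch and then bootstrapping (ii) from (i).

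For \textbf{Rademacher's theorem}, first reduce to $n = 1$ by considering components. For a fixed unit vector $v \in \R^k$, the function $t \mapsto f(x + tv)$ is Lipschitz on $\R$, hence absolutely continuous and, by \cref{lmm:integration}, differentiable at almost every $t$. Fubini's theorem then yields that the directional derivative $\partial_v f$ exists Lebesgue-almost everywhere on $\R^k$. Picking a countable dense subset $\{v_n\}_{n\in\N}$ of the unit sphere, the set $E$ on which $\partial_{v_n} f$ exists for every $n$ has full measure. The key step is to upgrade simultaneous existence of these directional derivatives to full differentiability at a.e.\ point of $E$. One shows via integration against smooth compactly supported test functions that the map $v \mapsto \partial_v f(x)$ is linear on $\{v_n\}$ for a.e.\ $x$; combined with the uniform bound $|\partial_v f(x)| \leq L$ coming from the Lipschitz constant, this extends to a linear functional $Df(x)$ on $\R^k$. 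A covering argument, together with density of $\{v_n\}$ and the Lipschitz bound, then controls $f(x+h) - f(x) - Df(x)h = o(|h|)$ uniformly in the direction of $h$. The main obstacle is precisely this last uniformity: the estimate $|f(x+h) - f(x) - Df(x)h|$ must be controlled by a quantity depending only on $|h|$, which requires carefully exploiting the finite net $\{v_n\}$ on the sphere and absorbing the error from approximating $h/|h|$ by some $v_n$ using the Lipschitz constant.

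For \textbf{Alexandrov's theorem}, the strategy is to apply Rademacher twice, once to $f$ and once to its gradient, and then to use convexity to promote approximate differentiability of $\nabla f$ into a genuine quadratic Taylor expansion of $f$. A finite convex function $f \colon \R^k \to \R$ is locally Lipschitz, so part (i) produces the gradient $\nabla f$ almost everywhere. The subdifferential $\partial f$ is a maximal monotone operator, and monotonicity implies that $\nabla f$, seen as an a.e.-defined map $\R^k \to \R^k$, is of locally bounded variation, with distributional Jacobian a matrix-valued Radon measure whose symmetric part is positive semidefinite. A BV regularity theorem (the vectorial analogue of Lebesgue's differentiation theorem applied to monotone maps) then yields an approximate derivative $A(x)$, a symmetric positive semidefinite matrix, at almost every $x$.

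To convert this into the desired second-order expansion, at an a.e.\ point $x$ where $\nabla f(x)$ and the approximate derivative $A(x)$ both exist, one writes
\begin{equation}
    f(x+h) - f(x) - \langle \nabla f(x), h\rangle - \tfrac12 \langle A(x) h, h\rangle = \int_0^1 \langle \nabla f(x + th) - \nabla f(x) - t A(x) h,\, h\rangle \, \dl t
\end{equation}
(using \cref{lmm:integration} along the segment from $x$ to $x+h$) and estimates the integrand. The main obstacle of the proof lies in this final step: approximate differentiability of $\nabla f$ only gives smallness in a measure-theoretic sense, whereas the right-hand side requires pointwise control along almost every ray from $x$. Convexity provides exactly the missing ingredient: the monotonicity of $\nabla f$ converts mean-value control on small balls into pointwise control, so that the $L^1$-average smallness of $\nabla f(y) - \nabla f(x) - A(x)(y-x)$ over small balls centered at $x$ upgrades to the $o(|h|)$ bound needed to conclude $f(x+h) = f(x) + \langle\nabla f(x), h\rangle + \tfrac12\langle A(x) h, h\rangle + o(|h|^2)$, which is twice differentiability of $f$ at $x$.
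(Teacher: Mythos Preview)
Your proposal sketches genuine proofs of these two classical theorems, while the paper does not prove them at all: it treats the lemma as a reference result and simply cites \cite[Theorems D.1.1 and D.2.1]{niculescu18}. So there is no ``paper's own proof'' to compare against; the statement is included only so that later arguments (e.g., the proof of \cref{lmm:semitaylor}) can invoke it.

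As for the content of your outline, it follows the standard route for both theorems and correctly identifies the main difficulties (uniformity over directions for Rademacher; upgrading approximate differentiability of $\nabla f$ to a pointwise second-order expansion via monotonicity for Alexandrov). Note, however, that what you have written is a plan rather than a proof: several steps (the distributional argument showing $v\mapsto\partial_v f(x)$ is linear a.e., the BV/monotone-map regularity yielding the approximate Jacobian, and the monotonicity argument converting averaged smallness into pointwise $o(|h|)$ control) each require substantial work that you have only named, not carried out. If the intent is to actually supply a proof rather than a reference, these would need to be filled in; otherwise, citing a standard source as the paper does is the appropriate treatment.
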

\begin{proof}
	See \cite[Theorems D.1.1 and D.2.1]{niculescu18}.
\end{proof}
\section{Auxiliary Results}\label{sec:aux}
\subsection{One-Sided Derivatives}
\begin{lemma}\label{lmm:middleDeriv}
	Let $I\subset \R$ be convex and $g\colon I \to \R$.
	Let $t_0\in I$. Assume $\semiDerivs g(t_0)$ exists. Then, for all $v_0\in \semiDerivs g(t_0)$, there are $(t_n^+)_{n\in\N}, (t_n^-)_{n\in\N} \subset I$ with $t_n^+ > t_n^-$ and $t_0\in[t_n^-, t_n^+]$ such that $t_n^\pm \xrightarrow{n\to\infty} t_0$, and
	\begin{equation}
		\lim_{n\to\infty} \frac{g(t_n^+) - g(t_n^-)}{t_n^+ - t_n^-} = v_0
		\eqfs
	\end{equation}
\end{lemma}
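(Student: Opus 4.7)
The plan is to realize any $v_0 \in \semiDerivs f(t_0)$ as a suitably weighted average of the one-sided derivatives of $f$ at $t_0$, exploiting the elementary identity
\[
\frac{f(t^+) - f(t^-)}{t^+ - t^-}
=
\mu \cdot \frac{f(t^+) - f(t_0)}{t^+ - t_0}
+
(1-\mu) \cdot \frac{f(t_0) - f(t^-)}{t_0 - t^-},
\qquad
\mu := \frac{t^+ - t_0}{t^+ - t^-},
\]
valid whenever $t^- < t_0 < t^+$. This displays the symmetric difference quotient as a convex combination of the forward and backward quotients at $t_0$, with weight $\mu \in (0,1)$ that we are free to prescribe.

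First I would dispose of the boundary cases. If $t_0 = \inf I$, then by \cref{not:leftright} the assumption that $\semiDerivs f(t_0)$ exists means $\semiDerivs f(t_0) = \{f\rd(t_0)\}$, so $v_0 = f\rd(t_0)$; choose $t_n^- := t_0$ constantly and any sequence $t_n^+ \searrow t_0$ with $t_n^+ \in I$, giving $(f(t_n^+)-f(t_n^-))/(t_n^+-t_n^-) \to f\rd(t_0) = v_0$ by definition of the right derivative. The case $t_0 = \sup I$ is symmetric.

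Now for $t_0 \in \mathring I$: write $a := f\ld(t_0)$ and $b := f\rd(t_0)$, so $\semiDerivs f(t_0) = [\min(a,b), \max(a,b)]$. Given $v_0$ in this interval, pick $\mu^* \in [0,1]$ with $v_0 = \mu^* b + (1-\mu^*) a$ (take $\mu^* = (v_0 - a)/(b-a)$ when $a \neq b$; any $\mu^* \in [0,1]$ when $a = b$). If $\mu^* \in (0,1)$, choose $n_0$ so large that $t_n^+ := t_0 + \mu^*/n$ and $t_n^- := t_0 - (1-\mu^*)/n$ lie in $I$ for all $n \geq n_0$. Then $t_n^- < t_0 < t_n^+$, both sequences converge to $t_0$, and the above identity yields $\mu_n = \mu^*$ for every $n$. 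Since $(f(t_n^+) - f(t_0))/(t_n^+-t_0) \to b$ and $(f(t_0) - f(t_n^-))/(t_0 - t_n^-) \to a$ by definition of the one-sided derivatives, the convex combination converges to $\mu^* b + (1-\mu^*) a = v_0$. The boundary cases $\mu^* = 0$ and $\mu^* = 1$ are handled by taking $t_n^+ = t_0$ (resp.\ $t_n^- = t_0$) and the other sequence monotonically approaching $t_0$, directly reducing to the definition of a one-sided derivative.

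There is no serious obstacle; the only subtlety is making sure $\mu^* \in \{0,1\}$ is treated separately, since in those cases the identity above is used in degenerate form and one of the two sequences must actually equal $t_0$ to match the bracketing requirement $t_0 \in [t_n^-, t_n^+]$ together with $t_n^- < t_n^+$.
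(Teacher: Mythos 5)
Your proposal is correct and uses essentially the same argument as the paper: the paper writes the difference quotient over $[t_n^-, t_n^+]$ as the convex combination $\alpha \frac{f(t_n^+)-f(t_0)}{t_n^+-t_0} + (1-\alpha)\frac{f(t_0)-f(t_n^-)}{t_0-t_n^-}$ with $\alpha = \frac{t_n^+-t_0}{t_n^+-t_n^-}$ held fixed, exactly as you do. You are merely more explicit about the endpoint cases and the degenerate weights $\mu^*\in\{0,1\}$, which the paper subsumes under ``we can clearly choose the sequences.''
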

\begin{proof}
	Let $\alpha \in [0,1]$ so that $v_0 = \alpha g\rd(t_0) + (1-\alpha)  g\ld(t_0)$. We can clearly choose the sequences $(t_n^+)_{n\in\N}, (t_n^-)_{n\in\N} \subset I$ with $t_n^+ > t_n^-$, $t_0\in[t_n^-, t_n^+]$, and $t_n^\pm \xrightarrow{n\to\infty} t_0$ so that $\alpha = \frac{t_n^+ - t_0}{t_n^+ - t_n^-}$. Then
	\begin{align}
		\frac{g(t_n^+) - g(t_n^-)}{t_n^+ - t_n^-}
		&=
		\alpha \frac{g(t_n^+) - g(t_0)}{t_n^+ - t_0} + (1-\alpha)  \frac{g(t_0) - g(t_n^-)}{t_0 - t_n^-}
		\\&\xrightarrow{n\to\infty}
		\alpha g\rd(t_0) + (1-\alpha)  g\ld(t_0)
	\end{align}
\end{proof}
\subsection{Distance Functions}
\begin{lemma}\label{lmm:gconv:solve}
	Let $t_1, t_2\in\R$, $t_2\neq t_1$ and $x_1, x_2 \in \Rp$.
	There are $s\in\R$, $h \in \Rp$ with
	\begin{equation}\label{eq:gconv:solve:def}
		\sqrt{(t_1-s)^2+h^2} = x_1
		\qquad\text{and}\qquad
		\sqrt{(t_2-s)^2+h^2} = x_2
	\end{equation}
	if and only if
	\begin{equation}
		\abs{x_1 - x_2} \leq \abs{t_1 - t_2} \leq  x_1 + x_2
		\eqfs
	\end{equation}
	In this case,
	\begin{align}
		s &= \frac{t_2^2 - t_1^2 + x_1^2 - x_2^2}{2 \delta}\eqcm\\
		h &= \frac12 \sqrt{2\br{x_1^2 + x_2^2} - \delta^2 - \delta^{-2}(x_1^2-x_2^2)^2}\eqcm
	\end{align}
	where $\delta := t_2 - t_1$.
	Furthermore, $h =0$ if and only if $\abs{\delta} \in \{\abs{x_1 - x_2},  x_1 + x_2\}$.
\end{lemma}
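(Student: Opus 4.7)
The plan is to treat \eqref{eq:gconv:solve:def} as a system of two equations in the two unknowns $s \in \R$ and $h^2 \in \Rp$, solve it explicitly, and then read off both the admissibility condition and the closed-form expressions for $s$ and $h$.

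First, I would square both equations to obtain $(t_i - s)^2 + h^2 = x_i^2$ for $i = 1, 2$, which is equivalent to the original system since $x_1, x_2, h$ are nonnegative. Subtracting one squared equation from the other eliminates $h^2$ and yields a linear equation in $s$, whose unique solution (using $\delta = t_2 - t_1 \neq 0$) is exactly the claimed formula for $s$. Substituting this back into either original squared equation then determines $h^2$; expanding $(t_1 - s)^2$ via $t_1 - s = -\tfrac{1}{2}\bigl(\delta + (x_1^2 - x_2^2)/\delta\bigr)$ leads directly to the stated closed form for $h^2$.

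The main content of the lemma is the characterization of when this expression for $h^2$ is nonnegative, so that a real $h \in \Rp$ can be extracted by taking a square root. Setting $u := \delta^2$ and multiplying through by $4\delta^2$ turns the inequality $h^2 \geq 0$ into the quadratic inequality $-u^2 + 2(x_1^2 + x_2^2)\,u - (x_1^2 - x_2^2)^2 \geq 0$. The discriminant factors cleanly as $(x_1^2 + x_2^2)^2 - (x_1^2 - x_2^2)^2 = 4 x_1^2 x_2^2$, so the roots are $u = (x_1 \pm x_2)^2$, and the inequality holds precisely when $(x_1 - x_2)^2 \leq \delta^2 \leq (x_1 + x_2)^2$, i.e., $\abs{x_1 - x_2} \leq \abs{t_1 - t_2} \leq x_1 + x_2$. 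The boundary case $h = 0$ corresponds exactly to equality in one of these two bounds, yielding the final claim.

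For the converse direction, existence of $s, h$ satisfying \eqref{eq:gconv:solve:def} forces the squared identities, hence the displayed formulas for $s$ and $h^2$, and realness of $h$ requires $h^2 \geq 0$, which by the factorization above is equivalent to the stated double inequality. There is no conceptual obstacle here; the only care needed is to track signs (nonnegativity of $x_1, x_2, h$ makes the squared system equivalent to the original) and to identify the roots of the quadratic in $u$ as $(x_1 \pm x_2)^2$ to pin down the degenerate case.
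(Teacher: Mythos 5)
Your proposal is correct and follows essentially the same route as the paper's proof: square both equations, subtract to solve for $s$, substitute back to obtain the closed form for $h^2$, and characterize nonnegativity of $h^2$ as a quadratic inequality in $\delta^2$ with roots $(x_1 \pm x_2)^2$. The only cosmetic difference is that you identify the roots via the discriminant factorization $4x_1^2x_2^2$, while the paper states the equivalent factored inequality directly.
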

\begin{proof}
	As $x_1, x_2 \geq 0$, \eqref{eq:gconv:solve:def} is equivalent to
	\begin{align}
		(t_1-s)^2+h^2 &= x_1^2\eqcm\label{eq:gconv:solve:def1}\\
		(t_2-s)^2+h^2 &= x_2^2
		\eqfs
	\end{align}
	Subtracting the two equations yields
	\begin{equation}
		(t_1-s)^2 - (t_2-s)^2 = x_1^2 -  x_2^2
		\eqfs
	\end{equation}
	Thus,
	\begin{align}
		0
		&=
		(t_1-s)^2 - (t_2-s)^2 - x_1^2 + x_2^2
		\\&=
		t_1^2 - 2 s t_1 + s^2 - t_2^2 + 2 s t_2 - s^2 - x_1^2 + x_2^2
		\\&=
		2 s (t_2-t_1) + t_1^2 - t_2^2  - x_1^2 + x_2^2
	\end{align}
	and we obtain
	\begin{equation}
		s = \frac{- t_1^2 + t_2^2 + x_1^2 - x_2^2}{2 (t_2-t_1)}
		\eqfs
	\end{equation}
	We plug this in to \eqref{eq:gconv:solve:def1} to obtain for the other parameter,
	\begin{align}
		h^2
		&=
		x_1^2 - (t_1-s)^2
		\\&=
		x_1^2 - t_1^2 + 2 s t_1 - s^2
		\\&=
		x_1^2 - t_1^2 + 2 t_1 \frac{- t_1^2 + t_2^2 + x_1^2 - x_2^2}{2 (t_2-t_1)} - \br{\frac{- t_1^2 + t_2^2 + x_1^2 - x_2^2}{2 (t_2-t_1)}}^2
		\eqfs
	\end{align}
	To ensure $h^2\geq0$, we calculate
	\begin{align}
		&\br{2 (t_2-t_1)}^2 h^2
		\\&=
		\br{2 (t_2-t_1)}^2 \br{x_1^2 - t_1^2} + 2 t_1 \br{2 (t_2-t_1)} \br{- t_1^2 + t_2^2 + x_1^2 - x_2^2}
		\\&\phantom{=}\,
		- \br{- t_1^2 + t_2^2 + x_1^2 - x_2^2}^2
		\\&=
		2 t_1^2 x_1^2 + 2 t_1^2 x_2^2 - 4 t_2 t_1 x_1^2 - 4 t_2 t_1 x_2^2 + 2 t_2^2 x_1^2 + 2 t_2^2 x_2^2
		\\&\phantom{=}\,
		- t_1^4 + 4 t_2 t_1^3 - 6 t_2^2 t_1^2 + 4 t_2^3 t_1 - t_2^4 - x_1^4 - x_2^4 + 2 x_1^2 x_2^2
		\\&=
		2 \br{t_1 - t_2}^2 \br{x_1^2 + x_2^2} -(t_1 - t_2)^4 - (x_1^2 - x_2^2)^2
		\\&= 2 \Delta \br{x_1^2 + x_2^2} - \Delta^2 - (x_1^2-x_2^2)^2
		\eqcm
	\end{align}
	where $\Delta := (t_1 - t_2)^2$.
	We need that $2 \Delta \br{x_1^2 + x_2^2} - \Delta^2 - (x_1^2-x_2^2)^2 \geq 0$ for $h$ to exist.
	In other words,
	\begin{equation}
		\br{x_1 - x_2}^2 \leq \Delta \leq  \br{x_1 + x_2}^2
	\end{equation}
	or
	\begin{equation}
		\abs{x_1 - x_2} \leq \abs{t_1 - t_2} \leq  x_1 + x_2
		\eqfs
	\end{equation}
	Then
	\begin{equation}
		h^2
		=
		\frac{2 \Delta \br{x_1^2 + x_2^2} - \Delta^2 - (x_1^2-x_2^2)^2}{4 \Delta}
		\eqcm
	\end{equation}
	which, for $h\geq0$, is equivalent to
	\begin{equation}
		h = \frac12 \sqrt{2\br{x_1^2 + x_2^2} - \Delta - \Delta^{-1}(x_1^2-x_2^2)^2}
		\eqfs
	\end{equation}
\end{proof}
\begin{lemma}\label{lmm:secondDerivInG}
	Let $g\in\mc G$ with parameters $t_0\in\R$ and $h\in\Rp$, i.e.,
	\begin{equation}
		g(t) = \sqrt{(t-t_0)^2 + h^2}
		\eqfs
	\end{equation}
	Let $s\in\R$. Assume $g(s) > 0$. Then $g$ is twice continuously differentiable at $s$ and
	\begin{equation}
		g\prr(s) g(s) = 1 - g\pr(s)^2
		\eqfs
	\end{equation}
\end{lemma}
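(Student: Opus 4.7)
The plan is a short direct calculation based on implicit differentiation of the identity $g^2 = (t-t_0)^2 + h^2$.

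First I would establish twice continuous differentiability at $s$. Write $p(t) := (t-t_0)^2 + h^2$, so that $g = \sqrt{p}$. The polynomial $p$ is smooth everywhere, and $p(s) = g(s)^2 > 0$ by hypothesis, so $p$ is strictly positive in a neighborhood of $s$. Since $x \mapsto \sqrt{x}$ is $C^\infty$ on $\Rpp$, the composition $g$ is $C^\infty$ in a neighborhood of $s$, and in particular twice continuously differentiable at $s$.

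Next I would derive the identity by differentiating $g(t)^2 = (t-t_0)^2 + h^2$ twice. The first differentiation gives
\begin{equation}
2 g(t) g\pr(t) = 2(t - t_0),
\end{equation}
i.e.\ $g(t) g\pr(t) = t - t_0$, valid in a neighborhood of $s$. Differentiating once more yields
\begin{equation}
g\pr(t)^2 + g(t) g\prr(t) = 1.
\end{equation}
Evaluating at $t = s$ and rearranging gives the claimed formula $g\prr(s) g(s) = 1 - g\pr(s)^2$.

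There is no real obstacle here; the only point worth flagging is that the hypothesis $g(s) > 0$ is essential, since it excludes the case $h = 0$, $s = t_0$ (where $g(t) = |t - t_0|$ fails to be differentiable). With $g(s) > 0$ in hand, both differentiations above are justified and the identity follows.
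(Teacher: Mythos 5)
Your proof is correct and follows essentially the same route as the paper's: both are direct two-line calculus computations, with the paper computing $g\pr$ and $g\prr$ explicitly while you differentiate the identity $g^2 = (t-t_0)^2 + h^2$ twice, which yields the same relations. Your explicit justification of smoothness near $s$ (via positivity of the polynomial under the square root) is a welcome detail that the paper leaves implicit.
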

\begin{proof}
	We calculate the first and second derivative of $g$ at $s$:
	\begin{align}
		g\pr(s) &= \frac{s-t_0}{\sqrt{(s-t_0)^2 + h^2}} =  \frac{s-t_0}{g(s)}\eqcm\\
		g\prr(s) &= \frac{g(s) - (s-t_0)g\pr(s)}{g(s)^2}\eqfs
	\end{align}
	Thus,
	\begin{equation}
		g\prr(s) g(s) = \frac{g(s) - (s-t_0)g\pr(s)}{g(s)} = 1 - g\pr(s)^2 \eqfs
	\end{equation}
\end{proof}
\subsection{Convex and Concave}
\begin{lemma}\label{lem:aux}
	Let $\tran\in\setcc$ not constant. Then, for $x\in\Rpp$,
	\begin{equation}
		1 \leq \frac{x \dtran(x)}{\tran(x)-\tran(0)} \leq 2
		\eqfs
	\end{equation}
\end{lemma}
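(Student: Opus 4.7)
My plan is to prove the two inequalities separately, with both following from the representation
\begin{equation}
\tran(x) - \tran(0) = \int_0^x \dtran(t)\,\dl t\eqcm
\end{equation}
valid since $\tran$, being convex on $\Rp$, is absolutely continuous on $[0,x]$ (\cref{lmm:integration}).

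For the lower bound $\tran(x) - \tran(0) \leq x\dtran(x)$: since $\tran$ is convex, $\dtran$ is nondecreasing, hence $\dtran(t) \leq \dtran(x)$ for $t\in[0,x]$, and integration yields the claim.

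For the upper bound $x\dtran(x) \leq 2(\tran(x) - \tran(0))$: since $\dtran$ is concave on $\Rp$, the chord connecting $(0,\dtran(0))$ and $(x,\dtran(x))$ lies below the graph of $\dtran$, so for $t\in[0,x]$,
\begin{equation}
\dtran(t) \geq \br{1-\tfrac{t}{x}}\dtran(0) + \tfrac{t}{x}\dtran(x) \geq \tfrac{t}{x}\dtran(x)\eqcm
\end{equation}
using $\dtran(0)\geq 0$. Integrating from $0$ to $x$ gives $\tran(x)-\tran(0) \geq x\dtran(x)/2$.

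The one subtlety, which I expect to be the only real thing to check, is that the denominator is nonzero when $\tran$ is not constant. I would argue: suppose $\dtran(a) = 0$ for some $a>0$. Since $\dtran$ is nondecreasing and nonnegative, $\dtran \equiv 0$ on $[0,a]$. Then for any $y > a$, writing $a = (1-a/y)\cdot 0 + (a/y)\cdot y$ and using concavity of $\dtran$ gives $0 = \dtran(a) \geq (a/y)\dtran(y)$, hence $\dtran(y) \leq 0$, forcing $\dtran \equiv 0$ on $\Rp$ and $\tran$ constant. Thus, whenever $\tran$ is not constant, $\dtran > 0$ on $\Rpp$, so $\tran(x)-\tran(0) = \int_0^x \dtran(t)\,\dl t > 0$ for all $x\in\Rpp$, and the ratio is well-defined.
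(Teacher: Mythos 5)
Your proof is correct and takes essentially the same route as the paper: the paper simply invokes its \cref{lmm:ccdiff} with $y=0$, and that lemma is itself proved via exactly your integral representation $\tran(x)-\tran(0)=\int_0^x\dtran(t)\,\dl t$ together with monotonicity of $\dtran$ (for the bound $\leq x\dtran(x)$) and the concave-chord estimate $\dtran(t)\geq \tfrac tx\dtran(x)$ (for the bound $\geq \tfrac12 x\dtran(x)$). Your additional verification that the denominator is strictly positive for non-constant $\tran$ is correct and is a detail the paper leaves implicit.
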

\begin{proof}
	By \cref{lmm:ccdiff},
	\begin{equation}
		\frac{\dtran(x)+\dtran(0)}2 \leq \frac{\tran(x) - \tran(0)}{x} \leq \dtran\brOf{\frac{x}{2}}
		\eqfs
	\end{equation}
	As $\dtran(0) \geq 0$ and $\dtran$ is nondecreasing,
	\begin{equation}
		\frac12 x \dtran(x) \leq \tran(x) - \tran(0) \leq x\dtran\brOf{x}
		\eqfs
	\end{equation}
\end{proof}
\begin{lemma}\label{lmm:aux:redistri}\mbox{ }
	\begin{enumerate}[label=(\roman*)]
		\item Let $f\colon\Rp \to \R$. Assume $f$ is concave. Let $a,b\in\Rp$ with $a\geq b$. Then
		$x \mapsto f(a+x) + f(b-x)$ is nonincreasing on $[0, b]$. If additionally $f(0)\geq 0$, then $f$ is subadditive.
		\item
		Let $f\colon\Rp \to \R$. Assume $f$ is convex. Let $a,b\in\Rp$ with $a\geq b$. Then
		$x \mapsto f(a+x) + f(b-x)$ is nondecreasing on $[0, b]$.
	\end{enumerate}
\end{lemma}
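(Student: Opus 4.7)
The plan is to reduce both parts to the standard fact that concavity (resp.\ convexity) of $f$ on $\Rp$ is equivalent to the map $y \mapsto f(y+h) - f(y)$ being nonincreasing (resp.\ nondecreasing) on $\Rp$ for every fixed $h \geq 0$. Once this is in hand, each monotonicity claim becomes a one-line comparison of two such difference expressions, and subadditivity follows by evaluating the monotone function at the endpoints of $[0,b]$.

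For part (i), I would fix $0 \leq x_1 < x_2 \leq b$ and set $h := x_2 - x_1 > 0$. Then
\begin{equation}
    \br{f(a+x_2) + f(b-x_2)} - \br{f(a+x_1) + f(b-x_1)}
    = \br{f(a+x_1+h) - f(a+x_1)} - \br{f((b-x_2)+h) - f(b-x_2)}
    \eqfs
\end{equation}
Because $a \geq b$ and $x_1, x_2 \geq 0$ we have $b - x_2 \leq a + x_1$, so concavity of $f$ gives $f((b-x_2)+h) - f(b-x_2) \geq f(a+x_1+h) - f(a+x_1)$. Hence the displayed difference is $\leq 0$, proving that $x \mapsto f(a+x) + f(b-x)$ is nonincreasing on $[0,b]$. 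For subadditivity, I would take arbitrary $\alpha,\beta \in \Rp$, set $a := \max(\alpha,\beta)$ and $b := \min(\alpha,\beta)$, and compare the values of this nonincreasing map at $x=0$ and $x=b$: this yields $f(a)+f(b) \geq f(a+b) + f(0) \geq f(a+b)$, using $f(0)\geq 0$ at the last step.

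For part (ii), the argument is literally the same computation with all inequalities reversed, using the fact that for convex $f$ the map $y \mapsto f(y+h) - f(y)$ is nondecreasing. With $s := b-x_2 \leq a+x_1 =: t$ and $h := x_2 - x_1$, convexity gives $f(t+h) - f(t) \geq f(s+h) - f(s)$, which rearranges exactly to $f(a+x_1) + f(b-x_1) \leq f(a+x_2) + f(b-x_2)$.

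There is essentially no obstacle here; the only point to be careful about is verifying $b - x_2 \leq a + x_1$, which is where the hypothesis $a \geq b$ is used (rather than the weaker $a,b \in \Rp$ alone), and checking that $f(0) \geq 0$ is genuinely needed only for the subadditivity conclusion, not for the monotonicity statement itself. The monotonicity-of-increments characterization of convex/concave functions on an interval is standard and can be cited without reproof.
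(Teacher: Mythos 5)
Your argument is correct. The paper reaches the same conclusion by a slightly different elementary device: it writes $a$ and $b$ as convex combinations of $a+x$ and $b-x$ (namely $a = \frac{a-b+x}{a-b+2x}(a+x) + \frac{x}{a-b+2x}(b-x)$, and symmetrically for $b$), applies concavity to each, and adds the two inequalities to get $f(a)+f(b) \geq f(a+x)+f(b-x)$ for all $x\in[0,b]$; monotonicity then follows by observing that the same inequality holds with $(a,b)$ replaced by $(a+\tilde x,\, b-\tilde x)$, and subadditivity by setting $x=b$ exactly as you do. You instead invoke the nonincreasing-increments characterization of concavity ($y\mapsto f(y+h)-f(y)$ nonincreasing for fixed $h\geq 0$) and compare two arbitrary points $x_1<x_2$ directly, which yields monotonicity in one step and sidesteps both the shift argument and the degenerate coefficient $a-b+2x=0$ at $a=b$, $x=0$ in the paper's convex combination. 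Both proofs are equally elementary; yours is marginally more direct for the monotonicity claim, while the paper's uses only the bare definition of concavity rather than the (standard, but separately justified) increment characterization. Your observations that $a\geq b$ is used precisely to guarantee $b-x_2\leq a+x_1$, and that $f(0)\geq 0$ enters only in the final subadditivity step, match the role these hypotheses play in the paper's proof.
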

\begin{proof}
	We prove the first part; the second part is similar.
	As $f$ is concave, we have
	\begin{align}
		f(a) &\geq \frac{a-b+x}{a-b+2x} f(a+x) + \frac{x}{a-b+2x} f(b-x)\eqcm\\
		f(b) &\geq \frac{x}{a-b+2x} f(a+x) + \frac{a-b+x}{a-b+2x} f(b-x)
	\end{align}
	for $x\in[0, b]$. Adding the two inequalities yields
	\begin{equation}
		f(a) + f(b) \geq f(a+x)  + f(b-x)
		\eqfs
	\end{equation}
	As this inequality also applies when $a$, $b$ are replaced by $\tilde a = a+\tilde x$, $\tilde b = b-\tilde x$ for $\tilde x \in [0, b]$, we have that $x \mapsto f(a+x) + f(b-x)$ is nonincreasing.
	Subadditivity follows by setting $x = b$.
\end{proof}
\begin{lemma}\label{lmm:tranconcave}
	Let $\tran\in\setcc$.
	\begin{enumerate}[label=(\roman*)]
		\item \label{lmm:tranconcave:add}
		Let $x,y\in\Rp$. Then
		\begin{equation}
			\dtran(x + y) \leq \dtran(x) +  \dtran(y) \leq 2 \dtran\brOf{\frac{x+y}2}
			\eqfs
		\end{equation}
		\item\label{lmm:tranconcave:factor} Let $a,x\in\Rp$. Then
		\begin{align}
			\dtran(ax) &\geq a\dtran(x) \text{ for } a\leq 1\eqcm\\
			\dtran(ax) &\leq a\dtran(x) \text{ for } a\geq 1
			\eqfs
		\end{align}
		\item\label{lmm:tranconcave:balance} Let $x,y\in\Rp$. Assume $y\geq x$. Then
		\begin{equation}
			x \dtran(y) \leq y \dtran(x)
			\eqfs
		\end{equation}
	\end{enumerate}
\end{lemma}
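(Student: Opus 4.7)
All three parts of the lemma should follow from just two inputs: (a) $\dtran$ is concave on $\Rp$, and (b) $\dtran$ is nonnegative, so in particular $\dtran(0) \geq 0$. Both are built into the definition of $\setcc$ (and the extension $\dtran(0) := \lim_{x\searrow 0}\dtran(x)$). The auxiliary \cref{lmm:aux:redistri} already packages the subadditivity consequence I need, and the rest are one-line Jensen arguments. I expect no real obstacle; the only care needed is to handle the boundary cases $x = 0$ or $y = 0$ cleanly.

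\textbf{Part \ref{lmm:tranconcave:add}.} For the right-hand inequality, apply concavity of $\dtran$ to the midpoint of $x$ and $y$: $\dtran((x+y)/2) \geq \tfrac12 \dtran(x) + \tfrac12 \dtran(y)$, which is the desired bound after multiplying by $2$. For the left-hand inequality, observe that $\dtran \colon \Rp \to \R$ is concave with $\dtran(0) \geq 0$, so the subadditivity conclusion of \cref{lmm:aux:redistri}\,(i) applies directly: $\dtran(x+y) \leq \dtran(x) + \dtran(y)$.

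\textbf{Part \ref{lmm:tranconcave:factor}.} If $a \in [0,1]$, write $ax = a \cdot x + (1-a)\cdot 0$ and use concavity of $\dtran$ together with $\dtran(0) \geq 0$ to get
\begin{equation*}
\dtran(ax) \geq a\,\dtran(x) + (1-a)\,\dtran(0) \geq a\,\dtran(x) \eqfs
\end{equation*}
If $a \geq 1$ and $x > 0$, set $b := 1/a \in (0,1]$ and apply the previous case to $(b, ax)$: $\dtran(x) = \dtran(b \cdot ax) \geq b\,\dtran(ax)$, which rearranges to $\dtran(ax) \leq a\,\dtran(x)$. The case $x = 0$ is trivial since both sides equal $\dtran(0)$ (up to sign: $a\dtran(0) \geq \dtran(0)$ uses $\dtran(0)\geq 0$).

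\textbf{Part \ref{lmm:tranconcave:balance}.} If $y = 0$, then $x = 0$ and both sides vanish. Otherwise $y > 0$, and $a := x/y \in [0,1]$. Applying part \ref{lmm:tranconcave:factor} to this $a$ yields $\dtran(x) = \dtran(ay) \geq a\,\dtran(y) = (x/y)\dtran(y)$, and multiplying by $y > 0$ gives $y\,\dtran(x) \geq x\,\dtran(y)$, as required.
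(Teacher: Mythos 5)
Your proposal is correct and follows essentially the same route as the paper: Jensen/concavity for the right-hand bound in (i), the subadditivity packaged in \cref{lmm:aux:redistri} for the left-hand bound, the two-point concavity inequality through $0$ for (ii), and a reduction of (iii) to (ii) (the paper uses $a = y/x \geq 1$ where you use $a = x/y \leq 1$, which is the same argument read from the other end and handles the boundary case $x=0$ a bit more cleanly).
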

\begin{proof}
	These are all well-known properties of nonnegative, concave functions.
	\begin{enumerate}[label=(\roman*)]
		\item Use \cref{lmm:aux:redistri} and Jensen's inequality.
		\item Use $(1-t)\dtran(x_0) + t \dtran(x_1) \leq \dtran((1-t) x_0 + t x_1)$ on points $x_0=0$, $x_1=x$, $t = a$ and on $x_0=0$, $x_1=ax$, $t = 1/a$, respectively, and note that $\dtran(0)\geq0$.
		\item Apply \ref{lmm:tranconcave:factor} with $a = y/x$.
	\end{enumerate}
\end{proof}
\begin{lemma}\label{lmm:limittran}
	Let $\tran\in\setcc$, $\tran\not\equiv 0$. Let $r \in \Rp$. Then
	\begin{equation}
		\frac{\tran(x +  r)}{\tran(x)} \xrightarrow{x\to\infty} 1
		\eqfs
	\end{equation}
\end{lemma}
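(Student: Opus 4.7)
The plan is to squeeze $\tran(x+r)/\tran(x)$ between $1$ and $1+o(1)$ as $x\to\infty$. First I would handle the trivial case where $\tran$ is constant: then $\dtran\equiv 0$, the ratio equals $1$ identically, and the claim is immediate. So assume $\tran$ is nonconstant. Since $\dtran$ is nonnegative and nondecreasing, there is $x_0$ with $\dtran(x_0)>0$, and then $\tran$ is strictly increasing on $[x_0,\infty)$. Combined with Lemma \ref{lem:aux}, which gives $\tran(x)\geq \tran(0)+\tfrac12 x\dtran(x)$ for all $x\in\Rpp$, this forces $\tran(x)\to\infty$ (the right-hand side is at least $\tran(0)+\tfrac12 x\dtran(x_0)$ for $x\geq x_0$). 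In particular $\tran(x)>0$ for large $x$, and $\tran(x+r)\geq\tran(x)$ then yields the lower bound $\tran(x+r)/\tran(x)\geq 1$.

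For the upper bound I would use two ingredients that are already established in the excerpt: convexity of $\tran$ together with monotonicity of $\dtran$ gives
\begin{equation*}
\tran(x+r)-\tran(x)=\int_{x}^{x+r}\dtran(t)\,\mathrm{d}t\leq r\,\dtran(x+r),
\end{equation*}
and subadditivity of $\dtran$ (Lemma \ref{lmm:tranconcave}\,\ref{lmm:tranconcave:add}) gives $\dtran(x+r)\leq \dtran(x)+\dtran(r)$. Dividing by $\tran(x)$ yields
\begin{equation*}
\frac{\tran(x+r)}{\tran(x)}-1\ \leq\ \frac{r\,\dtran(x)}{\tran(x)}\ +\ \frac{r\,\dtran(r)}{\tran(x)}\eqfs
\end{equation*}

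The second summand vanishes as $x\to\infty$ because $\tran(x)\to\infty$ while $r\dtran(r)$ is a fixed constant. For the first summand I would invoke Lemma \ref{lem:aux} once more, in the equivalent form $\tran(x)-\tran(0)\geq \tfrac12 x\dtran(x)$, to obtain $\dtran(x)/(\tran(x)-\tran(0))\leq 2/x\to 0$. Since $\tran(x)\to\infty$, we have $\tran(x)-\tran(0)\sim \tran(x)$, so $r\dtran(x)/\tran(x)\to 0$ as well. Combining both bounds gives $\limsup_{x\to\infty}\tran(x+r)/\tran(x)\leq 1$, and together with the lower bound the lemma follows.

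There is no real obstacle here; the only point requiring a little care is that $\tran(0)$ need not be nonnegative, but this is absorbed painlessly because $\tran(0)/\tran(x)\to 0$. The key observation making the argument uniform across the cases $\lim_{x\to\infty}\dtran(x)<\infty$ and $\lim_{x\to\infty}\dtran(x)=\infty$ is precisely the double-sided control $\tfrac12 x\dtran(x)\leq \tran(x)-\tran(0)\leq x\dtran(x)$ provided by Lemma \ref{lem:aux}.
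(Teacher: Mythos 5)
Your proof is correct and follows essentially the same route as the paper's: a lower bound of $1$ from monotonicity, and an upper bound obtained by controlling $\tran(x+r)-\tran(x)$ via $r\,\dtran(\cdot)$, subadditivity of $\dtran$ (\cref{lmm:tranconcave}), and the key fact $\dtran(x)/\tran(x)\to 0$ from \cref{lem:aux}. The only differences are cosmetic (you use the integral representation where the paper uses \cref{lmm:ccdiff}), and you are in fact slightly more careful in spelling out that $\tran(x)\to\infty$ and in disposing of the constant case.
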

\begin{proof}
	The statement is trivial for $r = 0$. Assume $r>0$. By \cref{lmm:ccdiff},
	\begin{align}
		\frac{\dtran(x+r) + \dtran(x)}2 \leq \frac{\tran(x +  r) - \tran(x)} r \leq \dtran\brOf{x + \frac r2}
		\eqfs
	\end{align}
	From \cref{lem:aux}, we infer
	\begin{equation}
		\frac{\dtran(x)}{\tran(x)} \xrightarrow{x\to\infty} 0
		\eqfs
	\end{equation}
	Thus, on one hand,
	\begin{align}
		\frac{\tran(x +  r)}{\tran(x)}
		&=
		\frac{\tran(x +  r) - \tran(x)}{\tran(x)} + 1
		\\&\leq
		\frac{r \dtran\brOf{x + \frac r2}}{\tran(x)} + 1
		\\&\leq
		r \frac{\dtran(x)}{\tran(x)} + \frac{r \dtran(r/2)}{\tran(x)} + 1
		\\&\xrightarrow{x\to\infty} 1
		\eqfs
	\end{align}
	On the other hand, as $\tran$ is nondecreasing,
	\begin{equation}
		\frac{\tran(x +  r)}{\tran(x)}
		\geq
		1
		\eqfs
	\end{equation}
\end{proof}
\begin{lemma}\label{lmm:limittran:finiteT}
    Let $\tran\in\setcciz$ with $T := \sup_{x\in\Rp}\dtran(x) < \infty$. Let $a \in \Rpp$. Then
    \begin{equation}
        \frac{ax}{\tran(x)} \xrightarrow{x\to\infty} \frac aT \qquad\text{and}\qquad
        \frac{\tran(ax)}{\tran(x)} \xrightarrow{x\to\infty} a
        \eqfs
    \end{equation}
\end{lemma}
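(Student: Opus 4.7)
The plan is to establish the first limit $\tran(x)/x \to T$ as $x \to \infty$, from which both claimed limits follow immediately. Since $\tran\in\setcciz$, we have $\tran(0) = 0$ and $\dtran(x) > 0$ for all $x\in\Rpp$, so $T \geq \dtran(1) > 0$ and division by $T$ is legitimate. Moreover, convexity of $\tran$ makes it absolutely continuous on every compact subset of $\Rp$, so by \cref{lmm:integration} we can write
\begin{equation}
    \tran(x) = \int_0^x \dtran(t)\, \dl t
\end{equation}
for all $x\in\Rp$.

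Next, I would use that $\dtran$ is nondecreasing (\cref{lmm:tranconcave}) and bounded above by $T$ to conclude that $\dtran(t) \nearrow T$ as $t \to \infty$. Given $\epsilon\in\Rpp$, pick $M\in\Rpp$ so that $\dtran(t) \geq T-\epsilon$ for all $t \geq M$. Splitting the integral at $M$ and using the bound $\dtran(t) \leq T$ on $[0,M]$ gives
\begin{equation}
    \br{T-\epsilon} \frac{x - M}{x} \leq \frac{\tran(x)}{x} \leq T
\end{equation}
for $x > M$. Taking $x\to\infty$ and then $\epsilon \searrow 0$ yields $\tran(x)/x \to T$.

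From this, the first claimed limit follows since $ax/\tran(x) = a / (\tran(x)/x) \to a/T$. For the second limit, write
\begin{equation}
    \frac{\tran(ax)}{\tran(x)} = a \cdot \frac{\tran(ax)}{ax} \cdot \frac{x}{\tran(x)}
\end{equation}
and note that both $\tran(ax)/(ax) \to T$ (as $ax\to\infty$) and $x/\tran(x)\to 1/T$, giving the product $a \cdot T \cdot (1/T) = a$. There is no real obstacle here — the only care needed is to ensure $T > 0$ to make the quotient well-defined, which is immediate from the strict positivity of $\dtran$ on $\Rpp$ for $\tran\in\setcciz$.
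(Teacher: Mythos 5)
Your proof is correct and rests on the same key fact as the paper's: since $\dtran$ is nondecreasing with supremum $T$, it converges to $T$, so $\tran$ is asymptotically linear with slope $T>0$. The paper sandwiches $\tran$ between affine functions of slopes $(1-\delta)T$ and $T$ and bounds the quotient $\tran(ax)/\tran(x)$ directly, whereas you first establish the single limit $\tran(x)/x \to T$ by splitting the integral of $\dtran$ and then deduce both claims algebraically -- an organizational variant of essentially the same argument.
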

\begin{proof}
    On one hand, for all $x_0\in\Rp$,
    \begin{equation}
        \tran(x) \leq \tran(x_0) + T(x - x_0)\eqfs
    \end{equation}
    On the other hand, $\dtran(x) \xrightarrow{x\to\infty} T$ as $\tran$ is convex. Hence, for all $\delta\in\Rpp$, there is $x_0\in \Rp$ such that
    \begin{equation}
        \tran(x) \geq \tran(x_0) + (1-\delta)T(x - x_0)
    \end{equation}
    for all $x \geq x_0$.
    Thus, if $a>0$ and $\delta \in (0,1)$, we can choose $x_0\in\Rp$ large enough so that
    \begin{equation}
        \frac{\tran(ax_0) + (1-\delta) T a (x-x_0)}{\tran(x_0) + T (x-x_0)} \leq \frac{\tran(ax)}{\tran(x)} \leq \frac{\tran(ax_0) + T a (x-x_0)}{\tran(x_0) + (1-\delta) T (x-x_0)}
    \end{equation}
    for all $x \geq x_0$.
    Hence, we obtain $\tran(ax) / \tran(x) \xrightarrow{x\to\infty} a$. The argument for $\frac{ax}{\tran(x)} \xrightarrow{x\to\infty} \frac aT$ is similar.
\end{proof}
\section{Omitted Proofs}\label{apendix:omitted}
\subsection{From Section \ref{sec:ncfcd}}\label{apendix:omitted:ncfcd}
\begin{proof}[Proof of \cref{lmm:tran:continuous}]\mbox{ }
	\begin{enumerate}[label=(\roman*)]
		\item As $\tran$, $\dtran$ are convex and concave, respectively, we immediately obtain continuity on $\Rpp$. As $\tran$ is nondecreasing and convex, it must also be continuous at 0. By the definition of $\dtran(0)$, it is continuous at $0$.
		\item The statements are well-known properties of finite, continuous, concave functions such as $\dtran$, see, e.g., \cite[Theorem 24.1]{rockafellar70}.
		\item
		As $\tran$ is convex and nondecreasing on $\Rp$,
		\begin{equation}
			h\mapsto \frac{\tran(h)-\tran(0)}{h}
		\end{equation}
		is nondecreasing and nonnegative. Thus, $\tran\rd(0)$ exists.

		Fix $\epsilon >0$. As $\dtran$ is continuous, it is uniformly continuous on the compact interval $[0,2]$. Thus, we can find $\delta\in(0,1]$ such that $\abs{\dtran(x)-\dtran(y)} \leq \epsilon$ for all $x,y\in[0,2]$ with $0 < \abs{x-y} \leq \delta$. Thus, for any $h\in (0, \delta]$ and $x \in (0, 1]$,
		\begin{equation}
			\abs{\frac{\tran(x+h)-\tran(x)}{h} - \dtran(x)}
			=
			\abs{\frac1h \int_0^h \dtran(x+z) - \dtran(x) \dl z}
			\leq
			\epsilon
			\eqfs
		\end{equation}
		Choose $h\in (0, \delta]$ small enough such that,
		\begin{equation}
			\abs{\frac{\tran(h)-\tran(0)}{h} - \tran\rd(0)}
			\leq
			\epsilon
			\eqfs
		\end{equation}
		As $\tran$ is continuous and $h$ is fixed, we can find $x\in(0,\delta]$ small enough such that
		\begin{align}
			\abs{\tran(x + h) - \tran(h)} &\leq h\epsilon\eqcm\\
			\abs{\tran(x) - \tran(0)} &\leq h\epsilon\eqfs
		\end{align}
		Thus, using the triangle inequality and putting the bounds together, we obtain
		\begin{align}
			&\abs{\dtran(0) - \tran\rd(0)}
			\\&\leq
			\abs{\dtran(0) - \dtran(x)} +
			\abs{\dtran(x) - \frac{\tran(x+h)-\tran(x)}{h}}
			\\&\phantom{\leq}\,
			+
			\abs{\frac{\tran(x+h)-\tran(x)}{h} - \frac{\tran(h)-\tran(0)}{h}}+
			\abs{\frac{\tran(h)-\tran(0)}{h} - \tran\rd(0)}
			\\&\leq
			3\epsilon +
			\frac1h\br{\abs{\tran(x+h)-\tran(h)}+\abs{\tran(x)-\tran(0)}}
			\\&\leq 5\epsilon
			\eqfs
		\end{align}
		As $\epsilon > 0$ can be chosen arbitrarily small, $\dtran(0) = \tran\rd(0)$.
	\end{enumerate}
\end{proof}
\begin{proof}[Proof of \cref{lmm:ccdiff}]
	Let $x,y\in\Rp$ with $x > y$. For the lower bound, as $\dtran$ is concave,
	\begin{align}
		\tran(x) - \tran(y)
		&=
		\int_y^x \dtran(u) \dl u
		\\&\geq
		(x-y)\int_0^1 (1-t)\dtran(y) + t \dtran(x)\, \dl t
		\\&=
		\frac {x-y}2 \br{\dtran(x)+\dtran(y)}
		\eqfs
	\end{align}
	For the upper bound, concavity of $\dtran$ implies the existence of an affine linear function $h$ with $h(u) \geq \dtran(u)$ for all $u \in\Rp$ and
	\begin{equation}
		h\brOf{\frac{x+y}{2}} = \dtran\brOf{\frac{x+y}{2}}
		\eqfs
	\end{equation}
	Thus,
	\begin{align}
		\tran(x) - \tran(y)
		&\leq
		\int_y^x h(u) \dl u
		\\&=
		\frac {x-y}2\br{h(x)+h(y)}
		\\&=
		(x-y) h\brOf{\frac{x+y}{2}}
		\eqfs
	\end{align}
\end{proof}
\begin{proof}[Proof of \cref{lmm:tranlower}]
	First, consider the case $x\geq y$.
	Define $f(x,y) = \tran(x-y) - \tran(x)- \tran(y) + 2 y \dtran(x)$.
	We want to show $f(x, y)\geq0$.
	The derivative of $f$ with respect to $y$ is
	\begin{equation}
		\partial_y f(x, y) = -\dtran(x-y) - \dtran(y) + 2 \dtran(x)\eqfs
	\end{equation}
	As $\dtran$ is nondecreasing and $x\geq \max(y, x-y)$, we obtain $\partial_y f(x, y)  \geq 0$.
	Hence, $f(x, y) \geq f(x, 0) = 0$, as $\tran(0) = 0$.

	Now, consider the case $x \leq y$. Set $g(x,y) = \tran(y-x) - \tran(x) - \tran(y) + 2 y \dtran(x)$, which yields
	\begin{equation}
		\partial_y g(x, y) = \dtran(y-x) - \dtran(y) + 2 \dtran(x)\eqfs
	\end{equation}
	As $\dtran$ is subadditive (\cref{lmm:tranconcave}), we obtain $\partial_y g(x, y)  \geq 0$.
	Thus, $g(x, y) \geq g(x, x) = -2 \tran(x) + 2 x \dtran(x)$ as $\tran(0) = 0$. As $\dtran$ is nondecreasing and $\tran(0) = 0$, we have $\tran(x) \leq x \dtran(x)$. Hence, $g(x, y) \geq 0$.
\end{proof}
\subsection{From Section \ref{sec:quadLowerBound}}\label{apendix:omitted:quadLowerBound}
\begin{proof}[Proof of \cref{prp:gconv:set}]\mbox{ }
	\begin{enumerate}[label=(\roman*)]
		\item
		The quadratic polynomial $t \mapsto t^2 + at + b$ attains its minimum $-\frac{a^2}4+b$ at $t_0 = -\frac a2$. It is nonnegative if and only if $4b \geq a^2$. A function of the form $t\mapsto (t-t_0)^2+h^2$ can be written as
		$t^2 + \tilde a t + \tilde b$ with $\tilde a = -2t_0$ and $\tilde b = t_0^2 + h^2$. Thus, $4 \tilde b \geq \tilde  a^2$. Furthermore, we can clearly choose $h\in\Rp$ and $t_0\in\R$ to obtain any $\tilde a, \tilde b\in\R$ with $4 \tilde b \geq \tilde a^2$.
		\item
		Let $\normof{\cdot}$ be the norm of the Hilbert space $\mc Q$. By the Pythagorean theorem,
		\begin{align}
			\normof{y - \gamma(t)}^2
			&=
			\normof{y - \gamma(t_0)}^2 + \normof{\gamma(t_0) - \gamma(t)}^2
			\\&=
			\normof{y - \gamma(t_0)}^2 + (t-t_0)^2
			\eqcm
		\end{align}
		where $\gamma(t_0)$ is the orthogonal projection of $y$ onto $\gamma$. If the dimension of $\mc Q$ is at least $2$, we can choose $y$ to obtain any value for $t_0\in\R$ and $\normof{y - \gamma(t_0)}\in\Rp$.
	\end{enumerate}
\end{proof}
\begin{proof}[Proof of \cref{prp:gconv:elem}]\mbox{ }
	\begin{enumerate}[label=(\roman*)]
		\item Trivial.
		\item If $g_1, g_2$ are parameterized by $t_{0,1}, h_1$ and $t_{0,2}, h_2$, respectively, then squaring the two equations yields
		\begin{align}
			(s - t_{0,1})^2 + h_1^2 &= (s - t_{0,2})^2 + h_2^2\eqcm\\
			(t - t_{0,1})^2 + h_1^2 &= (t - t_{0,2})^2 + h_2^2\eqfs
		\end{align}
		The difference of these two equations yields
		\begin{equation}
			(s - t_{0,1})^2 - (t - t_{0,1})^2 = (s - t_{0,2})^2 + (t - t_{0,2})^2 \eqcm
		\end{equation}
		which is equivalent to $(t-s) (t_{0,1}-t_{0,2}) = 0$. Using $t_{0,1}=t_{0,2}$ in $f_1(s) = f_2(s)$ yields $h_1^2 = h_2^2$.
		\item
		Set $h(x) =  g_2(x)^2 - g_1(x)^2$. Then $h$ is an affine linear function as the squared terms cancel. Furthermore, $h(r)\geq 0$, $h(s)\leq 0$, and $h(t) \geq 0$. Thus $h \equiv 0$ and we have $g_1 = g_2$.
		\item
		Follows directly from \cref{lmm:gconv:solve}.
	\end{enumerate}
\end{proof}
\begin{proof}[Proof of \cref{prp:gconv:fun}]\mbox{ }
	\begin{enumerate}[label=(\roman*)]
		\item All functions in $\mc G$ are nonnegative.
		\item All functions in $\mc G$ are $1$-Lipschitz. Let $t_1, t_2 \in \R$. Without loss of generality, assume $f(t_2) \geq f(t_1)$. Let $g\in\mc G$ be a $\mc G$-tangent of $f$ at $t_2$. Then $f(t_2) - f(t_1) \leq g(t_2) - g(t_1) \leq \abs{t_2 - t_1}$.
		\item All functions in $\mc G$ are convex. Thus, \cref{prp:gconv:characterization} \ref{prp:gconv:characterization:aboveBelow} yields convexity of $f$ (we do not use \ref{prp:gconv:fun:convex} in the proof of \cref{prp:gconv:characterization} \ref{prp:gconv:characterization:aboveBelow}).
		\item Let $g \in \mc G$ be a $\mc G$-tangent of $f$ at $t_1$. Then $f(t_1) + f(t_2) \geq g(t_1) + g(t_2)$. By \cref{lmm:gconv:solve}, $g(t_1) + g(t_2) \geq \abs{t_1 - t_2}$.
		\item The only function $g\in\mc G$ with $g(t_0) = 0$ is $\abs{t-t_0}$. As $f$ is $1$-Lipschitz and $t \mapsto \abs{t-t_0}$ is a lower bound of $f$ with equality at $t_0$, $f$ must also be the function $t \mapsto \abs{t-t_0}$.
	\end{enumerate}
\end{proof}
\begin{proof}[Proof of \cref{prp:gconvVSstrong}]\mbox{ }
	\begin{enumerate}[label=(\roman*)]
		\item
		The functions $g, g^2 \colon \R\to\R$ are convex. Thus, their subdifferentials are the closed intervals between the respective left and right derivative.
		Let $t_0\in\R$. By \cref{prp:gconv:characterization} \ref{prp:gconv:characterization:lower}, for all $s\in\R$,
		\begin{equation}
			g(s)^2 \geq g(t_0)^2 + 2 (s-t_0) g(t_0)g\rd(t_0) + (s-t_0)^2
			\eqfs
		\end{equation}
		This inequality holds if we replace $g\rd(t_0)$ by $g\ld(t_0)$ (follow the proof of \cref{prp:gconv:characterization}  \ref{prp:gconv:characterization:lower}). Thus it is true for every subderivative (subgradient in one dimension) in $\partial g(t_0)$. Furthermore,
		\begin{equation}
			\partial g^2 (t_0) = \setByEle{2 g(t_0) v}{v \in \partial g(t_0)}
			\eqfs
		\end{equation}
		Thus, by \cref{prp:strongconv:chara}, $g^2$ is strongly convex with modulus $1$.
		\item
		Set $g := \sqrt{f}$. As $f$ is strongly convex with modulus $1$, by \cref{prp:strongconv:chara},
		\begin{equation}\label{eq:gconvVSstron:1}
			g(s)^2 \geq g(t_0)^2 + (s-t_0) (g^2)\rd(t_0) + (s-t_0)^2 := \tilde f(s)
			\eqfs
		\end{equation}
		As the square root of a degree two polynomial with positive second order coefficient is convex, $g$ is convex. Thus, its one-sided derivatives exist.
 		There is $v_0 \in \semiDerivs g(t_0)$ such that $(g^2)\rd(t_0) = 2 g(t_0)v_0$.
		As we assume $g$ to be $1$-Lipschitz, we have $v_0^2 \leq 1$ and $(1 - v_0^2) g(t_0)^2 \geq 0$.
		We use the first of these two inequalities to show $\tilde f\geq0$. Hence, we can define $\tilde g := \sqrt{\tilde f}$.
		The second inequality allows us to write
		\begin{equation}
			\tilde g(s) = \sqrt{\br{s - s_0}^2 + h^2}
		\end{equation}
		with $s_0 = t_0 + g(t_0)v_0$ and $h^2 = (1 - v_0^2) g(t_0)^2$.
		Now we have $g(s) \geq \tilde g(s)$ for all $s\in I$ by \eqref{eq:gconvVSstron:1}. Furthermore, we calculate $g(t_0) = \tilde g(t_0)$. Clearly, $\tilde g\in\mc G$. Hence, $\tilde g$ is a $\mc G$-tangent of $g$ at $t_0$.
	\end{enumerate}
\end{proof}

\end{appendix}
\printbibliography
\end{document}